\documentclass[a4, 12pt]{article}

\usepackage{geometry}
\geometry{margin=0.8in}

\usepackage{graphicx} % Required for inserting images
\usepackage{amsmath}
\usepackage{amssymb}
\usepackage[english]{babel}
\usepackage[ut
f8]{inputenc}
\usepackage{tikz-cd}
\usepackage{tikz-network}
\usetikzlibrary{shapes,backgrounds,patterns,arrows,graphs,bending,knots}
\newcommand{\midarrow}{\tikz \draw[-triangle 90] (0,0) -- +(.1,0);}

\usepackage{amsthm}
\usepackage{mathtools}
\usepackage{enumerate}
\usepackage{mathrsfs}
\usepackage{todonotes}
\usepackage{mathbbol}
\usepackage{float}
\usepackage{pgfplots}
\usepackage[margin=1cm,font=small,labelfont=bf]{caption}

\usepackage{hyperref}

\theoremstyle{plain}
\newtheorem{theorem}{Theorem}[subsection]
\newtheorem{lemma}[theorem]{Lemma}
\newtheorem{corollary}[theorem]{Corollary}
\newtheorem{proposition}[theorem]{Proposition}
\newtheorem{conjecture}[theorem]{Conjecture}

\theoremstyle{definition}
\newtheorem{definition}[theorem]{Definition}

\newtheorem{example}[theorem]{Example}
\newtheorem{remark}[theorem]{Remark}

\usepackage[
backend=biber,
style=alphabetic,
doi=false
]{biblatex}
\addbibresource{reference.bib} 

\setcounter{section}{0}
\setcounter{tocdepth}{2}

\setcounter{secnumdepth}{4}

\title{\textbf{Geometric realisations of type \\ $\tilde{A}_n$ preprojective algebras \\
in homological mirror symmetry}}
\author{Johan Rydholm }
\date{}

\begin{document}

\maketitle
\begin{abstract}
The type $A_n$-singularity $\mathbb{C}^2/\mathbb{Z}_{n+1}$ can be resolved by hyper-Kähler manifolds $X_{\zeta}$ with underlying smooth manifolds diffeomorphic to the resolution of singularities $X_{\text{res}}$, whose hyper-Kähler structure depends on a parameter $\zeta\in H_2(X_{\text{res}};\mathbb{R})$. The structure as a complex manifold of each such hyper-Kähler manifold is equivalent to the resolution of singularities at the poles and the structure of a Milnor fibre with roots determined by $\zeta$ elsewhere; the symplectic structure is exact along the equator and is deformed by areas depending on $\zeta$ on the exceptional $(-2)$-spheres away from the equator.

We show that removing suitable divisors $D_u$ from a fixed $X_{\zeta}$ varying with $u$ in the
underlying upper hemisphere of the $S^2$-family of Kähler-structures yields a log Calabi--Yau hyper-Kähler family (in particular a family of log Calabi--Yau submanifolds), and that mirror symmetry is satisfied (partly conjectural in one direction) for this family by hyper-Kähler rotation, in particular by interchanging the structures over the equator and the pole. We furthermore show homological mirror symmetry after adding the missing divisors, which is related to attaching stops and computing singularity categories of certain Landau--Ginzburg potentials on the $A$-side and $B$-side, respectively.  

More concretely: we compute wrapped Fukaya categories and compare them with (previous and new) computations of derived categories of coherent sheaves and derived categories of singularities in algebraic geometry. We show that the relevant categories (with two exceptions) are triangulated equivalent to module categories over the additive and the multiplicative preprojective algebras of type $\tilde{A}_n$, or to deformations of these algebras depending on the parameters $\zeta$.
\end{abstract}

\tableofcontents
\section{Introduction}
The primary goal of this work is to explore symplectic invariants of the complex 2-dimensional hyper-Kähler manifolds obtained by resolving or smoothing an $A_n$-singularity, and to relate them to algebro-geometric invariants as predicted by Kontsevich homological mirror symmetry \cite{MR1403918}. This extends previous results in \cite{MR3692968}, \cite{MR4033516} and \cite[Section 9.2]{MR3502098}, where different types of related symplectic invariants in the form of Fukaya categories were computed.

Homological mirror symmetry is a collection of ideas, theories and conjectures. At its core are several formulations of a conjecture asserting, roughly, that certain geometric objects that possess both a symplectic and an algebraic structure, such as Kähler manifolds, occur in mirror pairs. The symplectic and the algebraic structures give rise to invariants known as the $A$-side and the $B$-side, respectively. Typically these invariants are in the form of triangulated categories, such as the derived category of coherent sheaves on the $B$-side or the different versions of derived Fukaya categories on the $A$-side. That two objects form a mirror pair means that they define equivalent invariants when interchanging the $A$-side and the $B$-side.

The singularities of type $ADE$ are well known from the McKay correspondence. One instance of this correspondence associates these singularities with a corresponding extended Dynkin quiver. In \cite{MR1752785}, the derived McKay correspondence was established: The derived category of coherent sheaves on the resolution of a singularity of type $ADE$ is shown to be triangulated equivalent with the derived category of modules over the preprojective algebra over the corresponding extended Dynkin quiver. The main motivation for this article was to construct the mirrors of these resolutions as Weinstein sectors, and to use the Chekanov--Eliashberg dg-algebras of the attaching links of the critical handles in the boundary at infinity of the sectors to prove the asserted equivalence (via the surgery formula \cite{MR2916289}, \cite{MR4634745}); this by realising the preprojective algebras as minimal models for such geometrically defined dg-algebras. We carry this out for type $A_n$ singularities, in what forms the main construction of the article (Section~\ref{geometry} and Section~\ref{wrappedsec}). 

In \cite{MR992334}, the $ADE$ singularities were related to a class of hyper-Kähler manifolds called ALE spaces. These hyper-Kähler manifolds, constructed from a hyper-Kähler quotient, can be taken to have the resolution of singularities as its algebraic structure at the poles (a hyper-Kähler manifold has an $S^2$-family of Kähler structures). In the present article, it is conjectured (depending on structural results of the wrapped Fukaya category in the non-exact settings that are not yet developed) that this hyper-Kähler structure is fundamental for homological mirror symmetry in this situation, and that the mirror of the Kähler manifold at the pole is obtained by hyper-Kähler rotation to the family of Kähler manifolds along the equator (cf. \cite{bruzzo1998mirrorsymmetryk3surfaces}). More precisely, such a hyper-Kähler rotation should be carried out in a log Calabi--Yau hyper-Kähler family, obtained by continuously removing a divisor from the hyper-Kähler manifold (such that the divisor can be extended to an anti-canonical divisor in a compactification). We produce such a family of (anti-canonical) divisors in the $A_n$-singularity case, and we show mirror symmetry results in both directions, modulo the analytical details of the wrapped Fukaya category in the non-exact setting: the $A$-side and $B$-side of the variety at the poles are related, respectively, to the $B$-side and the $A$-side of the varieties along the equators. (Similar results for the homological mirror symmetry of type $A_n$ resolutions are obtained in the setting of the Fukaya category for compact Lagrangians in \cite[Section 9.2]{MR3502098}.)

The non-log Calabi--Yau setting --- including the case that was the original motivation for the article, of the mirror of the resolution of $A$-singularities --- is then obtained from the log Calabi--Yau hyper-Kähler family by either adding stops or Landau--Ginzburg potentials on the mirror. This leads to a total of three examples of homological mirror symmetry. For each of these examples, we rigorously establish one of the directions, and we give a possible interpretation and sketch of the other (here, as above, certain foundational results concerning the invariants on the symplectic side in the non-exact setting are missing). Interestingly, this relates to deformations of the preprojective algebras, and to the deformed and non-deformed multiplicative versions of the preprojective algebras of type $\tilde{A}_n$.

The article contains extensive background material and preliminaries, likely known to many readers, which, while resulting in greater length, allows for a unified treatment.

Below is an outline of the results, the main arguments, and some of the connections to related previous works. 

\subsection{Results}
Section~\ref{algsec} is mainly for background and motivation: it is intended to provide the context for the results in the article. In it we briefly recall the McKay correspondence and its derived version. We recall the definition of the Ginzburg dg-algebra $\mathcal{G}(Q)$ of a quiver $Q$, and of the preprojective algebra $\Pi(Q)=H_0(\mathcal{G}(Q))$, as well as their multiplicative counterparts, and the deformed versions of these (dg-)algebras. In particular we recall the statement that $D^b(\text{Coh}(X_{\text{res}}))=D^b(\Pi(\tilde{A}_n))$, where $X_{\text{res}}$ is the resolution of singularities of $\mathbb{C}^2/\mathbb{Z}_{n+1}$, i.e. the $A_n$-resolution of singularities (or more generally, the statement holds for any finite subgroup $G\subset\text{SU}(2)$ and its corresponding extended Dynkin quiver). 

We begin by establishing the following purely algebraic result:
\begin{theorem}[Theorem~\ref{formal}]\label{formall}
Let $\tilde{A}_n$ be the (cyclically ordered) quiver with underlying graph $\tilde{A}_n$. Then the homology of $\mathcal{G}(\tilde{A}_n)$ is concentrated in degree $0$, i.e. it is equal to the (additive) preprojective algebra $\Pi(\tilde{A}_n)$ of $\tilde{A}_n$. Furthermore, $\mathcal{G}(\tilde{A}_n)$ is formal, i.e. quasi-isomorphic to its homology.
\end{theorem}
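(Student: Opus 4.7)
The plan is to exhibit $\mathcal{G}(\tilde{A}_n)$ as an explicit Koszul-type dg-resolution of $\Pi(\tilde{A}_n)$ and thereby reduce both claims to a single regularity statement. Concretely, $\mathcal{G}(\tilde{A}_n)$ is the free $k^{Q_0}$-algebra on the doubled arrows $a_j, a_j^*$ in homological degree $0$ together with loops $t_i$ in homological degree $1$ at each vertex, with only nontrivial differential $d(t_i) = e_i \mu e_i$, where $\mu := \sum_j [a_j, a_j^*]$. The assignment $\pi : \mathcal{G}(\tilde{A}_n) \to \Pi(\tilde{A}_n)$ sending each $t_i$ to $0$ and fixing the $a_j, a_j^*$ is a strict dg-algebra map, because each $d(t_i)$ maps to the defining preprojective relation at $i$ and hence to $0$ in $\Pi$. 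Both assertions of the theorem then reduce to showing that $\pi$ is a quasi-isomorphism: the concentration-in-degree-zero statement is equivalent to vanishing of the higher homology, and formality follows at once because $\pi$ is a strict dg-algebra model of the projection to $H_0$.

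To establish the quasi-isomorphism I would filter $\mathcal{G}(\tilde{A}_n)$ by total $t$-degree and analyse the associated spectral sequence. Its $E_1$-page is the Koszul-type complex on the sequence of elements $(e_i \mu e_i)_{i \in Q_0}$ inside the path algebra of the doubled quiver. The desired higher-homology vanishing then reduces to showing that this sequence is ``regular'' in the appropriate vertex-graded sense, or equivalently that $\Pi(\tilde{A}_n)$ admits a projective bimodule resolution of length exactly $2$.

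This regularity step is the main obstacle, and it is precisely where the $\tilde{A}_n$ (non-Dynkin) hypothesis enters: for an ordinary Dynkin $A_n$ the algebra $\Pi$ is finite-dimensional and has infinite global dimension, so the analogous $\mathcal{G}$ fails to be formal. My preferred route in the $\tilde{A}_n$ case is to invoke the classical identification --- already recalled in Section~\ref{algsec} --- of $\Pi(\tilde{A}_n)$ with an algebra Morita equivalent to the skew group algebra $k[x, y] \rtimes \mathbb{Z}/(n+1)$ coming from the $A_n$-singularity. The standard Koszul bimodule resolution of $k[x, y]$ has length $2$ and is $\mathbb{Z}/(n+1)$-equivariant, so it induces a length-$2$ projective bimodule resolution of the skew group algebra, hence via Morita invariance one of $\Pi(\tilde{A}_n)$ as well. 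A term-by-term comparison with the bimodule complex naturally extracted from $\mathcal{G}(\tilde{A}_n)$ then forces the higher homology to vanish, whence $\pi$ is a quasi-isomorphism and $\mathcal{G}(\tilde{A}_n)$ is formal.
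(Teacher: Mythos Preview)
Your strategy differs substantially from the paper's. The paper takes a direct Koszul-duality route: it writes down the quadratic dual algebra $A$ of $\Pi(\tilde{A}_n)$, identifies $\mathcal{G}(\tilde{A}_n)$ with the $R$-linear dual of the bar construction $BA$ (so that $H_\bullet(\mathcal{G}) \cong \text{Ext}_A^\bullet(R,R)$), and then computes this Ext by exhibiting an explicit two-step periodic free resolution of $R$ over $A$ whose exactness is verified by an elementary dimension count. No bimodule resolutions of $\Pi$ enter. Your approach instead aims to deduce acyclicity from the fact that $\Pi(\tilde{A}_n)$ has bimodule projective dimension $2$, established via the identification $\Pi(\tilde{A}_n) \cong k[x,y]*\mathbb{Z}_{n+1}$ and the $G$-equivariant Koszul resolution of $k[x,y]$. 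This is a legitimate alternative and is conceptually attractive because it makes the role of the non-Dynkin hypothesis transparent.

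There is, however, a genuine gap. First, the $t$-degree filtration is not a reduction: since $d$ lowers $t$-degree by exactly one, the $E_1$-page with its differential is just $(\mathcal{G},d)$ again---you are simply observing that $\mathcal{G}$ \emph{is} the noncommutative Koszul-type complex on the preprojective relations, not simplifying it. Second, and more seriously, the assertion that a length-$2$ projective bimodule resolution of $\Pi$ ``forces the higher homology to vanish'' via a ``term-by-term comparison'' is precisely where the content of the proof lies, and you have not supplied it. One needs a precise statement that exactness of the standard length-$2$ bimodule complex of $\Pi(Q)$ implies acyclicity of $\mathcal{G}(Q)$ in positive degrees---equivalently, that the preprojective relations are strongly free in $k\overline{Q}$ in Anick's sense, or that the Hilbert series match---together with a proof or a specific reference. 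This implication is true and known, but it is not a triviality, and without it your argument establishes that $\Pi(\tilde{A}_n)$ has global dimension $2$ without yet connecting that fact to $\mathcal{G}(\tilde{A}_n)$.
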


Thereafter, we briefly motivate the article by putting it in the context of homological mirror symmetry, and relate the results to the hyper-Kähler structure on the underlying manifold of the $A_n$-resolution of singularities. Given a finite group $G\subset\text{SU}(2)$, we recall the construction of the hyper-Kähler manifolds $X_{\zeta}$ from \cite{MR992334}. These are smooth manifolds diffeomorphic to the resolution of singularities $X_{\text{res}}$ of $\mathbb{C}^2/G$, each carrying the structure of a hyper-Kähler manifold depending on the parameter $\zeta\in H^2(X_{\text{res}};\mathbb{R})$ (here, in the context of \cite{MR992334}, we assume $\zeta=(\zeta,0,0)$ chosen generic). Each $X_{\zeta}$ considered with its complex structure $I_u$, $u\in S^2$, is equivalent to the resolution of singularities as an algebraic variety at the poles, i.e. when $u=(u_I,u_J,u_K)=(\pm 1,0,0)$, and to a Milnor fibre (smoothening) elsewhere. When regarded as a symplectic manifold $(X_\zeta,\omega_u)$ is exact and independent of $\zeta$ along the equator, (i.e., when $u_I=0$ with $u=(u_I,u_J,u_K)$), and non-exact elsewhere, where each exceptional $(-2)$-sphere $E_i$ has a non-zero symplectic area equal to $\zeta_i$, where $\zeta=(\zeta_1,\dots,\zeta_n)$. (See Example~\ref{HKquotient} and Figure~\ref{hkfamilypic}.)

Specialising to the $A_n$-case, that is, when $G=\mathbb{Z}_{n+1}$, for each $\zeta$ we can find $I_u$-holomorphic divisors $D_u\subset X_{\zeta}$, varying with $u\in S^2$, such that $X_{\zeta}\smallsetminus D_u$ is log Calabi--Yau (Example~\ref{loghkA}); restricting to the upper hemisphere $O\subset S^2$ we can do this continuously. This property is proven as Proposition~\ref{logCYHKprop}, stating that $X_{\zeta}\smallsetminus D_u$, $u\in O\subset S^2$, is a log Calabi--Yau hyper-Kähler family in the sense of Definition~\ref{logCYHKdef}. On the basis of this fact we hypothesise that mirrors for Kähler manifolds in this family should be given by hyper-Kähler rotation, by which we mean that a mirror for $(X_{\zeta}\smallsetminus D_u,I_u,\omega_u)$, for any $u\in S^2$, should be given by $(X_{\zeta}\smallsetminus D_{\check{u}},I_{\check{u}},\omega_{\check{u}})$, for some other $\check{u}\in S^2$. In particular, we conjecture that this mirror symmetry holds for the choices $u=(1,0,0)$ and $\check{u}=(0,u_J,u_K)$. This is proven in the article (almost, with two cases only partially proven). 

Below we outline the details, where we use the notation $X_{\text{res}}$ for any $(X_{\zeta},I_u,\omega_u)$ with $u=(1,0,0)$, and in this case $D_r=D_u$, and for $u=(0,I_J,I_K)$ we denote $(X_{\zeta},I_u,\omega_u)$ by $X_{\text{Mil}}$ and $D_m=D_u$. We start by surveying the symplectic geometry of $X_{\text{Mil}}$ and $X_{\text{Mil}}\smallsetminus D_m$, following Section~\ref{geometry}, which is devoted to these geometric constructions. 

The Milnor fibre introduced in \cite{MR239612} (which, as noted above, is diffeomorphic to the resolution of singularities obtained by repeated blow-ups) can be given the structure of a Weinstein domain \cite{MR3279026}, equivalent to a linear plumbing of $n$ copies of $T^*S^2$. We show in Proposition~\ref{geometriceq} that the Milnor fibre with a fibre removed (the Milnor fibre being the total space of a Lefschetz fibration), which is also a Weinstein domain, is equivalent as Weinstein domains to the cyclic plumbing of $n+1$ cotangent bundles of $S^2$. This is done using a covering argument and the Weinstein Kirby calculus provided by \cite{MR1668563}; see also \cite{MR4913463}.

Cyclic plumbing of cotangent bundles of spheres can be described by handle attachments on $\partial_{\infty}(\mathbb{C}\times\mathbb{C}^*)=(S^1\times S^2,\xi_{\text{std}})$, using Weinstein handlebody diagrams. We show that $(S^1\times S^2,\xi_{\text{std}})$ can be obtained by gluing together two copies $(J^1S^1,\xi_{\text{std}})$. We use this to prove the following theorem.

\begin{theorem}[Theorem~\ref{stopsect}, Lemma~\ref{chords} and Lemma~\ref{curves}]\label{introgeom}
Let $\Lambda_{\textup{stop}}\subset S^1\times S^2$ be the Legendrian given by $S^1\times\{\textup{pt}\}$. Then removing the stop $\sigma=\sigma_{\Lambda_{\textup{stop}}}$ defined by this Legendrian yields a Weinstein sector $W_{\sigma}=\mathbb{C}\times\mathbb{C}^*\smallsetminus \sigma_{\Lambda}$ with boundary at infinity $\partial_{\infty}W_{\sigma}$ equal to a convex open neighbourhood $V$ of the zero-section in $(J^1S^1,\xi_{\textup{std}})$.

Moreover, there is a cutoff Reeb vector field such that in a smaller convex open neighbourhood $U\subset V\simeq \partial_{\infty}W_{\sigma}$ of the zero-section in which the Reeb vector field satisfies $R_{\alpha}=\partial_z$, and no flow line of $R_{\alpha}$ returns to this neighbourhood after leaving it.

Finally, by making a Legendrian $\Lambda$ in $U$ smaller (in the $z$-direction), we can assume that holomorphic curves in the symplectisation of $V$ with punctures at Reeb chords of $\Lambda$ are fully contained in the symplectisation of $U$.
\end{theorem}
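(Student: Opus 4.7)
The plan is to take the three assertions in turn, each building on the decomposition of $(S^1\times S^2,\xi_{\textup{std}})$ into two copies of $(J^1 S^1,\xi_{\textup{std}})$ just established. For the first assertion, I would invoke this decomposition directly: under it, the Legendrian $\Lambda_{\textup{stop}}=S^1\times\{\textup{pt}\}$ is identified with the zero-section of one of the $J^1 S^1$ pieces. The stop $\sigma_{\Lambda_{\textup{stop}}}$ is, by construction, a Liouville thickening of $\Lambda_{\textup{stop}}$ modelled on a cotangent disk bundle, and the stop-removal procedure produces a Weinstein sector $W_\sigma$ whose boundary at infinity is $\partial_\infty(\mathbb{C}\times\mathbb{C}^*)$ with a tubular neighbourhood of $\Lambda_{\textup{stop}}$ excised. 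Since $J^1 S^1 = T^*S^1\times\mathbb{R}_z$ is the contactisation of $T^*S^1$, this excision transported through the $J^1S^1$-piece is precisely a convex open neighbourhood $V$ of the zero-section (convex in the fibre coordinates $(p,z)$), giving the first claim.

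For the second assertion, I would work in coordinates $(q,p,z)$ on $J^1 S^1$ in which the standard contact form is $\alpha_0=dz-p\,dq$ and $R_{\alpha_0}=\partial_z$. The contact form $\alpha$ inherited from $S^1\times S^2$ on $V$ has the form $f\,\alpha_0$ for some positive function $f$. Using a smooth cutoff in $|p|$ and $|z|$, I would replace $f$ by $1$ on a smaller neighbourhood $U=\{|p|<\varepsilon,\,|z|<\delta\}\subset V$, interpolating smoothly to $f$ outside; this yields $R_\alpha=\partial_z$ on $U$. The no-return property is arranged by choosing the interpolation so that the $z$-component of $R_\alpha$ remains strictly positive in a thickened collar around $U$, forcing any Reeb orbit that leaves $U$ through its $z$-boundary to continue monotonically in $z$ and exit $V$ through the horizontal Liouville boundary of the sector before it could return. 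The main obstacle here is making this cutoff globally smooth and compatible with the contact condition on $V$; this is a standard but careful Moser-type patching.

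For the third assertion, equip the symplectisation $V\times\mathbb{R}_s$ with a cylindrical almost complex structure $J$ compatible with the cutoff $\alpha$, satisfying $J(\partial_s)=R_\alpha$ and preserving $\xi=\ker\alpha$, and adjusted on $U\times\mathbb{R}_s$ so that it respects the product structure $U = \{|p|<\varepsilon\}\times\{|z|<\delta\}$ and makes the function $z$ plurisubharmonic. For a $J$-holomorphic curve $u$ with boundary on $\Lambda\times\mathbb{R}_s$ and asymptotics at Reeb chords of $\Lambda\subset U$, the no-return property of the second part ensures that every such chord is entirely contained in $U$ and is a pure $z$-interval whose length equals the chord length. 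By the maximum principle applied to the subharmonic function $z\circ u$, the $z$-range of the curve is bounded by the $z$-extent of $\Lambda$ plus the maximum chord length; shrinking $\Lambda$ in the $z$-direction therefore confines the image of $u$ to $\{|z|<\delta\}$. Containment in the $(q,p)$-directions then follows from standard a priori gradient bounds in the symplectisation of $V$ once the $z$-range and energy are controlled, together with the convexity of $U$ in the fibre directions. The main subtlety is to verify plurisubharmonicity of $z$ throughout the region in which the cutoff is in effect; this reduces to keeping the cutoff monotone in $z$ and choosing $J$ diagonal with respect to the product decomposition of $U$.
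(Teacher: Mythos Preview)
Your first assertion matches the paper: both invoke the decomposition of $(S^1\times S^2,\xi_{\textup{std}})$ into two copies of a convex neighbourhood of the zero-section in $J^1S^1$, identify $\Lambda_{\textup{stop}}$ with the zero-section of one piece, and read off that the complement is the other piece.

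For the second assertion, the paper makes a specific choice rather than a generic interpolation: it takes a \emph{radial} cutoff $\beta=f(\rho)\alpha$ with $\rho=y^2+z^2$, and then computes $d\beta$ explicitly to show that the $z$-component of $R_\beta$ is strictly positive on all of $V$ (not just on a collar). Your description that flow lines ``exit $V$ through the horizontal Liouville boundary'' is not what happens: the cutoff makes $R_\beta$ vanish at $\partial V$, so nothing exits. The no-return property comes entirely from global monotonicity of $z$ along $R_\beta$, and the paper's direct computation is what secures this.

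For the third assertion your route genuinely diverges from the paper's, and it has a gap. The paper does not use any maximum principle or plurisubharmonicity; it uses an energy--monotonicity argument. The total Hofer energy of a curve with one positive puncture is bounded above by twice the length of the longest Reeb chord, which can be made smaller than any prescribed $C_0>0$ by shrinking $\Lambda$ in the $z$-direction. On the other hand, if the curve enters $V\smallsetminus\overline{U}$, the monotonicity lemma (applied in a slab $[s-1,s+1]\times V$, using translation invariance of the cylindrical $J$) gives a uniform lower bound $C_0$ on the energy depending only on $J$ and the distance from $\Lambda$ to $\partial U$. Choosing $\Lambda$ small enough yields a contradiction. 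Your maximum-principle approach has a bootstrap problem: you can make $z$ (pluri)subharmonic only where $R_\alpha=\partial_z$ and $J$ splits, i.e.\ inside $U$, but the bound is needed precisely in the cutoff region $V\smallsetminus U$ where this fails---which you yourself flag as the ``main subtlety'' without resolving it. The appeal to ``standard a priori gradient bounds'' for confinement in the $(q,p)$-directions is likewise not a real argument. The paper's monotonicity approach avoids both issues and works for any cylindrical $J$.
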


In particular, when computing the Chekanov--Eliashberg algebra of Legendrians in the boundary of $W_{\sigma}$, we can perform the computations of the invariant in $(J^1S^1,\xi_{\text{std}})$ (Theorem~\ref{cesector}).

Thus, removing a neighbourhood from the stop in the cyclic plumbing of cotangent bundles of spheres we obtain a Weinstein sector which we can think of as given by attaching handles along the cyclic link of standard unknots in $(J^1S^1,\xi_{\text{std}})$; see Figure~\ref{CEAn}. 

In Section 4 then, we use the result in \cite{MR4695507} which states that the inclusion induces an equivalence between the wrapped Fukaya category of a sector and the partially wrapped Fukaya catgory of the manifold with the corresponding stop. We then apply the surgery formula from \cite{MR2916289} and \cite{MR4634745} in both $(S^1\times S^2,\xi_{\text{std}})$ and $(J^1S^1,\xi_{\text{std}})$, and deduce the following from Theorem~\ref{introgeom}.
\begin{theorem}[Theorem~\ref{surgerycon}]\label{geometrythm}
The Chekanov--Eliashberg algebra $CE(\Lambda)$ of an attaching link $\Lambda\subset J^1S^1$ is quasi-isomorphic to the (non-unital) subalgebra $eCE(\iota(\Lambda)\cup\Lambda_{\textup{stop}})e\subset CE(\iota(\Lambda)\cup\Lambda_{\textup{stop}})$, where $e$ is the sum of the idempotents corresponding to the connected components of $\iota(\Lambda)$, where the latter Chekanov--Eliashberg algebra is calculated in $S^1\times S^2$. Here $\Lambda_{\textup{stop}}$ is the component corresponding to the stop in Theorem~\ref{introgeom} and $\iota:J^1S^1\rightarrow S^1\times S^2$ is the inclusion, as the complement of a neighbourhood of $\Lambda_{\textup{stop}}$.
\end{theorem}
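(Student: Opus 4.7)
The plan is to combine three ingredients: the no-escape statement of Theorem~\ref{introgeom}, the surgery formula of Bourgeois--Ekholm--Eliashberg \cite{MR2916289} and its Weinstein-sector extension by Ekholm--Lekili \cite{MR4634745}, and the sector-versus-stop equivalence of Ganatra--Pardon--Shende \cite{MR4695507}. Roughly, I would realise $CE(\Lambda)$ as the endomorphism algebra of the cocores in the wrapped Fukaya category of the handle-attached sector $W_\sigma\cup H_\Lambda$, then push this identification across to the Liouville manifold $\mathbb{C}\times\mathbb{C}^*$, where the same wrapped category is also obtained by attaching handles along $\iota(\Lambda)\cup\Lambda_{\textup{stop}}$ and restricting to the cocores of $\iota(\Lambda)$.

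To begin, I would use Theorem~\ref{introgeom} to shrink $\Lambda$ into the neighbourhood $U\subset V\simeq\partial_{\infty}W_\sigma$ of the zero section where the Reeb dynamics is the translation $\partial_z$ and no trajectory returns. The no-escape conclusion then guarantees that every punctured pseudoholomorphic curve in the symplectisation of $V$ with asymptotics on $\Lambda$ lies entirely in the symplectisation of $U$, which embeds isometrically as a neighbourhood of the zero section in $(J^1S^1,\xi_{\textup{std}})$. Consequently, the Chekanov--Eliashberg algebra of $\Lambda$ computed in the cylindrical end of $W_\sigma$ coincides on the nose with $CE(\Lambda)$ computed intrinsically in $J^1S^1$. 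Applying the Ekholm--Lekili surgery formula to $W_\sigma\cup H_\Lambda$ now yields a quasi-isomorphism
\[
CE(\Lambda)\;\simeq\;\textup{End}_{\mathcal{W}(W_\sigma\cup H_\Lambda)}\Bigl(\bigoplus_i C_i\Bigr),
\]
where $C_i$ are the cocore disks of the attached handles, which moreover generate $\mathcal{W}(W_\sigma\cup H_\Lambda)$.

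Next, I would transfer this to the ambient Liouville manifold. Since by construction $W_\sigma=\mathbb{C}\times\mathbb{C}^*\smallsetminus\sigma_{\Lambda_{\textup{stop}}}$, the Ganatra--Pardon--Shende equivalence \cite{MR4695507} identifies the sector wrapped category with the partially wrapped category of the ambient manifold stopped at $\sigma_{\Lambda_{\textup{stop}}}$; this equivalence is compatible with critical handle attachment, giving
\[
\mathcal{W}(W_\sigma\cup H_\Lambda)\;\simeq\;\mathcal{W}\bigl(\mathbb{C}\times\mathbb{C}^*\cup H_{\iota(\Lambda)},\;\sigma_{\Lambda_{\textup{stop}}}\bigr).
\]
In parallel, applying the original Bourgeois--Ekholm--Eliashberg surgery formula to $\mathbb{C}\times\mathbb{C}^*$ (whose boundary at infinity is $(S^1\times S^2,\xi_{\textup{std}})$) with the full attaching link $\iota(\Lambda)\cup\Lambda_{\textup{stop}}$ identifies the fully wrapped category $\mathcal{W}(\mathbb{C}\times\mathbb{C}^*\cup H_{\iota(\Lambda)\cup\Lambda_{\textup{stop}}})$ with the category of perfect modules over $CE(\iota(\Lambda)\cup\Lambda_{\textup{stop}})$, generated by the cocores of all the attached handles.

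The last step is to identify the stopped partially wrapped category $\mathcal{W}(\mathbb{C}\times\mathbb{C}^*\cup H_{\iota(\Lambda)},\sigma_{\Lambda_{\textup{stop}}})$ with the full $A_\infty$-subcategory of $\mathcal{W}(\mathbb{C}\times\mathbb{C}^*\cup H_{\iota(\Lambda)\cup\Lambda_{\textup{stop}}})$ generated by the cocores of the $\iota(\Lambda)$-handles alone. This is a stop--handle correspondence: the linking disk of the stop $\sigma_{\Lambda_{\textup{stop}}}$ is precisely the cocore of the handle one would attach along $\Lambda_{\textup{stop}}$, and the inclusion of the subcategory generated by the remaining cocores is fully faithful. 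Taking endomorphisms of $\bigoplus_i C_i$ in this subcategory returns exactly the non-unital idempotent truncation $eCE(\iota(\Lambda)\cup\Lambda_{\textup{stop}})e$, and concatenating the quasi-isomorphisms of the preceding two steps yields the theorem. The main obstacle will be this final identification: one has to match the $A_\infty$-operations induced by holomorphic curves near $\Lambda_{\textup{stop}}$ in the stopped setup with those produced by the extra Reeb chords on $\Lambda_{\textup{stop}}$ after handle attachment. Thanks to the explicit local Reeb model of Theorem~\ref{introgeom}, the relevant curves are confined to a standard neighbourhood where this comparison can be carried out directly.
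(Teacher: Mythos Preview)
Your first two steps --- using Theorem~\ref{introgeom} to identify the sector CE with the $J^1S^1$ computation, applying surgery to get $CE(\Lambda)\simeq CW_{W_\sigma\cup H_\Lambda}(C,C)$, and then invoking the Ganatra--Pardon--Shende sector/stop equivalence to reach $CW_{(X,\sigma_{\Lambda_{\text{stop}}})}(C,C)$ --- match the paper's argument exactly. The divergence is in your final step.

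The paper does not detour through the fully wrapped category $\mathcal{W}(\mathbb{C}\times\mathbb{C}^*\cup H_{\iota(\Lambda)\cup\Lambda_{\text{stop}}})$ obtained by attaching a \emph{handle} at $\Lambda_{\text{stop}}$. Instead it applies the Ekholm--Lekili surgery formula (Theorem~\ref{surgeryloop}) a second time, now in the setting where $\Lambda_{\text{stop}}$ carries a stop rather than a handle. That theorem directly computes $\bigoplus CW_{(X,\sigma)}(L_i^\pm,L_j^\pm)\simeq CE(\iota(\Lambda)\cup\Lambda_{\text{stop}})$, where the right-hand side is computed with loop space coefficients on $\Lambda_{\text{stop}}$; restricting to the cocores of $\iota(\Lambda)$ gives $eCE(\iota(\Lambda)\cup\Lambda_{\text{stop}})e$. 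This is a single citation, not a separate comparison argument.

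Your proposed step~5, by contrast, asserts that the stopped category embeds fully faithfully into the fully wrapped category after handle attachment at $\Lambda_{\text{stop}}$. This is not correct as stated: attaching a handle at $\Lambda_{\text{stop}}$ corresponds (via stop removal) to a \emph{localisation}, not to an enlargement containing the stopped category as a full subcategory. Concretely, via the two surgery formulas, the endomorphisms of the $\iota(\Lambda)$-cocores in the stopped setting are governed by $CE$ with loop space coefficients $t_0^{\pm1}$ on $\Lambda_{\text{stop}}$, while after handle attachment one sees the plain $CE$; words in $e(-)e$ that pass through the stop component (e.g.\ via mixed handle chords) differ between these two. So the ``stop--handle correspondence'' you invoke does not give the fully faithful inclusion you need, and the obstacle you flag at the end is not a technical wrinkle but a genuine mismatch. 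The fix is simply to replace your steps~4--5 by a direct appeal to Theorem~\ref{surgeryloop}, exactly as the paper does.
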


In $S^1\times S^2$ we use the surgery formula together with Drinfeld's dg-quotient \cite{MR2028075} (see Proposition~\ref{quotgen}) to deduce the following theorem, which after surgery means that the cocores of the critical handles split-generate the category generated by attaching stops and handles, that is, that the linking disc of the stop is contained in the split-closure of the category generated by the cocores:
\begin{theorem}[Theorem~\ref{alggen}]
With notation as in Theorem~\ref{geometrythm}, the dg-category of finitely generated dg-modules over $CE(\iota(\Lambda)\cup\Lambda_{\textup{stop}})$ is derived equivalent to the dg-category of finitely generated dg-modules over $CE(\Lambda)$.
\end{theorem}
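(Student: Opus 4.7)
Setting $A := CE(\iota(\Lambda)\cup\Lambda_{\textup{stop}})$ and writing $e\in A$ for the sum of idempotents corresponding to the components of $\iota(\Lambda)$, with complementary idempotent $f=1-e$ attached to $\Lambda_{\textup{stop}}$, Theorem~\ref{geometrythm} provides a quasi-isomorphism $CE(\Lambda)\simeq eAe$. The plan is therefore to reduce the asserted derived equivalence to a split-generation statement at the level of perfect modules, and then to invoke Proposition~\ref{quotgen}.

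First, I would recall the standard Morita-type reduction for dg-algebras with idempotents: the functor $(-)\otimes_A^{\mathbb{L}}Ae\colon \mathrm{Perf}(A)\to \mathrm{Perf}(eAe)$ is essentially surjective and restricts to a quasi-equivalence from the thick subcategory of $\mathrm{Perf}(A)$ split-generated by $Ae$ onto all of $\mathrm{Perf}(eAe)$. Consequently, to upgrade this to a derived equivalence between $\mathrm{Perf}(A)$ and $\mathrm{Perf}(eAe)$, it suffices to show that $Ae$ split-generates $\mathrm{Perf}(A)$, equivalently that the summand $Af$ corresponding to $\Lambda_{\textup{stop}}$ lies in the split-closure $\langle Ae\rangle \subseteq \mathrm{Perf}(A)$.

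Second, I would invoke the algebraic content of Proposition~\ref{quotgen}: using Drinfeld's dg-quotient one realises $\mathrm{Perf}(eAe)$ as the quotient $\mathrm{Perf}(A)/\langle Af\rangle$, and in the present setting this quotient vanishes --- the cocores of the critical handles indexed by $\iota(\Lambda)$ split-generate the whole (partially wrapped) category, so that the linking disc of the stop $\Lambda_{\textup{stop}}$ is already absorbed, as made explicit in the paragraph preceding the theorem statement. Combining the two ingredients then produces the chain of derived equivalences $\mathrm{Perf}(A)\simeq \mathrm{Perf}(eAe)\simeq \mathrm{Perf}(CE(\Lambda))$.

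The hard part will be Proposition~\ref{quotgen} itself: a priori the idempotent $f$ could contribute an independent direct summand to $\mathrm{Perf}(A)$ not split-generated by $Ae$. The fact that it does not relies on the specific cyclic $\tilde{A}_n$-configuration of attaching unknots threading the stop Legendrian, which produces enough Reeb chords --- and hence enough relations in the Chekanov--Eliashberg algebra --- to express $Af$ as an iterated cone of summands of $Ae$. This is precisely what the Drinfeld dg-quotient computation in Proposition~\ref{quotgen} delivers, and is the geometrically non-trivial ingredient that makes the upgrade from idempotent subalgebra to a full derived equivalence possible.
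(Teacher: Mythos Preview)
Your overall framework --- reduce to showing that $Ae$ split-generates $\mathrm{Perf}(A)$ and then invoke Proposition~\ref{quotgen} --- matches the paper's approach, and your first paragraph is fine. The second paragraph, however, contains a genuine confusion that leaves a gap.

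You assert that $\mathrm{Perf}(eAe)$ is realised as $\mathrm{Perf}(A)/\langle Af\rangle$ and that ``this quotient vanishes''. These two claims are contradictory: that quotient is supposed to be $\mathrm{Perf}(CE(\Lambda))$, the nonzero target of the equivalence you are trying to establish. Worse, your justification for the vanishing is that ``the cocores of the critical handles indexed by $\iota(\Lambda)$ split-generate the whole (partially wrapped) category'' --- but that split-generation is exactly the content of Theorem~\ref{alggen}, so the argument is circular. You have also misread the scope: the theorem is stated and proved for an \emph{arbitrary} link $\Lambda$ passing through the $1$-handle, not only the cyclic $\tilde{A}_n$ configuration, so the Reeb-chord analysis you anticipate in your third paragraph is not what is needed.

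The paper applies Proposition~\ref{quotgen} by quotienting at the \emph{opposite} set of idempotents, namely $e_1,\ldots,e_m$ (those corresponding to the components of $\iota(\Lambda)$, not the stop). By Proposition~\ref{dgquotalg} this Drinfeld quotient is quasi-isomorphic to $CE(\Lambda_{\textup{stop}})$, the Chekanov--Eliashberg algebra of the stop component alone. That algebra has trivial homology because the handle-chord differential gives $\partial c^1_{1,1}=e_0$, so the unit is a boundary. Proposition~\ref{quotgen} then yields split-generation by the $e_iA$, and the derived equivalence follows via $CE(\Lambda)\simeq eAe$ from Theorem~\ref{geometrythm}. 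The geometric input is therefore not a delicate count specific to any particular $\Lambda$, but the single observation that a Legendrian circle traversing the $1$-handle once has acyclic Chekanov--Eliashberg algebra.
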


Using the generation result from \cite{MR4732675} or \cite{MR4695507} together with the surgery formula \cite{MR2916289} and \cite{MR4634745} we thus obtain:
\begin{theorem}[Theorem~\ref{cocoregen}]\label{cocoregenintro}
Let $W$ be the Weinstein manifold obtained by attaching handles to $\iota(\Lambda)\subset S^1\times S^2$. Then there is a triangulated equivalence
\[
D^{\pi}\mathcal{W}(W,\sigma)\simeq D^{\pi}(CE(\Lambda)),
\]
where $\sigma$ is the stop defined by $\Lambda_{\textup{stop}}$.
\end{theorem}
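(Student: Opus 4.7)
The plan is to combine the sector picture of Theorem~\ref{introgeom} with the now-standard ``generation plus surgery'' package for wrapped Fukaya categories of Weinstein sectors, so that both sides of the desired equivalence are realised as split-closures of the same endomorphism dg-algebra.

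First I would translate the partially wrapped category into the wrapped category of a sector. By the main result of \cite{MR4695507}, the inclusion induces an equivalence $\mathcal{W}(W,\sigma)\simeq \mathcal{W}(W_\sigma)$, where $W_\sigma$ is the Weinstein sector obtained from $W$ by removing a standard neighbourhood of the stop $\sigma$. By Theorem~\ref{introgeom}, the boundary at infinity $\partial_{\infty}W_\sigma$ is a convex open neighbourhood of the zero section in $(J^1S^1,\xi_{\textup{std}})$, and the attaching link $\Lambda$ used to build $W$ from $W_\sigma$ sits inside this neighbourhood.

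Second, I would invoke the cocore-generation theorem of \cite{MR4732675} (or \cite{MR4695507}): the Lagrangian cocores of the critical handles of $W_\sigma$ split-generate $\mathcal{W}(W_\sigma)$. There is one such cocore for each connected component of $\Lambda$, and together they form a full dg-subcategory $\mathcal{A}\subset \mathcal{W}(W_\sigma)$ with $D^{\pi}\mathcal{A}\simeq D^{\pi}\mathcal{W}(W_\sigma)$. Third, I would apply the surgery formula of \cite{MR2916289,MR4634745}: the endomorphism $A_\infty$-algebra of these cocores is quasi-isomorphic to $CE(\Lambda)$. This is where the second and third parts of Theorem~\ref{introgeom} do crucial work, providing a cutoff Reeb vector field with no returning flow lines and a neighbourhood to which holomorphic curves with chord asymptotics on $\Lambda$ are confined; this allows the Floer-theoretic input to the surgery formula to be computed entirely inside the symplectisation of a neighbourhood in $J^1S^1$, matching $\mathcal{A}\simeq CE(\Lambda)$ on the nose. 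Passing to split-closures then yields $D^{\pi}\mathcal{W}(W,\sigma)\simeq D^{\pi}(CE(\Lambda))$.

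The main obstacle is that the surgery formula and the cocore-generation result are traditionally formulated inside a complete Weinstein manifold, not in a sector whose ideal contact boundary is only an open subset of $J^1S^1$. This is precisely the gap that Theorem~\ref{introgeom} was designed to fill, so the real content of the proof is checking that its conclusions match the hypotheses under which \cite{MR2916289,MR4634745} and \cite{MR4732675,MR4695507} are stated. A secondary, more cosmetic issue is reconciling the $A_\infty$-structure produced by the surgery formula with the dg-model of $CE(\Lambda)$ used elsewhere in the paper, but this is handled by the cited surgery references and is consistent with the presentation of $\Lambda$ as a cyclic link of standard unknots developed in Section~\ref{geometry}.
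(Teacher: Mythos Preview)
Your proposal has a genuine gap in the second step. You claim that the cocore-generation theorem of \cite{MR4732675} or \cite{MR4695507} gives that the cocores of the critical handles of $W_\sigma$ split-generate $\mathcal{W}(W_\sigma)$. But $W_\sigma$ is a Weinstein \emph{sector} with nonempty boundary, and for such sectors the standard generation result (Theorem~\ref{generation} in the paper) produces the cocores \emph{together with} the linking disc of the stop. Equivalently, in the partially wrapped picture $\mathcal{W}(W,\sigma)$, one gets the cocores and the linking disc $D_0$ of $\Lambda_{\textup{stop}}$. The linking disc is not obviously redundant: the subcritical sector $(\mathbb{C}\times\mathbb{C}^*)\smallsetminus\sigma$ to which you attach the handles already has nontrivial wrapped Fukaya category, and that contribution survives after handle attachment. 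So the step ``cocores split-generate'' does not follow directly from the cited references.

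This is exactly what the paper's Theorem~\ref{alggen} is for. The paper's proof of Theorem~\ref{cocoregen} runs as follows: use Theorem~\ref{generation} and Theorem~\ref{surgeryloop} to identify $D^{\pi}\mathcal{W}(W,\sigma)$ with $D^{\pi}(CE(\iota(\Lambda)\cup\Lambda_{\textup{stop}}))$; then use Theorem~\ref{alggen} (the Drinfeld-quotient computation showing that the quotient of $CE(\iota(\Lambda)\cup\Lambda_{\textup{stop}})$ at the cocore idempotents is acyclic) to conclude that the cocores split-generate; finally use Theorem~\ref{surgerycon} to identify the endomorphism algebra of the cocores with $CE(\Lambda)$ computed in $J^1S^1$. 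Your outline captures the first and last of these ingredients, but it skips the algebraic generation argument (Theorem~\ref{alggen}), which is precisely the content needed to pass from ``cocores plus linking disc generate'' to ``cocores alone split-generate''.
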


Specialising now to the realisation of $\Pi(\tilde{A}_{n})$, we have the following simple calculation: 
\begin{proposition}[Proposition~\ref{ginzcalc}]\label{ginzcalcintro}
Let $\Lambda_{\textup{cyc}}\subset J^1S^1$ be the attaching link of the critical handles, as shown in Figure~\ref{CEAn}. Then $CE(\Lambda_{\textup{cyc}})\cong\mathcal{G}(\tilde{A}_{n})$, i.e., with correct choices (of e.g. Maslov potential) the Chekanov--Eliashberg algebra of $\Lambda_{\textup{cyc}}$ is isomorphic as dg-algebras to the Ginzburg algebra of the extended $A_{n}$-quiver.
\end{proposition}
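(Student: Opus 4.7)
The plan is a direct combinatorial computation of $CE(\Lambda_{\text{cyc}})$ followed by comparison with the explicit presentation of $\mathcal{G}(\tilde{A}_n)$. First I would enumerate the Reeb chords. The link $\Lambda_{\text{cyc}}$ has $n+1$ components $\Lambda_0,\dots,\Lambda_n$, each a standard Legendrian unknot, arranged cyclically in $J^1 S^1$. Each $\Lambda_i$ contributes a single self-chord $c_i$ (its Morse chord), and each consecutive pair $(\Lambda_i,\Lambda_{i+1})$ meets in two front crossings, producing mixed chords $a_i$ (from $i$ to $i+1$) and $a_i^*$ (from $i+1$ to $i$). By Theorem~\ref{introgeom}, the holomorphic disks relevant to the differential can be taken to lie in the convex neighbourhood $U$ in which no Reeb flow line returns; this precludes chords winding around $S^1$ and keeps the chord set local and finite.

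Second, I would fix a Maslov potential on each component so that $|a_i|=0$, $|a_i^*|=-1$, and $|c_i|=-2$, matching the degrees of the generators in the $3$-Calabi--Yau Ginzburg algebra of $\tilde{A}_n$. The cyclic closure of the potential is an integrality condition verified from the total rotation number of the chain. Third, I would identify the differential by enumerating rigid pseudoholomorphic disks. For degree reasons the chords $a_i$ and $a_i^*$ are cycles. For each self-chord $c_i$ there are precisely two rigid bigons visible in the front diagram: one traversing the pair of crossings with $\Lambda_{i-1}$, contributing $a_{i-1}a_{i-1}^*$, and one traversing the pair of crossings with $\Lambda_{i+1}$, contributing $-a_i^*a_i$. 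This gives
\[
d c_i = a_{i-1}a_{i-1}^* - a_i^*a_i,
\]
which is exactly $d t_i = e_i\bigl(\sum_a [a,a^*]\bigr)e_i$ in $\mathcal{G}(\tilde{A}_n)$. Mapping $a_i$ and $a_i^*$ to the corresponding arrow of $\tilde{A}_n$ and its dual, $c_i$ to the loop $t_i$, and chord-endpoint idempotents to the vertex idempotents then yields a bijection of generators that extends uniquely to an isomorphism of dg-algebras over the semisimple base $\prod_{i=0}^n k$.

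The principal obstacle is the joint sign-and-grading bookkeeping around the cycle. The Maslov potentials must close up consistently after $n+1$ identifications, and the signs attached to the two bigons that contribute to each $dc_i$ must be opposite, so that the commutator $[a,a^*]$ appears with the correct sign at every vertex. The disk enumeration itself is standard once the local model at each crossing pair is fixed, reducing to well-understood bigon counts at generic double points in $J^1$-type contact manifolds; the substantive work is therefore the global compatibility of the orientation and potential data around the cyclic chain.
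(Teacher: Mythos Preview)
Your overall strategy---enumerate chords, fix Maslov potentials, count disks, match generators---is exactly the paper's approach, but your execution has a genuine error in the grading that makes the claimed isomorphism false as stated.

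The self-chord $c_i$ corresponds to the right cusp of the unknot $\Lambda_i$, and the grading of a right-cusp chord is always $1$; it is \emph{invariant} under the choice of Maslov potential, because Maslov potential shifts only affect mixed chords between distinct components. You therefore cannot arrange $|c_i|=-2$. The paper computes $|\zeta_i|=1$, $|\alpha_i|=|\alpha_i^*|=0$, and this matches the paper's Definition~\ref{defginzb} of $\mathcal{G}(\tilde{A}_n)$, which is the $2$-Calabi--Yau convention (explicitly stated after that definition). Your target, the $3$-Calabi--Yau Ginzburg algebra with $|a_i^*|=-1$ and $|t_i|=-2$, is a different dg-algebra, and $CE(\Lambda_{\text{cyc}})$ is not isomorphic to it. Relatedly, your ``for degree reasons $a_i^*$ is a cycle'' argument rests on the incorrect grading; in the actual grading both $\alpha_i$ and $\alpha_i^*$ sit in degree $0$, and their vanishing differential is seen by the absence of disks with a positive puncture there (or, equivalently, by the absence of any negative-degree generators).

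A smaller point: the paper finds \emph{four} rigid disks at each $\zeta_i$, not two. Two of them are the bigons you describe, contributing $\alpha_i^*\alpha_i$ and $-\alpha_{i-1}\alpha_{i-1}^*$; the other two contribute idempotent terms $e_i$ and $-e_i$ which cancel. Omitting these does not change the final answer, but your disk enumeration is incomplete as written. Finally, your invocation of Theorem~\ref{introgeom} to rule out winding chords is unnecessary: in $J^1S^1$ with the standard contact form the Reeb vector field is $\partial_z$, so Reeb chords are vertical segments in the front and there is no winding to begin with.
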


From Theorem~\ref{cocoregenintro} and Proposition~\ref{ginzcalcintro} we conclude that:
\begin{theorem}[Theorem~\ref{mainresult}]\label{final}
The partially wrapped Fukaya category of cyclic plumbing of $n+1$ cotangent bundles $T^*S^2$, or equivalently of the Milnor fibre of the $A_n$-singularity with a fibre removed $X_{\textup{Mil}}\smallsetminus D_m$, with the stop $\sigma=\sigma_{\Lambda_{\textup{stop}}}$, is derived equivalent to the category of finitely generated $\Pi(\tilde{A}_n)$-modules:
\[
D^{\pi}\mathcal{W}(X_{\textup{Mil}}\smallsetminus D_m,\sigma)\simeq D^b(\Pi(\tilde{A}_n)).
\]
\end{theorem}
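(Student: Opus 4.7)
The proof is essentially the assembly of earlier results into a chain of equivalences, with a small additional argument at the end to reconcile the split-closure of wrapped modules with the bounded derived category of finitely generated modules over $\Pi(\tilde{A}_n)$. The plan is to line up the four identifications already established, then pass from the dg perspective to the ordinary abelian category level.

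First, by Proposition~\ref{geometriceq}, the Weinstein domain $X_{\textup{Mil}}\smallsetminus D_m$ is Weinstein-equivalent to the cyclic plumbing $W$ of $n+1$ copies of $T^*S^2$, and by the geometric analysis culminating in Theorem~\ref{introgeom} and the discussion preceding Theorem~\ref{geometrythm}, this plumbing may be presented as the handle attachment along the cyclic Legendrian link $\Lambda_{\textup{cyc}}\subset J^1S^1$, viewed inside $S^1\times S^2$ via $\iota$ with the stop $\sigma=\sigma_{\Lambda_{\textup{stop}}}$ corresponding to the missing divisor. Applying Theorem~\ref{cocoregenintro} to $W$ with this stop yields
\[
D^{\pi}\mathcal{W}(X_{\textup{Mil}}\smallsetminus D_m,\sigma)\simeq D^{\pi}\bigl(CE(\Lambda_{\textup{cyc}})\bigr).
\]
Next I would invoke Proposition~\ref{ginzcalcintro}, giving an isomorphism $CE(\Lambda_{\textup{cyc}})\cong \mathcal{G}(\tilde{A}_n)$ of dg-algebras, and hence
\[
D^{\pi}\bigl(CE(\Lambda_{\textup{cyc}})\bigr)\simeq D^{\pi}\bigl(\mathcal{G}(\tilde{A}_n)\bigr).
\]

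Formality is then applied via Theorem~\ref{formall}: since $\mathcal{G}(\tilde{A}_n)$ is quasi-isomorphic to its zeroth homology $\Pi(\tilde{A}_n)$, concentrated in degree $0$, the perfect derived category is preserved under the quasi-isomorphism, so
\[
D^{\pi}\bigl(\mathcal{G}(\tilde{A}_n)\bigr)\simeq D^{\pi}\bigl(\Pi(\tilde{A}_n)\bigr),
\]
where on the right we regard $\Pi(\tilde{A}_n)$ as a dg-algebra concentrated in degree $0$. It remains to identify $D^{\pi}(\Pi(\tilde{A}_n))$ with $D^b(\Pi(\tilde{A}_n))$. For this I would argue as follows: the preprojective algebra $\Pi(\tilde{A}_n)$ is a noetherian algebra of finite global dimension (in fact of global dimension $2$ for the extended Dynkin case, corresponding to the $2$-Calabi--Yau property arising from the underlying surface), so every finitely generated module admits a bounded finite projective resolution. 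Therefore the perfect derived category (which equals the split-closure of the category generated by the free module $\Pi(\tilde{A}_n)$, or equivalently by its indecomposable summands given by the idempotents $e_i$) coincides with $D^b(\Pi(\tilde{A}_n)\text{-mod}_{\textup{fg}})$, and the chain of equivalences assembles into the claimed triangulated equivalence.

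The main obstacle in the argument is not really in any single step — all the hard geometry, the surgery formula, and the formality statement have been isolated in previous results — but rather in making sure that the identifications used for $D^{\pi}$ versus $D^b$ are compatible and that the idempotents produced by the cocores on the symplectic side correspond exactly to the vertex idempotents $e_0,\dots,e_n$ of the cyclically ordered quiver on the algebraic side. I would therefore take care, after the isomorphism $CE(\Lambda_{\textup{cyc}})\cong\mathcal{G}(\tilde{A}_n)$, to track the idempotents corresponding to connected components of $\Lambda_{\textup{cyc}}$ through the generation statement of Theorem~\ref{cocoregenintro}, so that the split-generators of $D^{\pi}\mathcal{W}(X_{\textup{Mil}}\smallsetminus D_m,\sigma)$ become exactly the indecomposable projectives $e_i\Pi(\tilde{A}_n)$; once this bookkeeping is done, the coincidence of $D^{\pi}$ with $D^b$ is immediate from the finite global dimension of $\Pi(\tilde{A}_n)$.
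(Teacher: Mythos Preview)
Your proposal is correct and follows essentially the same approach as the paper: chain together Theorem~\ref{cocoregenintro}, Proposition~\ref{ginzcalcintro}, and Theorem~\ref{formall}, then identify $D^{\pi}(\Pi(\tilde{A}_n))$ with $D^b(\Pi(\tilde{A}_n))$. The paper handles the last step by citing a standard reference rather than invoking finite global dimension directly, but the content is the same; your final paragraph about tracking idempotents is harmless but unnecessary, since the equivalence $D^{\pi}\simeq D^b$ is a statement about the algebra and does not depend on matching specific generators.
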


We continue with some remarks. First, Theorem~\ref{formall} can be obtained from the theorem in \cite{MR3503979} by changing the orientation of one of the arrows in the $\tilde{A}_n$-quiver and providing an explicit isomorphism, but is here given an independent direct proof. Second, Theorem~\ref{final} depends on one fact which to the knowledge of the author has no complete proof existing in the literature: There are different definitions of the partially wrapped Fukaya category used in \cite{MR4634745} and \cite{MR4563001} respectively in \cite{MR4695507}, \cite{MR4106794}, which are expected to be equivalent. We need the generation result for Weinstein sectors that is proven in the second version of this category.

In Section~\ref{mirrorsec} we explain the three (families of) homological mirror symmetry statements of the article. We start (Section~\ref{otherdir}) with a sketch of the log Calabi--Yau homological mirror symmetry between the resolution of $A_n$-singularities $X_{\text{res}}$ with a divisor $D_r$ removed and with the $A_n$-Milnor fibre $X_{\text{Mil}}$ with a fibre $D_m$ removed, where the Milnor fibre is obtained by a particular hyper-Kähler rotation (See Figure~\ref{hmslogcyfigure} for a summary). Here, however, one needs to be aware that neither the resolution nor the Milnor fibre are unique when considered respectively as a symplectic manifold and as an algebraic variety (i.e. when swapping sides in the mirror conjecture). Their structures depend on an underlying choice of hyper-Kähler structure, which is explained in Example~\ref{HKquotient} and Example~\ref{loghkA}. 

In particular, the manifolds $X_{\text{res}}\smallsetminus D_r$ considered as symplectic manifolds are not Weinstein, and they depend on area parameters on the Lagrangian skeleton, where different parameters produce non-equivalent symplectic manifolds, having (supposedly) different (non-exact) wrapped Fukaya categories (once a correct definition is given). 

Likewise, the Milnor fibres $X_{\text{Mil}}$, when considered as algebraic varieties, can be described as hypersurfaces given as the vanishing locus of $P(Z)-XY$. Here $P(Z)$ is a polynomial in one variable of degree $n+1$, without repeated roots. 

Considering the resolution of singularities as a $B$-side and the Milnor fibre (with the divisor/fibre removed) as an $A$-side, we prove the following:
\begin{proposition}[Proposition~\ref{bsidecycplumb}]\label{fullfibreremoved}
The derived category of coherent sheaves on $X_{\textup{res}}\smallsetminus D_r$ is given by finitely generated modules over the non-deformed multiplicative preprojective algebras of type $\tilde{A}_n$: 
\[
D^b(\textup{Coh}(X_{\textup{res}}\smallsetminus D_r))\simeq D^b(\Lambda^1(\tilde{A}_n))
\]
\end{proposition}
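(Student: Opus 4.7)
The plan is to carry the derived McKay correspondence on $X_{\textup{res}}$ over to the open complement $X_{\textup{res}}\smallsetminus D_r$ by restricting the McKay tilting bundle and tracking how its endomorphism algebra changes under the localization coming from removing $D_r$. Concretely, I would invoke the tilting bundle $T=\bigoplus_{i=0}^{n}\mathcal{R}_i$ on $X_{\textup{res}}$ coming from the derived McKay correspondence recalled in Section~\ref{algsec}, where the $\mathcal{R}_i$ are the tautological summands and $\textup{End}(T)\cong \Pi(\tilde{A}_n)$. Setting $T':=j^{*}T$ for the open immersion $j:X_{\textup{res}}\smallsetminus D_r\hookrightarrow X_{\textup{res}}$, the goal is to verify that $T'$ is still a tilting bundle and to compute $\textup{End}(T')\cong \Lambda^{1}(\tilde{A}_n)$; a standard tilting theorem for smooth quasi-projective varieties then yields
\[
D^{b}(\textup{Coh}(X_{\textup{res}}\smallsetminus D_r))\simeq D^{b}(\textup{End}(T')\textup{-mod})=D^{b}(\Lambda^{1}(\tilde{A}_n)).
\]

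The tilting check is the easier part: classical generation of $T'$ is inherited from $T$ via $j^{*}$, while $\textup{Ext}^{k}(T',T')=0$ for $k>0$ can be verified in the local toric charts around each exceptional $\mathbb{P}^{1}\subset X_{\textup{res}}$. Since $D_r$ is anti-canonical, the complement is log Calabi--Yau and cohomologically well-behaved for $\mathcal{H}om(T',T')$ in these charts.

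The main obstacle is to identify $\textup{End}(T')$ with $\Lambda^{1}(\tilde{A}_n)$. Here one should interpret restriction along $j$ as localization of $\textup{End}(T)=\Pi(\tilde{A}_n)$ at those global sections cutting out $D_r$; under the McKay dictionary these correspond to the cycle loops $1+a_ia_i^{*}$ built from the doubled arrows of $\tilde{A}_n$. Inverting them and rewriting the additive commutator relation in the localized algebra should produce exactly the Crawley--Boevey--Shaw multiplicative relations at $q=1$ defining $\Lambda^{1}(\tilde{A}_n)$ (recalled in Section~\ref{algsec}). I would carry this out by pairing the explicit toric presentation of $X_{\textup{res}}\smallsetminus D_r$ with this localized algebraic description, and in particular verifying that the non-deformed parameter $q=1$ (rather than a nontrivial deformation) is forced by $D_r$ being genuinely anti-canonical, so that no monodromy twist is introduced. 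The subtle point throughout is that additive and multiplicative preprojective algebras are \emph{not} literally isomorphic as presented quotients of free algebras, and the identification requires a careful change of variables together with the cyclic topology of $\tilde{A}_n$ to close the multiplicative relation.
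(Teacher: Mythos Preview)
Your approach is essentially the same as the paper's: both reduce to localizing $\Pi(\tilde{A}_n)$ at the central element cutting out $D_r$ and identifying the result with $\Lambda^{1}(\tilde{A}_n)$. The paper frames this as a Verdier quotient $D^b(\text{Coh}(X_{\text{res}}))/\mathcal{M}\simeq D^b(\Pi(\tilde{A}_n))/\mathcal{M}'$, where $\mathcal{M}'$ is generated by modules annihilated by the single central element $p=Z+1=\sum_i(e_i+\alpha_i^*\alpha_i)$, and then invokes the universal property of Verdier localization. This packaging sidesteps the need to re-verify that $j^*T$ is tilting on the open piece.

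Two points where your write-up is weaker than the paper's argument. First, your justification for $\text{Ext}^{>0}(T',T')=0$ via ``log Calabi--Yau and cohomologically well-behaved in toric charts'' is not an argument: $X_{\text{res}}\smallsetminus D_r$ is not affine (it contains the exceptional divisor), so higher cohomology of $j^*\mathcal{E}nd(T)$ does not vanish for formal reasons, and the log Calabi--Yau property by itself says nothing about this. The Verdier-quotient route avoids this entirely. Second, you propose to invert the individual loops $1+a_ia_i^*$ and then ``rewrite the additive commutator relation'' to obtain the multiplicative one; the paper instead inverts the single central element $p$ and appeals directly to the alternative presentation of $\Lambda^1(\tilde{A}_n)$ in \cite{MR4582531} (their Remark~4.7), which already records the equivalence of the Crawley--Boevey--Shaw presentation with $\Pi(\tilde{A}_n)[p^{-1}]$. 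So the ``careful change of variables'' you flag as the subtle step is in fact available off the shelf.
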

Since, by \cite{MR4033516} and \cite{MR4582531} (or by direct computation of the Chekanov--Eliashberg algebra of the Legendrian link in Figure~\ref{plumbingpic} using the combinatorial model from \cite{MR3356070}, combined with Remark~\ref{formalityremark})
\[
D^{\pi}\mathcal{W}(X_{\text{Mil}}\smallsetminus D_m)\simeq D^b(\Lambda^1(\tilde{A}_n))
\]
this concludes one half of the mirror symmetry.

Swapping the roles yields (conjecturally) categories quasi-equivalent to finitely generated modules over deformed multiplicative preprojective algebras, where the deformation parameters are determined by areas on the exceptional spheres, which in turn is determined by the underlying choice of hyper-Kähler structure. Alternatively, the parameters can be deduced from the roots of the polynomials defining the Milnor fibres. In this setting we prove the following:
\begin{proposition}[Proposition~\ref{multloc}]\label{fibredremovedtwo}
Let $X_{\textup{Mil}}$ be defined as the zero-set of the polynomial $P(Z)-XY$, where $P(Z)$ is a degree $n+1$ polynomial in one variable without repeated roots. Let $D_m\subset X_{\textup{Mil}}$ be defined by $Z=0$. We have a triangulated equivalence
\[
D^b(\textup{Coh}(X_{\textup{Mil}}\smallsetminus D_m))\simeq D^b(\Lambda^q(\tilde{A}_n)),
\]
for certain parameters $q=(q_1,\dots,q_{n+1})$, $q_i\in\mathbb{C}^{\times}$ such that $q_1\cdots q_{n+1}=1$. 

If we denote the roots of $P(Z)$ by $Q_1,\dots,Q_{n+1}$, the relation between the roots and the parameters is given by $Q_i=\prod_{ j=1}^iq_j$.
\end{proposition}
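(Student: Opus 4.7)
The plan is to reduce the claim to a derived Morita equivalence of algebras and then exhibit a tilting module on $X_{\textup{Mil}}\smallsetminus D_m$ whose endomorphism algebra is $\Lambda^q(\tilde{A}_n)$, paralleling the argument for Proposition~\ref{fullfibreremoved} while tracking how the deformation parameters $Q_i$ propagate.

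First I would carry out the reduction. Since each $Q_i = \prod_{j\leq i}q_j$ lies in $\mathbb{C}^{\times}$, all roots of $P$ are nonzero, so $P(0)\neq 0$ and $D_m\cap X_{\textup{Mil}} = \{XY = P(0)\}$ is a smooth Cartier divisor isomorphic to $\mathbb{C}^{\times}$. Its complement is affine with coordinate ring $R_q := \mathbb{C}[X,Y,Z^{\pm 1}]/(XY - P(Z))$, hence $D^b(\textup{Coh}(X_{\textup{Mil}}\smallsetminus D_m)) \simeq D^b(R_q\textup{-mod}_{\textup{fg}})$. It remains to exhibit a derived equivalence $D^b(R_q) \simeq D^b(\Lambda^q(\tilde{A}_n))$.

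Next, I would construct a tilting generator $T = \bigoplus_{i=0}^n T_i$ over $R_q$, with each $T_i$ a rank-one fractional ideal deforming the line bundles used in the non-deformed case. A natural choice takes $T_i$ to be generated by the partial product $\prod_{j=1}^i (Z - Q_j)$; the arrows $a_i : T_i\to T_{i+1}$ are multiplication by $(Z - Q_{i+1})$, while their partners $a_i^* : T_{i+1}\to T_i$ are realised using the identity $XY = \prod_j(Z - Q_j)$ together with the invertibility of $Z$ on $X_{\textup{Mil}}\smallsetminus D_m$ (exactly what is achieved by removing $D_m$). A direct computation of $\textup{End}_{R_q}(T)$ should then match the presentation of $\Lambda^q(\tilde{A}_n)$: at each vertex $i$, the Crawley-Boevey--Shaw multiplicative relation $(1 + a_i^*a_i)(1 + a_{i-1}a_{i-1}^*)^{-1} = q_i\, e_i$ forces $q_i = Q_i/Q_{i-1}$, which iterates to $Q_i = \prod_{j=1}^i q_j$, and the normalisation $Q_{n+1} = 1$ reproduces the constraint $\prod_j q_j = 1$. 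Standard tilting theory then delivers the derived equivalence.

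The main obstacle is the precise identification of the endomorphism algebra with $\Lambda^q(\tilde{A}_n)$. Verifying that $T$ is tilting — vanishing of higher $\textup{Ext}$'s between the $T_i$, plus generation of $D^b(R_q)$ — should follow from $R_q$ being smooth of Krull dimension $2$ together with a generation argument parallel to the non-deformed case. The delicate part is the bookkeeping of coefficient normalisations: different trivialisations of the $T_i$ rescale the $q_i$ by units, and one must recognise such gauge freedom as an inner automorphism of $\Lambda^q$ so that the isomorphism class of $\Lambda^q$ with parameters $q_i = Q_i/Q_{i-1}$ is intrinsically pinned down.
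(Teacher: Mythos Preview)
Your tilting approach is plausible but differs from the paper's, and the module you describe has a defect.

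The paper argues in two steps. In Proposition~\ref{parameters} it shows by direct manipulation of generators that the idempotent subalgebra $e_{n+1}\Lambda^q(\tilde{A}_n)\,e_{n+1}$ is isomorphic to $R_q=\mathbb{C}[X,Y,Z^{\pm1}]/(P(Z)-XY)$, identifying $X,Y,Z$ with the cyclic words in the $\alpha_i,\alpha_i^*$ and with $z$. In Proposition~\ref{multloc} it then replaces $\Lambda^q(\tilde{A}_n)$ by the derived version $\mathcal{G}_M^q(\tilde{A}_n)$ and takes the Drinfeld quotient at the extending vertex $e_{n+1}$; by Proposition~\ref{dgquotalg} this is $\mathcal{G}_M^{q'}(A_n)$ with $q'=(q_1,\dots,q_n)$. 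The distinct-roots hypothesis is exactly the condition $\prod_{i\le l\le j}q_l\ne 1$ under which $\Lambda^{q'}(A_n)=0$, so this quotient is acyclic, and Proposition~\ref{quotgen} gives that $e_{n+1}\Lambda^q$ split-generates $D^\pi(\Lambda^q)$. Hence $D^\pi(\Lambda^q)\simeq D^\pi(e_{n+1}\Lambda^q\, e_{n+1})=D^\pi(R_q)$, and regularity of $R_q$ upgrades $D^\pi$ to $D^b$.

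Your proposal runs in the opposite direction, building $\Lambda^q$ as an endomorphism ring over $R_q$ rather than recovering $R_q$ as an idempotent subalgebra of $\Lambda^q$. That is a legitimate alternative, but the ideal ``generated by $\prod_{j\le i}(Z-Q_j)$'' is principal in $R_q$, hence free of rank one; your $T$ would then be free of rank $n+1$ with $\mathrm{End}_{R_q}(T)\cong M_{n+1}(R_q)$, not $\Lambda^q$. The rank-one modules you actually need are the non-principal ideals such as $\bigl(X,\prod_{j\le i}(Z-Q_j)\bigr)$, which cut out partial unions of the lines $\{X=0,\,Z=Q_j\}$. With that correction the tilting argument can be carried through; its advantage is a more direct geometric picture of the generators, while the paper's Drinfeld-quotient route is shorter and makes the role of the distinct-roots hypothesis completely transparent as the vanishing criterion for $\Lambda^{q'}(A_n)$.
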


The relation between the parameters $q_i$ and the underlying hyper-Kähler structure (as explained in Example~\ref{HKquotient}) is as follows (explained in Remark~\ref{moreonfamily} together with Remark~\ref{substrmk}): there are parameters $\lambda_i$ proportional to $\zeta_i$ by a common factor satisfying the relation
\[
\lambda_i=q_1\cdots q_{i-1}(q_i-1).
\]
In particular, the roots satisfy $Q_i=1+\sum_{l=1}^i\lambda_l$.

We note in the sketch of proof of Conjecture~\ref{cycconj} that a deformation of the Chekanov--Eliashberg algebra $CE^q(\iota(\Lambda_{\text{cyc}}))$ of the Legendrian link of attaching spheres for the cyclic plumbing of $T^*S^2$ in Figure~\ref{plumbingpic}, obtained by computing the Chekanov--Eliashberg algebra with parameters $t_i$ corresponding to loop space coefficients and specialising $t_i\mapsto q_i\in k$, is quasi-isomorphic to the deformed multiplicative preprojective algebra $\Lambda^q(\tilde{A}_n)$ (see also \cite{MR4033516}, combined with \cite{MR4582531} or Remark~\ref{formalityremark}). We expect that the parameters $q_i$ in this deformation are determined by the symplectic areas on the exceptional spheres. More precisely, they are expected to be in the field of formal Laurent series $\mathbb{C}((s))$ and equal to $q_i=q_i(s)=e^{s\lambda_i}$, for a (log-)Novikov parameter $s$. Thus, it is also expected that by using generation results analogous to \cite{MR4732675} or \cite{MR4695507} and surgery results analogous to \cite{MR2916289} and \cite{MR4634745}, that this observation implies the following equivalence:
\[
D^{\pi}\mathcal{W}(X_{\text{res}}\smallsetminus D_r)\stackrel{\text{conjecture}}{\simeq}D^b(\Lambda^{q(s)}(\tilde{A}_n)).
\]

In the second half of the homological mirror symmetry equivalence, on the $B$-side, deforming the complex structure is equivalent to the parameters $\lambda_i$, that is, we replace it by $s\lambda_i$. Then the induced deformations on the parameters $q_i$ can be identified with $q_i(s)$, obtained on the $A$-side, as infinitesimal variations, that is, interpreted over the ring $\mathbb{C}[[s]]/(s^2)$. Thus, by slight abuse of notation below, as infinitesimal variations there is a conjectured equivalence,  
\[
D^{\pi}\mathcal{W}(X_{\text{res}}\smallsetminus D_r)\stackrel{\text{conjecture}}{\simeq}D^b(\Lambda^{q(s)}(\tilde{A}_n))\simeq D^b(\text{Coh}(X_{\text{Mil}}\smallsetminus D_m)),
\]
which should be read transitively when $X_{\text{res}}\smallsetminus D_r$ is obtained from $X_{\text{Mil}}\smallsetminus D_m$ (and vice versa) by hyper-Kähler rotation in the same hyper-Kähler manifold (otherwise the parameters $q$ might differ, and hence the derived categories of modules may not be equivalent).

Next, if we do not remove the divisor on the $A_n$-resolution of singularities, i.e. we put back the divisor $D_r$ on the B-side, then, in terms of homological mirror symmetry, this amounts to adding a stop in the $A$-side when computing the wrapped Fukaya category. The main part of the article proved this equivalence, with $X_{\text{res}}$ considered as a $B$-side. Swapping the roles of $A$- and $B$-sides, by computations of the symplectic cohomology on the resolutions in \cite{MR2720232}, the categories should now vanish. We confirm the other side of this statement by providing a Landau--Ginzburg model, i.e. a potential (instead of a stop), on the Milnor fibre (obtained by a certain count of holomorphic curves). We show that this potential has trivial singularity category (or product of singularity categories to be precise, i.e. trivial category of $D$-branes, in the sense of \cite{MR2101296}). 
\begin{proposition}[Section~\ref{sectrivialpot}]
For the potential $w_m:X_{\textup{Mil}}\smallsetminus D_m\rightarrow\mathbb{C}$, defined by counting Maslov index $2$ holomorphic curves with boundary on the Lagrangian skeleton of $X_{\textup{res}}\smallsetminus D_r$, we have
\[
DB(w_m)=\prod_{\lambda}D_{\textup{sg}}(w_m^{-1}(\lambda))\simeq0.
\]
\end{proposition}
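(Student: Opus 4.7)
The strategy is to identify $w_m$ explicitly via the disc count and then to verify that every fibre $w_m^{-1}(\lambda)$ is a smooth affine variety; the triviality of $D_{\textup{sg}}$ for smooth schemes will then give the vanishing of each factor in the product.

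First I would enumerate the relevant Maslov index $2$ holomorphic discs. Fix the log holomorphic volume form on $X_{\textup{res}}\smallsetminus D_r$ with simple poles along $D_r$, so that a Maslov index $2$ disc in $X_{\textup{res}}$ with boundary on the skeleton is equivalently one meeting $D_r$ exactly once. The skeleton of $X_{\textup{res}}\smallsetminus D_r$ is the cyclic plumbing of the Lagrangian $(-2)$-spheres $E_1,\dots,E_{n+1}$ together with the non-compact Lagrangian pieces coming from the toric directions, and the toric description of the minimal resolution of $\mathbb{C}^2/\mathbb{Z}_{n+1}$ reduces the enumeration of such discs to a Cho--Oh-type computation on a toric Fano resolution. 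Packaging the counts via the log Calabi--Yau mirror identification of Proposition~\ref{fullfibreremoved} and Proposition~\ref{fibredremovedtwo} yields an explicit regular function $w_m$ on $X_{\textup{Mil}}\smallsetminus D_m = \mathrm{Spec}\,\mathbb{C}[X,Y,Z^{\pm 1}]/(XY-P(Z))$.

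Second I would verify that $w_m$ has no critical points on $X_{\textup{Mil}}\smallsetminus D_m$. Concretely, I expect the critical locus of any natural extension of $w_m$ to $X_{\textup{Mil}}$ to be confined to the removed divisor $D_m = \{Z=0\}$; this is checked by differentiating $w_m$ along the tangent space $\{aY + bX - P'(Z)c = 0\}$ of the hypersurface $\{XY = P(Z)\}$ and showing that $dw_m$ is nowhere zero once $Z \neq 0$. Since $P$ has no repeated roots, the Jacobian ideal of the pair $(XY-P(Z),\, w_m-\lambda)$ should be the unit ideal on $\{Z\neq 0\}$ for every $\lambda$. Consequently $w_m$ is a submersion, every fibre $w_m^{-1}(\lambda)$ is a smooth affine scheme, so $D_{\textup{sg}}(w_m^{-1}(\lambda))\simeq 0$ for each $\lambda$, and the product vanishes.

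The main obstacle is the first step: carrying out the Maslov $2$ disc count with the correct bulk weights, orientation signs, and grading conventions in the non-exact Kähler manifold $X_{\textup{res}}$, and then matching the resulting generating function intrinsically with a regular function on the affine mirror $X_{\textup{Mil}}\smallsetminus D_m$. Once $w_m$ is in hand in closed form, the verification that its critical locus is contained in $D_m$ should be a short calculation on the defining hypersurface.
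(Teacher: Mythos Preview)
Your overall two-step strategy matches the paper's: identify $w_m$ explicitly, then observe that every fibre is smooth so each $D_{\textup{sg}}(w_m^{-1}(\lambda))$ vanishes. The difference lies entirely in how the first step is executed. The paper does not enumerate Maslov $2$ discs directly on $X_{\text{res}}$ for general $n$; instead it reduces to the $\tilde{A}_0$ case, where the potential is already computed in \cite[Section 5.3]{MR4489822} (giving $\tilde{w}_0=X+Y^2Z^{-1}$ in the compactification, and $w_0=X$ once the disc through $\mathbb{C}P^1_\infty$ is discarded). The general case is then obtained by observing that $X_{\text{Mil}}\smallsetminus D_m$ is the $(n+1)$-fold cover of the $\tilde{A}_0$ space, so the potential lifts to the cyclic word $\alpha_n\cdots\alpha_1\alpha_{n+1}$, identified with the coordinate function $w_m(X,Y,Z)=X$. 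Your proposed toric/Cho--Oh enumeration would in principle reach the same endpoint, but the covering argument bypasses precisely the obstacle you flag as the main difficulty.

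One minor inaccuracy: you describe the skeleton of $X_{\text{res}}\smallsetminus D_r$ as a cyclic plumbing of Lagrangian $(-2)$-spheres, but with respect to $\omega_I$ the exceptional spheres are symplectic, not Lagrangian (this is why the setting is non-exact). This does not affect the logic, since the paper ultimately works with the Borman--Sheridan class on the Legendrian attaching link rather than a skeleton in the usual sense. Also, once $w_m=X$ is in hand, your Jacobian computation is unnecessary: projection to a coordinate is visibly a submersion, so smoothness of all fibres is immediate.
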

Thus, the following are equivalences of triangulated categories:
\[
D^b(\text{Coh}(X_{\text{res}}))\simeq D^b(\Pi(\tilde{A}_n))\simeq D^{\pi}\mathcal{W}(X_{\text{Mil}}\smallsetminus D_m,\sigma), \]
\[
D^{\pi}\mathcal{W}(X_{\text{res}})\simeq 0\simeq DB(w_m).
\]
See Section~\ref{hmsres} and Figure~\ref{hmsresfigure}.

Finally, if we instead choose to not remove the fibre on the Milnor fibre, this space is now equivalent to the linear plumbing of $T^*S^2$, and the wrapped Fukaya category is quasi-equivalent to twisted complexes over the Ginzburg algebra of type $A_n$ \cite{MR3692968} (which is not formal \cite{MR3503979}):
\[
D^{\pi}\mathcal{W}(X_{\text{Mil}})\simeq D^{\pi}(\mathcal{G}(A_n)).
\]

Considering this space as a $B$-side, we have to remember that the complex variety is no longer unique (as above with the fibres removed), but there are many affine varieties, whose underlying symplectic structure is the exact Milnor fibre. We show that their derived categories of coherent sheaves are equivalent to the derived categories of finitely generated modules over deformations of the preprojective algebra of type $\tilde{A}_n$:
\begin{proposition}[Proposition~\ref{nonlocmil}]
Let $X_{\textup{Mil}}$ be defined as the zero-set of the polynomial $P(Z)-XY$, where $P(Z)$ is a degree $n+1$ polynomial in one variable without repeated roots. We have an equivalence 
\[
D^b(\textup{Coh}(X_{\textup{Mil}}))\simeq D^b(\Pi^{\lambda}(\tilde{A}_n)),
\]
for certain parameters $\lambda=(\lambda_1,\dots,\lambda_{n+1})$ such that $\lambda_1+\cdots+\lambda_{n+1}=0$.

If we denote the roots by $\Lambda_i$, then the relation between the roots and the parameters is given by $\Lambda_i=\sum_{j=1}^{i}\lambda_j$, for $i=1,\dots,n+1$.
\end{proposition}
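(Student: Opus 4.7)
The plan is to reduce the statement to a Morita equivalence between $\Pi^{\lambda}(\tilde{A}_n)$ and the coordinate ring $\mathcal{O}(X_{\textup{Mil}})$. Since $X_{\textup{Mil}}$ is smooth affine we have $D^b(\textup{Coh}(X_{\textup{Mil}}))\simeq D^b(\mathcal{O}(X_{\textup{Mil}})\textup{-mod})$, so a Morita equivalence at the module level immediately upgrades to the desired derived equivalence. The natural framework is the theory of spherical subalgebras of deformed preprojective algebras developed by Crawley-Boevey and Holland, which is particularly clean in the $\tilde{A}_n$ case because of the cyclic symmetry of the quiver.

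The core computation is to identify the spherical subalgebra $e_0\,\Pi^{\lambda}(\tilde{A}_n)\,e_0$ with $\mathcal{O}(X_{\textup{Mil}})$, where $e_0$ is the idempotent at the extending vertex. I would introduce three natural loops based at vertex $0$: a forward loop $X=a_n a_{n-1}\cdots a_0$ going once around the cycle, a backward loop $Y=a_0^* a_1^*\cdots a_n^*$, and a local generator $Z$ built from $a_0^* a_0$. Applying the deformed preprojective relations $\sum_i[a_i,a_i^*]=\sum_i \lambda_i e_i$ and pushing commutators step by step around the $(n+1)$-cycle yields an identity $XY=P(Z)$, with $P$ of degree $n+1$ and roots exactly $\Lambda_i=\sum_{j=1}^{i}\lambda_j$; taking the trace of the defining relation forces $\sum_i\lambda_i=0$, which matches the vanishing of the subleading coefficient of $P$ after the natural centring of $Z$. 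This identifies $e_0\, \Pi^{\lambda}(\tilde{A}_n)\, e_0 \cong \mathbb{C}[X,Y,Z]/(P(Z)-XY) = \mathcal{O}(X_{\textup{Mil}})$.

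Next one shows that $e_0$ is a progenerator, i.e.\ $\Pi^{\lambda}(\tilde{A}_n)\,e_0\,\Pi^{\lambda}(\tilde{A}_n)=\Pi^{\lambda}(\tilde{A}_n)$. Genericity of $\lambda$ (equivalently, distinct roots of $P$, equivalently smoothness of $X_{\textup{Mil}}$) enters here essentially: for special $\lambda$ finite-dimensional simples appear at non-extending vertices and obstruct progeneration, but in the generic case each $e_i$ can be produced as an element of the two-sided ideal generated by $e_0$ through an explicit combination of arrows, as in the Crawley-Boevey--Holland analysis of extended Dynkin type. Combining the two steps yields the Morita, hence derived, equivalence. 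The main obstacle is the explicit parameter-matching in the first step: carefully tracking the iterated deformed commutators around the $(n+1)$-cycle and normalising $Z$ so that $P$ emerges with roots precisely $\Lambda_i=\sum_{j=1}^{i}\lambda_j$ rather than some affinely reparametrised version. This is largely bookkeeping, but it is where the substantive content of the proposition lies, pinning down the exact bijection between the algebraic deformation parameters $\lambda_i$ and the geometric roots $\Lambda_i$ of the Milnor fibre.
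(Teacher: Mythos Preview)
Your proposal is correct and the first step---identifying $e_0\Pi^{\lambda}(\tilde{A}_n)e_0$ with $\mathcal{O}(X_{\textup{Mil}})$ via explicit loops $X$, $Y$, $Z$ and pushing relations around the cycle---is exactly what the paper does (following the template of Proposition~\ref{parameters}). One small point: you attribute $\sum_i\lambda_i=0$ to a trace condition, whereas the paper treats it as the hypothesis under which the elements $X,Y,Z$ commute; the content is the same but the logical role is slightly different.

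For the second step the paper takes a different route. Rather than invoking a classical progenerator/Morita argument \`a la Crawley-Boevey--Holland, the paper passes to the semi-free dg-model $\mathcal{G}^{\lambda}(\tilde{A}_n)$ (formal by Proposition~\ref{deformedformal}), takes the Drinfeld quotient at the extending vertex, and uses that $\Pi^{\lambda'}(A_n)=0$ for generic $\lambda$ (i.e.\ distinct roots) to conclude the quotient is trivial. Proposition~\ref{quotgen} then gives split-generation by $e_{n+1}$, hence $D^{\pi}(\Pi^{\lambda}(\tilde{A}_n))\simeq D^{\pi}(e_{n+1}\Pi^{\lambda}(\tilde{A}_n)e_{n+1})$, and regularity of the affine algebra upgrades $D^{\pi}$ to $D^b$. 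Your Morita argument is more elementary and self-contained; the paper's dg-quotient argument is uniform with its treatment of the multiplicative case (Proposition~\ref{multloc}) and with the symplectic side of the story, where the same localisation/quotient machinery governs stop removal.
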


The relation between the parameters $\lambda_i$ and the underlying hyper-Kähler structure (explained in Example~\ref{HKquotient}) was explaine above (see also Remark~\ref{moreonfamily}).

On the other hand, as a symplectic manifold there is no unique resolution of singularities with a stop attached, but rather (as with the fibre removed above) there are many such symplecic manifolds, depending on area parameters $\lambda_i$ on the Lagrangian skeleton (in particular they are not Weinstein). Similarly to the case above, with a divisor removed, we show that the deformed Chekanov--Eliashberg algebra $CE^q(\Lambda_{\text{cyc}})$ of the Legendrian attaching link in Figure~\ref{CEAn} and Figure~\ref{lagAN}, i.e. the $\tilde{A}_n$-link in $J^1S^1$, computed with loop space coefficients $t_i$ which are then specialized to parameters $t_i\mapsto q_i\in k^{\times}$, satisfies:
\begin{proposition}[Lemma~\ref{nonloccechoice}, Lemma~\ref{substlemma}, Proposition~\ref{deformedformal}]
For any field $k$, if $q_1\cdots q_{n+1}=1$, then
\[
CE^q(\Lambda_{\textup{cyc}})\cong\mathcal{G}^{\lambda}(\tilde{A}_n),
\]
isomorphic as dg-algebras, where $\mathcal{G}^{\lambda}(\tilde{A}_n)$ is the deformed Ginzburg algebra
with parameters $\lambda_i=q_1\cdots q_{i-1}(q_i-1)$. In particular $\lambda_1+\cdots+\lambda_{n+1}=0$.

Moreover, $\mathcal{G}^{\lambda}(\tilde{A}_n)$ is formal with homology concentrated in degree $0$ and equal to
$\Pi^{\lambda}(\tilde{A}_n)$. 
\end{proposition}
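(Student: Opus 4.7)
The plan is to prove the three assertions in sequence: first the isomorphism $CE^q(\Lambda_{\textup{cyc}})\cong\mathcal{G}^\lambda(\tilde{A}_n)$, then the telescoping identity forcing $\sum_i\lambda_i=0$, and finally formality together with the identification of $H_0$.

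For the isomorphism, I would build on the non-deformed statement $CE(\Lambda_{\textup{cyc}})\cong\mathcal{G}(\tilde{A}_n)$ from Proposition~\ref{ginzcalcintro}. Using the combinatorial model of \cite{MR3356070} for Legendrians in $J^1S^1$, the generators (Reeb chords between adjacent unknots and the ``long'' chords from each component to itself) correspond to arrows, dual arrows and loops in the extended Dynkin quiver $\tilde{A}_n$, and the Maslov potential can be arranged so that the degrees agree with those in $\mathcal{G}(\tilde{A}_n)$. To track the deformation, I introduce loop space coefficients $t_i$, one per homotopy class of loop in $\Lambda_{\textup{cyc}}\subset J^1S^1$ encircling the corresponding cycle of the cylinder. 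The differential of each loop generator picks up monomials in the $t_i$'s determined by winding numbers of disk boundaries. Specializing $t_i\mapsto q_i\in k^\times$ then yields a deformation $CE^q$ of the dg-algebra. The next step is a direct matching of this specialized differential with the differential of the deformed Ginzburg algebra $\mathcal{G}^\lambda(\tilde{A}_n)$ after the linear change of basis $\lambda_i=q_1\cdots q_{i-1}(q_i-1)$; this is the substance of Lemma~\ref{substlemma}.

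The constraint is purely formal: since $\lambda_1\cdots q_{n+1}=1$, the telescoping computation
\[
\sum_{i=1}^{n+1}\lambda_i=\sum_{i=1}^{n+1}\bigl(q_1\cdots q_i-q_1\cdots q_{i-1}\bigr)=q_1\cdots q_{n+1}-1=0
\]
gives exactly the relation required by the definition of $\mathcal{G}^\lambda(\tilde{A}_n)$. Conversely, any $(\lambda_i)$ satisfying $\sum\lambda_i=0$ arises this way (generically invertibly), so the correspondence $q\leftrightarrow\lambda$ is bijective on the relevant parameter spaces, which is what Lemma~\ref{nonloccechoice} exploits in choosing the basepoints / Maslov data on $\Lambda_{\textup{cyc}}$.

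For formality of $\mathcal{G}^\lambda(\tilde{A}_n)$, I would adapt the proof of Theorem~\ref{formall}. The non-deformed Ginzburg algebra $\mathcal{G}(\tilde{A}_n)$ is formal with $H_*=\Pi(\tilde{A}_n)$ concentrated in degree zero; the deformation only modifies the differential on the degree $-1$ loop generators by adding scalar terms involving the $\lambda_i$'s (twisting the moment-map relation). Since these extra terms live in strictly lower polynomial degree than the leading ``preprojective'' term, a spectral sequence / filtration argument on the $\lambda$-weight filtration starts from the formal model of the undeformed case and collapses at $E_2$: the only contributions modify the degree-$0$ relations, turning $\Pi(\tilde{A}_n)$ into $\Pi^\lambda(\tilde{A}_n)$, while higher homology remains zero. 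An explicit contracting homotopy in negative degrees, transported from the undeformed case via the same change of basis as in step one, makes this rigorous.

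The main obstacle is the careful bookkeeping in the first step: one must arrange the basepoints and capping paths on each unknot component of $\Lambda_{\textup{cyc}}$ so that the loop coefficient $t_i$ decorates precisely the right generator, and so that all signs / orientations come out consistent with the conventions defining $\mathcal{G}^\lambda$. The telescoping identity and formality are then comparatively clean, the latter essentially because the deformation is a first-order perturbation that preserves the acyclicity computations underlying Theorem~\ref{formall}.
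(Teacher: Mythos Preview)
Your approach is essentially the paper's: identify $CE^q(\Lambda_{\textup{cyc}})$ with a deformed Ginzburg-type algebra via basepoint and orientation choices followed by a generator rescaling, verify the telescoping identity, and deduce formality from the undeformed case (Theorem~\ref{formal}) by a spectral sequence comparison whose first two pages coincide.

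Two small corrections are worth making. First, the loop generators $\zeta_i$ have degree $+1$ in the paper's conventions, not $-1$; the differential lowers degree, so $\partial\zeta_i$ lands in degree $0$ as it should. Second, you have the roles of the two lemmas slightly transposed: Lemma~\ref{nonloccechoice} is purely about placing basepoints and orientations on $\Lambda_{\textup{cyc}}$ so that $CE^q$ equals the non-localised derived multiplicative preprojective algebra $\mathcal{G}^q_{M,\,\textup{non-loc}}(\tilde{A}_n)$ on the nose (i.e., the differential picks up exactly the factors $q_i$ from the basepoints), whereas Lemma~\ref{substlemma} is the algebraic step that converts this into $\mathcal{G}^\lambda(\tilde{A}_n)$ --- and that step is not merely a parameter relabeling but an explicit rescaling $\alpha_i\mapsto q_1\cdots q_i\,\alpha_i$ of the arrow generators, which is what turns the multiplicative relation $q_i(e_i+\alpha_i^*\alpha_i)-(e_i+\alpha_{i-1}\alpha_{i-1}^*)$ into the additive one $\alpha_i^*\alpha_i-\alpha_{i-1}\alpha_{i-1}^*+\lambda_i e_i$.
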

Thus we conjecture these symplectic manifolds to have non-exact wrapped Fukaya categories quasi-equivalent to modules over deformed preprojective algebras of type $\tilde{A}_n$ (Conjecture~\ref{resconj}). As above, for the non-exact categories we use coefficients in the ring of formal Laurent series $\mathbb{C}((s))$ with a (log-)Novikov parameter $s$. The parameters $\lambda_i(s)$ in the deformation are determined by the area parameters $\lambda_i$ using the relations $\lambda_i(s)=q_1(s)\dots q_{i-1}(s)(q_i(s)-1)$, where $q_i(s)=e^{s\lambda_i}$. That is,
\[
D^{\pi}\mathcal{W}(X_{\text{res}}\smallsetminus D_r,\sigma)\stackrel{\text{conjecture}}{\simeq}D^b(\Pi^{\lambda(s)}(\tilde{A}_n)).
\]
In the exact same manner as above, on the $B$-side, deforming the complex structure is equivalent deforming the parameters $\lambda_i$, replacing them by $s\lambda_i$. Interpreting the deformed preprojective algebras obtained on both sides as having coefficients in $\mathbb{C}[[s]]/(s^2)$ (conjecturally) finishes one half of the the homological mirror symmetry for the Milnor fibre. By slight abuse of notation:
\[
D^{\pi}\mathcal{W}(X_{\text{res}}\smallsetminus D_r,\sigma)\stackrel{\text{conjecture}}{\simeq}D^b(\Pi^{\lambda(s)}(\tilde{A}_n))\simeq D^b(\text{Coh}(X_{\text{Mil}})),
\]
where as above with Proposition~\ref{fullfibreremoved} and Proposition~\ref{fibredremovedtwo}, the statement should be read transitively when $X_{\text{res}}$ and $X_{\text{Mil}}$ are obtained from each other by hyper-Kähler rotation.

Lastly, we find the mirror of the linear plumbing of $T^*S^2$ ($A$-side) in terms of a Landau--Ginzburg model ($B$-side) (Theorem~\ref{respot}). 
In more details: we find that the correct potential $w_r$ has a unique singular value $\lambda=0$, from which deduce that the category of $D$-branes is equivalent to the singularity category of the type $\tilde{A}_n$ preprojective algebra modulo cyclic paths in the original arrows: 
\[ 
DB(w_r)=D_{\text{sg}}(w_r^{-1}(0))\simeq D_{\text{sg}}(\Pi(\tilde{A}_n)/(\alpha_{i-1}\dots\alpha_{i+1}\alpha_i\;|\;i=1,\dots,n+1)).
\]
We show that the latter category is almost equivalent to the perfect derived category of finitely generated dg-modules over the Ginzburg algebra of type $A_n$ (Theorem~\ref{dbsgcatalg}). To be precise, there is a Novikov type parameter $T$, believed to be due to the fact that we are considering the cyclic plumbing of $n+1$ copies of $T^*S^2$ as non-exact submanifold of the linear plumbing of $n$ copies of $T^*S^2$. We summarise:
\begin{theorem}[Theorem~\ref{respot}, Lemma~\ref{dbsggeom}, Theorem~\ref{dbsgcatalg}]\label{resstopped}
We have the following equivalences of triangulated categories:
\[
DB(w_r)\simeq D_{\textup{sg}}(\Pi(\tilde{A}_n)/(\alpha^{n+1}))\simeq D^{\pi}(\mathcal{G}(A_n)\otimes_{\mathbb{C}}\mathbb{C}[T]),
\]
where $|T|=-2$. (Here we use the notation $\alpha=\alpha_1+\cdots+\alpha_{n+1}$.)
\end{theorem}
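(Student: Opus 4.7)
My plan is to prove the two equivalences separately, combining Theorem~\ref{respot}, Lemma~\ref{dbsggeom} and Theorem~\ref{dbsgcatalg} as indicated. For the first equivalence, I would construct the potential $w_r:X_{\textup{res}}\smallsetminus D_r\to\mathbb{C}$ by a signed count of Maslov index $2$ holomorphic discs bounded on the Lagrangian skeleton of $X_{\textup{Mil}}$, viewing both sides of the hyper-Kähler rotation as living in the same underlying smooth manifold $X_\zeta$, in direct analogy with the construction of $w_m$ in Section~\ref{sectrivialpot}. Using local Morse--Bott models around each exceptional $(-2)$-sphere I would compute $w_r$ explicitly and verify that $\lambda=0$ is its only critical value, so that $DB(w_r)=\prod_{\lambda}D_{\textup{sg}}(w_r^{-1}(\lambda))$ collapses to $D_{\textup{sg}}(w_r^{-1}(0))$. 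The identification with $D_{\textup{sg}}(\Pi(\tilde{A}_n)/(\alpha^{n+1}))$ then follows by matching the coordinate ring of the singular locus of $w_r^{-1}(0)$ (completed at its singular points) with $\Pi(\tilde{A}_n)/(\alpha^{n+1})$: vertices correspond to components of the critical fibre, the preprojective relations arise from pairs of discs around each sphere, and the truncation $\alpha^{n+1}=0$ records the vanishing of the potential itself on the fibre, since $\alpha^{n+1}$ (the sum of minimal cycles based at each vertex) represents the class of $w_r$.

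For the algebraic equivalence $D_{\textup{sg}}(\Pi(\tilde{A}_n)/(\alpha^{n+1}))\simeq D^{\pi}(\mathcal{G}(A_n)\otimes_{\mathbb{C}}\mathbb{C}[T])$, I would use that $\alpha^{n+1}$ is central and a non-zero-divisor in $\Pi(\tilde{A}_n)$, so that the quotient is a non-commutative hypersurface. Applying a dg-version of Orlov's theorem together with the Dyckerhoff-style matrix-factorisation description, $D_{\textup{sg}}$ of such a quotient is equivalent to the perfect derived category of the Koszul dual dg-algebra. Since the $2$-Calabi-Yau completion of $\mathbb{C}\tilde{A}_n$ is $\mathcal{G}(\tilde{A}_n)$ (Theorem~\ref{formall}), and the effect on the Koszul dual of killing the central element $\alpha^{n+1}$ is to cut the cyclic quiver, the truncated dual should be identified with $\mathcal{G}(A_n)$, while $\alpha^{n+1}$ itself Koszul-dualises to a free polynomial generator $T$ in homological degree $-2$ (dual to a relation of internal degree $0$ and path-length $n+1$, shifted by the Calabi--Yau dimension), producing $\mathcal{G}(A_n)\otimes_{\mathbb{C}}\mathbb{C}[T]$.

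The main obstacle is this last step. Since $\mathcal{G}(A_n)$ is non-formal by \cite{MR3503979}, formal Koszul duality alone does not suffice, and the higher $A_\infty$-operations must be tracked explicitly. I would handle this by an explicit bar-cobar computation on $\Pi(\tilde{A}_n)/(\alpha^{n+1})$, exploiting its induced $2$-Calabi-Yau pairing — preserved under the quotient since $\alpha^{n+1}$ is central — to control all higher operations, and verifying that the resulting dg-algebra decomposes cleanly as the tensor product $\mathcal{G}(A_n)\otimes\mathbb{C}[T]$ without curvature or twisting. A secondary point is the geometric interpretation of $T$ as a (log-)Novikov type parameter reflecting that the cyclic $(n+1)$-plumbing sits as a non-exact submanifold inside the linear $n$-plumbing in the ambient hyper-Kähler geometry; this provides a heuristic justification of the degree shift $|T|=-2$ and serves as a consistency check against the purely algebraic calculation.
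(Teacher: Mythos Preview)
Your approach diverges substantially from the paper's in both halves, and each half has a genuine gap.

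For the first equivalence, your plan to identify $w_r^{-1}(0)$ with $\Pi(\tilde{A}_n)/(\alpha^{n+1})$ by ``matching the coordinate ring of the singular locus'' is problematic: the fibre $w_r^{-1}(0)$ is a non-affine, non-reduced scheme (the exceptional chain of $\mathbb{P}^1$'s with multiplicities $n+1,n,\dots,2$ glued to an affine plane), so there is no single coordinate ring to match. The paper circumvents this by lifting to the $G$-equivariant picture on $\mathbb{C}^2$: the potential is identified with $X^{n+1}\in\mathbb{C}[X,Y]^{\mathbb{Z}_{n+1}}$, and then \cite[Theorem 8.6]{MR2504756} gives $D_{\textup{sg}}(w_r^{-1}(0))\cong D_{\textup{sg}}^{\mathbb{Z}_{n+1}}(\{X^{n+1}=0\})\cong D_{\textup{sg}}\big((\mathbb{C}[X,Y]/(X^{n+1}))*\mathbb{Z}_{n+1}\big)$, and this skew group algebra \emph{is} $\Pi(\tilde{A}_n)/(\alpha^{n+1})$ on the nose. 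Your local Morse--Bott computation of $w_r$ is also not what the paper does (it uses a covering reduction to the $\tilde{A}_0$ case from \cite{MR4489822}), though that part could plausibly be made to work.

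For the second equivalence, your Koszul-duality\,/\,Orlov route is not the paper's and is not substantiated. The paper does not compute a Koszul dual of $\Pi(\tilde{A}_n)/(\alpha^{n+1})$; instead it (i) shows the simples $C_i=e_i(A/(\alpha))$ split-generate $D^b(A)$, (ii) explicitly computes $\operatorname{Ext}_A^\bullet(C,C)$ as an $A_\infty$-algebra $B_T$ via the periodic projective resolutions, (iii) identifies the quotient by perfects with the derived localisation at the degree $-2$ element $T$, yielding $B\otimes_{\mathbb{C}}\mathbb{C}[T^{\pm1}]$, and then (iv) matches $B$ with $\mathcal{G}(A_n)$ \emph{geometrically}: Lemma~\ref{dbsggeom} shows via handle slides that $n+1$ parallel strands through a $1$-handle gives the linear $A_n$-plumbing, and the minimal model of the parallel-strand CE algebra is computed separately in \cite{ryd2025}. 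Your claim that ``killing $\alpha^{n+1}$ cuts the cyclic quiver on the Koszul dual side'' is the entire content of the theorem, and nothing in your sketch justifies it; as you note, non-formality of $\mathcal{G}(A_n)$ means the higher operations must be tracked, and the paper's mechanism for doing so is the symplectic detour, not a bar-cobar computation. Note also that the paper's body proves the result with $\mathbb{C}[T^{\pm1}]$ rather than $\mathbb{C}[T]$ (the introduction's statement appears to contain a typo), which is natural since passing to the singularity category inverts $T$.
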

This concludes the other half of the homological mirror symmetry for $A_n$-Milnor fibres, where the apparent asymmetry due to the Novikov parameter $T$ is explained in Remark~\ref{parameterremark}. See Section~\ref{hmsmil} and Section~\ref{hmsmiltwo}, and Figure~\ref{hmsmilfigure} for a summary of these sections.

\subsection{Connections to other works}
We here mention some important related works. We do not claim that this list is exhaustive.

In \cite{MR3692968} it is shown that for the Weinstein $4$-manifold constructed by plumbing copies of $T^*S^2$ according to the $A_n$ or $D_n$ graphs (and conjecturally for the $E_6$, $E_7$ and $E_8$ graphs) the wrapped Fukaya category is quasi-equivalent to the category finitely generated modules over the corresponding Ginzburg algebra. In \cite{MR4033516} the following related result is proved: the manifold constructed by plumbing of $T^*S^2$ according to any graph has wrapped Fukaya category quasi-equivalent to finitely generated dg-modules over the derived multiplicative preprojective algebra (it is also shown to hold in the higher genus case). By \cite{MR4457411} the Ginzburg algebra is quasi-isomorphic to the derived multiplicative preprojective algebra precisely when the underlying quiver is of type $ADE$. Thus these two papers together provides a new proof of the $A_n$ and $D_n$ cases in \cite{MR3692968}, and extends the results to also include the cases $E_6$, $E_7$ and $E_8$.

Furthermore, in combination with \cite{MR4582531}, the result in \cite{MR4033516} shows that the wrapped Fukaya category of the cyclic plumbing (according to $\tilde{A}_n$) of $T^*S^2$ is quasi-equivalent to finitely generated modules over the preprojective algebra. This results can also be recovered by the results in this work by using that stop removal corresponds to localisation. 

Similar computations for the $\tilde{A}_0$-case were done in out in \cite{MR4489822}.

The homological mirror symmetry of $X_{\text{Mil}}\smallsetminus D_m$ and $X_{\text{res}}\smallsetminus D_r$ for the compact Fukaya category is computed in \cite{MR3022713}, from the SYZ viewpoint.

The same mirror symmetry result is discussed by Auroux, Abousaid, Katzarkov in \cite[Section 9.2]{MR3502098}, where they show that $X_{\text{res}}\smallsetminus D_r$ and $X_{\text{Mil}}\smallsetminus D_m$ are SYZ-mirrors, including a discussion of the related Landau--Ginzburg potentials. However, they only do one side of the Mirror symmetry and they do not construct $D^b(\text{Coh}(X_{\text{res}}))$ as a symplectic category (they only construct a full subcategory).

Mirrors of the $A_n$-Milnor fibre is also computed in terms of Landau--Ginzburg potential by Lekili and Ueda in \cite{MR4371540}. Note however that their Landau--Ginzburg potentials are in a different $G$-equivariant setting. It would be interesting to compare these different Landau--Ginzburg models for the same A-side.
\subsection{Outlook}
One would like to extend the results in this work to the cases $D_n$, $E_6$, $E_7$ and $E_8$, with their corresponding finite subgroups $G\subset\text{SU}(2)$. The problem is that there is no $G$-invariant conic $\mathbb{C}^2$, and thus the strategy used here breaks down. 

This is related to that plumbing according to the extended graphs $\tilde{D_n}$, $\tilde{E}_6$, $\tilde{E}_7$ and $\tilde{E}_8$, is known to have wrapped Fukaya category given by twisted complexes over the (non-deformed) multiplicative preprojective algebra, rather than the (additive) preprojective algebra. This is seen if one computes the Chekanov--Eliashberg algebra of the corresponding Legendrian attachment link: there will be higher order terms in the differential.
\subsection*{Acknowledgements}
\addcontentsline{toc}{subsection}{Acknowledgements}
I would to thank my PhD supervisor Georgios Dimitroglou Rizell for suggesting the topic of realisation of preprojective algebras of type $ADE$ as partially wrapped Fukaya categories. I would also like to thank him for helpful discussions, careful reading and numerous suggestions of improvements of earlier drafts. Further, I would like to thank Johan Asplund and Martin Bäcke for valuable discussions of various topics related to this work.
\section{HMS and McKay correspondence}\label{algsec}
We give a brief overview over the algebraic side, or the $B$-side, on the resolution of singularities and in particular of the derived McKay correspondence. This will serve as a motivation for the main calculation of the paper. Thereafter we give a short introduction to homological mirror symmetry via the standard example of $T^*S^1$. The structure of these computations mimics the structure of the main results in this article, concerning the invariants related to the log Calabi--Yau hyper-Kähler family which we describe in Example~\ref{loghkA}.
\subsection{Background on algebras, DGAs, and derived categories}\label{algprel}
\subsubsection{Quivers and algebras}
A \emph{quivers} is a directed graph, that is, a quadruple $Q=(Q_0,Q_1,s,t)$, where $Q_0$ is the set of vertices, $Q_1$ is the set of arrows, and where $s,t:Q_1\rightarrow Q_0$ are the maps determining the source respectively the target of each arrow. Thus a quiver defines an underlying (non-oriented) graph by simply forgetting the orientation of the arrows. 

We will mostly work over the field $k=\mathbb{C}$, although most definitions makes sense for a general commutative ring $k$, and many results also hold more generally. A representation of a quiver is an assignment $Q_0\ni i\mapsto V_i$ of a $k$-vector space to each vertex, and a linear map $F_{\alpha}:V_i\rightarrow V_j$, for each arrow $i\xrightarrow{\alpha} j$. 

Concatenation (we write this as with composition) defines a multiplication on the set of paths of a quiver (where a trivial path $e_i$ for each vertex $i$ is included). By letting $kQ$ be the vector space spanned by all paths, equipped with the multiplication induced by concatenation we get an algebra called the \emph{path algebra} of $Q$. Modules over this algebra are in one-to-one correspondence with representations of the quiver.

We denote by $e_i$ the trivial path, or idempotent, corresponding to the vertex $i$, and by $R=kQ_0$ the semi-simple ring of the trivial paths (the multiplication is defined by $e_i^2=e_i$ and $e_ie_j=0$, if $i\not=j$; also note $1=\sum_{i\in Q_0}e_i$ is the identity in $kQ$). Typically we will be interested in $R$-algebras $A=kQ/I$, where $I$ is some ideal.

Given a quiver $Q$ the following construction will play a central role:
\begin{definition}\label{defpreproj}
Let $\overline{Q}=(\overline{Q}_0,\overline{Q_1})$ denote the \emph{doubled quiver}, that is, $\overline{Q}_0=Q_0$ and $\overline{Q}_1=Q_1\cup Q_1^*$, where $Q_1^*=\{\alpha^*:j\rightarrow i\,|\,\alpha:i\rightarrow j\in Q_1\}$. Let $I$ be the following ideal in the path algebra of the doubled quiver $k\overline{Q}$,
\[
I=\sum_{i}\left(\sum_{\substack{\alpha\in Q_1 \\ s(\alpha)=i} }\alpha^*\alpha-\sum_{\substack{\beta\in Q_1 \\ t(\beta)=i} }\beta\beta^*\right),
\]
where the first sum runs over all vertices $i\in Q_0$. We define the \emph{preprojective algebra} $\Pi(Q)$ of $Q$ to be $k\overline{Q}/I$.
\end{definition}
\begin{example}
Consider the $D_4$ and $\tilde{A}_3$ cases. We pick the orientation so that $Q$, in each case, is given by the quivers below: 
\begin{figure}[H]
\centering
\begin{tikzpicture}
\node at (-1,2) {${Q}_{\tilde{A}_3}$:};
\node at (6,2) {${Q}_{D_4}$:};

\draw[fill=black] (0,0) circle (0.06cm);
\draw[fill=black] (2,-2) circle (0.06cm);
\draw[fill=black] (4,0) circle (0.06cm);
\draw[fill=black] (2,2) circle (0.06cm);

\draw[fill=black] (7,1.5) circle (0.06cm);
\draw[fill=black] (9,1.5) circle (0.06cm);
\draw[fill=black] (11,1.5) circle (0.06cm);
\draw[fill=black] (9,-0.5) circle (0.06cm);

\draw[black,stealth-] (0.2,0.2)  to (1.8,1.8) ;
\draw[black,-stealth] (0.2,-0.2)  to (1.8,-1.8) ;
\draw[black,-stealth] (2.2,-1.8)  to (3.8,-0.2) ;
\draw[black,stealth-] (2.2,1.8)  to (3.8,0.2) ;

\node at (0.8,1.3) {$\delta$};
\node at (3.2,1.3) {$\gamma$};
\node at (3.2,-1.3) {$\beta$};
\node at (0.8,-1.3) {$\alpha$};

\draw[black,-stealth] (7.2,1.5)  to (8.8,1.5) ;
\draw[black,-stealth] (9.2,1.5)  to (10.8,1.5) ;
\draw[black,-stealth] (9,1.3)  to (9,-0.3) ;

\node at (8,1.7) {$\alpha$};
\node at (10,1.7) {$\beta$};
\node at (9.2,0.5) {$\gamma$};
\end{tikzpicture}
\caption{Example of quivers whose underlying graphs are of type $\tilde{A}_3$ respectively $D_4$.}
\label{quiversexample}
\end{figure}
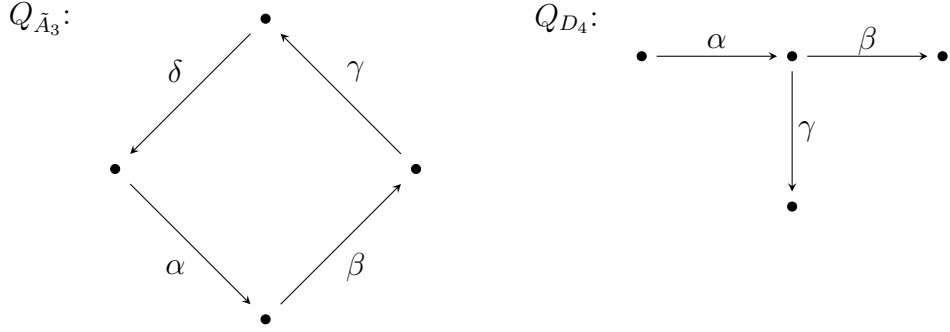
Then the preprojective algebra in each case is defined as the path algebra of the following quivers modulo the ideals $I_{\tilde{A}_3}=\langle\alpha^*\alpha-\delta\delta^*,\,\beta^*\beta-\alpha\alpha^*,\,\gamma^*\gamma-\beta\beta^*,\,\delta^*\delta-\gamma\gamma^*\rangle$, respectively $I_{D_4}=\langle\alpha^*\alpha,\,\beta^*\beta+\gamma^*\gamma-\alpha\alpha^*,\,-\beta^*\beta \rangle$:
\begin{figure}[H]
\centering
\begin{tikzpicture}
\node at (-1,2) {$\overline{Q}_{\tilde{A}_3}$:};
\node at (6,2) {$\overline{Q}_{D_4}$:};

\draw[fill=black] (0,0) circle (0.06cm);
\draw[fill=black] (2,-2) circle (0.06cm);
\draw[fill=black] (4,0) circle (0.06cm);
\draw[fill=black] (2,2) circle (0.06cm);

\draw[black,-stealth] (0.06,0.2) [out=65, in=-155] to (1.8,1.94) ;
\draw[black,stealth-] (0.2,0.06) [out=25, in=-115] to (1.94,1.8) ;

\draw[black,-stealth] (2.06,-1.8) [out=65, in=-155] to (3.8,-0.06) ;
\draw[black,stealth-] (2.2,-1.94) [out=25, in=-115] to (3.94,-0.2) ;

\draw[black,-stealth] (0.2,-0.06) [out=-25, in=115] to (1.94,-1.8) ;
\draw[black,stealth-] (0.06,-0.2) [out=-65, in=155] to (1.8,-1.94) ;

\draw[black,-stealth] (2.2,1.94) [out=-25, in=115] to (3.94,0.2) ;
\draw[black,stealth-] (2.06,1.8) [out=-65, in=155] to (3.8,0.06) ;

\node at (1.3,-0.5) {$\alpha$};
\node at (0.7,-1.6) {$\alpha^*$};

\node at (2.7,-0.5) {$\beta$};
\node at (3.3,-1.6) {$\beta^*$};

\node at (3.3,1.55) {$\gamma^*$};
\node at (2.7,0.4) {$\gamma$};

\node at (0.7,1.5) {$\delta^*$};
\node at (1.3,0.4) {$\delta$};

\draw[fill=black] (7,1.5) circle (0.06cm);
\draw[fill=black] (9,1.5) circle (0.06cm);
\draw[fill=black] (11,1.5) circle (0.06cm);
\draw[fill=black] (9,-0.5) circle (0.06cm);

\draw[black,stealth-] (7.2,1.4) [out=-20, in=-160] to (8.8,1.4) ;
\draw[black,-stealth] (7.2,1.6) [out=20, in=160] to (8.8,1.6) ;

\draw[black,stealth-] (9.2,1.4) [out=-20, in=-160] to (10.8,1.4) ;
\draw[black,-stealth] (9.2,1.6) [out=20, in=160] to (10.8,1.6) ;

\draw[black,stealth-] (8.9,1.3) [out=-110, in=110] to (8.9,-0.3) ;
\draw[black,-stealth] (9.1,1.3) [out=-70, in=70] to (9.1,-0.3) ;

\node at (8,2) {$\alpha$};
\node at (8,1) {$\alpha^*$};
\node at (10,2) {$\beta$};
\node at (10,1) {$\beta^*$};
\node at (8.5,0.5) {$\gamma^*$};
\node at (9.5,0.5) {$\gamma$};
\end{tikzpicture}
\caption{The doubled quivers corresponding to the quivers in Figure~\ref{quiversexample}, which are used to define the preprojective algebras.}
\label{quiversdoubled}
\end{figure}
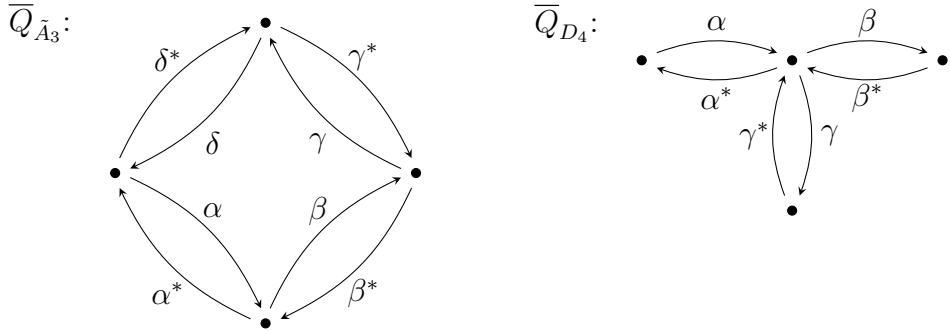
\end{example}
\begin{remark}
By $\Pi(\tilde{A}_n)$ we will mean $\Pi(Q_{\tilde{A}_n})$ where we assume that $Q_{\tilde{A}_n}$ is cyclically oriented (see Figure~\ref{Anquiv}). We will use the analogous notation with $A_n$, where we have an induced (linear) orientation as a subquiver, as well as for the deformed preprojective algebra, for the Ginzburg algebra and for the multiplicative versions, whose definitions are recalled below.
\end{remark}

Later we will also consider \emph{deformed preprojective algebras}, as introduced in \cite{MR1620538}. Given a quiver $Q$ and $\lambda_i\in k$, $i\in Q_0$, it is defined as the quotient $\Pi^{\lambda}(Q)=k\overline{Q}/I$, where
\[
I=\sum_{i}\left(\sum_{\substack{\alpha\in Q_1 \\ s(\alpha)=i} }\alpha^*\alpha-\sum_{\substack{\beta\in Q_1 \\ t(\beta)=i} }\beta\beta^*+\lambda_ie_i\right),
\]

Another related important algebra is the multiplicative preprojective algebra, first studied in \cite{MR2204754}. 
\begin{definition}\label{defmultpreproj}
Given a quiver $Q$, consider a spanning tree $T\subset Q$ (i.e. a subgraph containing all vertices, and with no cycles). Let $\overline{Q}_M=\overline{Q}\cup\{z_{\alpha}^{\pm 1}\;|\;\alpha\in Q_1\smallsetminus T_1\}$. Here the arrows $z_{\alpha}^{\pm 1}$ are loops at the vertices $s(\alpha)$.

The \emph{deformed multiplicative preprojective algebra} with parameters $q_i\in k^{\times}$, for $i\in Q_0$, is the algebra $\Lambda^q(Q)=k\overline{Q}_M/I$, where $I$ is the ideal generated by the relations
\begin{align*}
z_{\alpha}z_{\alpha}^{-1}&=z_{\alpha}^{-1}z_{\alpha}=e_{s(\alpha)}, \\
z_{\alpha}&=e_{s(\alpha)}+\alpha^*\alpha, \\
\prod_{t(\alpha)=i}(e_i+\alpha\alpha^*)&=\prod_{s(\alpha)=i}(q_ie_i+q_i\alpha^*\alpha).
\end{align*}
Here one has to choose an ordering on the vertices to define the multiplications above, however the algebra is  independent of this ordering, as well as of the spanning tree \cite{MR4033516}.
\end{definition}

\begin{example}\label{multanex}
We consider the example $Q=Q_{\tilde{A}_{n}}$, as in Figure~\ref{Anquiv}, and we take the spanning tree $T$ to be tree obtained by excluding $\alpha_{n+1}$. Then the quiver $\overline{Q}_M$ is $\overline{Q}$ with an extra pair of loops $z^{\pm 1}$ at the vertex $n+1$, satisfying $zz^{-1}=z^{-1}z=e_{n+1}$, and the other relations are $z=e_{n+1}+\alpha_{n+1}^*\alpha_{n+1}$ and $e_i+\alpha_{i-1}\alpha_{i-1}^*=q_ie_i+q_i\alpha_i^*\alpha_i$ (where the indices are read modulo $n+1$). We denote this algebra by $\Lambda^q(\tilde{A}_{n})$.
\end{example}
If $q_i=1$, for all $i$, then we get the \emph{non-deformed multiplicative preprojective algebra} $\Lambda(Q)$. In characteristic $0$ the non-deformed multiplicative preprojective algebra is isomorphic (as $R$-algebras) to the preprojective algebra if and only if $Q$ is of type $ADE$ \cite{MR4457411}. 

\subsubsection{Derived categories}\label{derivedcat}
For an algebra $A$, We will generally be interested in the category of finitely generated (right) $A$-modules, which we denote by $A$-mod, and its derived version, which we denote by $D^b(A)$. The latter can be defined as the Verdier quotient/localisation at quasi-isomorphism of the homotopy category of chain complexes of finitely generated $A$-modules which are bounded in homology. 

All objects of $D^b(A)$ can be represented by complexes of projective modules (that are bounded in homology). A complex of $A$-modules is called \emph{perfect} if it is quasi-isomorphic to a bounded complex of finitely generated projective modules. We have a triangulated inclusion $\text{Perf}(A)\subset D^b(A)$, where $\text{Perf}(A)$, called the \emph{perfect derived category} is the Verdier quotient of perfect complexes at quasi-isomorphisms.

We will also use the notation $\text{thick}\langle X_1,\dots,X_n\rangle$ to denote the smallest triangulated subcategory of $D^b(A)$ containing the (complexes of) $A$-modules $X_1,\dots,X_n$ and their summands.

\subsubsection{The triangulated category of singularities}\label{prelsingcat}
We define the \emph{triangulated category of singularities} of a Noetherian algebra $A$ by
\[
D_{\text{sg}}(A)=\frac{D^b(A)}{\text{Perf}(A)}.
\]
This measure how singular $A$ is. In particular, if $A$ is regular then $D^b(A)=\text{Perf}(A)$, and thus $D_{\text{sg}}(A)=0$. It was first studied in \cite{MR4390795} and later in \cite{MR2101296}, where it was defined also for schemes and put in the context of homological mirror symmetry, used to define Landau--Ginzburg models.

For a scheme $X$, a \emph{perfect complex} is a complex of sheaves of $\mathcal{O}_X$-modules which locally is quasi-isomorphic to a bounded complex of locally free sheaves. Denote the full triangulated subcategory of perfect complexes by $\text{Perf}(X)$. Then the \emph{singularity category} of the scheme $X$ is defined as
\[
D_{\text{sg}}(X)=\frac{D^b(\text{Coh}(X))}{\text{Perf}(X)}.
\]
As for algebras: if $X$ is smooth (and satisfies the ELF condition in the sense of \cite[Section 1.2]{MR2101296}, i.e., separated, Noetherian, of finite Krull dimension and $\text{Coh}(X)$ has enough locally free sheaves) then $D_{\text{sg}}(X)=0$ \cite[Remark 1.9]{MR2101296}. 

In \cite{MR2437083}, the construction is generalised to any triangulated category $\mathcal{D}$, as the quotient with the full subcategory of homologically finite objects: $\mathcal{D}/\mathcal{D}_{\text{hf}}$ (the definitions coincide when $X$ satisfies the ELF condition). Furthermore, in this situation it implies that a triangulated equivalence $D^b(\text{coh}(X))\simeq D^b(A)$, given e.g. by a non-commutative model (algebra) $A$, induces an equivalence $D_{\text{sg}}(X)\simeq D_{\text{sg}}(A)$.

Typically one is interested in studying $D_{\text{sg}}(X)$, where $X=w^{-1}(0)$, for a potential $:Y\rightarrow\mathbb{C}$. In the case when $Y$ is smooth and affine, $D_{\text{sg}}(X)$ is equivalent tot the usual Landau--Ginzburg model given by the category of matrix factorisation for $w$ \cite{MR2101296}.

\subsubsection{Dg-algebras and dg-categories}
Let $R$ be a commutative ring (typically we think of it as $R=kQ_0$). A \emph{dg-algebra} is a graded $R$-algebra $A$ with differential $\partial$ of degree $-1$ (i.e. decreasing the degree by $1$) satisfying the graded Leibniz rule:
\[
\partial(xy)=\partial x\cdot y+(-1)^{|x|}x\partial y,
\]
for $x,y\in A$, with $x$ homogeneous of degree $|x|$.

From now on $A=kQ/I$ will be a dg-algebra over $R=kQ_0$ whose structure as an algebra will be determined by concatenation of arrows in the quiver $Q$. We will mainly be working with dg-algebras arising as so called Chekanov--Eliashberg algebras (Section~\ref{cesection}). They are semi-free in the following sense:
\begin{definition}
A dg-algebra $A$ is \emph{semi-free} over $R=kQ_0$ if $A=kQ$ and where the set of arrows $Q_1$ has a $\mathbb{Z}$-grading which is bounded from below.
\end{definition}
The vertex set $Q_0$ will always be finite but $Q_1$ can possibly be infinitely countable.

Note that given a grading on the arrows of a quiver $Q$, we can extend this to a grading on $A=kQ$ by $\text{deg}(\beta\alpha)=\text{deg}(\beta)+\text{deg}(\alpha)$. Furthermore, we can define a differential $\partial$ on the arrows and extend this to $A$ by the graded Leibniz rule. The following example will be of interest.
\begin{definition}\label{defginzb}
Given a quiver $Q=(Q_0,Q_1)$, let $\tilde{Q}=(\tilde{Q}_0,\tilde{Q}_1)$ be the quiver given by $\tilde{Q}_0=Q_0$ and $\tilde{Q}_1=\overline{Q}_1\cup Q_0^{\text{loop}}$, where $Q_0^{\text{loop}}=\{\zeta_i:i\rightarrow i\,|\,i\in Q_0\}$. The grading is given by $|\alpha|=|\alpha^*|=0$ and $|\zeta_i|=1$. The differential is defined on generators by $\partial(\alpha)=\partial(\alpha^*)=0$ and
\[
\partial\zeta_i=\sum_{\substack{\alpha\in Q_1 \\ s(\alpha)=i} }\alpha^*\alpha-\sum_{\substack{\beta\in Q_1 \\ t(\beta)=i} }\beta\beta^*.
\]
The resulting algebra is known as the \emph{Ginzburg algebra} $\mathcal{G}(Q)$ (without potential) of $Q$.
\end{definition}
\begin{example}
The Ginzburg algebras are defined as the differential graded algebra obtained as the path algebra of the following quivers, with gradings given by $|\alpha|=|\alpha^*|=\beta|=|\beta^*|=|\gamma|=|\gamma^*|=|\delta|=|\delta^*|=0$ and $|\zeta_i|=1$, respectively $|\alpha|=|\alpha^*|=|\beta|=|\beta^*|=|\gamma|=\gamma^*|=0$ and $|\zeta_i|=1$, and with differentials $\partial \zeta_1=\alpha^*\alpha-\delta\delta^*$, $\partial \zeta_2=\beta^*\beta-\alpha\alpha^*$, $\partial \zeta_3=\gamma^*\gamma-\beta\beta^*$ and $\partial \zeta_4=\delta^*\delta-\gamma\gamma^*$: 
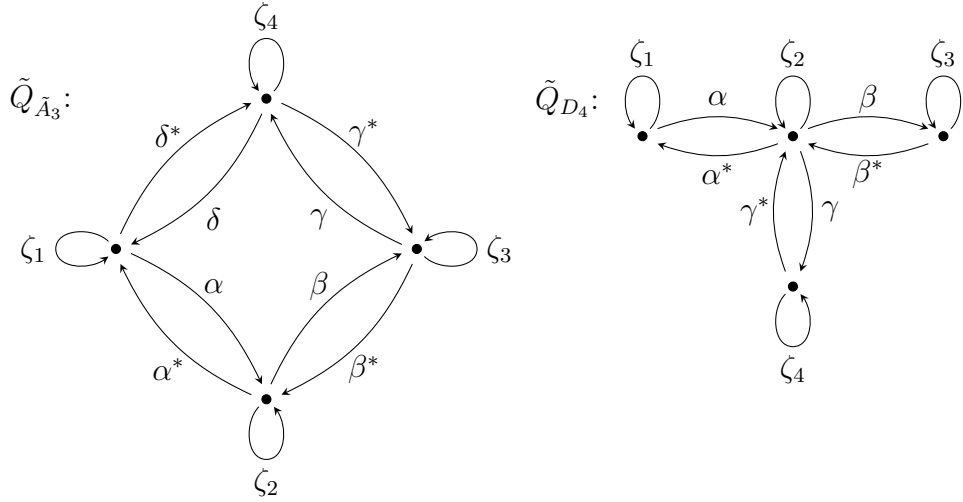
\begin{figure}[H]
\centering
\begin{tikzpicture}
\node at (-1,2) {$\tilde{Q}_{\tilde{A}_3}$:};
\node at (6,2) {$\tilde{Q}_{D_4}$:};

\draw[fill=black] (0,0) circle (0.06cm);
\draw[fill=black] (2,-2) circle (0.06cm);
\draw[fill=black] (4,0) circle (0.06cm);
\draw[fill=black] (2,2) circle (0.06cm);

\draw[black,-stealth] (0.06,0.2) [out=65, in=-155] to (1.8,1.94) ;
\draw[black,stealth-] (0.2,0.06) [out=25, in=-115] to (1.94,1.8) ;

\draw[black,-stealth] (2.06,-1.8) [out=65, in=-155] to (3.8,-0.06) ;
\draw[black,stealth-] (2.2,-1.94) [out=25, in=-115] to (3.94,-0.2) ;

\draw[black,-stealth] (0.2,-0.06) [out=-25, in=115] to (1.94,-1.8) ;
\draw[black,stealth-] (0.06,-0.2) [out=-65, in=155] to (1.8,-1.94) ;

\draw[black,-stealth] (2.2,1.94) [out=-25, in=115] to (3.94,0.2) ;
\draw[black,stealth-] (2.06,1.8) [out=-65, in=155] to (3.8,0.06) ;

\node at (1.3,-0.5) {$\alpha$};
\node at (0.7,-1.6) {$\alpha^*$};

\node at (2.7,-0.5) {$\beta$};
\node at (3.3,-1.6) {$\beta^*$};

\node at (3.3,1.55) {$\gamma^*$};
\node at (2.7,0.4) {$\gamma$};

\node at (0.7,1.5) {$\delta^*$};
\node at (1.3,0.4) {$\delta$};

\draw[fill=black] (7,1.5) circle (0.06cm);
\draw[fill=black] (9,1.5) circle (0.06cm);
\draw[fill=black] (11,1.5) circle (0.06cm);
\draw[fill=black] (9,-0.5) circle (0.06cm);

\draw[black,stealth-] (7.2,1.4) [out=-20, in=-160] to (8.8,1.4) ;
\draw[black,-stealth] (7.2,1.6) [out=20, in=160] to (8.8,1.6) ;

\draw[black,stealth-] (9.2,1.4) [out=-20, in=-160] to (10.8,1.4) ;
\draw[black,-stealth] (9.2,1.6) [out=20, in=160] to (10.8,1.6) ;

\draw[black,stealth-] (8.9,1.3) [out=-110, in=110] to (8.9,-0.3) ;
\draw[black,-stealth] (9.1,1.3) [out=-70, in=70] to (9.1,-0.3) ;

\node at (8,2) {$\alpha$};
\node at (8,1) {$\alpha^*$};
\node at (10,2) {$\beta$};
\node at (10,1) {$\beta^*$};
\node at (8.5,0.5) {$\gamma^*$};
\node at (9.5,0.5) {$\gamma$};

\draw[black,-stealth] (-0.1,0.1) [out=135, in=90] to (-0.8,0) [out=-90, in=-135] to (-0.1,-0.1) ;
\draw[black,-stealth] (4.1,-0.1) [out=-45, in=-90] to (4.8,0) [out=90, in=45] to (4.1,0.1) ;
\draw[black,-stealth] (2.1,2.1) [out=45, in=0] to (2,2.8) [out=180, in=135] to (1.9,2.1) ;
\draw[black,-stealth] (1.9,-2.1) [out=-135, in=180] to (2,-2.8) [out=0, in=-45] to (2.1,-2.1) ;

\node at (-1.1,0) {$\zeta_1$};
\node at (2,3.1) {$\zeta_4$};
\node at (5.1,0) {$\zeta_3$};
\node at (2,-3.1) {$\zeta_2$};

\draw[black,-stealth] (7.1,1.6) [out=45, in=0] to (7,2.3) [out=180, in=135] to (6.9,1.6) ;
\draw[black,-stealth] (9.1,1.6) [out=45, in=0] to (9,2.3) [out=180, in=135] to (8.9,1.6) ;
\draw[black,-stealth] (11.1,1.6) [out=45, in=0] to (11,2.3) [out=180, in=135] to (10.9,1.6) ;
\draw[black,-stealth] (8.9,-0.6) [out=-135, in=180] to (9,-1.3) [out=0, in=-45] to (9.1,-0.6) ;

\node at (7,2.6) {$\zeta_1$};
\node at (9,2.6) {$\zeta_2$};
\node at (11,2.6) {$\zeta_3$};
\node at (9,-1.6) {$\zeta_4$};
\end{tikzpicture}
\caption{Examples of quivers used to define the Ginzburg algebra, obtained from quivers with underlying graphs $\tilde{A}_3$ and $D_4$ respectively; compare with Figure \ref{quiversexample} and Figure \ref{quiversdoubled}.}
\end{figure} 
The preprojective algebra is then the zeroth homology of the Ginzburg algebra, $\Pi(Q)=H_0(\mathcal{G}(Q))$. (There are several different relevant grading conventions for the Ginzburg algebra. We use the grading that makes $\mathcal{G}(Q)$ into a 2-Calabi-Yau algebra.)
\end{example}
For $\lambda_i\in k$, $i\in Q_0$, the \emph{deformed Ginzburg algebra} $\mathcal{G}^{\lambda}(Q)$, of a quiver $Q=(Q_0,Q_1)$, is defined completely analogously to Definition~\ref{defginzb}, except for the differential of $\zeta_i$ being
\[
\partial\zeta_i=\sum_{\substack{\alpha\in Q_1 \\ s(\alpha)=i} }\alpha^*\alpha-\sum_{\substack{\beta\in Q_1 \\ t(\beta)=i} }\beta\beta^*+\lambda_ie_i.
\]

Also the deformed multiplicative preprojective algebra has a dg-version.
\begin{definition}\label{defdermultpreproj}
The \emph{deformed derived multiplicative preprojective algebra}, which we denote $\mathcal{G}_M^q(Q)$, is the dg-algebra defined as follows. Let $\tilde{Q}_M=\tilde{Q}\cup\overline{Q}_M\cup\{\tau_{\alpha}\;|\;\alpha\in Q_1\smallsetminus T_1\}$ (see Definition~\ref{defginzb} and Definition~\ref{defmultpreproj}), where $T\subset Q$ is a spanning tree, and let $q_i\in k^{\times}$, for $i\in Q_0$. Then $\mathcal{G}_M^q(Q)=k\tilde{Q}_M/I$, where $I$ is the set of relations $z_{\alpha}z_{\alpha}^{-1}=z_{\alpha}^{-1}z_{\alpha}=e_{s(\alpha)}$. The grading is defined as above on $\tilde{Q}\cup\overline{Q}_M$, and $|\tau_{\alpha}|=1$. The differential is defined by
\begin{align*}
    \partial \tau_{\alpha}&=e_{s(\alpha)}+\alpha^*\alpha-z_{\alpha}, \\
    \partial \zeta_{i}&=
\prod_{s(\alpha)=i}(q_ie_i+q_i\alpha^*\alpha)-\prod_{t(\alpha)=i}(e_i+\alpha\alpha^*),
\end{align*}
and $0$ on the generators in $\overline{Q}_M$. 
\end{definition}
The homology in degree $0$ is the deformed multiplicative preprojective algebra, and, as for the non-derived version, we call the case $q_i=1$, for all $i\in Q_0$, \emph{non-deformed}. (For an alternative definition of the preprojective algebra and its derived version definition see the paper by Kaplan and Shendler, and in particular \cite[Remark 4.7]{MR4582531} for an explicit quasi-isomorphism between the two versions.)

A \emph{dg-category} is a $k$-linear category $\mathcal{A}$ such that the morphism spaces $\mathcal{A}(X,Y)$ are graded and quipped with a differential $\partial=\partial_{X,Y}$ of degree $-1$, i.e. a $k$-linear map satisfying the graded Leibniz rule:
\[
\partial_{X,Z}(g\circ f)=\partial_{Y,Z}(g)\circ f+(-1)^{|g|}g\circ\partial_{X,Y}(f).
\]

A \emph{dg-module} over a dg-algebra $(A,\partial_A)$ is a graded (right) module $M$ equipped with a differential $\partial_M$ of degree $-1$, such that $\partial_M(ma)=\partial_M(m)a+(-1)^{|m|}m\partial_A(a)$. Finitely generated (as modules) dg-modules over $A$ form a dg-category, denoted by $\text{dg-mod}_A$. The morphism spaces $\text{dg-mod}_{A,\bullet}(M,N)$ are graded sets of module morphisms, with $\text{dg-mod}_{A,n}(M,N)=\{f:M\rightarrow N\,|\,|f|=n\}$, where $|f|=n$ means that $f$ maps homogeneous elements $m\in M$ of degree $d$ to homogeneous elements $f(m)\in N$ of degree $d+n$. The differential is given by $\partial=\partial_N\circ f+(-1)^{|f|+1}f\circ\partial_M$. 

The notion of a functor generalises to the dg-setting to the notion of a dg-functor, see \cite{MR2275593}. A (right) \emph{dg-module} over a dg-category $\mathcal{A}$ is a dg-functor $\mathcal{M}:\mathcal{A}^{\text{op}}\rightarrow\text{Ch}(k)$, where $\text{Ch}(k)$ is the category of chain complexes over $k$. We denote the category of dg-modules over $\mathcal{A}$ by $\mathcal{A}\text{-mod}$.

A family of examples of dg-categories is given by dg-algebras with a complete finite set of orthogonal idempotents, e.g. from dg-algebras $A$ having underlying algebra structure $A=kQ$, for a quiver $Q$. The objects are in one-to-one correspondence with the idempotents (vertices) and the morphisms from $i$ to $j$ are given by span of all paths from $i$ to $j$: $\mathcal{A}(e_i,e_j)=e_jAe_i$. We think of the objects as the right $A$-modules $e_iA$ (of all paths which end in $i$), and the (right) module morphisms are given by multiplication on the left. In this case the two definitions of dg-modules coincides: a dg-module $M$ over $A$ as an algebra can naturally be viewed as a dg-module $\mathcal{M}$ over $A$ as category, and vice versa.

One can easily extend the definition of homotopy to the dg-algebra setting. Doing this one defines $K(\text{dg-mod}_A)$, the homotopy category of finitely generated dg-modules over $A$. The Verdier quotient of this category, inverting all quasi-isomorphisms, defines the \emph{(unbounded) derived category} of finitely generated dg-modules over $A$, which we also denote by $D(A)$, which comes equipped the structure of a triangulated category. However, we will not consider any analogue $D^b(A)$, but another full triangulated subcategory $D^{\pi}(A)\subset D(A)$, called the \emph{perfect derived category}, which is the dg-analogue of $\text{Perf}(A)$. In terms of dg-algebras, wrapped Fukaya categories can always always be represented by the \emph{perfect derived category} $D^{\pi}(A)$ of $A$, for some dg-algebra $A$. It is defined as follows (viewing $A$ as a dg-category as above).

The category of \emph{twisted complexes} $\text{Tw}\,\mathcal{A}$ of a dg-category $\mathcal{A}$ is defined as follows.
 The objects are finite direct sums
 \[
 X=\left(\bigoplus_i X_i[n_i],(f_{i,j})\right),
 \]
where $X_i\in\mathcal{A}$ and $[n]$ is the shift of degree by $n$ functor (which changes the sign on the differential and on the morphisms by $(-1)^n$), and where $(f_{i,j})$ is an upper triangular matrix ($f_{i,j}=0$ if $i> j$) with $f_{i,j}=(-1)^{n_j}g_{i,j}:X_j\rightarrow X_i$, where $g_{i,j}$ is homogeneous of degree $n_i-n_j-1$, and $f_{ii}=\partial_{X_i}[n_i]$, such that the matrix multiplication ($f_{i,j})^2=0$. The last condition is required for $(f_i,j)$ to be a differential, i.e. to make $X$ into a complex and a dg-module over $\mathcal{A}$. A morphism $g=(g_{i,j})\in\text{Tw}\,\mathcal{A}(X,X')$ is a matrix with $g_{i,j}\in\mathcal{A}(X_j,X_i')[n_i'-n_j']$. The differential on the morphisms spaces is by the usual formula: $\partial g=\partial_{X'}\circ g+(-1)^{|g|+1}g\circ\partial_X$, for $g$ homogeneous.

Note that for $f:X_1\rightarrow X_2$ of degree $n_2-n_1-1$, such that $\partial(f)=0$, $\text{cone}(f[n_1]:X_1[n_1]\rightarrow X_2[n_2])=(X_2[n_2+1]\oplus X_1[n_1],f_{i,j})$, with $f_{1,1}=(-1)^{n_2+1}\partial_{X_2}$, $f_{2,2}=(-1)^{n_1}\partial_ {X_1}$ and $f_{1,2}=(-1)^{n_1}f$, is a twisted complex; in fact, twisted complexes is the formal way to add all shifts, cones, direct sums and their  combinations. Note also that $\mathcal{A}$ embeds into $\text{Tw}\,\mathcal{A}$. Cones define the triangulated structure on $D^b(\mathcal{A})$, by saying that distinguished triangles are triangles isomorphic to the ones of the form $X_2\xrightarrow{f_2} X_1\rightarrow\text{cone}(f)\rightarrow X_1[1]$.

Given a dg-category $\mathcal{A}$, there is a Yoneda embedding $\mathcal{A}\rightarrow\mathcal{A}\text{-mod}$. This embedding extends to an embedding $\text{Tw}\,\mathcal{A}\rightarrow\mathcal{A}\text{-mod}$. The smallest subcategory of $\mathcal{A}$-mod which contains $\text{Tw}\,\mathcal{A}$, such that its zeroth homology is a triangulated category, and which is furthermore closed under taking direct summands, is denoted by $\Pi\text{Tw}\,\mathcal{A}$. Then $D^{\pi}(\mathcal{A})=H_0\Pi\text{Tw}\,\mathcal{A}$. If $A$ is quasi-isomorphic to an algebra $B$, then $D^{\pi}(A)=\text{Perf}(B)$.

Given a set of objects $X_1,\dots,X_n$ in $\Pi\text{Tw}\,\mathcal{A}$ (in particular they can all belong to $\mathcal{A}$), let $\langle X_1,\dots,X_n\rangle$ denote the full subcategory with only these objects. If $H_0\text{Tw}\langle X_1,\dots,X_n\rangle\simeq D^{\pi}\mathcal{A}$, then we say that $X_1,\dots,X_n$ \emph{generates} $D^{\pi}(\mathcal{A})$. If $D^{\pi}\langle X_1,\dots,X_n\rangle\simeq D^{\pi}(\mathcal{A})$ we say that $X_1,\dots,X_n$ \emph{split-generates} $D^{\pi}(\mathcal{A})$.

Suppose that $\mathcal{A}$ is semi-free in the sense of \cite[Appendix B]{MR2028075} (otherwise one can take a semi-free replacement $\tilde{\mathcal{A}}$ of $\mathcal{A}$, having the same objects). The \emph{Drinfeld quotient} of $\mathcal{A}$ at a set of objects $X_1,\dots,X_n$ is the dg-category obtained by adding free generators $\kappa_i$ to $\mathcal{A}(X_i,X_i)$ of degree $|\kappa_i|=1$, such that $\partial \kappa_i=1_{X_i}$. 

In the case of a dg-algebra with underlying algebra structure $A=kQ$, for some quiver $Q$, the Drinfeld quotient at a set of objects given by vertices $i_1,\dots,i_m$, is then the dg-algebra obtained by adding loops $\kappa_{i_j}$ at these vertices, with $|\kappa_{i_j}|=1$ and $\partial \kappa_{i_j}=e_{i_j}$.

Next follow two proposition we will use later.
\begin{proposition}\label{quotgen}
Suppose that $X_1,\dots,X_n,Y_1,\dots,Y_m$ are objects of $\mathcal{A}$ that generate $D^{\pi}(\mathcal{A})$. If the Drinfeld quotient at $X_1,\dots,X_n$ is trivial, i.e. quasi-equivalent to the trivial dg-category, then $X_1,\dots, X_n$ split-generate $\mathcal{A}$.
\end{proposition}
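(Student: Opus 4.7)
The plan is to reduce the statement to the theorem of Drinfeld (the one referenced as \cite{MR2028075} in the excerpt) that the Drinfeld dg-quotient computes the Verdier quotient on passage to perfect derived categories. Concretely, I would invoke the quasi-equivalence
\[
D^{\pi}(\mathcal{A}/\langle X_1,\dots,X_n\rangle)\simeq D^{\pi}(\mathcal{A})/\mathrm{thick}\langle X_1,\dots,X_n\rangle,
\]
valid because $\mathcal{A}$ is (or has been replaced by) a semi-free dg-category, exactly the setting singled out just before the proposition. Once this identification is in place, the proof becomes a short categorical argument.

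First I would translate the triviality of the Drinfeld quotient $\mathcal{A}/\langle X_1,\dots,X_n\rangle$ into the statement that the right-hand side, the Verdier quotient $D^{\pi}(\mathcal{A})/\mathrm{thick}\langle X_1,\dots,X_n\rangle$, is the zero triangulated category. Next, since a Verdier quotient $\mathcal{C}/\mathcal{D}$ being zero forces every object of $\mathcal{C}$ to be isomorphic in the quotient to $0$, and isomorphism to $0$ in a Verdier quotient means membership (up to summand) in $\mathcal{D}$, I conclude that every object of $D^{\pi}(\mathcal{A})$ already lies in $\mathrm{thick}\langle X_1,\dots,X_n\rangle$. In particular each $Y_j$ is a summand of some twisted complex in the $X_i$'s, so the $Y_j$'s are redundant.

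Finally, combining this with the hypothesis that $X_1,\dots,X_n,Y_1,\dots,Y_m$ generate $D^{\pi}(\mathcal{A})$ gives the desired conclusion $D^{\pi}\langle X_1,\dots,X_n\rangle\simeq D^{\pi}(\mathcal{A})$, i.e.\ split-generation by the $X_i$'s alone. The only real obstacle is making sure the invocation of Drinfeld's theorem is legitimate, i.e.\ verifying that the hypotheses of \cite{MR2028075} hold; this is where the semi-free (or semi-free replacement) assumption is used, together with the fact that $\mathrm{thick}\langle X_1,\dots,X_n\rangle$ is generated by a set of compact objects, so that the quotient on perfect derived categories is computed correctly and is itself compactly generated. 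Everything else is bookkeeping.
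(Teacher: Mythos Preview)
Your proposal is correct and follows essentially the same approach as the paper: both invoke Drinfeld's result (\cite{MR2028075}, 3.4) that the Drinfeld quotient computes the Verdier quotient, observe that triviality of the quotient forces every object to be zero there, and then use the standard characterization that an object vanishes in a Verdier quotient precisely when it is a summand of an object of the subcategory. The paper's proof is terser and does not spell out the semi-free or compactness hypotheses you mention, but the argument is the same.
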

\begin{proof}
The derived category of the Drinfeld quotient is the Verdier quotient, hence 
\[
D^{\pi}\langle X_1,\dots,X_n,Y_1,\dots,Y_m\rangle/H_0\text{Tw}\langle X_1,\dots,X_n\rangle=0
\]
(\cite{MR2028075} 3.4). An object $X\in D=D^{\pi}\langle X_1,\dots,X_n,Y_1,\dots,Y_m\rangle$ is sent to $0$ in the Verdier quotient (up to quasi-isomorphism) if and only if there is an $X'\in D$ such that $X\oplus X'\in H_0\text{Tw}\,\langle X_1,\dots,X_n\rangle$.
\end{proof}
\begin{proposition}\label{dgquotalg}
For a dg-algebra $(A,\partial)$ with $A=kQ$, let $B=A\langle h_i,\,i\in I\rangle$ denote the Drinfeld quotient at a set of vertices $I\subset KQ_0$. Let $\tilde{A}=A/Ae_IA$, where $e_I=\sum_{i\in I}e_i$. Then $(\tilde{A},\tilde{\partial})$ is the semi-free dg-algebra with $\tilde{A}=kQ'$, where $Q'$ is the quiver obtained by removing all the vertices in $I$ and all arrows starting or terminating in $I$ (with the induced differential). Moreover, the obvious map (sending $h_i$ to $0$) $B\rightarrow \tilde{A}$ is a quasi-isomorphism.
\end{proposition}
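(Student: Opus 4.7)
First, I would check the description of $\tilde{A}$: the two-sided ideal $Ae_I A \subseteq A = kQ$ is spanned by those paths of $Q$ that pass through (or start/end at) some vertex of $I$, since each such path factors as $a \cdot e_i \cdot b$. Its quotient therefore has as basis the paths of $Q$ avoiding $I$ altogether, giving $\tilde A = kQ'$ as graded algebras. Because $e_I$ is a sum of idempotents and $\partial$ is a derivation, $\partial(Ae_I A) \subseteq Ae_I A$, so $\tilde{\partial}$ is well defined on $kQ'$ and on each generator coincides with $\partial$ followed by discarding all terms that involve vertices of $I$. The projection $\pi : B \to \tilde{A}$ sending each $h_i$ to $0$ and composing with $A \twoheadrightarrow \tilde A$ is then a dg-algebra map: on the added generators, $\pi(\partial h_i) = \pi(e_i) = 0 = \partial \pi(h_i)$.

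To prove $\pi$ is a quasi-isomorphism, the plan is to present $B$ and $\tilde{A}$ as pushouts of dg-algebras over $R = kQ_0$. Let $R_h$ denote the dg-$R$-algebra freely generated by $h_i$ ($i \in I$) subject to $h_i = e_i h_i e_i$, with $|h_i| = 1$ and $\partial h_i = e_i$, and let $R' = R/(e_i : i \in I) = kQ_0'$. Comparing generators and relations, $B = A *_R R_h$ and $\tilde{A} = A *_R R'$, and $\pi$ is induced by the canonical surjection $\varepsilon : R_h \twoheadrightarrow R'$ killing all $h_i$. Since $A = kQ$ is free over the semisimple ring $R$ as an $R$-bimodule (a basis being given by the arrows of $Q$), the functor $A *_R (-)$ preserves quasi-isomorphisms between sufficiently cofibrant dg-$R$-algebras, so it suffices to show $\varepsilon$ is a quasi-isomorphism.

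This last claim is a direct computation. Using $h_i e_j = \delta_{ij} h_i$ one sees that words in the $h_i$'s collapse, so $R_h = R \oplus \bigoplus_{i \in I} \bigoplus_{n \geq 1} k h_i^n$ as a graded $R$-bimodule, and an easy induction on $n$ using the Leibniz rule yields $\partial h_i^n = h_i^{n-1}$ for $n$ odd and $\partial h_i^n = 0$ for $n$ even. Hence each subcomplex $(k h_i^n)_{n \geq 0}$ is acyclic in positive degrees, $H_0(R_h) = R/(e_i : i \in I) = R'$ and $\varepsilon$ is a quasi-isomorphism. The main technical point is the homotopy invariance of $A *_R (-)$; this is immediate from the standard model structure on dg-$R$-algebras (where $A$ is cofibrant because it is a semi-free extension of $R$), but if one wishes to avoid model-theoretic language it can be replaced by filtering $B$ by the number of $h_i$-occurrences in each path: the associated graded is a tensor product of complexes of flat $R$-bimodules, so $H$ commutes with the tensor products, and the resulting spectral sequence collapses to $H(\tilde A)$ using the acyclicity of $R_h$ just established. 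A more hands-on alternative is to construct a contracting homotopy on $\ker \pi$ that replaces the leftmost occurrence of $e_i$ (respectively $h_i$) in a path by $h_i$ (respectively sends it to zero), with signs chosen so that $[\partial, s] = \mathrm{id}_{\ker \pi}$.
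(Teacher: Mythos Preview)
Your proof is correct and considerably more detailed than the paper's, which simply observes that $B/Be_IB = A/Ae_IA$ and then cites \cite[Lemma 2.5]{bäcke2023contacthomologycomputationssingular} (a spectral sequence argument) for the quasi-isomorphism. Your pushout description $B = A *_R R_h$, $\tilde A = A *_R R'$, together with the explicit computation of $H_*(R_h)$, gives a clean conceptual framing that the paper does not provide.

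One small point of care: the phrase ``$A *_R(-)$ preserves quasi-isomorphisms between sufficiently cofibrant dg-$R$-algebras'' is slightly awkward here, since $R' = R/(e_i:i\in I)$ is \emph{not} cofibrant in the standard model structure. What actually makes the argument go through is that $R$ is semisimple: every $R$-bimodule is flat, so filtering the free product by word length gives an associated graded whose pieces are tensor products of $\bar A$'s and $\bar R_h$'s (resp.\ $\bar R'$'s) over $R$, K\"unneth applies termwise, and comparison of spectral sequences finishes. This is exactly the mechanism behind your second alternative, and it removes any need for cofibrancy of the target. Your filtration ``by the number of $h_i$-occurrences'' is a slight variant of this word-length filtration; either works, and this is presumably what the cited lemma does as well. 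The contracting-homotopy alternative you sketch is also viable but requires more bookkeeping with signs than you indicate.
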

\begin{proof}
This follows from that $B/Be_IB=A/Ae_IA$, together with \cite[Lemma 2.5]{bäcke2023contacthomologycomputationssingular}, which is obtained by a spectral sequence argument.
\end{proof}

\subsubsection{$A_{\infty}$-algebras and $A_{\infty}$-categories}\label{algprelAinf}
An $A_{\infty}\text{-category}$ $\mathcal{A}$ consists of a collection objects $\text{Ob}\,\mathcal{A}$, together with graded vector spaces $\mathcal{A}(A,B)$, for each pair of objects $A,B\in\text{Ob}\,\mathcal{A}$, and (higher) multiplication maps
\[
\mu_m:\mathcal{A}(X_{m},X_{m+1})\otimes\cdots\mathcal{A}(X_1,X_2)\rightarrow\mathcal{A}(X_1,X_{m+1})
\]
of degree $m-2$, which satisfy the $A_{\infty}$-relations:
\[
\sum_{\substack{r+s+t=m \\ s\geq 1}}(-1)^{r+st}\mu_{r+1+t}(1^{\otimes r}\otimes\mu_s\otimes 1^{\otimes t})=0
\]
(with the Koszul sign rule when acting on objects). We say that $\mathcal{A}$ is \emph{strictly unital} if every object $X$ contains an element $e_X$ such that 
\begin{align*}
\mu_1(e_X)&=0 \\
\mu_2(a,e_X)=\mu_2(e_Y,a)&=\begin{cases}
a,&\mbox{ if }a\in\mathcal{A}(X,Y) \\
0&\mbox{ otherwise, } 
\end{cases} \\
\mu_m(\dots,e_X,\dots)&=0,\quad m\geq 3.
\end{align*}
When $\mathcal{A}$ has only one object which is strictly unital, we recover the usual notion of a unital $A_{\infty}$-algebra over $k$. However, just as with dg-categories, we are more interested in $A_{\infty}$-algebras over semi-simple rings $R=ke_1\oplus\cdots\oplus ke_n$. That is, when $\mathcal{A}$ is an $A_{\infty}$-category with $n$ objects $X_1,\dots, X_n$, with units $e_i\in\mathcal{A}(X_i,X_i)$.

If $\mu_m=0$ for $m\geq 3$, in the definitions above, we recover the dg-notions above. 

An $A_{\infty}$-functor $F:\mathcal{A}\rightarrow\mathcal{B}$ is a map on objects together with a collection of maps $F_m:\mathcal{A}(X_{m-1},X_m)\otimes\cdots\otimes\mathcal{A}(X_0,X_1)\rightarrow\mathcal{B}(FX_0,FX_m)$ of degree $m-1$, such that
\[
\sum_{\substack{1\leq r\leq m\\s_1+\dots+s_r=m}}\mu_r^{\mathcal{B}}(F_{s_1}\otimes\cdots\otimes F_{s_r})=\sum_{\substack{r+s+t=m \\ s\geq 1}}F_{r+1+t}(1^{\otimes r}\otimes\mu_s^{\mathcal{A}}\otimes 1^{\otimes t}).
\]
Note that $H_0(\mathcal{A})$ and $H_{\bullet}(\mathcal{A})$ are always categories (composition induced by $\mu_2$ is associative). If $F_1$ induces an equivalence of categories $H_{\bullet}(\mathcal{A})\rightarrow H_{\bullet}(\mathcal{B})$, then we say that $F$ is a \emph{quasi-equivalence}; if $\mathcal{A}$ and $\mathcal{B}$ are $A_{\infty}$-algebras, then $F$ is a \emph{quasi-isomorphism}. 

For any $A_{\infty}$-category $\mathcal{A}$, there exists an $A_{\infty}$-structure on $H_{\bullet}(\mathcal{A})$ (so that $\mu_1=0$, unique up $A_{\infty}$-isomorphism) and an $A_{\infty}$-quasi-isomorphism to $\mathcal{A}$. This is called a \emph{minimal model} of $\mathcal{A}$. We say that $\mathcal{A}$ is \emph{formal}, if $\mu_m=0$ for $m\geq 3$, for the $A_{\infty}$-structure making $H_{\bullet}(\mathcal{A})$ into a minimal model for $\mathcal{A}$.

Sometimes $A_{\infty}$-categories $\mathcal{A}$ geometrically defined do not have units or idempotents, but they do have elements which become units in $H_{\bullet}(\mathcal{A})$. In such cases we say that $\mathcal{A}$ is \emph{homologically unital}. However, every homologically unital $A_{\infty}$-category is quasi-equivalent to a strictly unital one \cite{MR2441780}.

Any $A_{\infty}$-category is quasi-equivalent to a dg-category (as $A_{\infty}$-categories). Furthermore, the definitions of twisted complexes and $D^{\pi}(\mathcal{A})$ generalises to $A_{\infty}$-categories as follows (see also e.g. \cite{MR4297810}, \cite{MR2441780}). Given an $A_{\infty}$-category $\mathcal{A}$, we consider finite formal expressions $A=\bigoplus_i(X_i[n_i])$, $X_i\in\text{Ob}\,\mathcal{A}$, $n_i\in\mathbb{Z}$. Between these objects we define spaces of morphisms as matrices:
\[
\text{Add}_{\mathcal{A}}\left(\bigoplus_iX_i[n_i],\bigoplus_jY_j[m_j]\right)=\bigoplus_{i,j}\mathcal{A}(X_i,Y_j)[m_j-n_i]
\]
for which define $\mu_m^{\text{Add}\,\mathcal{A}}$ by the matrix version of $A_{\infty}$-operations:
\[
(\mu_m^{\text{Add}_{\mathcal{A}}}(g_m,\dots,g_1))_{i,j}=\sum_{i_1,\dots,i_m}\mu_m^{\mathcal{A}}((g_m)_{i,i_1}),(g_{m-1})_{i_1,i_2},\dots,(g_1)_{i_{m-1},j}),
\]
where $g_r=((g_r)_{i,j})\in\bigoplus_{i,j}\mathcal{A}(X_r,X_{r+1})[n_{r+1}-n_r]$. A \emph{twisted complex} over $\mathcal{A}$ is a pair $\left(\bigoplus_iX_i[n_i],f\right)$, where $f=(f_{i,j})$ is an upper triangular matrix, such that $f_{i,j}:X_j\rightarrow X_i$ of degree $n_i-n_j-1$, which satisfies the Maurer--Cartan equation
\[
\sum_{m\geq 1}\mu_m^{\text{Add}_{\mathcal{A}}}(f,\dots,f)=0.
\]
Twisted complexes are the objects of the $A_{\infty}$-category $\text{Tw}\,\mathcal{A}$, with morphisms being $\text{Add}_{\mathcal{A}}$. Its higher operations are defined by
\[
\mu_m^{\text{Tw}\,\mathcal{A}}(g_m,\dots,g_1)=\sum_{i_{m+1},\dots,i_1\geq 0}\mu^{\text{Add}_{\mathcal{A}}}_{m+i_1\cdots+i_{m+1}}(f_{m+1},\dots,f_{m+1},g_m,f_{m},\dots,f_2,g_1,f_1,\dots,f_1),
\]
where $g_r$ is a morphism of twisted complexes from $(A_r,f_r)$ to $(A_{r+1},f_{r+1})$, where we insert $i_{m+1}$ copies of $f_{m+1}$ followed by $g_m$, then $i_m$ copies of $f_{m}$ followed by $g_{m-1}$, etcetera. The sum is finite because $f_r$ is required to be upper triangular.

An $A_{\infty}$-functor $F:\mathcal{A}\rightarrow\mathcal{B}$ defines an $A_{\infty}$-functor $\text{Tw}\,F:\text{Tw}\,\mathcal{A}\rightarrow\text{Tw}\,\mathcal{B}$, defined on objects by sending $(\bigoplus_i X_i[n_i],f)$ to $(\bigoplus_i F(X_i)[n_i],\sum F_i(f,\dots,f))$ and
\[
\left(\text{Tw}\,F\right)_m(g_m,\dots,g_1)=\sum_{i_{m+1,\dots,i_1}\geq 0}\mathcal{F}_{m+i_1+\cdots+i_{m+1}}(f_{m+1},\dots,f_{m+1},g_m,f_m,\dots,f_2,g_1,f_1,\dots,f_1),
\]
where we insert $i_{m+1}$ copies of $f_{m+1}$ followed by $g_m$ followed by $i_m$ copies of $f_m$ etc. \cite[Section 3.3.3]{MR4297810}.

We define the \emph{perfect derived category} of the $A_{\infty}$-category $\mathcal{A}$ as $D^{\pi}(\mathcal{A})=H_0\Pi\text{Tw}\,\mathcal{A}$, where $\Pi$, denotes the idempotent completions (this means that we need to add summands, after embedding $\text{Tw}\,\mathcal{A}$ into the category of $A_{\infty}$-modules over $\mathcal{A}$ using the Yoneda embedding, as is explained in e.g. \cite{MR2441780}). A functor of $A_{\infty}$-categories $F:\mathcal{A}\rightarrow\mathcal{B}$ induces functors $H_0\text{Tw}\,\mathcal{A}\rightarrow H_0\text{Tw}\,\mathcal{B}$ and $D^{\pi}(\mathcal{A})\rightarrow D^{\pi}(\mathcal{B})$, via the $A_{\infty}$-functor $\text{Tw}\,F$ defined above. They are quasi-equivalences if $F$ is a quasi-isomorphism.

The notions of generation and split-generation is generalised to $A_{\infty}$-categories verbatim. The Drinfeld quotient, is also generalised, and Proposition~\ref{quotgen} remain true for $A_{\infty}$-categories. In particular, if $\mathcal{A}$ is an $A_{\infty}$-category and $\mathcal{B}=\langle X_1\dots,X_n\rangle$ is the full sub-$A_{\infty}$-category with objects $X_1,\dots,X_n\in\text{Tw}\,\mathcal{A}$, then the morphism spaces in quotient $A_{\infty}$-category $\mathcal{A}/\mathcal{B}$ are given by
\[
\mathcal{A}/\mathcal{B} (U,V)=\bigoplus_{\substack{k\geq 0 \\
X_{i_1},\dots,X_{i_k}\in\mathcal{B}}}\text{Tw}\,\mathcal{A}(X_{i_k},V)\otimes\cdots\otimes\text{Tw}\,\mathcal{A}(X_{i_1},X_{i_2})\otimes\text{Tw}\,\mathcal{A}(U,X_{i_1}),
\]
and the differential is given by the bar differential. See \cite{MR2259271} for the complete description of the $A_{\infty}$-structure. 

\subsection{Background on the McKay correspondence}
The McKay correspondence is a remarkable correspondence between classification results in different areas of mathematics.

\subsubsection{The classical McKay correspondence}

\noindent
We start with the graphs and the quivers. The following graphs (Figure~\ref{dynkingraphs}) without the extra vertices and dashed edges are the Dynkind diagrams of type $A_n$, $D_n$, $E_6$, $E_7$ and $E_8$. If one includes the dashed part one obtaines the extended Dynkin diagrams: $\tilde{A}_n$, $\tilde{D}_n$, $\tilde{E}_6$, $\tilde{E}_7$ and $\tilde{E}_8$.
\begin{figure}[H]
\begin{tikzpicture}[scale=0.5]
\node at (-2,1.5) {$A_n/\tilde{A}_n$:};
\node at (-2,-2) {$D_n/\tilde{D}_n$:};
\node at (-2,-6) {$E_6/\tilde{E}_6$:};
\node at (14,0) {$E_7/\tilde{E}_7$:};
\node at (14,-4) {$E_8/\tilde{E}_8$:};

%start of A
\draw[fill=black] (0,0) circle (0.1cm);
\draw[fill=black] (2,0) circle (0.1cm);
\draw[fill=black] (6,0) circle (0.1cm);
\draw[fill=black] (8,0) circle (0.1cm);

\draw[fill=black] (4,1.5) circle (0.1cm);
\draw[thick] (0,0) to (3.75,0);
\draw[fill=black] (3.9,0) circle (0.008cm);
\draw[fill=black] (4.0,0) circle (0.008cm);
\draw[fill=black] (4.1,0) circle (0.008cm);
\draw[thick] (4.25,0) to (8,0);

\draw[dashed, thick] (0,0) to (4,1.5);
\draw[dashed, thick] (8,0) to (4,1.5);

\node[scale=0.5,thick] at (0,-0.3) {$1$};
\node[scale=0.5,thick] at (2,-0.3) {$2$};
\node[scale=0.5,thick] at (6,-0.3) {$n-1$};
\node[scale=0.5,thick] at (8,-0.3) {$n$};
\node[scale=0.5,thick] at (4,1.8) {$n+1$};

%start of D
\draw[fill=black] (0,-2) circle (0.1cm);
\draw[fill=black] (2,-2) circle (0.1cm);
\draw[fill=black] (4,-2) circle (0.1cm);
\draw[fill=black] (8,-2) circle (0.1cm);
\draw[fill=black] (10,-2) circle (0.1cm);
\draw[fill=black] (2,-4) circle (0.1cm);
\draw[fill=black] (8,-4) circle (0.1cm);

\draw[fill=black] (5.9,-2) circle (0.008cm);
\draw[fill=black] (6.0,-2) circle (0.008cm);
\draw[fill=black] (6.1,-2) circle (0.008cm);
\node[scale=0.5,thick] at (0,-1.7) {$1$};
\node[scale=0.5,thick] at (2,-1.7) {$2$};
\node[scale=0.5,thick] at (4,-1.7) {$3$};
\node[scale=0.5,thick] at (8,-1.7) {$n-2$};
\node[scale=0.5,thick] at (10,-1.7) {$n-1$};
\node[scale=0.5,thick] at (2,-4.3) {$n$};
\node[scale=0.5,thick] at (8,-4.3) {$n+1$};

\draw[thick] (0,-2) to (5.75,-2);
\draw[thick] (6.25,-2) to (10,-2);
\draw[thick] (2,-2) to (2,-4);
\draw[thick, dashed] (8,-2) to (8,-4);

%start of E6
\draw[fill=black] (0,-6) circle (0.1cm);
\draw[fill=black] (2,-6) circle (0.1cm);
\draw[fill=black] (4,-6) circle (0.1cm);
\draw[fill=black] (6,-6) circle (0.1cm);
\draw[fill=black] (8,-6) circle (0.1cm);
\draw[fill=black] (4,-8) circle (0.1cm);
\draw[fill=black] (4,-10) circle (0.1cm);
\draw[thick] (0,-6) to (8,-6);
\draw[thick] (4,-6) to (4,-8);
\draw[thick,dashed] (4,-8) to (4,-10);

\node[scale=0.5,thick] at (0,-5.7) {$1$};
\node[scale=0.5,thick] at (2,-5.7) {$2$};
\node[scale=0.5,thick] at (4,-5.7) {$3$};
\node[scale=0.5,thick] at (6,-5.7) {$4$};
\node[scale=0.5,thick] at (8,-5.7) {$5$};
\node[scale=0.5,thick] at (4.3,-8) {$6$};
\node[scale=0.5,thick] at (4.3,-10) {$7$};

%start of E7
\draw[fill=black] (16,0) circle (0.1cm);
\draw[fill=black] (18,0) circle (0.1cm);
\draw[fill=black] (20,0) circle (0.1cm);
\draw[fill=black] (22,0) circle (0.1cm);
\draw[fill=black] (24,0) circle (0.1cm);
\draw[fill=black] (26,0) circle (0.1cm);
\draw[fill=black] (28,0) circle (0.1cm);
\draw[fill=black] (22,-2) circle (0.1cm);

\node[scale=0.5,thick] at (16,0.3) {$8$};
\node[scale=0.5,thick] at (18,0.3) {$1$};
\node[scale=0.5,thick] at (20,0.3) {$2$};
\node[scale=0.5,thick] at (22,0.3) {$3$};
\node[scale=0.5,thick] at (24,0.3) {$4$};
\node[scale=0.5,thick] at (26,0.3) {$5$};
\node[scale=0.5,thick] at (28,0.3) {$6$};
\node[scale=0.5,thick] at (22,-2.3) {$7$};

\draw[thick,dashed] (16,0) to (18,0);
\draw[thick] (18,0) to (28,0);
\draw[thick] (22,0) to (22,-2);

%start of E8
\draw[fill=black] (16,-4) circle (0.1cm);
\draw[fill=black] (18,-4) circle (0.1cm);
\draw[fill=black] (20,-4) circle (0.1cm);
\draw[fill=black] (22,-4) circle (0.1cm);
\draw[fill=black] (24,-4) circle (0.1cm);
\draw[fill=black] (26,-4) circle (0.1cm);
\draw[fill=black] (28,-4) circle (0.1cm);
\draw[fill=black] (30,-4) circle (0.1cm);
\draw[fill=black] (20,-6) circle (0.1cm);

\draw[thick,dashed] (28,-4) to (30,-4);
\draw[thick] (16,-4) to (28,-4);
\draw[thick] (20,-4) to (20,-6);

\node[scale=0.5,thick] at (16,-3.7) {$1$};
\node[scale=0.5,thick] at (18,-3.7) {$2$};
\node[scale=0.5,thick] at (20,-3.7) {$3$};
\node[scale=0.5,thick] at (22,-3.7) {$4$};
\node[scale=0.5,thick] at (24,-3.7) {$5$};
\node[scale=0.5,thick] at (26,-3.7) {$6$};
\node[scale=0.5,thick] at (28,-3.7) {$7$};
\node[scale=0.5,thick] at (30,-3.7) {$9$};
\node[scale=0.5,thick] at (20,-6.3) {$8$};
\end{tikzpicture}
\caption{The classical Dynkin diagrams of type $A_n$, $D_n$ and $E_{6,7,8}$ shown in filled edges. If we add the dashed edges we get the extended Dynkin diagrams $\tilde{A}_n$, $\tilde{D}_n$ and $\tilde{E}_{6,7,8}$.}
\label{dynkingraphs}
\end{figure}
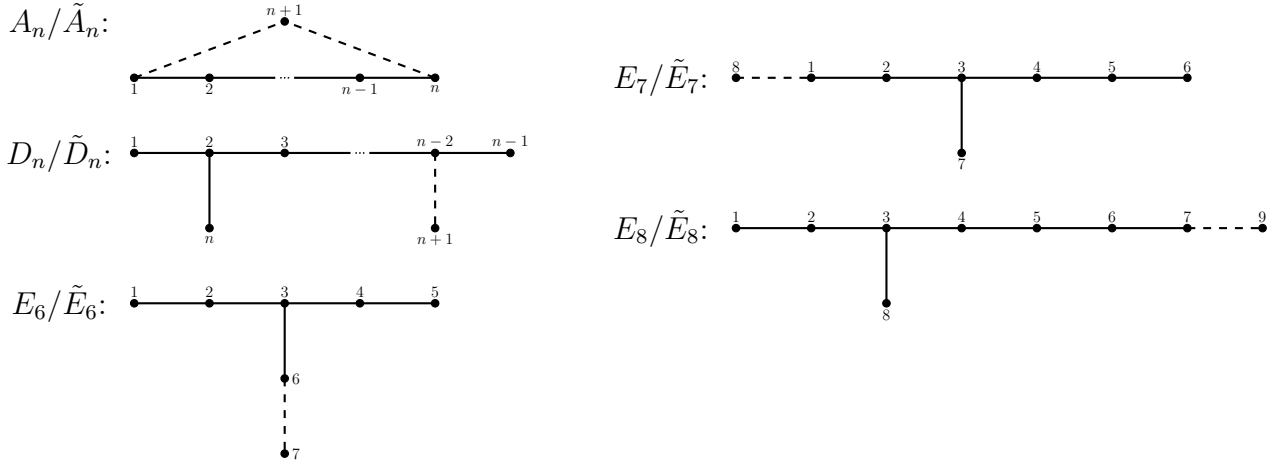
A quiver $Q$ is said to be of \emph{finite representation type} if there are finitely many isomorphism classes of indecomposable $kQ$-modules. Gabriel's theorem states that a connected quiver is of finite type if and only if the underlying graph is one of Dynkin type $ADE$ graphs above. Furthermore, there is notion of \emph{tame representation type}, which occur precisely when the underlying graph is one of the extended Dynkin type $\tilde{A}\tilde{D}\tilde{E}$ above. (All other quivers are of \emph{wild representation type}, which more or less says that it is impossible to have good understanding of all their indecomposable modules.)

The Dynkin diagrams correspond to finite subgroups $G\subset\text{SU}(2)$. The finite subgroups of $\text{SU}(2)$ are classified (up to conjugacy) by the following list:
\begin{enumerate}[\qquad$\bullet$]
\item The cyclic groups $\mathbb{Z}_n\cong\text{C}_n=\langle x\,|\,x^n=1\rangle$.
\item The binary dihedral groups, also known as the dicyclic groups, $\mathbb{B}\text{D}_{4n}=\langle x,y\,|\,x^{2n}=1,\, x^n=y^2, \,y^{-1}xy=x^{-1}\rangle$.
\item The binary tetrahedral group $\mathbb{B}\text{T}=\langle x,y,z\,|\,x^2=y^3=z^3=xyz\rangle$.
\item The binary octrahedral group $\mathbb{B}\text{O}=\langle x,y,z\,|\,x^2=y^3=z^4=xyz\rangle$.
\item The binary icosahedral group $\mathbb{B}\text{I}=\langle x,y,z\,|\,x^2=y^3=z^5=xyz\rangle$.
\end{enumerate}
Given a finite group $G$ and a finite dimensional $\mathbb{C}$-representation $V$ of $G$, we can define the \emph{McKay quiver} of $V$ as follows. For each (isomorphism class of) irreducible finite dimensional representation $\rho:G\rightarrow V$ we take a corresponding vertex $v_{\rho}$. We put an arrow $v_{\rho_1}\rightarrow v_{\rho_2}$ if and only if $\rho_2$ is a direct summand of $V\otimes\rho_1$. 

In the case $G\subset\text{SU}(2)$ we have a canonical representation on $V=\mathbb{C}^2$. It turns out the underlying graphs of the McKay quivers in these cases are the corresponding extended Dynkin diagrams above, where the extra vertex corresponds to the trivial one-dimensional representation, according to:
\begin{enumerate}[\qquad$\bullet$]
    \item $G=\mathbb{Z}_{n+1}$ yields McKay graph $\tilde{A}_{n}$.
    \item $G=\mathbb{B}\text{D}_{4(n-2)}$ yields McKay graph $\tilde{D}_n$.
    \item $G=\mathbb{B}\text{T}$ yields McKay graph $\tilde{E}_6$.
    \item $G=\mathbb{B}\text{O}$ yields McKay graph $\tilde{E}_7$.
    \item $G=\mathbb{B}\text{I}$ yields McKay graph $\tilde{E}_8$.
\end{enumerate}
Finally, in the classical McKay we a have a correspondence also with the \emph{du Val singularities}, given at the origin of $\mathbb{C}^3$ by:
\begin{enumerate}[\qquad$\bullet$]
    \item  The $A_n$ singularities given by $X^2+Y^2+Z^{n+1}=0$.
    \item The $D_n$ singularities $X^2+ZY^2+Z^{n-1}=0$.
    \item The $E_6$ singularity $X^2+Y^3+Z^4=0$.
    \item The $E_7$ singularity $X^2+Y^3+YZ^3=0$. 
    \item The $E_8$ singularity $X^2+Y^3+Z^5=0$.
\end{enumerate}
These singularities can equivalently be defined by $\mathbb{C}^2/G$, where $G$ is the corresponding group. They also classify certain types of singularities, called simple. One finds the crepant resolution of singularities by repeated blow-up. The exceptional divisor in the resolution of singularities is then a union $\mathbb{P}_{\mathbb{C}}^1$'s with transverse intersections. One defines the \emph{intersection graph} of the by taking one vertex for each copy $\mathbb{P}_{\mathbb{C}}^1$, and we put an edge between any two vertices if the corresponding $\mathbb{P}_{\mathbb{C}}^1$'s intersect. The intersection graph of the du Val singularities is then the corresponding Dynking graph.

\subsubsection{The derived McKay correspondence}\label{secmckay}
The preprojective algebras enter the picture because of the equivalence of categories,
\[
\Pi(Q)\text{-mod}\simeq (k[x,y]*G)\text{-mod}
\]
(see \cite{MR1620538}). Here, $G\subset\text{SU}(2)$ is a finite subgroup and $Q$ is a corresponding extended Dynkin quiver. The algebra $k[x,y]*G$ appearing on the right-hand side, called the \emph{skew-symmetric algebra}, is the set $G\times k[x,y]$ with the twisted product $(p(x,y),g)\cdot(q(x,y),h)=(p(x,y)\cdot g(q(x,y)),gh)$. For cyclic groups the proof of the Morita equivalence even yields an isomorphism of algebras.

The derived McKay correspondence below relates this to the resolutions of singularities. It is a basic fact that $\text{Coh}_G(\mathbb{C}^2)$ is equivalent to $(\mathbb{C}[x,y]*G)\text{-mod}$, where $\text{Coh}_G(\mathbb{C}^2)$ is the categories of $G$-equivariant coherent sheaves on $\mathbb{C}^2$.

\begin{theorem}[The derived McKay correspondence {\cite[Theorem 1.4]{MR1752785}}]\label{derivedmckay}
Let $G\subset\text{SU}(2)$ be a finite subgroup and let $X_{\textup{res}}$ be the crepant resolution of singularities of $\mathbb{C}^2/G$. Then we have an equivalence of derived categories
\[
D^b(\textup{Coh}(X_{\textup{res}}))\simeq D^b(\textup{Coh}_G(\mathbb{C}^2)).
\]
\end{theorem}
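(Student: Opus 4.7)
The plan is to follow the Fourier--Mukai approach of Bridgeland--King--Reid \cite{MR1752785}, specialised to the two-dimensional (in fact crepant) setting where it becomes notably cleaner. The central geometric input is Nakamura's $G$-Hilbert scheme $G\text{-Hilb}(\mathbb{C}^2)$, which parametrises $G$-clusters in $\mathbb{C}^2$, i.e.\ $G$-invariant $0$-dimensional subschemes $Z\subset\mathbb{C}^2$ such that $H^0(\mathcal{O}_Z)$ is the regular representation of $G$. First I would identify $X_{\text{res}}$ with $G\text{-Hilb}(\mathbb{C}^2)$; for $G\subset\textup{SU}(2)$ this is a theorem of Ito--Nakamura and yields a universal $G$-cluster $\mathcal{Z}\subset X_{\text{res}}\times\mathbb{C}^2$, flat over $X_{\text{res}}$, with the canonical $G$-action on the second factor.

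With the projections $p:\mathcal{Z}\to X_{\text{res}}$ and $q:\mathcal{Z}\to\mathbb{C}^2$ in hand, I would then define the Fourier--Mukai functor
\[
\Phi : D^b(\textup{Coh}(X_{\text{res}})) \longrightarrow D^b(\textup{Coh}_G(\mathbb{C}^2)), \qquad F \longmapsto Rq_*\bigl(\mathcal{O}_{\mathcal{Z}}\otimes^L p^*F\bigr),
\]
where the $G$-equivariant structure comes from the target. It has a left (and right) adjoint $\Psi$ given by the transposed kernel tensored with $\omega_{X_{\text{res}}}[\dim X_{\text{res}}]$, and since $X_{\text{res}}\to\mathbb{C}^2/G$ is crepant one has $\omega_{X_{\text{res}}}\cong\mathcal{O}_{X_{\text{res}}}$, so the Serre functors on both sides agree up to shift and the adjoint is well behaved.

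The main step is to prove $\Phi$ is fully faithful using the Bondal--Orlov criterion: it suffices to verify, on the spanning class $\{\mathcal{O}_x\}_{x\in X_{\text{res}}}$, that $\textup{Hom}^i(\Phi(\mathcal{O}_x),\Phi(\mathcal{O}_y))$ vanishes for $x\neq y$ or $i\notin[0,2]$, and equals $\mathbb{C}$ when $x=y$ and $i=0$. Since $\Phi(\mathcal{O}_x)=\mathcal{O}_{Z_x}$ is the structure sheaf of the corresponding $G$-cluster, this translates (via adjunction and base change) into a dimension estimate on the fibre product $\mathcal{Z}\times_{X_{\text{res}}}\mathcal{Z}\subset X_{\text{res}}\times\mathbb{C}^2\times\mathbb{C}^2$. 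The hard part --- and the heart of the BKR argument --- is showing this fibre product has the expected dimension; in dimension $2$ this follows from the crepancy of the resolution together with an application of the new intersection theorem (or, following BKR, from an explicit Koszul-style argument using that $G\subset\textup{SU}(2)$ so the relevant canonical bundles are trivial).

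Finally, once $\Phi$ is fully faithful, I would conclude it is an equivalence by a standard argument: $\Phi$ admits both adjoints, its essential image is a triangulated subcategory closed under the Serre functor, and the matching of Serre functors (both are shifts by $2$, using crepancy on the $X_{\text{res}}$ side and the $G$-equivariant Calabi--Yau structure of $\mathbb{C}^2$ on the right) forces the image to be all of $D^b(\textup{Coh}_G(\mathbb{C}^2))$; equivalently, the right adjoint $\Psi$ satisfies $\Psi\circ\Phi\simeq\textup{id}$ by fully faithfulness and has zero kernel by the Calabi--Yau argument, hence $\Phi\circ\Psi\simeq\textup{id}$ as well. The main obstacle throughout is the fibre-product dimension estimate; everything else is formal once that is in place.
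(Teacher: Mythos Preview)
Your proposal correctly sketches the Bridgeland--King--Reid argument, which is exactly the reference the paper cites for this theorem. Note, however, that the paper does not give its own proof of this statement: it is quoted from \cite{MR1752785} as background and used as a black box, so there is no proof in the paper to compare against beyond the citation itself.
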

Thus the bounded derived category of coherent sheaves on resolution of singularities is derived Morita equivalent to the preprojective algebra of the associated extended Dynkin quiver.

\subsection{Formality of the type $\tilde{A}_n$ Ginzburg algebra}\label{formalityginzburgsec}
We will later realise the Ginzburg dg-algebra of type $\tilde{A}_n$ as a Chekanov--Eliashberg algebra of a Legendrian attaching link for a Weinstein handle-decomposition of the $A_n$-Milnor fibre with a divisor removed and suitable stop. Here we prove the important fact that this dg-algebra is formal.

Given a quiver $Q$, when the original quiver $Q$ is acyclic the relation between the full homology of $\mathcal{G}(Q)$ and $\Pi(Q)$ is worked out in \cite{MR3503979}:
\begin{theorem}[{\cite[Theorem A]{MR3503979}}]\label{Hermes}
Suppose that $Q$ is acyclic. Then $\mathcal{G}(Q)$ is formal with $H_{\bullet}(\mathcal{G}(Q))=\Pi(Q)$, i.e., the homology of the Ginzburg algebra is concentrated in degree $0$, if and only if the underlying graph of $Q$ is \emph{not} of Dynkin type $ADE$.
\end{theorem}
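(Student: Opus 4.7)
The plan is to prove both directions of the equivalence by relating the homology of $\mathcal{G}(Q)$ to derived bimodule invariants of $\Pi(Q)$, and then using a Hilbert-series argument to separate the Dynkin from the non-Dynkin case. As a preliminary observation, valid for any acyclic $Q$, one has $H_0(\mathcal{G}(Q)) = \Pi(Q)$ tautologically: only the degree-$1$ loops $\zeta_i$ carry nonzero differential, and $\partial \zeta_i$ is precisely the defining preprojective relation at $i$, so the image of $\partial$ in degree $0$ cuts out exactly the preprojective ideal inside $k\overline{Q}$. The content of the theorem therefore lies in the higher homology.

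Next, I would construct the canonical length-two complex of $\Pi(Q)$-bimodules
\[
0 \to \Pi \otimes_R \Pi \xrightarrow{d_2} \Pi \otimes_R V \otimes_R \Pi \xrightarrow{d_1} \Pi \otimes_R \Pi \xrightarrow{\mu} \Pi \to 0,
\]
where $V = k\overline{Q}_1$, $\mu$ is multiplication, $d_1$ records the action of the doubled arrows, and $d_2$ encodes the preprojective relations. Identifying the Koszul-type complex $\mathcal{G}(Q) \otimes^{\mathbb{L}}_\Pi R$ with the bar-like resolution obtained from the four-term sequence above, concentration of $H_\bullet(\mathcal{G}(Q))$ in degree zero becomes equivalent to exactness of this sequence. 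Equipping $\Pi(Q)$ with the path-length grading, writing $H(t)$ for its matrix-valued Hilbert series, and letting $C$ be the adjacency matrix of the underlying undirected graph of $Q$, exactness translates to the matrix identity
\[
H(t)\bigl(I - C t + I t^{2}\bigr) = I.
\]

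The argument then splits along this identity. In the non-Dynkin case the formal inverse $(I - Ct + I t^{2})^{-1}$ has entries with nonnegative coefficients which coincide with the Hilbert series of the infinite-dimensional algebra $\Pi(Q)$; I would close this direction by establishing exactness of the bimodule complex directly, either via Schofield's construction of preprojective algebras through preinjective modules, or by invoking the $2$-Calabi--Yau property of $\Pi(Q)$ established by Crawley-Boevey--Holland. This yields $H_k(\mathcal{G}(Q)) = 0$ for $k \geq 1$, and formality is then automatic: the minimal $A_\infty$-operations $m_k$ on $H_\bullet(\mathcal{G}(Q))$ have degree $k-2$ and so must vanish for $k \geq 3$ once $H_\bullet$ is concentrated in degree $0$. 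In the Dynkin case, $\Pi(Q)$ is finite-dimensional so $H(t)$ is a polynomial, and the matrix identity above fails in sufficiently high degree, forcing $H_k(\mathcal{G}(Q)) \neq 0$ for some $k \geq 1$ and hence the failure of the stated concentration. The main obstacle is the exactness of the bimodule complex in the non-Dynkin case, which is the step that genuinely uses the $2$-Calabi--Yau representation-theoretic content of preprojective algebras.
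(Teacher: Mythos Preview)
The paper does not prove this statement; it is quoted verbatim as Theorem~A of \cite{MR3503979} and used only as background/motivation. There is therefore no ``paper's own proof'' to compare against. What the paper does prove is the closely related Theorem~\ref{formal}, which handles the cyclically oriented $\tilde{A}_n$ quiver---a case \emph{not} covered by the cited result, since that quiver is not acyclic.

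For that related theorem, the paper's method is in the same family as yours but organised differently: instead of the bimodule Schofield complex of $\Pi$, it passes to the quadratic Koszul dual algebra $A$ (obtained by dualising the differential $\partial\zeta_i$ to a multiplication), identifies $\mathcal{G}(\tilde{A}_n)$ with $\mathrm{Hom}_R^\bullet(BA,R)$ so that $H_\bullet(\mathcal{G}(\tilde{A}_n))\cong\mathrm{Ext}_A(R,R)$, and then computes this Ext via an explicit small free resolution of $R$ over $A$ whose differentials are left multiplication by $\sum_i a_i$ and $\sum_i a_i^*$. Exactness is verified by a bare-hands dimension count, and after applying $\mathrm{Hom}_A(-,R)$ the resulting complex has zero differential, giving homology concentrated in degree~$0$. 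Your approach via the bimodule complex and the Hilbert-series identity $H(t)(I-Ct+It^2)=I$ is the standard route to the general acyclic result and is sound as an outline; the paper's argument trades generality for an explicit resolution that avoids invoking the $2$-Calabi--Yau property as a black box.

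One notational slip in your write-up: the expression ``$\mathcal{G}(Q)\otimes^{\mathbb{L}}_\Pi R$'' is not well-typed, since $\mathcal{G}(Q)$ carries no $\Pi$-module structure. What you presumably intend is that exactness of your four-term complex is equivalent to $\Pi$ having a length-two bimodule resolution, and that tensoring this resolution with $R$ over $\Pi$ on one side recovers (the underlying complex of) $\mathcal{G}(Q)$; equivalently, $\mathcal{G}(Q)$ computes $\mathrm{Tor}^\Pi_\bullet(R,R)$ when the four-term complex is exact. You should make this identification explicit rather than gesturing at it.
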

For example, according to Theorem~\ref{Hermes}, $H_{\bullet}(\mathcal{G}({Q_{D_4}}))\not=\Pi({Q_{D_4}})$ but $H_{\bullet}(\mathcal{G}({Q_{\tilde{D}_4}}))=\Pi({Q_{\tilde{D}_4}})$. This theorem does however not include the $\tilde{A}_n$ cases with the cyclic orientation of the arrows. One could fix this by changing the orientation and applying an isomorphism changing some of the signs. However we choose to give a direct independent proof for the claim that the full homology of the Ginzburg algebra is given by the preprojective algebra in the $\tilde{A}_n$ cases, independent of Theorem~\ref{Hermes} (see also \cite[Lemma 9.1]{kalck2018relativesingularitycategoriesii}):
\begin{theorem}\label{formal}
Let $\mathcal{G}(\tilde{A}_n)$ denote the Ginzburg algebra and $\Pi(\tilde{A}_n)$ the preprojective algebra of the quiver in Figure~\ref{Anquiv}. Then $H_{\bullet}(\mathcal{G}(\tilde{A}_n))=H_0(\mathcal{G}(\tilde{A}_n))=\Pi(\tilde{A}_n)$. Consequently, the map $\mathcal{G}(\tilde{A}_n)\rightarrow\Pi(\tilde{A}_n)$ given by $\alpha_i\mapsto \alpha_i$, $\alpha_i^*\mapsto\alpha_i^*$ and $\zeta_i\mapsto 0$ defines a quasi-isomorphism.
\end{theorem}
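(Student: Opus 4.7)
The statement has two components: that $H_{\bullet}(\mathcal{G}(\tilde{A}_n))$ is concentrated in degree zero, and that it coincides there with $\Pi(\tilde{A}_n)$. The explicit map $\mathcal{G}(\tilde{A}_n)\rightarrow\Pi(\tilde{A}_n)$ sending $\zeta_i\mapsto 0$ is precisely the composition of the quotient onto $H_0$ with the identification of $H_0$ with $\Pi(\tilde{A}_n)$, so once both homological claims are established the quasi-isomorphism follows for free. Note first that $\mathcal{G}(\tilde{A}_n)$ is concentrated in non-negative degrees (all generators have degree $\geq 0$ and only the $\zeta_i$'s contribute positive degree), so $\mathcal{G}_0=k\overline{Q}$.

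The degree-zero computation is immediate: the image of $\partial_1:\mathcal{G}_1\to\mathcal{G}_0$ is the two-sided ideal generated by the elements $\partial\zeta_i = \alpha_i^*\alpha_i - \alpha_{i-1}\alpha_{i-1}^*$, which is exactly the ideal defining $\Pi(\tilde{A}_n)$ in Definition~\ref{defpreproj}. Hence $H_0(\mathcal{G}(\tilde{A}_n))=\Pi(\tilde{A}_n)$, and the given map is surjective on $H_0$ with the correct kernel.

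The substantial task is to prove $H_k(\mathcal{G}(\tilde{A}_n))=0$ for $k\geq 1$. I would write each $\mathcal{G}_k$ in the natural $\zeta$-path basis: monomials $p_0\zeta_{i_1} p_1\cdots\zeta_{i_k} p_k$ with $p_j\in k\overline{Q}$ and endpoints compatible with the vertices $i_j$. The differential acts by graded Leibniz, replacing a single $\zeta_{i_j}$ by $\alpha_{i_j}^*\alpha_{i_j}-\alpha_{i_j-1}\alpha_{i_j-1}^*$ with appropriate signs. The strategy is to produce an explicit contracting homotopy $h:\mathcal{G}_k\to\mathcal{G}_{k+1}$ for $k\geq 1$ with $\partial h+h\partial=\mathrm{id}$, built from the observation that the relation $\partial\zeta_i$ identifies the two return loops at vertex $i$ up to a coboundary; iterating around the cycle one may transport any $\alpha^*\alpha$ or $\alpha\alpha^*$ subword along the quiver and close up. The cyclic topology of $\tilde{A}_n$ is what makes this transport procedure globally well-defined, in contrast to acyclic $ADE$ quivers where the analogous transport gets stuck at the leaves of the Dynkin diagram and produces obstructions (which is why Theorem~\ref{Hermes} excludes those cases).

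The principal obstacle is the combinatorial bookkeeping: defining $h$ uniformly on all monomials and verifying the homotopy identity with the correct signs. A cleaner alternative route, which I would pursue if the direct homotopy becomes unwieldy, is a spectral sequence argument: filter doubly by $\zeta$-count and by word length in $\alpha,\alpha^*$, so that the associated graded has an easily computed differential modelled on the Koszul complex. Combined with the Morita-type identification $\Pi(\tilde{A}_n)\simeq\mathbb{C}[x,y]\ast\mathbb{Z}_{n+1}$ recalled in Section~\ref{secmckay} and the exactness of the Koszul resolution of $\mathbb{C}[x,y]$ as a bimodule over itself, one recognises $\mathcal{G}(\tilde{A}_n)$ as a free bimodule resolution of $\Pi(\tilde{A}_n)$, whence $H_k=0$ for $k\geq 1$. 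Either route completes the proof and in particular shows that the natural map $\mathcal{G}(\tilde{A}_n)\to\Pi(\tilde{A}_n)$ is a quasi-isomorphism, so $\mathcal{G}(\tilde{A}_n)$ is formal.
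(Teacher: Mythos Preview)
Your proposal is a plan rather than a proof: you correctly identify $H_0$ and outline two strategies for $H_k=0$ when $k\geq 1$, but neither is carried out. The contracting homotopy $h$ is never written down (you yourself flag the bookkeeping as the ``principal obstacle''), and the spectral sequence route ends by asserting that one ``recognises $\mathcal{G}(\tilde{A}_n)$ as a free bimodule resolution of $\Pi(\tilde{A}_n)$'' --- but that recognition \emph{is} the content of the theorem, so invoking it without an argument is circular. Your intuition that the cyclic shape of $\tilde{A}_n$ lets one transport $\alpha^*\alpha$ around to close up is the right heuristic, but it is not a proof.

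The paper takes a different and cleaner route via Koszul duality. Dualise the quadratic differential of $\mathcal{G}$ into quadratic relations: set $A=k\overline{Q}/I$ with $I=(a_{i+1}a_i,\ a_i^*a_{i+1}^*,\ a_i^*a_i-a_{i-1}a_{i-1}^*)$, graded by $|a_i|=|a_i^*|=-1$. One checks directly that the cobar construction $\operatorname{Hom}_R^\bullet(BA,R)$ is isomorphic to $\mathcal{G}(\tilde{A}_n)$ as a dg-algebra, so $H_\bullet(\mathcal{G})\cong\operatorname{Ext}_A^\bullet(R,R)$. The payoff is that $A$ is \emph{finite-dimensional} (only $e_i,a_i,a_i^*,z_i:=a_i^*a_i$ survive), so one can write down a small explicit free resolution
\[
R\leftarrow A\xleftarrow{\Phi_1} A^{\oplus 2}\xleftarrow{\Phi_2} A^{\oplus 3}\leftarrow\cdots
\]
with $\Phi_m$ a bidiagonal $m\times(m{+}1)$ matrix whose nonzero entries are left multiplication by $\sum_i a_i$ and $\pm\sum_i a_i^*$. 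That $\Phi_m\Phi_{m+1}=0$ is immediate from the relations in $I$; exactness is then a pure linear-algebra dimension count using $\dim A^{\oplus m}$ and $\operatorname{rank}\Phi_m$. Finally, applying $\operatorname{Hom}_A(-,R)$ annihilates every $\Phi_m$ (since $a_i,a_i^*$ act by zero on $R$), so the dual complex has zero differential and all of $\operatorname{Ext}_A(R,R)$ sits in total degree~$0$.

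What each approach buys: the paper's Koszul-dual trick converts an infinite-dimensional acyclicity problem into a finite-dimensional rank computation, at the cost of the preliminary identification $\mathcal{G}\cong\operatorname{Hom}_R(BA,R)$. Your direct homotopy, if you can produce it, would avoid that detour but requires genuinely managing the combinatorics of arbitrary $\zeta$-words; your second route via $\mathbb{C}[x,y]\ast\mathbb{Z}_{n+1}$ is viable in principle but you would still need to exhibit the bimodule resolution explicitly, which in the end is work of the same order as what the paper does on the dual side.
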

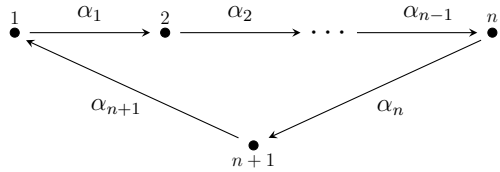
\begin{figure}[H]
\centering
\begin{tikzpicture}
\draw[fill=black] (0,0) circle (0.06cm);
\draw[fill=black] (2,0) circle (0.06cm);
\draw[fill=black] (6.35,0) circle (0.06cm);
\draw[black,-stealth] (0.2,0) to (1.8,0);
\draw[black,-stealth] (2.2,0) to (3.8,0);
\node at (4.2,0) {$\cdots$};
\draw[black,-stealth] (4.55,0) to (6.15,0);
\draw[fill=black] (3.175,-1.5) circle (0.06cm);
\draw[black,-stealth] (6.2,-0.1) to (3.375,-1.4);
\draw[black,stealth-] (0.15,-0.1) to (2.975,-1.4);

\node[scale=0.6,thick] at (0,0.2) {$1$};
\node[scale=0.6,thick] at (2,0.2) {$2$};
\node[scale=0.6,thick] at (6.35,0.2) {$n$};
\node[scale=0.6,thick] at (3.175,-1.7) {$n+1$};

\node[scale=0.8] at (1,0.25) {$\alpha_1$};
\node[scale=0.8] at (3,0.25) {$\alpha_2$};
\node[scale=0.8] at (5.5,0.25) {$\alpha_{n-1}$};
\node[scale=0.8] at (5,-1) {$\alpha_{n}$};
\node[scale=0.8] at (1.35,0-1) {$\alpha_{n+1}$};
\end{tikzpicture}
\caption{Cyclically oriented $\tilde{A}_{n}$.}
\label{Anquiv}
\end{figure}
\begin{proof}
Let $A$ denote the algebra $R$-algebra obtained by dualising the quadratic differential of the Ginzburg algebra to a multiplication (see Figure~\ref{dualisingfigure}), the Koszul dual. In other words, $A=k\overline{Q}/I$, where $I$ is the ideal generated by relations,
\[
I=(a_{i+1}a_i,\,a_i^*a_{i+1}^*,\,a_i^*a_i-a_{i-1}a_{i-1}^*\;|\;i=1,\dots,n+1)
\]
(the last relation should be read with indices modulo $n+1$). We use the notations $\overline{A}=A/R$ and replace $\alpha$ by $a$ to distinguish between elements in $A$ and in $\mathcal{G}(Q)$. We turn $A$ into a graded algebra by letting $|a_i|=|a_i^*|=-1$. 

We denote by $BA$ the bar complex of $A$:
\[
\begin{tikzcd}
R&\arrow{l}[swap]{\epsilon} \overline{A}[-1]&\arrow{l} (\overline{A}[-1])^{\otimes_R2}&\arrow{l}\cdots&\arrow{l} (\overline{A}[-1])^{\otimes_Rm}&\arrow{l}\cdots\quad=\quad BA
\end{tikzcd}
\]
where $\epsilon$ is the obvious augmentation. Its differential is given on pure tensors by
\[
\partial_{BA}(sx_1\otimes\cdots\otimes sx_n)=\sum_{i=1}^{n-1}(-1)^{i-1+|x_1|+\dots+|x_{i-1}|}sx_1\otimes\cdots\otimes s(x_ix_{i+1})\otimes\cdots\otimes sx_n,
\]
where we use the notation $sx=x[-1]$.

We get a projective resolution of $R$ over $A$ (as right $A$-modules) by tensoring $BA$ with $A$:
\[
\begin{tikzcd}
R&\arrow{l}BA\otimes_R A
\end{tikzcd}
\]
Applying $\text{Hom}_A^{\bullet}(-,R)$ (graded dual with respect to the total grading) to this resolution, followed by taking cohomology, calculates $\text{Ext}_A(R,R)$ (considering $A$ as a dg-algebra with trivial differential). Moreover, we have isomorphisms
\begin{align*}
\text{Hom}_A^{\bullet}(BA\otimes_RA,R)&\cong\text{Hom}_R^{\bullet}(BA,\text{Hom}_A^{\bullet}(A,R)) \\
&\cong \text{Hom}_R^{\bullet}(BA,R),
\end{align*}
using the tensor-hom adjunction. Moreover
\[
\text{Hom}_R^{\bullet}(BA,R)\cong\mathcal{G}(Q),
\]
as dg-algebras (with respect to the total grading), by the $R$-isomorphism $(s\alpha_i)^{\vee}\mapsto\alpha_i$, $(sa_i^*)^{\vee}\mapsto\alpha_i^*$, $(sz_i)^{\vee}\mapsto\zeta_i$, where $z_i=a_i^*a_i=a_{i-1}a_{i-1}^*$.

Thus we can compute $H_{\bullet}(\mathcal{G}(Q))$ by computing $\text{Ext}_A(R,R)$. We do this by the following simpler free resolution:
\[
\begin{tikzcd}
R&\arrow{l}[swap]{\epsilon}A&\arrow{l}[swap]{\Phi_1}A^{\oplus 2}&\arrow{l}[swap]{\Phi_2}A^{\oplus 3}&\arrow{l}[swap]{\Phi_3}\cdots
\end{tikzcd}
\]
The maps are given by
\[
\Phi_m=\begin{pmatrix}
\phi_1&(-1)^m\phi_2&0&0&0&\cdots &0&0 \\
0&\phi_1&(-1)^m\phi_2&0&0&\cdots&0&0 \\
0&0&\phi_1&(-1)^m\phi_2&0&\cdots&0&0 \\
\vdots&\vdots&\ddots&\ddots&\ddots&\ddots&\vdots&\vdots\\
0&0&0&0&0&\cdots&\phi_1&(-1)^m\phi_2
\end{pmatrix},
\]
where
\begin{align*}
\phi_1(x)&=(a_1+\cdots+a_n)\cdot x,\\
\phi_2(x)&=(a_1^*+\cdots+a_n^*)\cdot x
\end{align*}
(which are right-module morphisms $A\rightarrow A$). This indeed defines a resolution. First of all,
\[
\Phi_m\circ\Phi_{m+1}=0
\]
because $\phi_1^2=\phi_2^2=\phi_1\phi_2-\phi_2\phi_1=0$. Secondly, the sequence is necessarily exact by dimension reasons. We have that $\text{dim}(A^{\oplus m})=4n m$ and $\text{rank}(\Phi_m)=2mn+n$. But then $4(m+1)n=2mn+n+\text{dim}(\text{ker}(\Phi_m))$, by the dimension formula of linear algebra, hence $\text{dim}(\text{ker}(\Phi_n))=2mn+3n=2(m+1)n+n=\text{rank}(\Phi_{m+1})$. Since $\text{ker}(\Phi_m)\supset\text{Im}(\Phi_{m+1})$, and they have the same dimension, we conclude that $\text{ker}(\Phi_m)=\text{Im}(\Phi_{m+1})$. (Similarly, $\text{ker}(\epsilon)=\text{Im}(\Phi_1)$ is easily checked.) 

Note that since we want a complex with differential of degree $-1$, in particular $\Phi_m$ should be of degree $-1$, we do not shift the grading on $A$. 

Applying $\text{Hom}_A(-,R)$ to the resolution we get the complex with vanishing differential
\[
\begin{tikzcd}
R\arrow{r}{0}&R^{\oplus 2}\arrow{r}{0}&R^{\oplus 3}\arrow{r}{0}&\dots
\end{tikzcd}
\]
Taking homology thus gives back the same algebra. Therefore the homology is concentrated in degree $0$.
\end{proof}
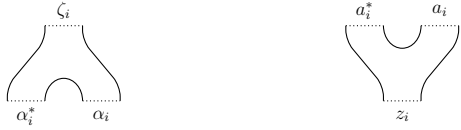
\begin{figure}[H]
\centering
\begin{tikzpicture}
\draw[densely dotted] (2,4) to (2.5,4);
\draw[rounded corners=4pt] (2,4) to (2,3.8) to (1.5,3.2) to (1.5,3);
\draw[densely dotted] (1.5, 3) to (2,3);
\draw (2.5,3) arc (0:180:0.25 and 0.30);
\draw[densely dotted] (2.5, 3) to (3,3);
\draw[rounded corners=4pt] (2.5,4) to (2.5,3.8) to (3,3.2) to (3,3);

\node[scale=0.6] at (2.25,4.2) {$\zeta_i$};
\node[scale=0.6] at (1.75,2.8) {$\alpha_i^*$};
\node[scale=0.6] at (2.75,2.8) {$\alpha_i$};

\draw[densely dotted] (6,4) to (6.5,4);
\draw[densely dotted] (7,4) to (7.5,4);
\draw (7,4) arc (0:-180:0.25 and 0.30);

\draw[densely dotted] (6.5,3) to (7,3);
\draw[rounded corners=4pt] (6,4) to (6,3.8) to (6.5,3.2) to (6.5,3);
\draw[rounded corners=4pt] (7.5,4) to (7.5,3.8) to (7,3.2) to (7,3);

\node[scale=0.6] at (6.755,2.8) {$z_i$};
\node[scale=0.6] at (6.25,4.2) {$a_i^*$};
\node[scale=0.6] at (7.25,4.2) {$a_i$};
\end{tikzpicture}
\caption{"Discs" contributing to $\partial\zeta_i=\alpha_i^*\alpha_i$, respectively to $a_i^*a_i=z_i$}
\end{figure}
\label{dualisingfigure}
\subsection{Homological mirror symmetry}

In string theory, the extra dimensions required are described by a compact Calabi--Yau manifold $X$. There are two different theories which to describe the physics: the $A$-side which uses the symplectic geometry of $X$ and the $B$-side which uses the algebraic geometry of $X$. These theories are not equivalent in general. However, mirror symmetry is a conjecture that for any Calabi--Yau manifold $X$ there exists a mirror Calabi--Yau manifold $\check{X}$, such that
\begin{align*}
A\text{-side}(X)&\simeq B\text{-side}(\check{X}), \\
B\text{-side}(X)&\simeq A\text{-side}(\check{X}). 
\end{align*}
Based upon these ideas Kontsevich \cite{MR1403918} formulated his homological mirror symmetry conjecture, which predicts connections between invariants of symplectic and algebraic varieties, and in particular, between different Kähler manifolds, which have both the structure of an algebraic variety and that of a symplectic manifold. According to this conjecture, the symplectic invariant given by the derived Fukaya category on the $A$-side on $X$ should be equivalent to the algebraic invariant given by the derived category of coherent sheaves on the $B$-side on the mirror $\check{X}$. And furthermore, another equivalence given by swapping the $A$-side and $B$-side on $X$ and $\check{X}$. The equivalence should be as triangulated categories.

This conjecture has been extended beyond the scope of Calabi--Yau manifolds (or more generally, compact Kähler manifolds). In this paper we will primarily be concerned with the equivalences of the (partially) wrapped Fukaya categories of the Weinstein manifolds (with stops) that appear in Example~\ref{loghkA}, and the derived categories of coherent sheaves (or singularity categories of the Landau--Ginzburg models) of the same manifolds considered as algebraic varieties. Of central importance is the notion of log-Calabi Yau manifolds, which behave as the open analogues of closed Calabi-Yau manifolds.

Below is a famous example of homological mirror symmetry, with variations corresponding to the different mirror statements (Section~\ref{mirrorsec}) of the paper. The part which uses the surgery formula (Example~\ref{surgeryex}), illustrates the strategy of the main part of this article.

\subsubsection{Examples of homological mirror symmetry derived from the case $T^*S^1$}\label{examplessec}
Here we discuss some well-known and basic examples of homological mirror symmetry.
\begin{example}[$T^*S^1$]\label{examplecylinder}
An example of a manifold which is its own mirror is $T^*S^1=\mathbb{C}^*$. We have that $D^b(\text{Coh}(\mathbb{C}^*))=D^b(\mathbb{C}[x,x^{-1}])$, as coherent sheaves on $\mathbb{C}$ are just finitely generated $\mathbb{C}[x,x^{-1}]$-modules.

The (derived) wrapped Fukaya category (see Section~\ref{wrappedsec}) of the cylinder $D^{\pi}\mathcal{W}(T^*S^1)$ is generated by a cotangent fibre $L=T_p^*S^1\subset T^*S^1$. Thus it suffices to compute the endomorphism algebra $CW_{T^*S^1}(L,L)$, i.e. the wrapped Floer complex of the fibre. We show that this $A_{\infty}$-algebra is equal to $\mathbb{C}[x,x^{-1}]$ for suitable choices. As a vector space $CW_{T^*S^1}(L,L)$ is generated by the intersection points of $L$ and $\phi_H^1(L)$,"the wrapping" of $L$, where $\phi_H^1$ is the time $1$-flow of a quadratic Hamiltonian, say $H(q,p)=p^2$ (on $T^*S^1=S^1_q\times\mathbb{R}_p$). This gives countably many generators $1,x^{\pm 1},x^{\pm 2},\dots$ (left-hand side of Figure~\ref{wfukdisc}). 

The degree of all generators are $0$ (in the standard grading, we omit details). The endomorphism algebra $CW_{T^*S^1}(L,L)$ is a priori an $A_{\infty}$-algebra, but since it is concentrated in degree $0$ the only non-trivial multiplication is $\mu^2$, and it is thus an ordinary associative algebra.

To see that the product structure is right, i.e. that $x^i\cdot x^j=x^{i+j}$ we must perturb $L$ again, say $\phi_G^1(F)$ with $G=2H$, and count certain immersed holomorphic discs with punctures on the boundary. The punctures should go to the intersection points and the boundary should go the fibre and its two perturbations. The generators need to be identified. Such discs exists, and can be drawn in the universal cover (see the right-hand side of Figure~\ref{wfukdisc}). They are the only possible discs, which can be seen by that the winding numbers must be $0$.
\begin{figure}[H]
\centering
\begin{tikzpicture}
\draw[thick] (0,0) rectangle (3,4);
\draw (0,2) -- (3,2);
\node at (3.3,2) {$S^1$};

\draw[thick, blue!80!black] (1.5,0) to (1.5,4);
\node[blue!80!black] at (1.5,-0.3) {$L$};
\draw[purple!80!black, thick] (0,1.7) to (3,2.3);
\draw[purple!80!black, thick] (0,2.3) to (3,2.9);
\draw[purple!80!black, thick] (0,2.9) to (3,3.5);
\draw[purple!80!black, thick] (0,3.5) to (2.5,4);
\draw[purple!80!black, thick] (0,1.1) to (3,1.7);
\draw[purple!80!black, thick] (0,0.5) to (3,1.1);
\draw[purple!80!black, thick] (0.5,0) to (3,0.5);
\node[purple!80!black] at (0.5,-0.3) {$\phi_H^1(L)$};
\node[scale=0.7] at (1.65,1.8) {$1$};
\node[scale=0.7] at (1.75,1.25) {$x^{-1}$};
\node[scale=0.7] at (1.65,2.4) {$x$};
\node[scale=0.7] at (1.7,3.05) {$x^2$};
\node[scale=0.7] at (1.7,3.65) {$x^3$};
\node[scale=0.7] at (1.75,0.65) {$x^{-2}$};
\node[scale=0.7] at (1.84,0.15) {$x^{-3}$};
\end{tikzpicture}
\qquad
\begin{tikzpicture}
\draw[thick] (0,0) rectangle (5,5);
\draw (0,1) to (5,1);
\node at (5.9,1) {$\mathbb{R}\twoheadrightarrow S^1$};
\node[blue!80!black] at (5.5,1.7) {$L$};
\node[cyan!80!black] at (5.8,2.4) {$\phi_G^1(L)$};
\node[purple!80!black] at (5.8,3.1) {$\phi_H^1(L)$};
\draw[blue!80!black, thick] (0.5,0) to (0.5,5);
\draw[blue!80!black, thick] (2.5,0) to (2.5,5);
\draw[blue!80!black, thick] (4.5,0) to (4.5,5);
\draw[purple!80!black, thick] (0,0.75) to (5,3.25);
\draw[purple!80!black, thick] (0,1.75) to (5,4.25);
\draw[purple!80!black, thick] (0,2.75) to (4.5,5);
\draw[purple!80!black, thick] (0,3.75) to (2.5,5);
\draw[purple!80!black, thick] (0,4.75) to (0.5,5);
\draw[purple!80!black, thick] (0.5,0) to (5,2.225);
\draw[purple!80!black, thick] (2.5,0) to (5,1.25);
\draw[purple!80!black, thick] (4.5,0) to (5,0.25);
\draw[cyan!80!black, thick] (0,0) to (2.5,5);
\draw[cyan!80!black, thick] (2,0) to (4.5,5);
\draw[cyan!80!black, thick] (4,0) to (5,2);
\draw[cyan!80!black, thick] (0,4) to (0.5,5);
\draw[fill, opacity=0.5] (0.5,1) -- (0.5,2) -- (1.166,2.333) -- (0.5,1);

\draw[fill, opacity=0.5] (4.5,5) -- (4.5,3) -- (3.166,2.333) -- (4.5,5);

\draw[fill, opacity=0.5] (2.5,5) -- (2.5,4) -- (1.833,3.667) -- (2.5,5);
\node[scale=0.7] at (0.3,1.2) {$1$};
\node[scale=0.7] at (0.3,2.15) {$x$};
\node[scale=0.7] at (1.3,2.15) {$x$};

\node[scale=0.7] at (2.7,4.8) {$x$};
\node[scale=0.7] at (2.7, 3.85) {$x^3$};
\node[scale=0.7] at (1.65,3.9) {$x^2$};

\node[scale=0.7] at (4.7,4.8) {$x$};
\node[scale=0.7] at (3.4,2.25) {$x$};
\node[scale=0.7] at (4.7,2.85) {$x^2$};
\end{tikzpicture}
\caption{$T^*S^1$ with $L$ and the wrapping $\phi^1_H(L)$ of $L$ on the left-hand side. Discs shown in the universal cover, contributing to $\mu^2(x,1)=x$, $\mu^2(x^2,x)=x^3$ and $\mu^2(x,x)=x^2$, respectively, on the right-hand side. The multiplication should be read as a map $\mu^2:CW_{T^*S^1}(\phi_G^1(L),\phi_H^1(L))\otimes CW_{T^*S^1}(L,\phi_G^1(L))\rightarrow CW_{T^*S^1}(L,\phi_H^1(L))$}.
\label{wfukdisc}
\end{figure}
\end{example}
\begin{example}[Stops and $\mathbb{C}P^1$]
Returning to $\mathbb{C}$; we have that $D^b(\text{Coh}(\mathbb{C}))=D^b(\mathbb{C}[X])$, and $D^b(\text{Coh}(\mathbb{C}^*))=D^b(\mathbb{C}[x,x^{-1}])$ is obtained by localising $x$. This localisation occurs when removing a divisor, which is a general pattern. Furthermore, following this pattern, $\mathbb{C}$ is obtained by removing a divisor from $\mathbb{C}P^1$, and the category $D^b(\mathbb{C}[X])$ is obtained by localising the category
\[
D^b(\text{Coh}(\mathbb{C}P^1))=D^b(kQ),\qquad Q=
\begin{tikzcd}
1\arrow[shift left, bend left]{r}\arrow[shift right, bend right]{r}&2,
\end{tikzcd}
\]
that is, the derived category of finitely generated modules of the path algebra of the Kronecker quiver (see e.g. \cite{MR4297810}).

Removing/adding a divisor (a point in the examples above) is supposed to correspond to removing/adding a so called stop in symplectic geometry. For now we can think of this as a neighbourhood of a Legendrian submanifold in the boundary at infinity, where we stop wrapping. In the example with $T^*S^1$, the boundary at infinity is two copies of $S^1$, and a Legendrian is just a collection of points on these two copies.

If we stop at one point, $D^{\pi}\mathcal{W}(TS^1,\text{pt})$ is known to be generated by a fibre (cocore) and a linking disc of the stop. However, the linking disc is actually not needed as a generator. Thus we need to compute $CW_{(T^*S^1,\text{pt})}(L,L)$. Now we the wrapping is stopped on, say the part with negative $p$-coordinate. The count of discs between these intersection points remains unchanged and we conclude that $CW_{(T^*S^1,\text{pt})}(L,L)=\mathbb{C}[X]$.

Adding two stops, one on each copy of $S^1$, changes the generators: $D^{\pi}\mathcal{W}(T^*S^1,\{\text{pt}_1,\text{pt}_2\})$ is no longer generated by just a fibre. It is known that it is generated by a fibre together with two linking discs, one for each stop. However one can show from this that it is also generated by two fibres, as in Figure~\ref{twostops} below (geometrically cones corresponds to "surgery" operation on Lagrangian submanifolds which allows to find new generators, see e.g. \cite[Figure 9]{MR3220941}). Here the wrapping stops after one intersection, and thus we obtain the Kronecker quiver.
\begin{figure}[H]
\centering
\begin{tikzpicture}
\draw[thick] (0,0) rectangle (3,4);
\draw[red!80!black, fill=red!80!black] (1.5,0) circle (0.1cm);
\draw[red!80!black, fill=red!80!black] (1.5,4) circle (0.1cm);
\draw (0,2) -- (3,2);
\node at (3.3,2) {$S^1$};

\draw[purple!80!black,thick] (1,0) to[out=90,in=-135] (1.5,2) to[out=45,in=-90] (2,4);
\draw[blue!80!black, thick] (0.5,0) to (0.5,4);
\node[blue!80!black] at (0.5,-0.3) {$L_1$};
\node[purple!80!black] at (1,-0.3) {$L_2$};

\draw[thick] (6,0) rectangle (9,4);
\draw[red!80!black, fill=red!80!black] (7.5,0) circle (0.1cm);
\draw[red!80!black, fill=red!80!black] (7.5,4) circle (0.1cm);
\draw (6,2) -- (9,2);
\node at (9.3,2) {$S^1$};

\draw[blue!80!black, thick] (6.5,0) to (6.5,4);
\node[blue!80!black] at (6.5,-0.3) {$L_1$};
\node[purple!80!black] at (8.25,-0.3) {$\phi^1_H(L_2)$};
\draw[purple!80!black,thick] (6,1) to (9,3);
\draw[purple!80!black,thick] (8,0) to[out=90,in=-135] (9,1);
\draw[purple!80!black, thick] (6,3) to[out=45,in=-90] (7,4);

\node[scale=0.8] at (6.7,3.2) {$x$};
\node[scale=0.8] at (6.7,1.2) {$y$};
\end{tikzpicture}
\caption{Generators $L_1$ and $L_2$ of $D^{\pi}\mathcal{W}(T^*S^1,\{\text{pt}_1,\text{pt}_2\})$ on the left-hand side. Calculation of $CW_{(T^*S^1,\{\text{pt}_1,\text{pt}_2\})}(L_1,L_2)=L_1\cap\phi^1_H(L_2)$ on the right-hand side. The stops are marked with red points. Thus the full category can be described by the algebra $
\bigoplus_{1\leq i,j\leq 2}CW_{(T^*S^1,\{\text{pt}_1,\text{pt}_2\})}(L_i,L_j)=\mathbb{C}\left(
1\,\substack{{\xlongrightarrow{x}}\\{\xlongrightarrow[y]{\,}}}\,2\right)$
.}
\label{twostops}
\end{figure}
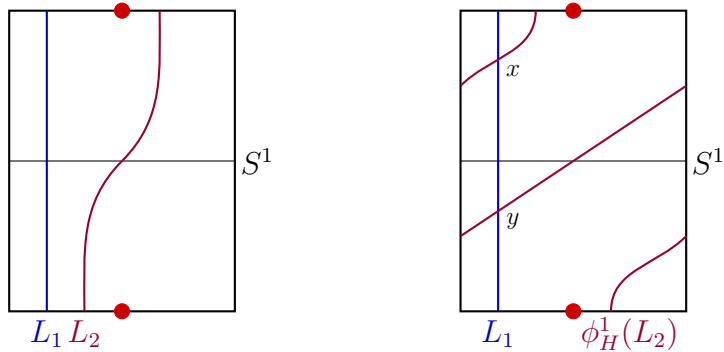
\end{example}
\begin{example}[Using the Chekanov--Eliashberg algebra and the surgery formula]\label{surgeryex}
Finally, we can also compute the wrapped Fukaya category, by computing the Chekanov--Eliashberg algebra (see Section~\ref{wrappedsec}), via the surgery formula (\cite{MR2916289}, \cite{MR4634745} and for this dimension \cite{bäcke2023contacthomologycomputationssingular}). In the example above, $T^*S^1$ is obtained by attaching one critical Weinstein handle to the subcritical domain $D^2$. By the surgery formula $CW_{T^*S^1}(L,L)=CE(\Lambda)$, where $\Lambda\subset S^1=\partial D^2$ is the Legendrian attaching link in the boundary at infinity consisting of two points. The Reeb flow here is given by rotation counterclockwise, thus the Reeb chords are given by $c^0_{01}$, $c^p_{ij}$, $0\leq i,j\leq 1$, $p\geq 1$. The grading is given by $|c_{i,j}^p|=2p-1+\delta_{j,1}-\delta_{i,1}$, where $\delta_{i,j}$ is the Kronecker delta. The differential can be shown to be (see \cite{bäcke2023contacthomologycomputationssingular})
\begin{align*}
&\partial c^0_{0,1}=0,\quad &&\partial c^1_{1,0}=0,\quad &&\partial c^1_{0,0}=1-c^1_{1,0}c^0_{0,1}, \\ &\partial c^1_{1,1}=1-c^0_{0,1}c^1_{1,0},\quad &&\partial c^1_{0,1}=-c^0_{0,1}c^1_{0,0}+c^1_{1,1}c^0_{0,1},\quad &&\partial c^p_{i,j}=\sum_{l=0}^p(-1)^{\delta_{l,1}+\delta_{j,1}}c^l_{lj}c^{p-l}_{il} 
\end{align*}
(where we use the convention $c^0_{00}=c^0_{11}=c^0_{10}=0$ in the last equation above). This DGA can be shown to formal, and the homology is generated by $x=c^0_{0,1}$ and $x^{-1}=c^1_{1,0}$; the relations $xx^{-1}=1=x^{-1}x$ follows from $\partial c^1_{1,1}$ and $\partial c^1_{0,0}$ (see also \cite{MR4033516}). 

Adding a stop yields the Chekanov--Eliashberg algebra of three points on $S^1$, with one idempotent corresponding to the attaching sphere $\Lambda=S^0$ and one idempotent to the stop. The algebra formed by Reeb chords starting and ending at these points is quasi-isomorphic to the partially wrapped cochains of the cocore and the linking disc. However, we know that the category is generated by just the cocore of the handle, and thus by the surgery formula we only need to compute the sub-dg-algebra consisting of chords starting and ending at $S^0$. This dg-algebra can also be computed in the boundary of the Weinstein sector which is obtained by removing the stop; this boundary is just an interval. The DGA of $S^0$ on an interval clearly is $k[x]$ (see Figure~\ref{surgone}). 
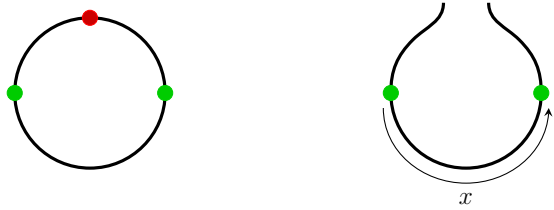
\begin{figure}[H]
\centering
\begin{tikzpicture}
\draw[very thick] (0,0) circle (1cm);
\draw[red, fill=red!80!black] (0,1) circle (0.1cm);
\draw[green!80!black, fill= green!80!black] (1,0) circle (0.1cm);
\draw[green!80!black, fill=green!80!black] (-1,0) circle (0.1cm);

\draw[very thick] (4.7,1.2) to[out=-90,in=45] (4.293,0.707) to[out=-135,in=90] (4,0) to[out=-90,in=180] (5,-1) to[out=0,in=-90] (6,0) to[out=90,in=-45] (5.707,0.707) to[out=135,in=-90] (5.3,1.2);
\draw[green!80!black, fill= green!80!black] (6,0) circle (0.1cm);
\draw[green!80!black, fill=green!80!black] (4,0) circle (0.1cm);
\draw[-stealth] (3.9,-0.2) to[out=-90,in=180] (5,-1.2) to[out=0,in=-90] (6.1,-0.2);
\node[scale=0.8] at (5,-1.4) {$x$};
\end{tikzpicture}
\caption{On the left-hand side is the attaching sphere in green and the stop in red. The chords starting and ending at the green points yields the sub-DGA of the cocore, which generates. On the right-hand side is also the DGA of the cocore, but from the sector point of view, together with the only Reeb chord $\Lambda\rightarrow\Lambda$.}
\label{surgone}
\end{figure}

Finally, adding two stops as in the cylinder above yields the following pictures, adding two $1$-handles to the boundaries of two discs, and a stop on the boundary on each disc. Again in the sector picture it is easy to compute the Chekanov--Eliashberg algebra, yielding the Kronecker quiver. 
\begin{figure}[H]
\centering
\begin{tikzpicture}
\draw[very thick] (-1,0) circle (1cm);
\draw[red!80!black, fill=red!80!black] (-1,1) circle (0.1cm);
\draw[green!80!black, fill= green!80!black] (0,0) circle (0.1cm);
\draw[green!80!black, fill=green!80!black] (-2,0) circle (0.1cm);

\draw[very thick] (2,0) circle (1cm);
\draw[red!80!black, fill=red!80!black] (2,-1) circle (0.1cm);
\draw[green!80!black, fill= green!80!black] (3,0) circle (0.1cm);
\draw[green!80!black, fill=green!80!black] (1,0) circle (0.1cm);

\node[green!80!black, scale=0.8] at (0.25,0) {$2$};
\node[green!80!black, scale=0.8] at (0.75,0) {$2$};
\node[green!80!black, scale=0.8] at (-2.25,0) {$1$};
\node[green!80!black, scale=0.8] at (3.25,0) {$1$};

\draw[very thick] (6.7,1.2) to[out=-90,in=45] (6.293,0.707) to[out=-135,in=90] (6,0) to[out=-90,in=180] (7,-1) to[out=0,in=-90] (8,0) to[out=90,in=-45] (7.707,0.707) to[out=135,in=-90] (7.3,1.2);
\draw[green!80!black, fill= green!80!black] (6,0) circle (0.1cm);
\draw[green!80!black, fill=green!80!black] (8,0) circle (0.1cm);
\draw[-stealth] (5.9,-0.2) to[out=-90,in=180] (7,-1.2) to[out=0,in=-90] (8.1,-0.2);

\draw[very thick] (10.3,-1.2) to[out=90,in=-135] (10.707,-0.707) to[out=45,in=-90] (11,0) to[out=90,in=0] (10,1) to[out=180, in=90] (9,0) to[out=-90,in=135] (9.293,-0.707) to[out=-45,in=90] (9.7,-1.2);
\draw[green!80!black, fill= green!80!black] (11,0) circle (0.1cm);
\draw[green!80!black, fill=green!80!black] (9,0) circle (0.1cm);
\draw[-stealth] (11.1,0.2) to[out=90,in=0] (10,1.2) to[out=180,in=90] (8.9,0.2);

\node[green!80!black, scale=0.8] at (8.25,0) {$2$};
\node[green!80!black, scale=0.8] at (8.75,0) {$2$};
\node[green!80!black, scale=0.8] at (5.75,0) {$1$};
\node[green!80!black, scale=0.8] at (11.25,0) {$1$};

\node[scale=0.8] at (7,-1.4) {$x$};
\node[scale=0.8] at (10,1.4) {$y$};
\end{tikzpicture}
\caption{The stops and the attaching spheres of the $1$-handles on the boundary of the Weinstein domain on the left-hand side. The points on the same $S^0$ are marked by the same number. The attaching spheres on the boundary of the Weinstein sector on the right-hand side.}
\end{figure}
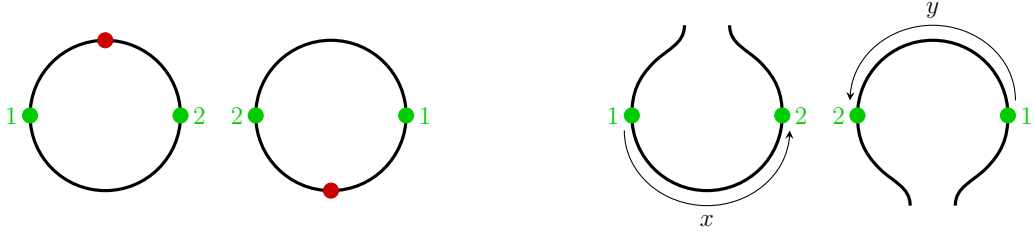
\end{example} 
\begin{example}[Landau--Ginzburg models]
We could also consider the $A$-side on $\mathbb{C}$ or $\mathbb{C}P^1$. Then the mirror is not given by a manifold, and instead of attaching stops we attach a (Landau--Ginzburg) potential $w:\mathbb{C}^*\rightarrow\mathbb{C}$ defined by a certain count of index $2$ Maslov disc. The $B$-side category is given by the product of the triangulated categories of singularities corresponding to the singular values of $w$ (or, equivalently, by matrix factorisation categories)
\[
DB(w)=\prod_{\lambda\in\mathbb{C}}D_{\text{sg}}(w^{-1}(\lambda))=\prod_{\lambda\in\mathbb{C}}\frac{D^b(\text{Coh}(w^{-1}(\lambda)))}{D^{\pi}(\text{Coh}(w^{-1}(\lambda)))}.
\]

In the example $\mathbb{C}$, $D^{\pi}\mathcal{W}(\mathbb{C})=0$. The potential is given by $w=z$, and hence $DB(w)=0$, as $w$ has no critical points. 

In the example $\mathbb{C}P^1$, the relevant $A$-side is not the wrapped but a compact Fukaya category (this is not a Liouville manifold, also note that this case is $\mathbb{Z}_2$-graded). Mirror symmetry is here known to be $\mathcal{F}(\mathbb{C}P^1)=DB(w)$, for 
 the potential $w=z+z^{-1}$. Note that $w$ has two critical points $z=\pm 1$, and thus $DB(w)= D_{\text{sg}}(1)\times D_{\text{sg}}(-1)$. 
(See e.g. \cite{MR2101296} for this example and more about Landau--Ginzburg models.)
\end{example}
\subsection{Log Calabi--Yau property and hyper-Kähler structures}
The result of this paper is motivated by the anticipation that manifolds in log Calabi--Yau hyper-Kähler families should have mirror given by a hyper-Kähler rotation (analogous to the case of certain closed hyper-Kähler K3 surfaces, see e.g. \cite{bruzzo1998mirrorsymmetryk3surfaces}). An elaboration of what this means and the relations to the $ADE$ singularities follows below.

A \emph{log Calabi--Yau manifold} is a manifold $U=X\smallsetminus D$, where $X$ is a smooth projective variety and $D$ is a normal crossing and anti-canonical divisor (i.e. $D\in |-\mathcal{K}_X|$). The pair $(X,D)$ defining $U$ is not unique, in general, and is known as a \emph{Looijenga pair}. Log Calabi--Yau manifolds are central in the study of cluster varieties and algebras, and Homological mirror symmetry for log Calabi--Yau surfaces has been studied quite extensively (see e.g. \cite{MR3415066}, \cite{MR4562569} and \cite{MR3966785}).

An equivalent definition for $U$ to be log Calabi--Yau, used below, is that there is a smooth and projective complex manifold $X$ containing $U$, such that $D=X\smallsetminus U$ is a normal crossing divisor and there exists a non-vanishing holomorphic volume form $\Omega$ on $U$ which as simple poles along $D$. 
\begin{example}\label{logcyexone}
For $\mathbb{C}P^n$, in a local chart $\mathbb{C}^n=\{(1:z_1:\cdots:z_n)\}$ the canonical bundle is given by $dz_1\wedge\cdots\wedge dz_n$. This has poles of order $n+1$ at $\mathbb{C}P^{n-1}_{\infty}=\{(0:z_1:\cdots:z_n)\}$. Thus an anti-canonical divisor is given by removing a degree $n$ normal crossing divisor from $\mathbb{C}^n=\mathbb{C}P^n\smallsetminus\mathbb{C}P^{n-1}_{\infty}$, e.g. $D=V(f)$, for a smooth regular function $f:\mathbb{C}^n\rightarrow\mathbb{C}$ of degree $n$, intersecting $\mathbb{C}P^{-1}_{\infty}$ transversely in a smooth subvariety.

For $n=1$, we can remove any point from $\mathbb{C}$, thus e.g. $\mathbb{C}^*$ is a log Calabi--Yau curve. For $n=2$, we can remove any smooth conic $C$ from $\mathbb{C}^2$. Thus, e.g. $\mathbb{C}^2\smallsetminus\{z_1z_2=-1\}$ is a log Calabi--Yau surface. Alternatively, one can see this by the fact that
\[
\Omega=\frac{dz_1\wedge dz_2}{z_1z_2+1}
\] 
is a holomorphic volume form on $\mathbb{C}^2\smallsetminus\{z_1z_2=-1\}$ which is nowhere vanishing and has simple poles along the normal crossing divisor $D=\{z_1z_2=-1\}\cup\mathbb{C}P^1_{\infty}$.
\end{example}
A \emph{hyper-Kähler} manifold $(M,g,I,J,K)$ is Riemannian manifold $M$ with metric $g$ and three covariantly constant integrable complex structures $I$, $J$ and $K$, satisfying the quaternionic relation $IJK=-1$. Each of the complex structures defines a symplectic form by the formulas $\omega_I(-,-)=g(I(-),-)$, $\omega_J(-,-)=g(J(-),-)$ and $\omega_K(-,-)=g(K(-),-)$, making it a Kähler manifold; in fact, for any $u=(u_I,u_j,u_K)\in S^2$, $I_u=u_II+u_JJ+u_KK$ is a complex structure for which $M$ together with $\omega_{u}(-,-)=g(I_u(-),-)=u_I\omega_I+u_J\omega_J+u_K\omega_K$ is a Kähler manifold. 

\begin{example}\label{basicHK}
The standard example of a hyper-Kähler manifold is $\mathbb{H}=\mathbb{C}\times\mathbb{C}$, with $g$ the Euclidean metric and with 
\[
I=\begin{pmatrix}
    i&0\\0&-i
\end{pmatrix},\qquad J=\begin{pmatrix}
    0&-1\\1&0
\end{pmatrix},\qquad K=\begin{pmatrix}
    0&-i\\-i&0
\end{pmatrix}.
\]
\end{example}

\begin{example}[ALE spaces as hyper-Kähler quotients \cite{MR992334}]\label{HKquotient}
By \cite{MR992334} (see also \cite[Section 7]{MR2720232}), given a simply connected hyper-Kähler manifold $X$ with the action of a compact Lie group $F$, there are three moment maps $\mu_I,\mu_J$ and $\mu_K$ (obtained with respect to $\omega_I$, $\omega_j$ and $\omega_K$ respectively), which put together yields a map $\mu=(\mu_I,\mu_J,\mu_K):X\rightarrow\mathbb{R}^3\otimes\mathfrak{f}^{\vee}$ (where $\mathfrak{f}$ denotes the Lie algebra of $F$). Given $\zeta\in\mathbb{R}^3\otimes Z$ , where $Z=\text{centre}(\mathfrak{f}^{\vee})$, the \emph{hyper-Kähler quotient} is defined to be the space
\[
X_{\zeta}=\mu^{-1}(\zeta)/F.
\] 
If $F$ acts freely, then $X_{\zeta}$ is a manifold of (real) dimension $\text{dim}(X)-4\text{dim}(F)$ \cite[Proposition 2.1]{MR992334}. It inherits a hyper-Kähler structure from $X$.

For $\mathbb{C}^2$ with $G\subset\text{SU}(2)$ finite, we put $X=(Q\otimes_{\mathbb{C}}\text{End}(R))^G$, where $Q$ is the canonical $2$-dimensional representation and $R=\mathbb{C}[G]$ is the regular representation (the group algebra with action induced by group multiplication) endowed with the natural Euclidean metric. We make the space $Q\otimes_{\mathbb{C}}\text{End}(R)$, and therefore also $X$, into an $\mathbb{H}$-module, as follows. Elements of the tensor product $Q\otimes\text{End}(R)$ can be viewed as pairs of morphisms $(\alpha,\beta)$ in $\text{End}(R)$. The action of $I$ is defined by $I(\alpha,\beta)=(i\alpha,i\beta)$ and the action of $J$ by $J(\alpha,\beta)=(-\alpha^*,\beta^*)$, where the metric is used to define the real structure (anti-linear involution) $(-)^*$ on the complex vector space $\text{End}(R)$. Using this, $X$ is given the structure of a hyper-Kähler manifold.

We next describe the Lie group $F$ and its action on $X$. Note that $X$ comes equipped with the diagonal action of $U(R)$, acting by conjugation in each factor: $f\cdot(\alpha,\beta)=(f\alpha f^{-1},f\beta f^{-1})$. We let $F'$ be the elements of $U(R)$ which commute with $G$, and then we put $F=F'/T$, where $T$ is the subgroup of unit scalars. 

For $\zeta=0$, the hyper-Kähler quotient is $X_0\cong \mathbb{C}^2/G$, and for generic choices of $\zeta$ (see \cite[Definition 38]{MR2720232} or \cite{MR992334}), $X_{\zeta}$ is a smooth manifold diffeomorphic to the resolution of singularities of $\mathbb{C}^2/G$; varying $\zeta\in Z\otimes\mathbb{R}^3$ (or more precisely an open subset) classifies the so called $ALE$ spaces.

We will restrict interest to the spaces $X_{\zeta}$ for $\zeta=(\zeta_1,0,0)$. Then, via the isomorphism $Z\cong H^2(X_{\zeta};\mathbb{R})$, the cohomology classes of the Kähler forms are: $[\omega_I]=\zeta_1$ and $[\omega_J]=[\omega_K]=0$. For each of these choices of $\zeta$, we have a family of Kähler manifolds $(X_{\zeta},I_u,\omega_u)$, $u=(u_I,u_J,u_K)\in S^2$. For $u_I=\pm 1$ (i.e. $u_J=u_K=0$), $(X_{\zeta},\pm I)$ is the minimal resolution of singularities $X_{\text{res}}$, obtained by repeatedly blowing up the singularity. For $u_I=0$, we obtain an $S^1$-family of symplectic manifolds $(X_{\zeta},\omega_u)$ which are exact; they are all equivalent and Weinstein, and, more precisely, they are given by the Milnor fibres $X_{\text{Mil}}$ (see Section~\ref{geomeq}). 

We can even say more: With respect to the symplectic structures $\omega_u$, for all $u\in S^2$ with $u_I\not=0$, $(X_{\zeta},\omega_u)$ is a non-exact symplectic manifold. They can be thought of as the resolution of singularities with area on the exceptional spheres; however changing the $\zeta_1$-parameter changes the areas of these $(-2)$-spheres and may result in non-equivalent symplectic manifolds. For a fixed $\zeta_1$, changing $u\in S_2\smallsetminus\{u_I=0\}$ rescales the symplectic area of the exceptional $(-2)$-spheres by a common factor (see Remark~\ref{moreonfamily} below for further details, in the case $G=\mathbb{Z}_{n+1}$). 

With respect to the complex structures, for $u\in S^2\smallsetminus\{(\pm1,0,0)\}$, i.e. away from these poles, $(X_{\zeta},I_u)$ is a Milnor fibre considered as an affine variety. In the type $A_n$ case, that is when $G=\mathbb{Z}_{n+1}$, it can be given as a hypersurface of the form $\{P(Z)=XY\}\subset\mathbb{C}^3$, for a polynomial $P(Z)$ in one variable degree $n+1$, having degree $n+1$ and distinct roots. As affine varieties, such hypersurfaces are not necessarily isomorphic, analogously to the symplectic structures on the resolutions of singularities (see however Remark~\ref{moreonfamily}).
\end{example}
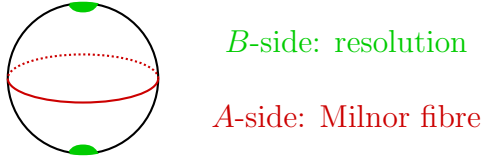
\begin{figure}[H]
\centering
\begin{tikzpicture}
\draw[thick] (-1,0) circle (1cm);
\draw[densely dotted, thick, red!80!black] (0,0) arc (0:180:1 and 0.32);
\draw[thick,red!80!black] (-2,0) arc (0:180:-1 and -0.32);

\draw[green!80!black,fill, very thick] (-1.174,0.985) to[out=-60,in=180] (-1,0.9) to[out=0,in=-120] (-0.826,0.985) to[out=170,in=0] (-1,1) to[out=180,in=10] (-1.174,0.985);

\draw[green!80!black,fill, very thick] (-1.174,-0.985) to[out=60,in=180] (-1,-0.9) to[out=0,in=120] (-0.826,-0.985) to[out=-170,in=0] (-1,-1) to[out=180,in=-10] (-1.174,-0.985);

\node[green!80!black] at (2.5,0.5) {$B$-side: resolution};
\node[red!80!black] at (2.5,-0.5) {$A$-side: Milnor fibre};
\end{tikzpicture}
\caption{The $S^2$-family of Kähler structures shown. On the equator the $u_I=0$ the symplectic manifold $(X_{\zeta},\omega_u)$ is exact symplectic and equivalent to the Milnor fibre. For the other values the manifolds are determined by the area put on the exceptional spheres. On the poles the varieties $(X_{\zeta},I_u)$ are the resolutions of singularities, whereas for the other values they are affine varieties. In the type $A_n$ case, when $G=\mathbb{Z}_{n+1}$, the affine varieties are given by the hypersurfaces defined by the equations $P(Z)=XY$, where $P(Z)$ is a polynomial of degree $n+1$ without repeated roots.}
\label{hkfamilypic}
\end{figure}

Next we give the definition for a family of manifolds to be log Calabi--Yau hyper-Kähler, followed by two examples, relating this concept to Example~\ref{basicHK} an Example~\ref{HKquotient} above. 
\begin{definition}\label{logCYHKdef}
We say that a family $(U_u,I_u,\omega_u)$, $u\in O\subset S^2$, where $O$ is an open subset, of open Kähler manifolds is \emph{log Calabi--Yau hyper-Kähler} if the the following conditions are satisfied:
\begin{enumerate}[(i)]
    \item There exists a hyper-Kähler manifold $(U,g,I,J,K)$ such that $U_u=U\smallsetminus D_u$, where $D_u$ is a continuous family of $I_u$-holomorphic divisor, and such that the Kähler structures on $(U_u,I_u,\omega_u)$ are compatible with the hyper-Kähler structure on $U$ in the sense that $I_u=u_II+u_JJ+u_KK$ and $\omega_u=u_I\omega_I+u_J\omega_J+u_K\omega_K$, for $u=(u_I,u_J,u_K)\in S^2$.
    \item For each $u\in O\subset S^2$, there is a compactification of $U=U_u\cup D_u$ by a complex manifold $X_u$, such that $D_u'=X_u\smallsetminus U_u$ is a normal crossing anti-canonical divisor with respect to (the extension to the compactification of) $I_u$. 
\end{enumerate}
\end{definition}
In particular each $U_u$ is log Calabi--Yau. Note also that $D_u\subset D_u'$, but we do not require equality. 

\begin{remark}
We expect mirrors of the Kähler manifolds in such families to be obtained by hyper-Kähler rotation, i.e. by rotating to another Kähler manifold in the family. Note however, that this definition is never used in proving any example of homological mirror symmetry. It is only a tentative proposal, formulated in the hope that it encapsulates a more general phenomenon. 
\end{remark}
\begin{remark}
One would naturally look for a global family of divisors $D_u$, i.e. $O=S^2$, as as is the case in Example~\ref{logCYHKexone} below. However, it is not possible to extend the choice of divisors in Example~\ref{loghkA} to a global family without altering the topology of the divisors; such a choice will necessarily become nodal over one of the poles. 
\end{remark}

\begin{example}\label{logCYHKexone}
Consider the hyper-Kähler manifold $(\mathbb{H},g,I,J,K)$ from Example~\ref{basicHK}. Each $I_u=u_II+u_JJ+u_KK$, $u=(u_I,u_J,u_K)\in S^2$, lie in $\text{SU}(2)$ and has eigenvalues $\pm i$. Thus we can find unitary matrices $M_u$ such that $I_u=M_u^{-1}IM_u$ (where we take $M_{(1,0,0)}=\text{Id}$). Moreover, as a family, we can choose $M_u$ to vary smoothly with $u\in S^2$. Using this, we define $f_u:\mathbb{H}\rightarrow\mathbb{C}$ in two steps: we put $f_{(1,0,0)}=z_1\overline{z}_2$, and then $f_u=f_{(1,0,0)}\circ M_u$, for $u\in S^2$. A simple calculation shows that $df_{(1,0,0)}\circ I=i\,df_{(1,0,0)}$. Therefore, 
\[
df_u\circ I_u=df_{(1,0,0)}\circ M_u\circ M_u^{-1}IM_u=i\,df_{(1,0,0)}\circ M_u=i\,df_u.
\]
Hence $f_u$ is $I_u$-holomorphic and $D_u=\{f_u=0\}$ is an $I_u$-holomorphic divisor.

Put $U_u=\mathbb{H}\smallsetminus D_u$. We claim that $(U_u,I_u,\omega_u)$ is a log Calabi--Yau hyper-Kähler family  (where $I_u$ and $\omega_u$ are inherited from the hyper-Kähler structure on $\mathbb{H}$).

Condition (i) of Definition~\ref{logCYHKdef} is satisfied by construction, with $U=\mathbb{H}$. For condition (ii), note that $M_u^{-1}$ defines a biholomorphism $M_u^{-1}:(U_{(1,0,0)},I_{(1,0,0)})\rightarrow (U_u,I_u)$. Therefore, it suffices to find a compactification $X$ of $(U_{(1,0,0)},I_{(1,0,0)})=((\mathbb{C}^*)^2,I)$, such that $D'=X\smallsetminus (\mathbb{C}^*)^2$ is a normal crossing divisor with respect to $I$. This is of course accomplished by $X=\mathbb{C}P^1\times\mathbb{C}P^1$ (where use the extension of the complex structure $-i$ in the second factor). We conclude that each $(U_u,I_u)$, is a log Calabi--Yau surface, and hence that $(U_u,I_u,\omega_u)$, $u\in S^2$, is a log Calabi--Yau hyper-Kähler family. 

We next verify that homological mirror symmetry for this family can be obtained by hyper-Kähler rotation. Firstly, $\text{Coh}(U_u,I_u)\cong\text{Coh}((\mathbb{C}^*)^2)\cong\mathbb{C}[x_1^{\pm 1},x_2^{\pm 1}]\text{-mod}$ and hence 
\[
D^b(\text{Coh}(U_u,I_u))=D^b([x_1^{\pm 1},x_2^{\pm 1}]\text{-mod}).
\]
Secondly, since $M_u^{-1}$ is unitary (and linear), it is a symplectomorphism:
\[
(M_u^{-1})^*\omega_u=g(I_uM_u^{-1}(-),M_u^{-1}(-))=g(M_u^{-1}I(-),M_u^{-1}(-))=g(I(-),-)=\omega_I.
\]
We have already seen that $D^{\pi}\mathcal{W}(\mathbb{C}^*,\omega_{\text{std}})=D^{\pi}\mathcal{W}(T^*S^1,\omega_{\text{std}})$ is triangulated equivalent to 
\[
D^{\pi}(\mathbb{C}[x,x^{-1}]\text{-mod})=D^b(\mathbb{C}[x,x^{-1}]\text{-mod})
\]
(Example~\ref{examplecylinder}). Thus, using $(\mathbb{C}^*,\omega_{\text{std}})\cong(\mathbb{C}^*,-\omega_{\text{std}})$ together with the Künneth formula \cite[Corollary 1.18]{MR4695507}, we conclude that we have equivalences 
\[
D^{\pi}\mathcal{W}(U_u,\omega_u)\simeq D^{\pi}\mathcal{W}((\mathbb{C}^*)^2,\omega_I)\simeq D^{\pi}\mathbb{C}[x_1^{\pm 1},x_2^{\pm 1}]\text{-mod}.
\]

Therefore, in this example, mirror symmetry is obtained by any hyper-Kähler rotation, for example by self-mirroring, i.e. the trivial hyper-Kähler rotation. 
\end{example}

The following example is of central importance. We study the hyper-Kähler manifolds $X_{\zeta}$ from Example~\ref{HKquotient}, restricting to the case $G=\mathbb{Z}_{n+1}$. We remove an appropriate family of $I_u$-divisors $D_u$. We show in Proposition~\ref{logCYHKprop} below that this turns it into a log Calabi--Yau hyper-Kähler family in the sense of Definition~\ref{logCYHKdef}.

\begin{example}[The underlying log Calabi--Yau hyper-Kähler families of $X_{\text{res}}\smallsetminus D_r$ and $X_{\text{Mil}}\smallsetminus D_m$]\label{loghkA}
We consider $U=X_{\zeta}$ from Example~\ref{HKquotient}, now only in the case $G=\mathbb{Z}_{n+1}$. We let $O=\{u\in S^2\;|\;I_u\geq 0\}$ be the upper hemisphere in $S^2$. We show that we can remove $I_u$-divisors $D_u$, for $u\in O$, in a way that makes this family log Calabi--Yau hyper-Kähler. 

For $u\in O\smallsetminus\{(1,0,0)\}$, as is explained in Remark~\ref{moreonfamily}, the affine variety $(X_{\zeta},I_u)$ can be concretely realised as the hypersurface defined by
\[
XY=P_{\tilde{u}}(Z),
\]
where
\[
P_{\tilde{u}}(Z)=(Z-\tilde{u}b_1)\cdots(Z-\tilde{u}b_{n+1}).
\]
Here $0=b_{n+1}, b_1,\dots,b_n$ are real and distinct, depending on $\zeta$, and $\tilde{u}\in \{z\in\mathbb{C}\;|\;0<|z|\leq 1\}\subset\mathbb{C}P^1$ is determined from $u$ by stereographic projection (sending $\tilde{u}=0$, $\tilde{u}=1$ and $\tilde{u}=\infty$ to $u=(1,0,0)$, $u=(0,1,0)$ and $u=(-1,0,0)$ respectively).

We put $D_u=\{Z=-C\}$, for $C$ some constant such that $C>|b_i|$, for $i=1,\dots,n+1$. Note that the affine varieties $(X_{\zeta},I_u)=\{XY=P_{\tilde{u}}(Z)\}$ are all isomorphic (here $u\in O\smallsetminus\{(1,0,0)\}$). An explicit isomorphism 
\[
(X_{\zeta},I_{u})=\{XY=P_{\tilde{u}}(Z)\}\xrightarrow{\sim}(X_{\zeta},I_{(0,1,0)})=\{XY=P_1(Z)\}
\]
is given by $X\mapsto \tilde{u}^{n+1}X$, $Y\mapsto Y$ and $Z\mapsto \tilde{u}Z$. However, this isomorphism does not identify $D_u$ and $D_{u'}$ when $\tilde{u}\not=\tilde{u}'$. Thus, the affine varieties after removing divisors $(X_{\zeta}\smallsetminus D_u,I_u)$ are not necessarily isomorphic. 

Next, we put $U_u=U\smallsetminus D_u$, $u\in O$. Then, this is a family of Kähler manifolds (see Example~\ref{HKquotient} for the definition of $I_u$ and $\omega_u$). We show in Proposition~\ref{logCYHKprop} below that this is a log Calabi--Yau hyper-Kähler family. 

Therefore we expect that homological mirror symmetry can be obtained by hyper-Kähler rotation, i.e. we anticipate that $X_{\zeta}\smallsetminus D_u$, for any $u\in S^2$, is mirror to a hyper-Kähler rotation $X_{\zeta}\smallsetminus D_{\check{u}}$, for another $\check{u}\in S^2$. Of particular interest are the pole $u=(1,0,0)$ and the equator $\check{u}=(0,\check{u}_J,\check{u}_K)$. More specifically, here we expect (later we prove this in one direction and partly in the other) that $(X_{\zeta}\smallsetminus D_{(1,0,0)},I,\omega_I)$ is mirror to the $S^1$-family $(X_{\zeta}\smallsetminus D_u,I_u,\omega_u)$, for $u\in\{u_I=0\}\subset S^2$. 

For $u=(1,0,0)$, when $(X_{\zeta},I_{(1,0,0)})$ is the crepant resolution $X_{\text{res}}$ of the $A_n$-singularity, we let $D_{(1,0,0)}=\{Z=-C\}\subset\{XY=Z^{n+1}\}\smallsetminus\{(0,0,0)\}=X_{\text{res}}\smallsetminus E$, where $E$ denotes the exceptional divisor. Alternatively, this divisor is obtained by the $G$-invariant (this is one reason why we restrict to the cyclic groups) divisor $\tilde{D}_{(1,0,0)}=\{z_1z_2=-C\}\subset\mathbb{C}^2$. This is a smooth conic disjoint from the origin (compare with Example~\ref{logcyexone}). We project down to a divisor in $\mathbb{C}^2/G$ which we thereafter pull back to the divisor $D_{(1,0,0)}$ in $U$. This a family of divisors smoothly varying with $u$ in the upper hemisphere $O$. Note that by a change of coordinates, at the pole, all choices of the constant $C$ define isomorphic varieties; cf. Proposition~\ref{bsidecycplumb} and Remark~\ref{bsidecycplumbremark}.

We will sometimes write $D_m$ for a divisor $D_u$ with $u=(0,u_J,u_K)$, and $D_r$, when $u=(1,0,0)$.
\end{example}

\begin{proposition}\label{logCYHKprop}
The family of open Kähler manifolds $(U_u,I_u,\omega_u)$ from Example~\ref{loghkA} is a log Calabi--Yau hyper-Kähler family.
\end{proposition}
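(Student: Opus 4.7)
The plan is to verify the two conditions of Definition~\ref{logCYHKdef} directly from the explicit models in Example~\ref{HKquotient} and Example~\ref{loghkA}.

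Condition (i) is almost immediate from the construction. The underlying hyper-Kähler manifold is $U = X_\zeta$ itself, and by Example~\ref{HKquotient} the family $(I_u, \omega_u)$ is precisely the canonical $S^2$-family of Kähler structures on $X_\zeta$, so the compatibility clause holds tautologically. To check $I_u$-holomorphicity of $D_u$: for $u \in O \smallsetminus \{(1,0,0)\}$, Remark~\ref{moreonfamily} supplies the biholomorphism $(X_\zeta, I_u) \cong \{XY = P_{\tilde u}(Z)\} \subset \mathbb{C}^3$, in which $Z$ is by definition $I_u$-holomorphic, hence $D_u = \{Z = -C\}$ is an $I_u$-divisor. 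At the pole, the $\mathbb{Z}_{n+1}$-invariant function $z_1 z_2$ on $\mathbb{C}^2$ descends to a holomorphic function on $\mathbb{C}^2/\mathbb{Z}_{n+1}$ which pulls back to an $I$-holomorphic function on $X_{\text{res}}$ identified with the coordinate $Z$ of the singular hypersurface, so $D_{(1,0,0)} = \{Z = -C\}$ is again cut out by an $I$-holomorphic function. Continuity of $D_u$ in $u$ reduces to the continuous dependence $\tilde u(u)$ given by stereographic projection, together with the fact that the roots $\tilde u\, b_i$ of $P_{\tilde u}$ vary continuously and collide with $0$ precisely at the pole, where the description passes to $X_{\text{res}}$.

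For condition (ii), I need to exhibit, for every $u \in O$, a smooth projective compactification $X_u \supset U$ such that $D'_u := X_u \smallsetminus U_u$ is anti-canonical and normal crossing. Away from the pole I propose: view the affine variety $\{XY = P_{\tilde u}(Z)\}$ as a conic fibration over $\mathbb{C}_Z$, compactify the base to $\mathbb{P}^1_Z$, and fibrewise complete each smooth hyperbola $\{XY = c\} \cong \mathbb{C}^*$ to $\mathbb{P}^1$, resolving the singularities that arise over the roots of $P_{\tilde u}$ by a finite sequence of toric blow-ups. The divisor at infinity $D_\infty$ is then an explicit chain of rational curves, normal crossing by construction. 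The holomorphic symplectic form
\[
\Omega_u \;=\; \frac{dX \wedge dZ}{X} \;=\; -\frac{dY \wedge dZ}{Y} \;=\; \frac{dX \wedge dY}{P'_{\tilde u}(Z)},
\]
is globally defined and nowhere vanishing on the affine part, and $\Omega_u / (Z+C)$ has a simple pole along $D_u$ by inspection; a chart-wise local computation in the toric coordinates shows that it extends with at most simple poles along each component of $D_\infty$. This exhibits $D'_u = D_u \cup D_\infty$ as a normal crossing anti-canonical divisor. For $u = (1,0,0)$, the same strategy applies to $X_{\text{res}}$: compactify $\mathbb{C}^2/\mathbb{Z}_{n+1}$ (e.g.\ via the standard weighted projective compactification) and pass to its minimal resolution, using the form inherited from $(n+1)^{-1} dz_1 \wedge dz_2$ on $\mathbb{C}^2$.

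The main obstacle will be the local pole-order verification along $D_\infty$, which must be carried out chart by chart after the toric blow-ups that resolve the singular fibres; this is routine but requires some bookkeeping of the self-intersection data of the added rational curves. A secondary subtlety is the continuity of the family $D_u$ across the pole, since the global topology of the compactifications $X_u$ jumps at $u = (1,0,0)$ (the conic fibration structure degenerates as $\tilde u \to 0$). I will handle this by verifying continuity at the level of the uncompactified manifold $U = X_\zeta$ as a smooth manifold, which is all that Definition~\ref{logCYHKdef}(i) requires, rather than attempting to produce a continuous family of compactifications.
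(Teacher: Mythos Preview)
Your outline is correct in spirit and would work, but it distributes the effort oppositely to the paper. Away from the pole the paper does not carry out the conic-fibration compactification you propose; it simply invokes \cite[Example 3.2.4]{MR4039179} for the case $P(Z)=Z^{n+1}-1$ and \cite[Section 7.1, Lemma 7.1]{MR2787361} for general $P$ with distinct roots. Your direct construction via the $Z$-fibration and the form $\Omega_u = dX\wedge dZ/X$ is exactly what underlies those references, so your route there is correct and in fact more self-contained.

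Where your proposal is thinnest is precisely where the paper does the real work: at the pole $u=(1,0,0)$. Your suggestion of ``weighted projective compactification and minimal resolution'' glosses over the concrete computation. The paper instead extends the $\mathbb{Z}_{n+1}$-action to $\mathbb{CP}^2$, observes that $\mathbb{CP}^2/G$ acquires two further cyclic quotient singularities (one in each affine chart at infinity, with action $\xi\cdot(w_1,w_2)=(\xi w_1,\xi^2 w_2)$), and then resolves these via Hirzebruch--Jung continued fractions. This splits into separate cases for $n+1$ odd versus even, and in each chart the paper writes out the pullback of the $G$-invariant form $\Omega = (n+1)^{-1}\,du_0\wedge du_1/(u_0(u_1+u_0))$ explicitly in the resolution coordinates $(\xi_i,\eta_i)$, verifying by inspection that the poles are simple and the divisor is nodal. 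If you pursue your approach, this is the step that will demand the chart-by-chart bookkeeping you anticipate, and the cyclic-quotient structure at infinity (rather than over the roots of $P_{\tilde u}$, which are already smooth points of the affine variety) is what forces it.
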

\begin{proof}
It remains to see that $U_u=X_{\zeta}\smallsetminus D_u$ in Example~\ref{loghkA} is log Calabi--Yau, i.e. we need to make sure that we can compactify $X_{\zeta}$ at infinity for each complex structure $I_u$, adding $D_u^{\infty}$ so that $D_u'=D_u\cup D_u^{\infty}$ is anti-canonical and has nodal singularities.

We start with $u=(1,0,0)$, i.e. the resolution of singularities (this is of course equivalent to taking $u=(-1,0,0)$), and we can assume that $D_r=\{z_1z_2=-1\}$, without loss of generality, since removing the divisor $\{z_1z_2=-C\}$ from the resolution yields isomorphic varieties for all $C$. We consider the nowhere vanishing holomorphic volume form
\[
\Omega=\frac{dz_1\wedge dz_2}{z_1z_2+1}
\]
on $\mathbb{C}^2\smallsetminus\{z_1z_2=-1\}$. It has simple poles along $\tilde{D}_r=\{z_1z_2=-1\}$ and at infinity $\mathbb{C}P^1$ (note that $dz_1\wedge dz_2$ has poles of order $3$ at $\mathbb{C}P^1_{\infty}$). The form is invariant under the $G$-action, hence it defines a nowhere vanishing holomorphic volume form on $\mathbb{C}^2/G\smallsetminus D_r$, with simple poles along $D_r=\pi(\{z_1z_2=-1\})$, where $\pi:\mathbb{C}^2\rightarrow\mathbb{C}^2/G$ is the projection. Since the resolution is crepant, this form extends to a nowhere vanishing $I$-holomorphic volume form on $X_{\zeta}\smallsetminus D_r$; we denote this form also by $\Omega$, with no risk of confusion.

Note that the action of $G$ on $\mathbb{C}^2$ extends to an action on $\mathbb{C}P^2$ as follows. Using homogeneous coordinates on $\mathbb{C}P^2$, we express our standard neighbourhood $U_0=\mathbb{C}^2\subset\mathbb{C}P^2$ as the subset $\{[1:z_1:z_2]\}$. Then $\mathbb{C}P^1_{\infty}=\mathbb{C}P^2\smallsetminus U_0=\{[0:z_1:z_2]\}$. The complex projective plane $\mathbb{C}P^2$ is covered by three affine charts: $U_0$ together with $U_1=\{[w'_1:1:w'_2]\}\cong \mathbb{C}^2$ and $U_2=\{[w_1:w_2:1]\}\cong\mathbb{C}^2$. By a simple change of coordinates we see that the action extends to $\mathbb{C}^2\cong U_1$ by $\xi\cdot(w'_1,w'_2)=(\xi^{-1}w'_1,\xi^{-2}w'_2)$ and to $U_2$ by $\xi\cdot (w_1,w_2)=(\xi w_1,\xi^2w_2)$.

Observe that, besides from the singularity at the origin in $U_0$ which is already dealt with, $\mathbb{C}P^2/G$ has two more singular points: one in the image of the origin in the chart $U_1$ and one in the image of the origin in the chart $U_2$. We construct $X_u=X_{(\pm 1,0,0)}$ by resolving these singularities. Writing $f_u:X_u\rightarrow\mathbb{C}P^2/G$, our aim is then to show that $(f_u)^*\Omega$ has simple poles along $\tilde{D}_r=f_u^{-1}(\pi(\mathbb{C}P^1_{\infty}))$, and that $D_r'=D_r\cup \tilde{D}_r$ has nodal singularities.

We prove this in $U_2/G$; we note that by the group automorphism $\xi\mapsto\xi^{-1}$ it must then also hold in $U_1/G$. Note that, by the coordinate change $w_1=1/z_2$, $w_2=z_1/z_2$, we can write
\[
\Omega=\frac{dw_1\wedge dw_2}{w_1(w_2+w_1^2)}.
\]

Following \cite[Theorem 3.2]{reid93}, starting with the case when $n+1$ is odd. In this case, the quotient $U_2/G$ can be written as the singular hypersurface 
\[
\{(u_0,\dots,u_{_{n/2}})\;|\;u_{i-1}u_{i+1}=u_i^2\}\subset\mathbb{C}^{n/2+1}
\]
(here $u_i$ is identified with the monomial $w_1^{n+1-2i}w_2^i\in \mathbb{C}[w_1,w_2]^G$). In particular, $\pi(\mathbb{C}P^1_{\infty})=U_2/G\cap\{u_1=0\}$. Furthermore, we note that
\[
\frac{1}{n+1}\cdot\frac{du_0\wedge du_1}{u_0(u_1+u_0)}=\frac{dw_1\wedge dw_2}{w_1(w_2+w_1^2)}.
\]

The resolution can be described by
$f^{-1}_{u}(U_2/G)=Y_0\cup Y_1\cup Y_2$, with $Y_i\cong\mathbb{C}^2$, where we write $(\xi_i,\eta_i)$ for the coordinates in $Y_i$; the gluing is given by
\[
Y_i\smallsetminus\{\eta_i=0\}\xrightarrow{\sim}Y_{i+1}\smallsetminus\{\xi_{i+1}=0\},\qquad \xi_{i+1}=\frac{1}{\eta_i},\;\eta_{i+1}=\xi_i\eta_i^{b_i},
\]
where $b_0=n/2+1$ and $b_1=2$ (where these numbers are obtained by Hirzebruch--Jung continued fraction of $(n+1)/2$). The birational map $Y_0\rightarrow U_2/G$ is given by
\[
(\xi_0,\eta_0)\mapsto(u_0,u_1,u_2,\dots,u_{n/2})=\left(\xi_0,\xi_0\eta_0,\xi_0\eta_0^2,\dots,\xi_0\eta_0^{n/2}\right)
\]
(i.e. $\xi_0=u_0=w_1^{n+1}$ and $\eta_0=u_1/u_0=w_2/w_1^{2}$). Therefore, 
\[
((f_u)^*\Omega)_{|Y_0}=\frac{1}{n+1}\cdot\frac{d\xi_0\wedge d\eta_0}{\xi_0(\eta_0+1)}.
\]
We note in particular that this form has only simple poles, which are along a divisor with nodal crossings. Here, the part $\{\xi_0=0\}$ is the intersection of $Y_0$ with $f_u^{-1}(\pi(\mathbb{C}P^1_{\infty}))$ and the part $\{\eta_0=-1\}$ is the intersection with $D_r$.

Similarly (using the coordinate transformations) we deduce that
\begin{align*}
((f_u)^*\Omega)_{|Y_1}&=\frac{1}{n+1}\cdot\frac{d\xi_1\wedge d\eta_1}{\xi_1\eta_1(1+\xi_1)} ,\\
((f_u)^*\Omega)_{|Y_2}&=\frac{1}{n+1}\cdot\frac{d\xi_2\wedge d\eta_2}{\xi_2\eta_2(1+\xi_2^2\eta_2)}.
\end{align*}
Again we notice that this form has simple poles along a normal crossing divisor. The union of the divisors $\{\xi_1=0\}$ and $\{\eta_1=0\}$ are the intersection of $Y_1$ with $f_u^{-1}(\pi(\mathbb{C}P^1_{\infty}))$ and the part $\{\xi_1=-1\}$ is the intersection of $Y_1$ with $D_r$; the parts $\{\xi_2=0\}$ and $\{\eta_2=0\}$ are the intersection of $Y_2$ with $f_u^{-1}(\pi(\mathbb{C}P^1_{\infty}))$ and the divisor $\{\xi_2^2\eta_2=-1\}$ is the intersection of $Y_2$ with $D_r$.

We conclude that $(X_{\zeta}\smallsetminus D_r,\pm I)$ is log Calabi--Yau when $n+1$ is odd. Next we assume that $n+1$ is even. In this case we note that the map 
\[(w_1,w_2)\mapsto(\tilde{w}_1,\tilde{w}_2)=(w_1^2,w_2)
\]
induces as isomorphism $\mathbb{C}^2/G\rightarrow\mathbb{C}^2/\tilde{G}$, where $\tilde{G}=\mathbb{Z}/(\frac{n+1}{2})$, where the action of $\tilde{G}$ on $\mathbb{C}^2$ by $\tilde{\xi}\cdot(\tilde{w}_1,\tilde{w}_2)=(\tilde{\xi}\tilde{w}_1,\tilde{\xi}\tilde{w}_2)$, where $\tilde{\xi}=\xi^2=e^{2\pi i/(\frac{n+1}{2})}$ is a generator of the cyclic group $\tilde{G}$. (See e.g. \cite[Section 1.1]{MR385162} for the more general rationale behind this.)

In the this case, we can write $U_2/G$ as 
\[
\{(u_0,\dots,u_{(n+1)/2)})\;|\;u_iu_{j+1}=u_{i+1}u_j,\,0\leq i<j<(n+1)/2\}\subset\mathbb{C}^{(n+1)/2+1},
\]
(where we identify $u_i$ with $\tilde{w}_1^{(n+1)/2-i}\tilde{w}_2^{i}=w_1^{n+1-2i}w_2^{i}$). In terms of these, just as above, we can write the holomorphic volume form as
\[
\frac{1}{n+1}\cdot\frac{du_0\wedge du_1}{u_0(u_1+u_0)}=\frac{dw_1\wedge dw_2}{w_1(w_2+w_1^2)}.
\]

The singularity can be resolved by
\[
f_u^{-1}(U_2/G)=Y_0\cup Y_1,
\]
where $Y_i\cong\mathbb{C}^2$ are glued along $(\xi_1,\eta_1)=\left(1/\eta_0,\xi_0\eta_0^{(n+1)/2}\right)$ and the map $Y_0\rightarrow U_0/G$ is given by
\[
(\xi_0,\eta_0)\mapsto(u_0,u_1,u_2,\dots,u_{(n+1)/2})=\left(\xi_0,\xi\eta_0,\xi_0\eta_0^2,\dots,\xi_0\eta_0^{(n+1)/2}\right)
\]
(i.e. $\xi_0=u_0=\tilde{w}_1^{(n+1)/2}=w_1^{n+1}$ and $\eta_0=u_1/u_0=\tilde{w}_2/\tilde{w}_1=w_2/w_1^{2}$).
The holomorphic volume form can be expressed in these charts as
\begin{align*}
((f_u)^*\Omega)_{|Y_0}&=\frac{1}{n+1}\cdot\frac{d\xi_0\wedge d\eta_0}{\xi_0(\eta_0+1)}, \\
((f_u)^*\Omega)_{|Y_1}&=\frac{1}{n+1}\cdot\frac{d\xi_1\wedge d\eta_1}{\xi_1\eta_1(1+\xi_1)}.
\end{align*}
Again we conclude that this form has with simple poles along a normal crossing divisor. The translation of the descriptions of the divisors in different charts is identical to the case above, $n+1$ being odd.

Outside of the poles, i.e. $u=(u_I,u_J,u_K)\in S^2$, with $u_I\not=\pm 1$, the complex manifold $(X_{\zeta},I_u)$ is a Milnor fibre and can be described as a hypersurface given by $P(Z)=XY$, for $P(Z)$ a polynomial in one variable of degree $n+1$ without repeated roots. When $X_{\zeta}$ is defined by $P(Z)=Z^{n+1}-1$, the complex manifold $(X_{\zeta}\smallsetminus D_u,I_u)$ have been shown to be log Calabi--Yau in \cite[Example 3.2.4]{MR4039179}. In the general case, denote the roots of $P(Z)$ by $Q_1,\dots,Q_{n+1}$. Since the construction and proof in \cite[Section 7.1, Lemma 7.1]{MR2787361}, carry over to this case (it is however crucial for the proof that the roots are distinct), replacing $\xi_k$ with $Q_k$, we conclude that $(X_{\zeta}\smallsetminus D_u,I_u)$ is log Calabi--Yau.
\end{proof}

\begin{remark}\label{moreonfamily}
For $X_{\zeta}$ in Example~\ref{HKquotient} and Example~\ref{loghkA}, in the case $G=\mathbb{Z}_{n+1}$, we expand our exposition slightly on the algebraic structure and its relation to the symplectic structures for $u\in O\smallsetminus\{(1,0,0)\}$.

The symplectic areas of the exceptional spheres $E_i$ with respect to $\omega_I$ (that is, the values $\zeta_1^i$ in the $n$-tuple $\zeta_1=(\zeta_1^1,\dots,\zeta_1^n)$) are proportional to $b_{i}-b_{i-1}$, for $i=1,\dots,n$, by a common factor, where we put $b_0=b_{n+1}=0$ (see \cite[page 475]{MR520463}, and also \cite[Section 2]{MR4865839} and \cite[Section 2, Section 4]{MR4271387}). Here we use the assumption $\zeta=(\zeta_1,0,0)$ (which means that the symplectic area of the exceptional spheres $E_i$ with respect to $\omega_J$ and $\omega_K$ is $0$). For a fixed complex structure away from the poles, that is, for a fixed $\tilde{u}\in\mathbb{C}^*$, we let $\Lambda_i=\tilde{u}b_i$ denote the roots of $P_{\tilde{u}}(Z)$ (where thus $\Lambda_0=\Lambda_{n+1}=0$). We put $\lambda_i=\Lambda_{i}-\Lambda_{i-1}$, for $i=1,\dots,n+1$. Then $\Lambda_i=\sum_{j=1}^i\lambda_j$, for $i=1,\dots,n+1$, and the following relation is satisfied: $\lambda_1+\cdots+\lambda_{n+1}=0$. Compare this to Proposition~\ref{nonlocmil} (and also Proposition~\ref{parameters} and Proposition~\ref{multloc}, via Remark~\ref{substrmk}). 

In terms of the genericity condition for $\zeta$ \cite[Definition 38]{MR2720232}, we require all areas $\zeta_1^i$ to satisfy $\sum_{l=i}^j\zeta_l\not=0$. This is equivalent to $\sum_{l=i}^j\lambda_l=\Lambda_j-\Lambda_{i}\not=0$, $i=1,\dots, n+1$. In particular, all roots $\Lambda_i$ are distinct, which is equivalent to $X_{\zeta}$ being smooth; compare again with Proposition~\ref{nonlocmil}.

Furthermore, the sufficiency of the genericity condition also implies that all choices of $0=b_{n+1}=b_0,b_1,\dots,b_n$ are realised in some $X_{\zeta}$.

Lastly, in the definition $D_u=\{Z=-C\}$, note that varying the constant $C$ is equivalent to keeping the divisor fixed, say at $\{Z=-1\}$ while varying the complex structure. Indeed, this is seen by the coordinate change $Z\mapsto ZC$, $X\mapsto C^{n+1}X$.
\end{remark}

\section{The symplectic geometry of the $A_n$-Milnor fibres}\label{geometry}
We describe the symplectic geometry of relevance in our setting. We give a short preliminary on Weinstein manifolds and sectors. We describe the Milnor fibres with a fibre removed as Weinstein manifolds obtained by attaching critical handles to $T^*\mathbb{T}^2$. A covering argument together with a Legendrian Kirby calculation shows that this manifold is equivalent to the Weinstein manifold obtained by cyclic plumbing of $T^*S^2$.

Finally we identify a Legendrian $S^1\simeq\Lambda_{\text{stop}}\subset S^1\times S^2$ at which we put our stop, and we show that the complement of a neighbourhood of the stop can be identified with a Weinstein sector $W_{\sigma}$ with boundary at infinity $\partial_{\infty}W_{\sigma}=J^1S^1$, with its standard contact structure. We use this in the following section to compute the partially wrapped Fukaya category.
\subsection{Preliminaries: Weinstein sectors}\label{geomprel}

\subsubsection{Weinstein Manifolds}

A \emph{Liouville domain} is an exact compact symplectic manifold $(\overline{W},\omega=d\lambda)$ with boundary $M=\partial \overline{W}$, such that the Liouville vector field $Z$, defined by $\iota_Z\omega=\lambda$, is an outwards pointing normal to the boundary. This implies that $(M,\xi)$, with $\xi=\text{ker}\,\lambda_{|M}$, is a (cooriented) contact manifold.

A \emph{Liouville manifold} is an exact symplectic manifold $(W,\lambda)$ which is cylindrical and convex at infinity, meaning that: There is a Liouville domain $\overline{W}\subset W$ such that the positive Liouville flow $\phi_Z^t:\mathbb{R}_t\times\partial\overline{W}\rightarrow W$ is defined for all $t$ and the induced map $\overline W\cup_{\partial \overline{W}}(\mathbb{R}_{t\geq 0}\times\partial\overline{W})\rightarrow W$ is a symplectomorphism, where the symplectic form on the \emph{cylindrical end} $\mathbb{R}_t\times\partial\overline{W}$ is given by the symplectisation form $d(e^t\lambda)$.

The contact manifold $(M,\xi=\text{ker}\,\alpha)=(\partial\overline{W},\text{ker}\,\lambda_{|\partial\overline{W}})$ is unique up to contactomorphism (by Liouville flow), in the sense that, for any other choice of Liouville domain $\overline{W}'$ such that the induced map as above is a symplectomorphism, $(\partial\overline{W},\text{ker}\,\lambda_{|\partial\overline{W}})\simeq (\partial\overline{W}',\text{ker}\,\lambda_{|\partial\overline{W}'})$, are contactomorphic. In particular, any contact type hypersurface in the cylindrical end which is everywhere transverse to the Liouville vector field is contactomorphic to $(M,\xi)$. We denote this contact manifold by $(\partial_{\infty} W,\xi)$ and call it the \emph{boundary at infinity}.

Any Liouville domain $(\overline{W},\lambda)$ can be completed to a Liouville manifold by attaching a cylindrical end as follows. Flowing along $-Z$ gives a collar neighbourhood of the boundary $(\overline{W},\lambda)\supset (U,\lambda)\cong (-\infty,0]\times M$, where $M=\partial \overline{W}$. Attaching the rest of the symplectisation of $M$ to $W$, called \emph{attaching the cylindrical end}, gives an open manifold without boundary $W=\overline{W}\cup_M([0,\infty)\times M)$. The Liouville form $\lambda$ extends to a Liouville form which at the cylindrical end is $\lambda=e^t\lambda_{|M}$.

The \emph{skeleton} of a Liouville domain is the set of points which do flow to the boundary with the Liouville flow:
\[
\text{skel}(\overline{W})=\bigcap_{t<0} \phi_Z^{t}(\overline{W}).
\]
For general Liouville domains the skeleton can be "big" and can have codimension $1$, but for Weinsetin domains (defined below) the skeleton is always (possibly singular) isotropic, and thus "at most" Lagrangian.

We consider two Liouville manifolds equivalent if they can be homotoped to each other through a smooth family of Liouville manifold structures, which are the completions of a smooth family of Liouville sub-domains. 

A Liouville domain is a \emph{Weinsetin domain} if it has a Morse (or Morse\--Smale) function $f$ such that the Liouville vector field $Z$ is a pseudo-gradient vector field for $f$ (in particular the critical points are finitely many and away from the boundary). A Liouville manifold is called a \emph{Weinstein manifold} if is the completion of a Weinstein domain $\overline{W}\subset W$, such that the Morse function $f$ can be extended to the whole manifold with no critical points outside $\overline{W}$.

The property that the Liouville flow expands the symplectic form implies that the critical points of $f$ must have index less than or equal to $n$, half the dimension of $W$, and $W$ is built by symplectic handle attachments, as follows. We define the \emph{Weinstein standard handle} of index $0\leq k\leq n$, of size $\epsilon>0$, to be the subset
\[
H_k=\left\{\sum_{i=1}^nx_i^2+\sum_{i=1}^{n-k}y_i^2\leq\epsilon\right\}\cap\left\{\sum_{i=n-k+1}^ny_i^2\leq 1\right\}\cong D^{2n-k}\times D^k
\]
of $(\mathbb{R}^{2n},\omega_{\text{std}})$, where $\omega_{\text{std}}=\sum_idy_i\wedge dx_i$. It comes with a Weinstein structure defined by
\begin{align*}
\lambda_k&=\frac{1}{2}\sum_{i=1}^{n-k}\Big(y_idx_i-x_idy_i\Big)-\sum_{i=n-k+1}^n\Big(y_idx_i+2x_idy_i\Big), \\
Z_k&=\frac{1}{2}\sum_{i=1}^{n-k}\Big(x_i\partial_{x_i}+y_i\partial_{y_i}\Big)+\sum_{i=n-k+1}^n\Big(2x_i\partial_{x_i}-y_i\partial_{y_i}\Big), \\
f_k&=\frac{1}{4}\sum_{i=1}^{n-k}\Big(x_i^2+y_i^2\Big)+\sum_{i=n-k+1}^n\Big(x_i^2-\frac{1}{2}y_i^2\Big).
\end{align*}
The disc $\{x_1=\dots=x_n=y_1=\dots y_{n-k}=0\}\cap H_k\cong D^{k}$ is called the \emph{core}, its boundary ($\cong S^{k-1}$) is called the \emph{attaching sphere}, and the disc $\{y_{n-k+1}=\dots=y_n=0\}\cap H_k\cong D^{2n-k}$ is called the cocore. Note that the core is isotropic and the cocore is coistropic, and that the attaching sphere is isotropic in the contact boundary. In particular, if $k=n$, both the core and the cocore are Lagrangian submanifolds, and the attaching sphere is a Legendrian sphere in the boundary. 

We denote by $\partial_h H_k=\left\{\sum_{i=n-k+1}^ny_i^2=1\right\}\cong D^{2n-k}\times S^{k-1}\subset D^{2n-k}\times D^k$, the attaching region. An attachment of a handle $H_k$ to a Weinstein domain $\overline{W}$ is given by $\overline{W}\cup_{\phi} H_k$. By this we mean the disjoint union of $\overline{W}$ and $H_k$, modulo an identification of the attaching region $\partial_h H$ with its image via an embedding, the \emph{attaching map}, $\phi:\partial_h H_k\rightarrow \partial \overline{W}$. This results in a manifold with boundary with corners, which then is smoothened while making sure the symplectic structures are compatible and that the Liouville vector fields still points out of the boundary; thus we obtain a new Weinstein domain. The handle attachment is uniquely determined by an embedding of the attaching sphere $S^{k-1}\xrightarrow{\sim} L\subset\partial \overline{W}$ and a trivialisation of its conformal symplectic normal bundle $\text{CSN}(L)=(TL)^{\perp}/TL$. In particular, the attaching of a standard Weinstein handle of index $n$ is uniquely determined by the embedding of the attaching Legendrian \cite{MR1114405}. 

Given a Weinstein domain $\overline{W}$, one can deform the Weinstein structure (in particular changing the Morse function) so that it is built by first attaching handles of index $1\leq k\leq n-1$ to the the standard ball $B^{2n}$, one for each critical point of $f$ with the corresponding index, which yields the \emph{subcritical} part $\overline{W}_\text{sub}$, and then attaching the handles of index $n$, called the \emph{critical handles}. A subcritical manifold is symplectically uninteresting in the sense that it has trivial wrapped Fukaya category, and all related related symplectic invariants vanish. 

In this way Weinstein manifolds are up to deformations of Weinstein structures built by first attaching handles and then attaching the cylindrical end.

\subsubsection{Weinstein Kirby calculus}

To describe Weinstein $4$-manifolds that are constructed by as sequence of handle attachments we will use Weinstein handlebody diagrams and Legendrian Kirby calculus (see \cite{MR1668563}). This is a calculus which can be used to determine when two Weinstein manifold are equivalent. Namely, two Weinstein manifolds are equivalent if and only if their Weinstein structures can be related by a so-called Weinstein homotopy, which involves isotopies of the structures, handle-slides, and birth/death moments. When the Weinstein manifold is $4$-dimensional, these moves can all be described by using the $2$-dimensional front projection of the attaching spheres.

To construct a \emph{Weinstein handlebody diagram}, we can always start with one $0$-handle $B^4$, and attach first $1$-handles, to obtain the subctritical Weinstein domain with boundary $M=\#^m(S^1\times S^2)$ (with its standard contact form), and then we attach the $2$-handles along Legendrian knots in $M$. In this case we only attach $1$-handles, with attaching spheres $S^0$, and $2$-handles, which are critical. There is a standard choice for the attaching regions of the $1$-handles (see \cite{MR1668563}), and the attaching maps of the critical handles are determined by the the attaching spheres.

We will draw a front diagram representing the handle attachments which make up the Weinstein manifold. Thus this picture completely specifies the Weinstein manifold.

The $0$-handle is just the standard Liouville domain $(D^4,\lambda_0)$. It has boundary standard contact $S^3$. All handle attachments must happen in the complement of some point $\text{pt}\in S^3$, and so we can think of the handle attachment as happening in standard contact $\mathbb{R}^3=S^3\smallsetminus\{\text{pt}\}$ (with $\xi=\text{ker}(dz-ydx)$). This is "pictured" as the background, via the front projection $\pi_F:\mathbb{R}_{x,y,z}^3\rightarrow\mathbb{R}^2_{x,z}$. The $1$-handles are pictured as pairs of balls aligned horizontally (the attaching region is $S^0\times D^3$), and the $2$-handles by their attaching spheres. See Figure~\ref{plumbingpic} and Figure~\ref{eqpic} for examples. We say that the diagram is in \emph{Gompf standard form}.

Two Legendrian links are Legendrian isotopic in $M$ if and only if they can be related by Legendrian Reidemeister moves and the Gompf moves, together also called Legendrian Kirby moves. This induces equivalences of the Weinstein manifolds built by the handle attachments (and quasi-isomorphic Chekanov--Eliashberg dg-algebras, see Section~\ref{invprel}). Weinstein manifolds are also equivalent if their Weinstein handlebody diagrams can be related by handle slides and handle cancellations (which induce quasi-equivalences of the wrapped Fukaya categories, see Section~\ref{invprel}).

\subsubsection{Stops and sectors}

A Liouville/Weinstein sector is a kind of Liouville/Weinstein manifold with boundary. The boundary is assumed to have collar neighbourhood of a certain type which allows the sector to be doubled to a Liouville/Weinstein manifold without boundary, analogously to doubling of smooth manifolds with boundary. The Floer theories on these manifolds are well defined if one makes an appropriate choice of almost complex structure near the boundary (see \cite{MR4106794}). 

A \emph{Liouville sector} as defined in \cite{MR4106794} is a Liouville manifold $W$ with boundary (so that a corresponding Liouville domain $\overline{W}$, as above, is a manifold with corners), such that for some (or all) positive number $a>0$, there exists a function $I:\partial W\rightarrow \mathbb{R}$ such that $ZI=a I$ near infinity (for $t$ large enough, where $t$ is the $\mathbb{R}$-coordinate in the cylindrical end) and $dI_{|\mathcal{F}_{\text{Ch}}}>0$, where $\mathcal{F}_{\text{Ch}}$ is the characteristic foliation. 

A consequence of the existence of I is that $\partial_{\infty}W$ is contact manifold with convex boundary $\partial\partial_{\infty}W$ and that there exists a diffeomorphism $\partial W\xrightarrow{\sim} F\times\mathbb{R}$ sending the characteristic foliation of $\partial_{\infty} W$ to the leaves of the foliation $\mathbb{R}\times F$ by $\mathbb{R}\times\{\text{pt}\}$.

The typical example is $T^*M$, where $M$ is a manifold with boundary.

Next we discuss the relations to stops, due to \cite{MR3427304}. We recall the definition from \cite{MR4106794}:
\begin{definition}\label{stop}
Let $(F^{2n-2},\lambda_F)$ and $(W^{2n},\lambda_W)$ be Liouville manifolds, and let $\epsilon>0$. A \emph{stop with fibre} $F$ is a proper embedding
\[
\sigma:\mathbb{C}_{\text{Re}\leq\epsilon}\times F\rightarrow W
\]
satisfying $\sigma^*\lambda_W=\lambda_F+\lambda_{\mathbb{C}}+df$, for some compactly supported function $f$. Here $\lambda_{\mathbb{C}}=\frac{1}{2}\left(ydx-xdy\right)$.
\end{definition}

A Liouville manifold with stop $(W,\sigma)$ defines a Liouville sector, which we denote $W_{\sigma}$, by removing $\sigma(\mathbb{C}_{\text{Re}<0}\times F)$ from $W$. On the other hand, there is a way to go from a Liouville sector to Liouville manifold (which is unique up to deformation), roughly remembering the boundary as a stop. Thus the notions of stopped Liouville manifolds and of Liouville sectors are essentially the same (see also Theorem~\ref{stopsecteq}).

By the following example we will talk about putting a stop at a Legendrian submanifold (similarly to attaching Weinstein handles along them), and these are the stops of interest here:
\begin{example}
Let $\Lambda\subset M=\partial_{\infty} W$ be a Legendrian submanifold. By the Legendrian tubular neigbourhood theorem, $\Lambda$ has a tubular neighbourhood $U$ contactormorphic to an open convex neighbourhood of the zero-section in the $1$-jet bundle $J^1\Lambda=T^*\Lambda\times\mathbb{R}$. Identifying these two neighbourhoods we obtain a stop $\sigma$ with fibre $T^*\Lambda$ in $W$ (see \cite[Example 2.8]{MR3427304} for details).
\end{example}

Given any point $p\in \Lambda$, one can define a certain sphere $S_p$ in a Darboux chart around $\Lambda$, which links with $\Lambda$. This sphere bounds a Lagrangian disc $D_p$ in the symplectisation of $M$ called a \emph{linking disc}, which is roughly given by the Cartesian product of a cotangent fibre in $T^*\Lambda$ and a curve in the $\mathbb{C}$-factor. This can then be viewed as a Lagrangian in the Weinsetin manifold $W$ (the completion). (See \cite[Section 5.3]{MR4695507}.)

Together with the cocores, the Linking discs generates the partially wrapped Fukaya categories of Weinstein sectors (see Theorem~\ref{generation}). 

\subsubsection{Cutoff Reeb dynamics}

In order to define the wrapped Fukaya category on sectors one needs to control the Reeb dynamics in the boundary at infinity. 

Let $W$ be a Liouville sector with boundary at infinity $(\partial_{\infty}W,\xi)=(M,\xi=\text{ker}\,\alpha)$, where $M$ is a contact manifold with convex boundary. Recall that the \emph{Reeb vector field} on $M$ defined by the contact form $\alpha$ is the unique vector field satisfying
\begin{align*}
    \alpha(R_{\alpha})&=1, \\
    \iota_{R_{\alpha}}d\alpha&=0.
\end{align*}

We will assume that the contact forms $\alpha$ on $\partial_{\infty}W=M$ define cutoff Reeb vector fields, in sense described below. Recall that a \emph{contact vector field} is a vector field $X$ whose flow preserve the contact structure or, equivalently, $\mathcal{L}_V\alpha=f\cdot\alpha$, for some function $f:M\rightarrow \mathbb{R}$, where $\mathcal{L}$ denotes the Lie derivative. Contact vector fields are in one-to-one correspondence with contact Hamiltonians, i.e. sections $H:M\rightarrow TM/\xi$. Given a fixed contact form $\alpha$, contact Hamiltonians are equivalently defined as functions $H:M\rightarrow\mathbb{R}$, and contact vector fields are equivalently defined as Hamiltonian vector fields, i.e. vector fields $X_H$ defined by requiring
\begin{align*}
    \alpha(X_H)&=H,\\
     \iota_{X_H}d\alpha&=dH(R_{\alpha})\alpha-dH,
\end{align*}
to hold. (In particular $H=1$ defines the Reeb vector field.) 

A contact vector field is said to be a \emph{cutoff contact vector field} if it is generated by a Hamiltonian of the form $H=f^2G$, for a function $f:M\rightarrow\mathbb{R}$ vanishing transversely on $\partial M$ and a section $G>0$ on all of $M$ (including $\partial M$).
\subsection{Equivalence of Milnor fibres and plumbings}\label{geomeq}
We show that the smoothening of the $A_n$-singularity, which we denote by $X_{\text{Mil}}$, with a fibre removed, i.e. $X_{\text{Mil}}\smallsetminus D_m$, has a nice geometric description in terms of cyclic plumbings of $T^*S^2$. In particular, the cyclic plumbing has a well known simple handle decomposition which we will use.

\subsubsection{Milnor fibres}

In \cite{MR239612}, Milnor analysed the topology of singular hypersurfaces given by $\{f=0\}$, for analytic maps $f:\mathbb{C}^m\rightarrow\mathbb{C}$, locally around the origin.

To summarise shortly, we let $f$ be a non-constant polynomial function (or, more genreally, an analytic one) $f:\mathbb{C}^{m+1}\rightarrow\mathbb{C}$, $m\geq 1$, with an isolated singularity at the origin. For small enough $\epsilon>0$, let $S_{\epsilon}^{2m+1}$ denote the sphere of radius $\epsilon$. It intersects $f^{-1}(0)$ transversely, and we obtain a smooth fibre bundle over $S^1$,
\[
\phi:S_{\epsilon}^{2m+1}\smallsetminus K\rightarrow S^1,\qquad z\mapsto \frac{f(z)}{|f(z)|}.
\]
There exists $\delta=\delta(\epsilon)$ such that for all $c$ with $0<|c|<\delta$ we have that
\[
f^{-1}(c)\cap\overline{B}_{\epsilon}\simeq\phi^{-1}(\text{arg}(c)),
\]
are diffeomorphic smooth manifolds with boundary, where $B_{\epsilon}$ denotes the open ball of radius $\epsilon$. 

For fixed $c$ and $\epsilon$, we denote the \emph{Milnor fibre} of $f$ by $\overline{F}_{c,\epsilon}=f^{-1}(c)\cap\overline{B}_{\epsilon}$. The Milnor fibre admits the structure of a Weinstein domain given by restricting the radial structure:
\begin{align*}
f_{\text{rad}}&=\sum_i|z_i|^2, \\
\lambda_{\text{rad}}&=\frac{i}{4}\sum_i(z_id\overline{z}_i-\overline{z}_idz_i).
\end{align*}
Indeed, one can show that the induced Liouville vector field is pointing outwards along the boundary. The domain $\overline{F}_{c,\epsilon}$ is unique up to exact symplectomorphism \cite[Section 3.1]{MR3279026}. The completions obtained by adding cylindrical ends yield equivalent Liouville manifolds. Furthermore, if $g$ is obtianed form $f$ by holomorphic reparametrisation, then the Milnor fibres of $f$ and $g$ are exact symplectomorphic. 

The homotopy type of the Milnor fibre is a bouquet of $\mu$ $m$-dimensional spheres, where $\mu$ is the Milnor number.

We are interesed in the case $f=x^2+y^2+z^{n+1}$.  (Equivalently, we could consider any polynomial on the form $f=x^2+y^2+p(z)$, or $f=xy+p(z)$, for $p(z)$ a polynomial of degree $n+1$ with distinct roots.) We obtain the corresponding Milnor fibre $\overline{F}_{A_n}\subset\mathbb{C}^3$, by e.g. letting $c=1$. In this case we obtain a Lefschetz fibration $\pi_z:\overline{F}_{A_n}\rightarrow\mathbb{C}$ by projection onto the $z$ coordinate. This means that except for at finitely many points the fibres are given by smooth conics $\pi_z^{-1}(z_0)=\{x^2+y^2=z_0\}\simeq\mathbb{C}^*\simeq T^*S^1$. The singular fibres are the sets $\{(x,y,\xi_{n+1})\,|\,x^2+y^2=0\}$, where $\xi_{n+1}$ is an $n+1$:th root of unity. We denote the completed Weinstein manifold by $X_{\text{Mil}}$.

We remove a generic fibre, say $D_m=\pi_z^{-1}(0)$, which is a divisor. Now the above vector field is no longer transversely pointing outwards along all of the boundary of the corresponding domain. This could be amended, but we will instead understand this Weinstein manifold by a description of a handle decomposition.

\begin{figure}[H]
\centering
\begin{tikzpicture}
\draw (0,0) circle (2cm);
\draw[fill=white] (0,0) circle (0.13cm);

\draw[black, line width = 1.4] (1.9,-0.1) to (2.1,0.1);
\draw[black, line width = 1.4] (1.9,0.1) to (2.1,-0.1);

\draw[black, line width = 1.4] (-1.1,1.832) to (-0.9,1.632);
\draw[black, line width = 1.4] (-1.1,1.632) to (-0.9,1.832);

\draw[black, line width = 1.4] (-1.1,-1.832) to (-0.9,-1.632);
\draw[black, line width = 1.4] (-1.1,-1.632) to (-0.9,-1.832);

\draw (0,0.6) to (0,2.5);

\draw[densely dotted] (0.5,3) arc (0:180:0.5 and 0.16);
\draw (-0.5,3) arc (0:180:-0.5 and -0.16);

\draw (0.5,5) arc (0:180:0.5 and 0.16);
\draw (-0.5,5) arc (0:180:-0.5 and -0.16);
\draw[rounded corners=8pt] (-0.5,5) to (-0.2,4) to (-0.5,3);
\draw[rounded corners=8pt] (0.5,5) to (0.2,4) to (0.5,3);

\draw (2,0.6) to (2,2.5);

\draw[densely dotted] (2.5,3) arc (0:180:0.5 and 0.16);
\draw (1.5,3) arc (0:180:-0.5 and -0.16);

\draw (2.5,5) arc (0:180:0.5 and 0.16);
\draw (1.5,5) arc (0:180:-0.5 and -0.16);

\draw (2.5,5) to (1.5,3);
\draw (1.5,5) to (2.5,3);
\end{tikzpicture}
    \caption{Picture of the Lefschetz fibration $\{x^2+y^2+z^3=1\}\xrightarrow{\pi_z}\mathbb{C}_z$, showing the critical fibres (the crosses) and one generic fibre at the origin.}
\end{figure}
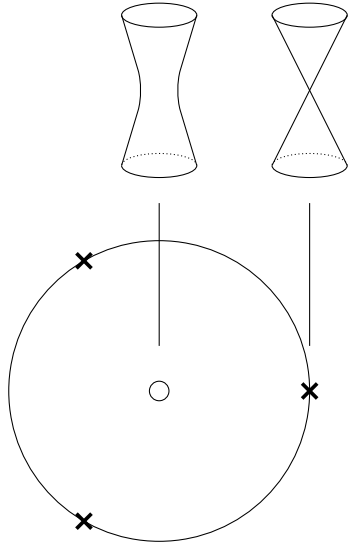

\begin{proposition}
The Milnor fibre minus a fibre $X_{\textup{Mil}}\smallsetminus D_m$ can be obtained from $T^*\mathbb{T}^2$ by attaching Weinstein $2$-handles along the conormal lifts of $n+1$ parallel embedded curves on $\mathbb{T}^2$, as in Figure~\ref{milnorpic} (where $\mathbb{T}^2=S^1\times S^1$).
\end{proposition}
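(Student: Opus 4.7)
The plan is to exploit the Lefschetz fibration $\pi_z\colon X_{\textup{Mil}}\smallsetminus D_m\to \mathbb{C}^{*}$ and extract from it the desired Weinstein handle decomposition.

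\textbf{Step 1: Set up the Lefschetz fibration.} The projection $\pi_z\colon\{x^2+y^2+z^{n+1}=1\}\to\mathbb{C}$ has isolated non-degenerate critical points at $(0,0,\xi)$ with $\xi^{n+1}=1$, whose critical values are the $(n+1)$-th roots of unity, all lying on the unit circle $S^1\subset\mathbb{C}^*$. After removing $D_m=\pi_z^{-1}(0)$ the base becomes $\mathbb{C}^*$, and the generic fibre $\{x^2+y^2=c\}$ is symplectomorphic to $T^{*}S^1$ via the identification $\mathbb{C}^{*}\cong T^{*}S^1$. Identifying the base $\mathbb{C}^{*}\cong T^{*}S^1$ as well, we put ourselves in the situation of a Lefschetz fibration over a Weinstein surface whose skeleton is the unit circle $S^1\subset\mathbb{C}^{*}$ passing through all $n+1$ critical values.

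\textbf{Step 2: Apply the Lefschetz$\leftrightarrow$Weinstein handle dictionary.} I would invoke the standard fact (which in this $4$-dimensional setting is the content of Gompf's Weinstein Kirby calculus \cite{MR1668563}, and which symplectically is the parallel transport construction for a Lefschetz fibration) that a Weinstein Lefschetz fibration $W\to B$ with generic fibre $F$ and critical values $c_1,\dots,c_k$ is Weinstein-equivalent to
\[
(F\times B)\cup\bigcup_j H_j,
\]
where each $H_j$ is a Weinstein $2$-handle (top index $=\tfrac12\dim W$) attached along a Legendrian lift, to $\partial_{\infty}(F\times B)$, of the vanishing cycle $L_j\subset F$ sitting over $c_j$. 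With $F=T^{*}S^1$ and $B=T^{*}S^1$ this immediately yields $F\times B = T^{*}\mathbb{T}^2$ as the subcritical part, and $n+1$ Weinstein $2$-handles to be attached.

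\textbf{Step 3: Identify the vanishing cycles as conormal lifts of meridians.} At each critical point the fibration has a standard Morse model $x^2+y^2=c-\xi_j^{n+1}$; parallel transport along a radial path toward $\xi_j$ collapses the zero section $S^1\subset T^{*}S^1$ of the fibre, so the vanishing cycle $L_j$ is the zero section of the fibre. Its Legendrian lift to $\partial_{\infty}(T^{*}S^1\times T^{*}S^1)=\partial_{\infty}T^{*}\mathbb{T}^2$ is, under the product identification, the unit conormal of the product $L_j\times\{\xi_j\}\subset \mathbb{T}^2$, which is precisely the meridian $\gamma_j=S^1\times\{\xi_j\}\subset\mathbb{T}^2$ (since the conormal direction is the one transverse to $\{\xi_j\}$ in the base factor, i.e.\ along $L_j$). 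Because $\xi_1,\dots,\xi_{n+1}$ are the $(n+1)$-th roots of unity on the skeleton $S^1$ of the base, the curves $\gamma_1,\dots,\gamma_{n+1}$ are $n+1$ equally spaced parallel embedded meridians on $\mathbb{T}^2$, and their conormal lifts give the Legendrian attaching link that produces $X_{\textup{Mil}}\smallsetminus D_m$.

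\textbf{Main obstacle.} The step that will require the most care is Step 3: matching the Legendrian lift of the vanishing cycle (which is by construction only defined in an abstract cylindrical neighbourhood of the fibre at infinity inside the Lefschetz total space) with the unit conormal of $\gamma_j$ inside $\partial_{\infty}T^{*}\mathbb{T}^2$, including the correct choice of co-direction. This requires verifying that the symplectic parallel transport trivialisation of $\pi_z$ away from the critical values, the radial Weinstein structure $\lambda_{\textup{rad}}=\tfrac{i}{4}\sum(z_i d\bar z_i-\bar z_i dz_i)$ restricted to $X_{\textup{Mil}}\smallsetminus D_m$, and the product Weinstein structure on $T^{*}\mathbb{T}^2=T^{*}S^1\times T^{*}S^1$ are Weinstein homotopic on the subcritical part; once this compatibility is in place the conormal identification follows because the vanishing cycle is the zero section in the fibre factor and sits over a point on the zero section of the base factor. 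The remaining verification, that each critical value on $S^1\subset\mathbb{C}^{*}$ indeed corresponds to a meridian $\{\xi_j\}\times S^1$ after swapping base/fibre conventions, is essentially bookkeeping.
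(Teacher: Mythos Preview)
Your proposal is correct and follows essentially the same route as the paper: both use the Lefschetz fibration $\pi_z$ over $\mathbb{C}^*$, identify the subcritical part as the trivial bundle $T^*S^1\times T^*S^1=T^*\mathbb{T}^2$, and then read off the $n+1$ critical $2$-handles from the critical values of $\pi_z$, with attaching spheres the Legendrian lifts of the vanishing cycles. The paper phrases the subcritical identification as ``grow a small punctured disc $D_\epsilon\smallsetminus\{0\}$ in the base until it swallows the critical values'' and defers the explicit conormal identification of the attaching Legendrians to \cite[Section~3]{MR4377932} and \cite{MR4913463}, whereas you spell out directly that the vanishing cycles are zero-sections in the fibre and hence lift to conormals of parallel meridians; this is exactly the content hidden behind those citations, so there is no substantive difference.
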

\begin{proof}
We note that for small $\epsilon$, that is, such that $\pi^{-1}_z(D_{\epsilon})$ does not contain any singular fibres, where $D_{\epsilon}$ is the disc of radius $\epsilon$, the completion of $\pi_z^{-1}(D_{\epsilon}\smallsetminus\{0\})$ is equivalent to the trivial fibration $\mathbb{C}^*\times\mathbb{C}^*=T^*\mathbb{T}^2$. When passing to the critical level, i.e. when passing to $\pi^{-1}_z(D_{\epsilon})$ for $\epsilon$ large enough to contain the singular fibres, we attach to it $n+1$ critical handles, one for each singular fibre. How this is done is shown in \cite[Section 3]{MR4377932}, using a slightly different picture, from which we deduce that the handle attachment can be understood from Figure~\ref{milnorpic}. The conormal lifts of the blue lines are the Legendrian attaching spheres in the unit cotangent bundle $ST^*\mathbb{T}^2$. It is shown in \cite{MR4913463} how these diagrams can be translated into a Weinstein handlebody diagram (see Figure~\ref{eqpic}). (See also Figure~\ref{skeleton}, for the skeleton.) 
\end{proof}

\begin{figure}[H]
   \centering
    \begin{tikzpicture}
        \draw[blue!80!black,thick] (0,3.6) to (4,3.6);
        \draw[blue!80!black,thick] (0,3.4) to (4,3.4);
        \draw[blue!80!black,thick] (0,3.2) to (4,3.2);
        \draw[blue!80!black,thick] (0,2.4) to (4,2.4);
        \draw[black, line width =1.5 pt] (0,0) rectangle (4,4);
        \node[blue!80!black, very thick] at (2,2.9) {$\vdots$};
        \node[scale=1.5] at (2,0) {\midarrow};
        \node[scale=1.5] at (2,4) {\midarrow};
        \node[scale=1.5, rotate=90] at (0,1.9) {\midarrow};
        \node[scale=1.5, rotate=90] at (0,2.2) {\midarrow};
        \node[scale=1.5, rotate=90] at (4,1.9) {\midarrow};
        \node[scale=1.5, rotate=90] at (4,2.2) {\midarrow};
    \end{tikzpicture}
\caption{The projection onto $\mathbb{T}^2$ of the Legendrian attaching spheres (blue) for $X_{\text{Mil}}\smallsetminus D_m$ in $ST^*\mathbb{T}^2$. The attaching spheres are the conormal lifs of the parallel curves.}
\label{milnorpic}
\end{figure}
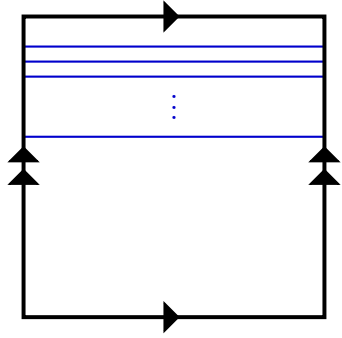

\subsubsection{Cyclic plumbings of $T^*S^2$}

We recall the basic properties of plumbings of cotangent bundles, which is a kind of symplectic connected sum. In particular we introduce the cyclic plumbing of cotangent bundles of spheres.
\begin{definition}
Let $M$ and $N$ be $n$-dimensional manifolds and let $X=T^*M$ and $Y=T^*N$. A \emph{plumbing} of $X$ and $Y$ is constructed as follows. Pick $q_0\in M$ and $q_1\in N$, and open neighbourhoods $B_0\subset M$ and $B_1\subset N$ of $q_0$ and $q_1$ respectively. Let the plumbing of $X$ and $Y$, denoted $X\#_{\text{pl}}Y$, be defined as a smoothing of $X\sqcup Y/\sim$,  where $\sim$ denotes the identification of $DT^*B_1$ and $DT^*B_2$ along the symplectomorphism $(q,p)\mapsto (-p,q)$. (Here $DT^*B_i$ denotes the disc cotangent bundle consisting of $(q,p)$ with $\|p\|\leq 1$ after some choice of metric.)

A \emph{self-plumbing} of $X=T^*M$ is defined choosing $q_0,q_1$ in $M$ and $B_0$ and $B_1$ disjoint neighbourhoods of $q_0$ and $q_1$ respectively. Then $\#_{\text{pl}}X$ is defined as a smoothening of $X/\sim$, where $\sim$ denotes denotes the same identification as above. There is an additional orientation data needed in order to make this operation well-defined. In the case when the dimension $n$ of $M$ is even, this is the choice of a local intersection number $+1$ or $-1$ at the unique self-intersection of the immersion of the zero-section.
\end{definition}
\begin{example}[Plumbing graphs of $T^*S^2$]
Given a (finite) graph $\Gamma=(V,E)$, the plumbing of copies of $T^*S^2$ according to $\Gamma$ consists taking one copy $S_i=T^*S^2$ for each vertex $i\in V$ and plumbing along them according to the edges. More precisely, for each edge $e:i\leftrightarrow j$, we plumb $S_i$ and $S_j$, where we choose the basepoints for the plumbings distinct for each edge, and where the neighbourhoods chosen for different plumbings are disjoint. 

In order for this to be well-defined, an additional choice of orientation data for each cycle in the graph $\Gamma$ is needed. Note that the local intersection number at the unique point of intersection corresponding to each edge is either $+1$ or $-1$.  For each cycle in $\Gamma$, the additional data needed is a choice of $+1$ or $-1$ of the products of the intersection numbers corresponding to the edges of the cycle. Here we will always assume that this multiplication is $1$. This is equivalent to the condition that the Lagrangian immersion obtained by taking the spheres corresponding to the nodes in the cycle and resolving the double points corresponding to the edges in the cycle is orientable.

With these choices, given a graph $\Gamma$, we denote the resulting symplectic manifold by $\#_{\Gamma}T^*S^2$. In particular, the plumbing of two copies $T^*S^2$ is $T^*S^2\#_{\text{pl}}T^*S^2=\#_{A_2}T^*S^2$, where $A_2$ denotes the Dynkin graph of type $A$ with two nodes. The self-plumbing is $\#_{\text{pl}}T^*S^2=\#_{\tilde{A}_0}T^*S^2$, i.e. plumbing accoriding to the extended Dynkin diagram $\tilde{A_0}$. 
\end{example}
Plumbings $\#_{\Gamma}T^*S^2$ are Weinstein manifolds. Their wrapped Fukaya categories were calculated in \cite{MR4033516}.

Cyclic plumbings of $T^*S^2$, i.e. $\#_{\tilde{A}_n}T^*S^2$, have a well known handle decomposition into one $1$-handle and $n+1$ $2$-handles, as shown in Figure~\ref{plumbingpic} below.
\begin{figure}[H]
\centering
\begin{tikzpicture}
\draw[thick] (-1,0) circle (1cm);
\draw[densely dotted] (0,0) arc (0:180:1 and 0.32);
\draw[thick] (-2,0) arc (0:180:-1 and -0.32);

\draw[thick] (8,0) circle (1cm);
\draw[densely dotted] (9,0) arc (0:180:1 and 0.32);
\draw[thick] (7,0) arc (0:180:-1 and -0.32);

\begin{knot}[clip width=5, flip crossing=2,flip crossing =4]
\strand[thick] (-0.293,0.7) to[out=0,in=180] (0.7,0.7) to[out=0,in=180]  (1.7,0);
\strand[thick] (-0.293,-0.7) to[out=0,in=180] (0.7,-0.7) to[out=0,in=180]  (1.7,0);

\strand[thick] (0.8,0) to[out=0,in=180] (1.8,0.7) to[out=0,in=180] (2.0,0.7);
\strand[thick] (2.0,0.7) to[out=0,in=180] (2.7,0.7) to[out=0,in=180] (3.7,0);
\strand[thick] (2.0,-0.7) to[out=0,in=180] (2.7,-0.7) to[out=0,in=180] (3.7,0);
\strand[thick] (0.8,0) to[out=0,in=180] (1.8,-0.7) to[out=0,in=180] (2.0,-0.7);

\strand[thick] (2.8,0) to[out=0,in=180] (3.8,0.7) to[out=0,in=180] (4,0.7);
\strand[thick] (2.8,0) to[out=0,in=180] (3.8,-0.7) to[out=0,in=180] (4,-0.7);
\end{knot}
\draw[fill=black] (4.4,0) circle (0.01cm);
\draw[fill=black] (4.5,0) circle (0.01cm);
\draw[fill=black] (4.6,0) circle (0.01cm);

\begin{knot}[clip width=5, flip crossing =2]
\strand[thick] (5.0,0.7) to[out=0,in=180] (5.3,0.7) to[out=0,in=180] (6.2,0);
\strand[thick] (5.0,-0.7) to[out=0,in=180] (5.3,-0.7) to[out=0,in=180] (6.2,0);
\strand[thick] (5.3,0) to[out=0,in=180] (6.3,0.7) to[out=0,in=180] (7.293,0.7);
\strand[thick] (5.3,0) to[out=0,in=180] (6.3,-0.7) to[out=0,in=180] (7.293,-0.7);
\end{knot}
\end{tikzpicture}
\caption{Weinstein handlebody diagram for cyclic plumbing of copies of $T^*S^2$, showing the Legendrian attaching link going through a $1$-handle.}
\label{plumbingpic}
\end{figure}
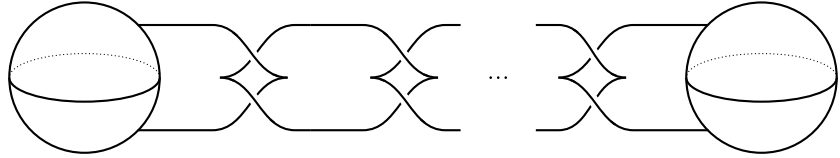

\subsubsection{Proof of the equivalence of $X_{\text{Mil}}\smallsetminus D_m$ and cyclic plumbing of $T^*S^2$}
The main purpose of Section~\ref{geomeq} is the following proposition:
\begin{proposition}\label{geometriceq}
The Milnor fibre with a fibre removed $X_{\textup{Mil}}\smallsetminus D_m$ is equivalent to the cyclic plumbing $\#_{\tilde{A}_n}T^*S^2$ as Weinstein manifolds.
\end{proposition}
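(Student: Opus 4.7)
The plan is to match the two Weinstein manifolds via explicit Weinstein Kirby calculus, starting from the handle description given in the preceding proposition. That proposition presents $X_{\textup{Mil}}\smallsetminus D_m$ as $T^*\mathbb{T}^2$ with $n+1$ critical $2$-handles attached along the conormal lifts of $n+1$ parallel embedded curves on $\mathbb{T}^2$ (Figure~\ref{milnorpic}). My first step would be to fix a standard Weinstein handlebody presentation of $T^*\mathbb{T}^2$ consisting of one $0$-handle, two $1$-handles (one for each generator of $\pi_1(\mathbb{T}^2)$), and one $2$-handle (for the $2$-cell of the torus). This yields a Gompf standard presentation of $X_{\textup{Mil}}\smallsetminus D_m$ with one $0$-handle, two $1$-handles and $n+2$ critical $2$-handles.

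Next I would apply the diagrammatic translation of \cite{MR4913463} to express the $n+1$ conormal-lift Legendrians in $ST^*\mathbb{T}^2$ as explicit Legendrian knots inside the contact boundary $\#^2(S^1\times S^2)$ of the subcritical handlebody. A covering argument is natural here: lifting the $n+1$ parallel curves through the finite cyclic cover of $\mathbb{T}^2$ that unwinds the direction transverse to the curves makes the link easy to draw symmetrically, and descending back to $\mathbb{T}^2$ produces a Weinstein handlebody diagram in which the $n+1$ critical attaching knots are arranged in a cyclic pattern, each passing once through one of the two $1$-handles.

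The final step is a sequence of Legendrian Kirby moves taking this diagram to Figure~\ref{plumbingpic}. The key move is a Weinstein $1$-/$2$-handle cancellation: the torus $2$-handle passes once through one of the two $1$-handles, so they form a cancelling pair, and their removal leaves one $0$-handle, one $1$-handle and $n+1$ critical $2$-handles. A few Reidemeister moves and handle slides among the remaining $2$-handles then bring the diagram into exactly the cyclic form of Figure~\ref{plumbingpic}, which represents $\#_{\tilde{A}_n}T^*S^2$.

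The main obstacle will be carrying out the Kirby calculus rigorously at the Weinstein level rather than only the smooth level. One must check that the cocore of the $1$-handle meets the attaching sphere of the torus $2$-handle transversely in a single point (so that the cancellation is a genuine Weinstein birth/death), and that all subsequent handle slides preserve the rotation numbers and Thurston--Bennequin framings needed for Weinstein homotopy. The identification of the Legendrian link produced by the translation of \cite{MR4913463} with the one read off directly from the Lefschetz fibration on $X_{\textup{Mil}}\smallsetminus D_m$ is the other delicate point, since one needs the front diagrams to match before the Kirby calculus can be applied.
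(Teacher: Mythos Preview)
Your approach is correct in outline and would work, but it takes a more laborious route than the paper. You propose to carry out the full Weinstein Kirby calculus for general $n$: present $T^*\mathbb{T}^2$ with two $1$-handles and one $2$-handle, add the $n+1$ conormal attaching spheres, cancel the torus $2$-handle against one of the $1$-handles, and then manipulate the remaining $n+1$ knots through one $1$-handle into the cyclic pattern of Figure~\ref{plumbingpic}. The delicate points you flag (checking the cancellation is genuinely Weinstein, tracking framings through the slides) are real, and for arbitrary $n$ the sequence of moves would need to be written out carefully.

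The paper sidesteps all of this by a different use of covering spaces. Rather than using a cover merely to draw the link symmetrically, the paper observes that \emph{both} sides of the claimed equivalence are $(n+1)$-fold covers of their $\tilde{A}_0$ versions: $X_{\textup{Mil}}\smallsetminus D_m$ is the $(n+1)$-fold cover of the manifold obtained by attaching a single critical handle to $\mathbb{C}^*\times\mathbb{C}^*$, and the cyclic plumbing $\#_{\tilde{A}_n}T^*S^2$ is the $(n+1)$-fold cover of the self-plumbing $\#_{\tilde{A}_0}T^*S^2$. So it suffices to check the equivalence for $n=0$, and that case is already in the literature (\cite[Figure~19]{MR4417717}, \cite[Example~10.6]{MR4913463}). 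This reduces the entire argument to a citation plus the observation that the covering structures match, avoiding any Kirby calculus for general $n$. Your approach has the advantage of being self-contained and of producing an explicit handle-slide sequence for each $n$, but the paper's reduction is considerably shorter.
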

\begin{proof}
From Figure~\ref{milnorpic} we see that the Weinstein manifold $X_{\text{Mil}}\smallsetminus D_m$ is the $n+1$-fold covering space of Weinstein manifold obtained by attaching one critical handle to $\mathbb{C}^*\times\mathbb{C}^*$. The following example (see \cite[Figure 19]{MR4417717}, see also \cite[Example 10.6]{MR4913463}) shows that attaching one critical handle in this way yields a Weinstein manifold equivalent to self-plumbing of $T^*S^2$. Using that the Weinstein manifold obtained represented by the diagram in Figure~\ref{plumbingpic} is the $n+1$-fold covering space of self-plumbing of $T^*S^2$, we conclude that $\#_{\tilde{A}_n}T^*S^2$ is equivalent to $X_{\text{Mil}}\smallsetminus D_m$.
\end{proof}
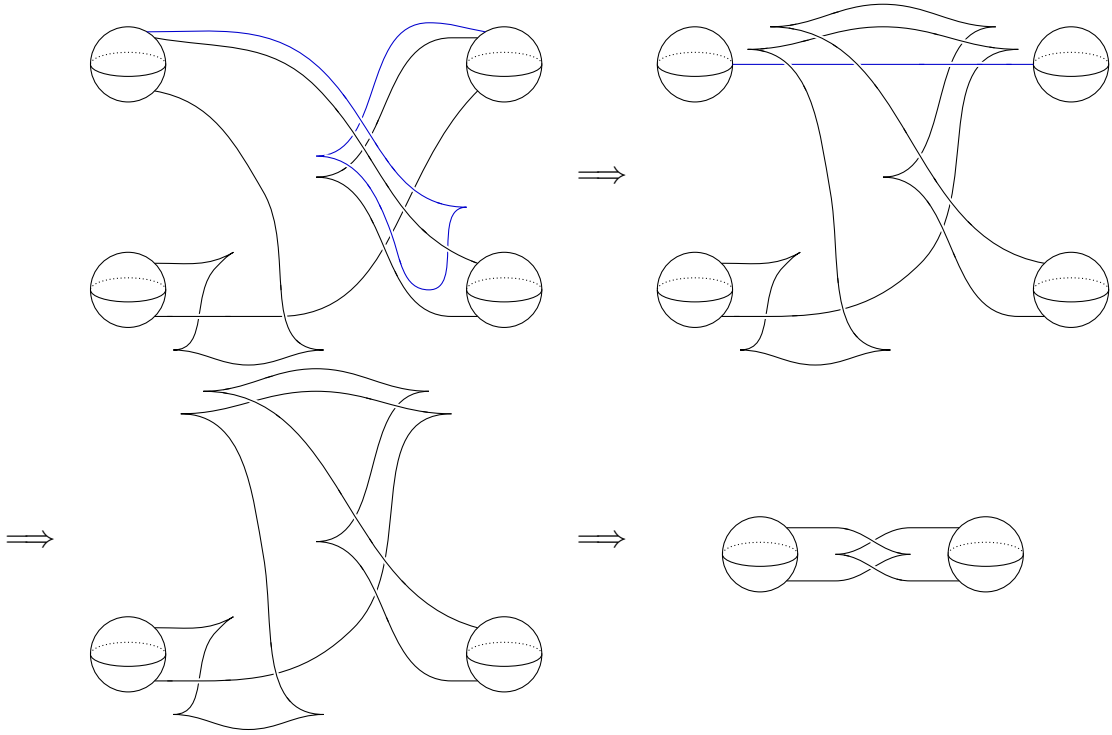
\begin{figure}[H]
    
\begin{tikzpicture}
\node[white] at (-1.3,0) {$\implies$};
\draw (0,0) circle (0.5cm);
\draw[densely dotted] (0.5,0) arc (0:180:0.5 and 0.16);
\draw (-0.5,0) arc (0:180:-0.5 and -0.16);

\draw (5,0) circle (0.5cm);
\draw[densely dotted] (5.5,0) arc (0:180:0.5 and 0.16);
\draw (4.5,0) arc (0:180:-0.5 and -0.16);

\draw (0,3) circle (0.5cm);
\draw[densely dotted] (0.5,3) arc (0:180:0.5 and 0.16);
\draw (-0.5,3) arc (0:180:-0.5 and -0.16);

\draw (5,3) circle (0.5cm);
\draw[densely dotted] (5.5,3) arc (0:180:0.5 and 0.16);
\draw (4.5,3) arc (0:180:-0.5 and -0.16);

\begin{knot}[clip width=2.2,flip crossing=1,flip crossing=4,flip crossing=5,flip crossing=6,flip crossing=7,flip crossing=10, flip crossing=11, flip crossing =3]
\strand (0.353553,0.353553) to[out=0,in=180] (0.4,0.353553) to[out=0,in=-150] (1.4,0.5)
to[out=-150,in=45] (1.17,0.3) to[out=-135,in=0] (0.6,-0.8) to[out=0,in=180] (1.6,-1) to[out=0,in=180] (2.6,-0.8) to[out=180, in=-60] (1.8,1.3) to[out=120,in=-10](0.353553,2.646447);

\strand (0.353553,-0.353553) to[out=0, in=180] (2,-0.353553)to[out=0,in=-135] (4.646447,2.646447);

\strand (0.353553,3.353553) to[out=-10,in=135] (2.5,2.7) to[out=-45,in=160] (4.646447,0.353553);

\strand (4.646447,-0.353553) to[out=180,in=0] (4.3,-0.353553) to[out=180,in=0] (2.5,1.5) to[out=0,in=180] (4.3,3.353553) to[out=0,in=180] (4.646447,3.353553);

\strand[blue!80!black] (0.25,3.433013) to[out=0,in=135] (2.6,2.9) to[out=-45,in=180] (4.5,1.1)
to[out=180, in=0] (4,0) to[out=180,in=-70] (3.5,0.75) to[out=110,in=0] (2.5,1.78);
\strand[blue!80!black] (2.5,1.78) to[out=0,in=180] (4.0,3.553553) to[out=0,in=170] (4.75,3.433013);
\end{knot}

\node at (6.3,1.5) {$\implies$};
\end{tikzpicture}
\begin{tikzpicture}
\draw (0,0) circle (0.5cm);
\draw[densely dotted] (0.5,0) arc (0:180:0.5 and 0.16);
\draw (-0.5,0) arc (0:180:-0.5 and -0.16);

\draw (5,0) circle (0.5cm);
\draw[densely dotted] (5.5,0) arc (0:180:0.5 and 0.16);
\draw (4.5,0) arc (0:180:-0.5 and -0.16);

\draw (0,3) circle (0.5cm);
\draw[densely dotted] (0.5,3) arc (0:180:0.5 and 0.16);
\draw (-0.5,3) arc (0:180:-0.5 and -0.16);

\draw (5,3) circle (0.5cm);
\draw[densely dotted] (5.5,3) arc (0:180:0.5 and 0.16);
\draw (4.5,3) arc (0:180:-0.5 and -0.16);

\begin{knot}[clip width=2.2, flip crossing=1, flip crossing =6, flip crossing=7, flip crossing=8, flip crossing=11, flip crossing=3]
\strand (0.353553,0.353553) to[out=0,in=180] (0.4,0.353553) to[out=0,in=-150] (1.4,0.5)
to[out=-150,in=45] (1.17,0.3) to[out=-135,in=0] (0.6,-0.8) to[out=0,in=180] (1.6,-1) to[out=0,in=180] (2.6,-0.8) to[out=180, in=-80] (1.8,1.3) to[out=100,in=0] (0.7,3.2) to[out=0,in=180] (2.5,3.5) to[out=0,in=180] (4.3,3.2);
\strand (4.3,3.2) to[out=180,in=45] (3,0.353553) to[out=-135, in=0] (1,-0.353553) to[out=180,in=0]
(0.353553,-0.353553);

\strand (4.646447,0.353553) to[out=170, in=0] (1,3.5) to[out=0,in=180] (2.5,3.8) to[out=0,in=180] (4,3.5);
\strand (4,3.5) to[out=180,in=0] (2.5,1.5) to[out=0,in=180](4.3,-0.353553) to[out=0,in=180] (4.646447,-0.353553);

\strand[blue!80!black] (0.5,3) to (4.5,3);
\end{knot}
\end{tikzpicture}
\\
\begin{tikzpicture}
\node at (-1.3,1.5) {$\implies$};

\draw (0,0) circle (0.5cm);
\draw[densely dotted] (0.5,0) arc (0:180:0.5 and 0.16);
\draw (-0.5,0) arc (0:180:-0.5 and -0.16);

\draw (5,0) circle (0.5cm);
\draw[densely dotted] (5.5,0) arc (0:180:0.5 and 0.16);
\draw (4.5,0) arc (0:180:-0.5 and -0.16);
\begin{knot}[clip width=2.2,flip crossing=1, flip crossing =3, flip crossing =5, flip crossing =6]
\strand (0.353553,0.353553) to[out=0,in=180] (0.4,0.353553) to[out=0,in=-150] (1.4,0.5)
to[out=-150,in=45] (1.17,0.3) to[out=-135,in=0] (0.6,-0.8) to[out=0,in=180] (1.6,-1) to[out=0,in=180] (2.6,-0.8) to[out=180, in=-80] (1.8,1.3) to[out=100,in=0] (0.7,3.2) to[out=0,in=180] (2.5,3.5) to[out=0,in=180] (4.3,3.2);
\strand (4.3,3.2) to[out=180,in=45] (3,0.353553) to[out=-135, in=0] (1,-0.353553) to[out=180,in=0]
(0.353553,-0.353553);

\strand (4.646447,0.353553) to[out=165, in=0] (1,3.5) to[out=0,in=180] (2.5,3.8) to[out=0,in=180] (4,3.5);
\strand (4,3.5) to[out=180,in=0] (2.5,1.5) to[out=0,in=180](4.3,-0.353553) to[out=0,in=180] (4.646447,-0.353553);
\end{knot}
\node at (6.3,1.5) {$\implies$};
\end{tikzpicture}
\begin{tikzpicture}
\node at (-1.2,-2.2) {$\,$};
\draw (0,0) circle (0.5cm);
\draw[densely dotted] (0.5,0) arc (0:180:0.5 and 0.16);
\draw (-0.5,0) arc (0:180:-0.5 and -0.16);

\draw (3,0) circle (0.5cm);
\draw[densely dotted] (3.5,0) arc (0:180:0.5 and 0.16);
\draw (2.5,0) arc (0:180:-0.5 and -0.16);

\begin{knot}[clip width=2.2, flip crossing=2]
\strand (0.353553,0.353553) to (1,0.353553) to[out=0,in=180] (2,0);
\strand (0.353553,-0.353553) to (1,-0.353553) to[out=0,in=180] (2,0);

\strand (2.646447,0.353553) to (2,0.353553) to[out=180,in=0] (1,0);
\strand (2.646447,-0.353553) to (2,-0.353553) to[out=180,in=0] (1,0);
\end{knot}
\end{tikzpicture}
\caption{A calculation using Legendrian Kirby moves showing that self-plumbing of $T^*S^2$ is equivalent to attaching one critical handle to $\mathbb{C}^*\times\mathbb{C}^*$. (From \cite[Figure 19]{MR4417717}, which contains the missing steps). The two $1$-handles together with the Legendrian knot in black is a handle decomposition for $T^*\mathbb{T}^2$; the blue Legendrian knot is obtained from the conormal lift of a blue curve, as in Figure~\ref{milnorpic}.}
\label{eqpic}
\end{figure}
\begin{figure}[H]
\centering
\begin{tikzpicture}[scale=0.6]
    \draw[thick] (0,0) circle (1cm);
    \draw[thick, blue!80!black] (1,0) to (2,0);
    \draw[thick, blue!80!black] (-0.5,0.866) to (-1,1.73);
    \draw[thick, blue!80!black] (-0.5,-0.866) to (-1,-1.73);
    \draw (0,0) circle (0.08cm);
    
    \draw[thick, blue!80!black] (6,0) circle (2cm);
    \draw (6,0) circle (0.08cm);

\draw[black, line width = 1.4] (1.9,-0.1) to (2.1,0.1);
\draw[black, line width = 1.4] (1.9,0.1) to (2.1,-0.1);

\draw[black, line width = 1.4] (-1.1,1.832) to (-0.9,1.632);
\draw[black, line width = 1.4] (-1.1,1.632) to (-0.9,1.832);

\draw[black, line width = 1.4] (-1.1,-1.832) to (-0.9,-1.632);
\draw[black, line width = 1.4] (-1.1,-1.632) to (-0.9,-1.832);

\draw[black, line width = 1.4] (7.9,-0.1) to (8.1,0.1);
\draw[black, line width = 1.4] (7.9,0.1) to (8.1,-0.1);

\draw[black, line width = 1.4] (4.9,1.832) to (5.1,1.632);
\draw[black, line width = 1.4] (4.9,1.632) to (5.1,1.832);

\draw[black, line width = 1.4] (4.9,-1.832) to (5.1,-1.632);
\draw[black, line width = 1.4] (4.9,-1.632) to (5.1,-1.832);
\end{tikzpicture}
\caption{Both diagram shows the projection onto the $z$-coordinate, in the case $n=2$. On the left-hand side is the skeleton obtained when attaching handles in $ST^*\mathbb{T}^2$, showing four Lagrangian $S^2$. On the right-hand side is the skeleton, three Lagrangian spheres intersecting in double points, of the equivalent manifold obtained by cyclic plumbing of $T^*S^2$.}
\label{skeleton}
\end{figure}
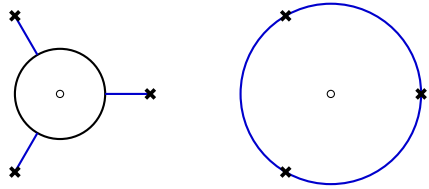
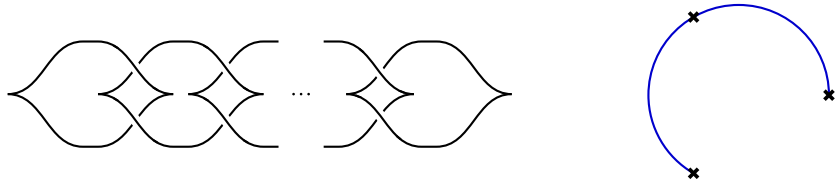
\begin{figure}[H]
    \centering
    \begin{tikzpicture}
\begin{knot}[clip width = 5, flip crossing =2, flip crossing=4]
\strand[thick] (0,0) to[out=0,in=180] (1,0.7) to (1.2,0.7) to[out=0,in=180] (2.2,0);   
\strand[thick] (0,0) to[out=0,in=180] (1,-0.7) to (1.2,-0.7) to[out=0,in=180] (2.2,0);   
\strand[thick] (1.2,0) to[out=0,in=180] (2.2,0.7) to (2.4,0.7) to[out=0,in=180] (3.4,0);
\strand[thick] (1.2,0) to[out=0,in=180] (2.2,-0.7) to (2.4,-0.7) to[out=0,in=180] (3.4,0);

\strand[thick] (2.4,0) to[out=0,in=180] (3.4,0.7) to (3.6,0.7);
\strand[thick] (2.4,0) to[out=0,in=180] (3.4,-0.7) to (3.6,-0.7);
\end{knot}
\draw[fill=black] (3.8,0) circle (0.01cm);
\draw[fill=black] (3.9,0) circle (0.01cm);
\draw[fill=black] (4.0,0) circle (0.01cm);
\begin{knot}[clip width = 5, flip crossing=2]
\strand[thick] (4.2,0.7) to (4.4,0.7) to[out=0,in=180] (5.4,0);
\strand[thick] (4.2,-0.7) to (4.4,-0.7) to[out=0,in=180] (5.4,0);
\strand[thick] (4.5,0) to[out=0,in=180] (5.5,0.7) to (5.7,0.7) to[out=0,in=180] (6.7,0);
\strand[thick] (4.5,0) to[out=0,in=180] (5.5,-0.7) to (5.7,-0.7) to[out=0,in=180] (6.7,0);
\end{knot}
\node at (1,-1) {$\;$};
\end{tikzpicture}
\qquad\qquad
    \begin{tikzpicture}[scale=0.6]
        \draw[thick, blue!80!black] (2,0) arc (0:240:2 and 2);
        
\draw[black, line width = 1.4] (1.9,-0.1) to (2.1,0.1);
\draw[black, line width = 1.4] (1.9,0.1) to (2.1,-0.1);

\draw[black, line width = 1.4] (-1.1,1.832) to (-0.9,1.632);
\draw[black, line width = 1.4] (-1.1,1.632) to (-0.9,1.832);

\draw[black, line width = 1.4] (-1.1,-1.832) to (-0.9,-1.632);
\draw[black, line width = 1.4] (-1.1,-1.632) to (-0.9,-1.832);
\end{tikzpicture}

\caption{Compare the figures above with the case $x^2+y^2+z^{n+1}=1$, without the fibre removed. Then one obtains the linear plumbing of $n$ copies of $T^*S^n$, i.e. the $A_n$-plumbing. The Legendrian attaching link shown on the left-hand side, and the skeleton shown on the right-hand side in the case $n=2$.}
\label{linear}
\end{figure}
\subsection{Geometry of the stop}\label{stopsection}
We here describe the stop put on the cyclic plumbings of $T^*S$. We put the stop on the Legendrian submanifold $\Lambda_{\text{stop}}\subset S^1\times S^2$, as in \cite[Example 2.8]{MR3427304}, which has Front diagram shown in Figure~\ref{stopfigure}. More precisely, we first show that $S^1\times S^2$ can be described as two copies of tubular neighbourhoods of the zero-section in $(J^1S^1,\xi_{\text{std}})$ glued together. This gives a nice geometric description of the sector obtained by cutting out a neighbourhood of the stop. Moreover, we show that the Reeb chords and holomorphic curves defining the contact invariants of the Legendrian attaching spheres can be computed inside $(J^1S^1,\xi_{\text{std}})$.
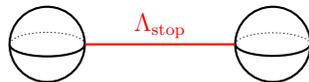
\begin{figure}[H]
    \centering
\begin{tikzpicture}
\draw[thick] (0,0) circle (0.5cm);
\draw[densely dotted] (0.5,0) arc (0:180:0.5 and 0.16);
\draw[thick] (-0.5,0) arc (0:180:-0.5 and -0.16);

\draw[thick] (3,0) circle (0.5cm);
\draw[densely dotted] (3.5,0) arc (2:180:0.5 and 0.16);
\draw[thick] (2.5,0) arc (2.5:180:-0.5 and -0.16);

\draw[thick,red] (0.5,0) to (2.5,0);
\node[red, scale=0.8] at (1.5,0.25) {$\Lambda_{\text{stop}}$};
\end{tikzpicture}
\caption{The Legendrian $\Lambda_{\text{stop}}\subset S^1\times S^2$ shown in the front projection. Carving out a neighbourhood of $\Lambda_{\text{stop}}$ results in a manifold contactomorphic to a convex neighbourhood of the zero-section in $J^1S^1$.}
\label{stopfigure}
\end{figure}

\subsubsection{Gluing construction}

We consider the Weinstein manifold given by the Lefschetz fibration on the Milnor fibre minus a fibre: we start with
\[
\{x^2+y^2+z^{n+1}=1\}\xrightarrow{\pi_z}\mathbb{C},
\]
and we consider $\{x^2+y^2+z^{n+1}=1\}\smallsetminus\pi^{-1}(0)$. We have already seen that this is a cyclic plumbing of $n+1$ copies of $T^*S^2$. We now want to describe the stop, and to make the geometry easier we will think of the stop as being attached before we attach the handles. Thus we restrict to $\pi^{-1}(D_{\epsilon}\smallsetminus 0)$, where $D_{\epsilon}$ is a disc with radius $\epsilon$ small enough so that the inverse image is subcritical. Then $\pi^{-1}(D_{\epsilon}\smallsetminus 0)$ is the Weinstein domain given by ataching a $1$-handle to the standard ball $B^4$. We think of the completion as the Weinstein manifold being the total space of the trivial Lefschetz fibration $\mathbb{C}\times\mathbb{C}^*=\mathbb{C}\times T^*S^1\rightarrow\mathbb{C}$. 

We are going to describe the boundary at infinity $M=\partial_{\infty}(\mathbb{C}\times\mathbb{C}^*)=S^1\times S^2$, as two copies of convex neighbourhoods of the zero-section in $J^1S^1$ glued together. The stop will be attached to the zero-section of one of the copies. The Weinstein handles will be attached along the zero-section of the other copy. Looseness of the Legendrian at which we attach the stop will imply that we can we can calculate the wrapped Fukaya category in the other copy of $J^1S^1$. The division of the contact boundary into two parts will be done by viewing the fibration as an open book decomposition (defined below, following \cite{MR2249250}) and the two halves will have common boundary given by a convex surface constructed as the union of two pages.

\begin{definition}
An \emph{open book decomposition} of a closed $3$-manifold $M$ consists of a pair $(B,\pi)$, where the \emph{binding} $B\subset M$ is an oriented link, and where $\pi:M\smallsetminus B\rightarrow S^1$ is a fibration such that $\pi^{-1}(\theta)$ is the interior of a compact surface $\Sigma_{\theta}$ with $\partial\Sigma_{\theta}=B$, for all $\theta\in S^1$. The surface $\Sigma\cong\Sigma_{\theta}$ is called the \emph{page}.

A contact structure $\xi$ on $M$ is \emph{supported by the open book decomposition} $(B,\pi)$ if $\xi$ can be isotoped through contact structures to a contact structures with a contact form $\alpha$ such that
\begin{enumerate}[\qquad$\bullet$]
    \item $d\alpha$ is a positive symplectic form on the interior of the pages, i.e. on $\pi^{-1}(\theta)$, with respect to the orientation of the pages, which is induced from the orientation of $B$, and
    \item $\alpha>0$ on $B$.
\end{enumerate}
\end{definition}
By isotopy we mean a smooth $1$-parameter family of contact structures $\xi_t$, $t\in[0,1]$. By Gray's stability theorem this implies that there is a $1$-parameter family of diffeomorphisms $\phi_t:M\rightarrow M$ such that $d\phi_t(\xi_0)=\xi_t$ (see e.g. \cite[Theorem 2.2.2]{MR2397738}).

Given a Lefschetz fibration $f:(W,\lambda)\rightarrow \mathbb{C}$, with total space $W$ a Weinstein manifold, we get an open book decomposition on $M=\partial_{\infty} W$ which supports $\xi=\text{ker}\,\lambda_{|M}$ (see \cite[Construction 2.9]{MR4687585}). It has binding $B=f^{-1}(0)$ and the fibration $\pi:M\smallsetminus B\rightarrow S^1$ is given by composing $f$ with the angle coordinate $\theta$ on $D=\{|z|\leq1\}\subset\mathbb{C}$. In the case of $W=\mathbb{C}\times\mathbb{C}^*\rightarrow \mathbb{C}$, the binding is then $B=f^{-1}(0)\cap M=S^1\sqcup S^1$. Equivalently we can think of the fibration as $D\times DT^*S^1\rightarrow D$, and smoothing happens near the "corners" away from $B$. The pages are then seen to be given by $\Sigma_{\theta}=DT^*S^1$, with boundary $F_{\theta}=B$, as shown in Figure~\ref{OBD}.

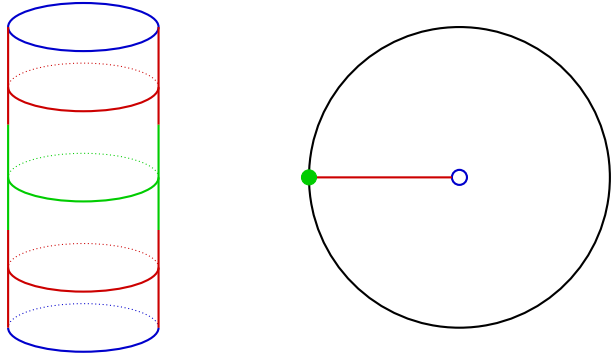
\begin{figure}[H]
\centering
\begin{tikzpicture}
\draw[thick] (0,0) circle (2cm);
\draw[blue!80!black, thick] (0,0) circle (0.1cm);
\draw[red!80!black, thick] (-1.9,0) to (-0.1,0);
\draw[green!80!black,fill=green!80!black] (-2,0) circle (0.1cm);

\draw[blue!80!black,thick] (-4,2) arc (0:180:1 and 0.32);
\draw[blue!80!black,thick] (-6,2) arc (0:180:-1 and -0.32);

\draw[blue!80!black,densely dotted] (-4,-2) arc (0:180:1 and 0.32);
\draw[blue!80!black,thick] (-6,-2) arc (0:180:-1 and -0.32);

\draw[red!80!black,densely dotted] (-4,1.2) arc (0:180:1 and 0.32);
\draw[red!80!black,thick] (-6,1.2) arc (0:180:-1 and -0.32);
\draw[red!80!black,densely dotted] (-4,-1.2) arc (0:180:1 and 0.32);
\draw[red!80!black,thick] (-6,-1.2) arc (0:180:-1 and -0.32);

\draw[green!80!black,densely dotted] (-4,0) arc (0:180:1 and 0.32);
\draw[green!80!black,thick] (-6,0) arc (0:180:-1 and -0.32);

\draw[red!80!black,thick] (-6,-2) to (-6,-0.7);
\draw[red!80!black,thick] (-6,2) to (-6,0.7);
\draw[red!80!black,thick] (-4,-2) to (-4,-0.7);
\draw[red!80!black,thick] (-4,2) to (-4,0.7);

\draw[green!80!black,thick] (-6,-0.7) to (-6,0.7);
\draw[green!80!black,thick] (-4,-0.7) to (-4,+.7);
\end{tikzpicture}
\caption{The picture shows the page $DT^*S^1$ of the open book decomposition $(B,\pi)$ of $M=\partial_{\infty}W$ and how it relates to the Lefschetz fibration $W=\mathbb{C}\times\mathbb{C}^*$. The binding $B$ is in blue, the part of the fibre of $\pi$ which lives above $\partial D$ is in green, and the part which lives above the interior of $D$ is in red.}
\label{OBD}
\end{figure}

The inverse image of the imaginary axis $\pi^{-1}(\{\text{Re}\,z=0\}\cap D)$ is two pages glued smoothly together along $B$, that is, a torus $S^1\times S^1$ which we denote by $S$. After an isotopy on $\xi$, $S$ is a convex surface where we can take $B$ as the dividing set, in sense of Definition~\ref{dividing} below, according to \cite[Lemma 4.1]{MR2249250}. Moreover, $\xi$ is tight on both components of $M\smallsetminus S$. The following definition is due to \cite{MR1129802}.

\begin{definition}\label{dividing}
A closed surface $S\subset M$ is called \emph{convex} if there is a contact vector field $V$ which is transverse to $S$.

Given a convex surface and a transverse contact vector field $V$, the \emph{dividing set} is the multi-curve $\Gamma_S=\{x\in S\;|\;V(x)\in\xi_x\}$.
\end{definition}

Next we consider $J^1S^1=S^1_{\theta}\times\mathbb{R}_y\times\mathbb{R}_z$, with its standard contact structured defined as the kernel of $\alpha=dz-yd\theta$ with standard tubular neighbourhood of the zero-section $U_{\epsilon}=\{(\theta,y,z)\;|\;y^2+z^2\leq \epsilon^2\}$. This is a solid torus. Let $V_{\text{rad}}=y\partial_y+z\partial_z$ --- this is a vector field which is pointing radially outwards from the zero-section, and thus it is transverse to the boundary of $U_{\epsilon}$. It is a contact vector field, as
\begin{align*}
\mathcal{L}_{V_{\text{rad}}}\alpha&=d\iota_{V_{\text{rad}}}\alpha+\iota_{V_{\text{rad}}}d\alpha \\
&=dz-\iota_{V_{\text{rad}}}(dy\wedge d\theta) \\
&=dz-yd\theta \\
&=\alpha.
\end{align*}
The dividing set of the boundary of $U_{\epsilon}$ is given by the points at which $V\in\xi$, i.e. $\alpha(V)=z=0$. Thus the dividing set consists of the union of the two longitudes $\ell_{\pm}=\{(\theta,\pm \epsilon,0)\}$. 

Denote the two components of $M\smallsetminus S$ by $T_0$ and $T_1$, and note that they are the interior of solid tori (recall that $S=\pi^{-1}(\{\text{Re}\,z=0\}\cap D)$ is their common convex boundary, which is homeomorphic to a torus $S^1\times S^1$). There are two Legendrian copies of $S^1$ living as zero-sections in the fibres of the two points $\partial D\cap\{\text{Im}\,z=0\}$. Denote them by $\Lambda_0$ and $\Lambda_1$, and say that $\Lambda_0$ lives above $-1$ in $T_0$ and $\Lambda_{1}$ lives above $1$ in $T_1$. We can find neighbourhoods of both Legendrians, $U^0$ respectively $U^1$, which are contactomorphic to the tubular neighbourhoods $U_{\epsilon}$ of the zero-section in $J^1S^1$. In fact, as the following lemma shows we can assume $U^i=T^i$:
\begin{lemma}
The closed convex neighbourhoods $U^i$ and $\overline{T}_i=T_i\cup S$ are contactomorphic.
\end{lemma}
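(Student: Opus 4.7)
The plan is to recognise both $U^i$ and $\overline{T}_i$ as convex solid tori with the same core Legendrian and the same boundary dividing data, then invoke a uniqueness theorem for tight contact structures. First I would verify the solid-torus structure on each side. For $U^i = U_\epsilon \subset J^1 S^1$ this is explicit: it is $S^1_\theta \times D^2_{y,z}$ with Legendrian core the zero-section. For $\overline{T}_i$, the description of $M = S^1 \times S^2$ via the trivial open book (cylinder page, trivial monodromy) identifies the $S^2$-factor with the union of two half-disc "caps" glued to a cylindrical page, and the convex surface $S$ corresponds to the equatorial circle in each $S^2$-slice. Cutting along $S$ therefore gives $\overline{T}_i \cong S^1_a \times D^2$, whose core is the Legendrian $\Lambda_i$ (it sits as the zero-section of the middle page, i.e.\ the centre of $D^2$).

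Next I would compute and match the dividing sets. On $\partial U_\epsilon$, the text already gives the dividing set as $\ell_\pm = \{(\theta, \pm\epsilon, 0)\}$, two parallel longitudes. On $\partial\overline{T}_i = S$, the dividing set is the binding $B$, which consists of two circles; under the identification $\overline{T}_i \cong S^1_a \times D^2$ above, each component of $B$ is of the form $S^1_a \times \{\mathrm{pole}\}$ with $\mathrm{pole} \in \partial D^2$, hence a longitude of $\overline{T}_i$ parallel to $\Lambda_i$. Thus both boundaries are convex tori whose dividing set is two parallel longitudes. Tightness of $\xi$ on the interior of $\overline{T}_i$ is already asserted in the text, and $(U_\epsilon, \xi_{\mathrm{std}})$ is manifestly tight.

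With the combinatorial data matched, I would apply Honda's classification of tight contact structures on a solid torus with convex boundary and prescribed dividing set (the longitudinal slope-$0$ case with two dividing curves admitting a unique tight filling up to isotopy rel the Legendrian core), combined with the Legendrian tubular neighbourhood theorem to start from a contactomorphism between a small standard neighbourhood of $\Lambda_i$ and a small $U_{\epsilon'} \subset U_\epsilon$. The uniqueness result then lets us thicken this contactomorphism out to all of $\overline{T}_i \xrightarrow{\sim} U^i$.

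The main obstacle I expect is not the existence of the contactomorphism but rather the careful identification of the longitudinal slope of $B$ in the meridian–longitude framing of $\overline{T}_i$, since this depends on tracking the open-book coordinates through the decomposition $M = (B \times D^2) \cup (\mathrm{page} \times S^1)$. If a direct slope computation proves delicate, a fallback is to exhibit an explicit isotopy of convex tori from $\partial U^i_{\mathrm{small}}$ out to $S = \partial \overline{T}_i$ inside $\overline{T}_i$ via the one-parameter family of radially rescaled tori in the open-book coordinates, and check that the dividing set is preserved throughout (no bypasses occur), which again uses the tightness on $T_i$.
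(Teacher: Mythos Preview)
Your proposal is correct and follows essentially the same approach as the paper: both arguments reduce to the uniqueness of the tight contact structure on a solid torus with two longitudinal dividing curves. The paper's proof is a one-line citation of Giroux's result \cite[Theorem 8.2]{MR1487723}, whereas you invoke Honda's classification and add more detail on matching the boundary data; the extra verification you sketch (identifying the dividing set on $S$ with longitudes of $\overline{T}_i$) is implicit in the paper's setup but not spelled out there.
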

\begin{proof}
This is a consequence of \cite[Theorem 8.2]{MR1487723}: there is a unique contact structure on the solid torus which is tight and for which the dividing set is the longitudes. 
\end{proof}

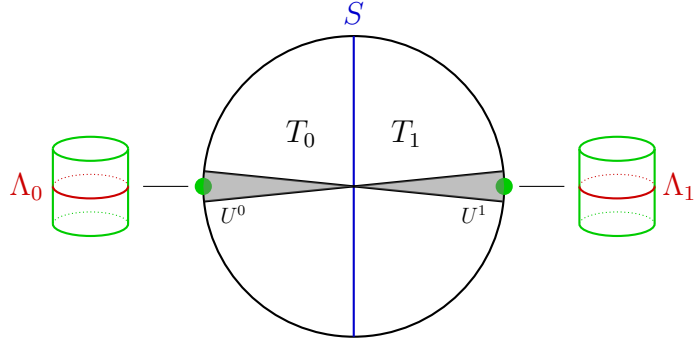
\begin{figure}[H]
\centering
\begin{tikzpicture}
\draw[thick] (0,0) circle (2cm);
\draw[blue!80!black,thick] (0,-2) -- (0,2);
\node[blue!80!black] at (0,2.3) {$S$};
\draw[thick, green!80!black,fill=green!80!black] (2,0) circle (0.1cm);
\draw[thick, green!80!black,fill=green!80!black] (-2,0) circle (0.1cm);

\draw[green!80!black,thick] (-3,0.5) arc (0:180:0.5 and 0.16);
\draw[green!80!black,thick] (-4,0.5) arc (0:180:-0.5 and -0.16);

\draw[green!80!black,densely dotted] (-3,-0.5) arc (0:180:0.5 and 0.16);
\draw[green!80!black,thick] (-4,-0.5) arc (0:180:-0.5 and -0.16);

\draw[green!80!black,thick] (-4,-0.5) -- (-4,0.5);
\draw[green!80!black,thick] (-3,-0.5) -- (-3,0.5);

\draw[red!80!black,densely dotted] (-3,0) arc (0:180:0.5 and 0.16);
\draw[red!80!black,thick] (-4,0) arc (0:180:-0.5 and -0.16);

\node[red!80!black] at (-4.35,0) {$\Lambda_0$};

\draw[green!80!black,thick] (4,0.5) arc (0:180:0.5 and 0.16);
\draw[green!80!black,thick] (3,0.5) arc (0:180:-0.5 and -0.16);

\draw[green!80!black,densely dotted] (4,-0.5) arc (0:180:0.5 and 0.16);
\draw[green!80!black,thick] (3,-0.5) arc (0:180:-0.5 and -0.16);

\draw[green!80!black,thick] (3,-0.5) -- (3,0.5);
\draw[green!80!black,thick] (4,-0.5) -- (4,0.5);

\draw[red!80!black,densely dotted] (4,0) arc (0:180:0.5 and 0.16);
\draw[red!80!black,thick] (3,0) arc (0:180:-0.5 and -0.16);

\node[red!80!black] at (4.33,0) {$\Lambda_1$};

\draw (-2.8,0) to (-2.2,0);
\draw (2.8,0) to (2.2,0);

\node at (-0.7,0.7) {$T_0$};
\node at (0.7,0.7) {$T_1$};
\draw[thick] (-1.98,0.2) to (0,0);
\draw[thick] (1.98,0.2) to (0,0);
\draw[thick] (-1.98,-0.2) to (0,0);
\draw[thick] (1.98,-0.2) to (0,0);

\draw[draw=none,fill,gray,opacity=0.5] (1.98,0.2) to (2,0) to (1.98,-0.2) to (0,0) to (1.98,0.2);
\draw[draw=none,fill,gray,opacity=0.5] (-1.98,0.2) to (-2,0) to (-1.98,-0.2) to (0
,0) to (-1.98,0.2);

\node[scale=0.7] at (-1.6,-0.37) {$U^0$};
\node[scale=0.7] at (1.6,-0.37) {$U^1$};
\end{tikzpicture}
\caption{Showing the different sets projection to the base space $D$, together with the fibres over the points $\pm1$ with the Legendrian $S^1$'s forming the zero-sections in red.}
\label{neighbourhoods}
\end{figure}

Therefore we conclude:
\begin{theorem}\label{stopsect}
Removing a neighbourhood $U^0$, which is contactomorphic to the interior of a solid torus with convex boundary, of $\Lambda_0$ from $(S^1\times S^2,\xi_{\textup{std}})$ yields a contact manifold contactomorphic to a convex closed neighbourhood of the zero-section, identified with $\Lambda_1$ (or $\Lambda_0$), in $(J^1S^1,\xi_{\textup{std}})$. Thus removing a stop put at $\Lambda_0$ from the Weinstein domain $S^1\times D^3$ yields a Weinstein sector with boundary at infinity a convex closed neighbourhood of the zero-section in $(J^1S^1,\xi_{\textup{std}})$.
\end{theorem}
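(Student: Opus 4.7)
The plan is to leverage the open book decomposition of $M=\partial_\infty(\mathbb{C}\times\mathbb{C}^*)=S^1\times S^2$ induced by the trivial Lefschetz fibration $\mathbb{C}\times\mathbb{C}^*\to\mathbb{C}$, together with Honda's classification of tight contact structures on the solid torus, essentially assembling the pieces already laid out in Section~\ref{stopsection}. First, I would fix the open book $(B,\pi)$ with binding $B=f^{-1}(0)\cap M=S^1\sqcup S^1$ and pages $\Sigma_\theta\cong DT^*S^1$, and note that this open book supports $\xi_{\text{std}}$ because the total space is Weinstein. Then I would single out the surface $S=\pi^{-1}(\{\mathrm{Re}\,z=0\}\cap D)\cong S^1\times S^1$, which is obtained from smoothly gluing the two pages above $\pm i$ along their common boundary $B$.

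Next, I would isotope $\xi_{\text{std}}$ (through contact structures, so by Gray's theorem through contactomorphisms) so that $S$ becomes a convex torus whose dividing set equals $B$; this is exactly the conclusion of \cite[Lemma 4.1]{MR2249250} and uses only that $\pi^{-1}(\{\mathrm{Re}\,z=0\}\cap D)$ is the union of two pages glued along the binding. Cutting $M$ along $S$ yields two open solid tori $T_0$ and $T_1$ with common convex boundary $S$; each contains a Legendrian copy of $S^1$, namely $\Lambda_i$ sitting as the zero-section in the fibre over $\mp 1$, respectively. At this point the lemma immediately preceding the theorem gives $U^i\simeq \overline{T}_i$ as contact manifolds, where $U^i$ denotes the closed tubular neighbourhood of $\Lambda_i$ contactomorphic to a closed neighbourhood $U_\epsilon$ of the zero-section in $(J^1S^1,\xi_{\text{std}})$; the invocation of \cite[Theorem 8.2]{MR1487723} handles this step via the uniqueness of tight contact structures on the solid torus with longitudinal dividing set.

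Combining these observations, if I remove an open tubular neighbourhood $U^0$ of $\Lambda_0$ from $M$, what remains is $M\smallsetminus U^0 \simeq \overline{T}_1$, and by the lemma this is contactomorphic to a closed convex neighbourhood of the zero-section in $(J^1S^1,\xi_{\text{std}})$, identified along the way with $\Lambda_1$ (equivalently $\Lambda_0$, since both copies are Legendrian isotopic to the zero-section). For the final sentence of the theorem, I would recall that attaching a stop at $\Lambda_0$ and then removing a neighbourhood of the corresponding stop hypersurface produces a Weinstein sector $W_\sigma$; by the definition of a stop (Definition~\ref{stop}) and the identification of the boundary at infinity with $M\smallsetminus U^0$, the contact boundary at infinity of $W_\sigma$ is precisely the convex closed neighbourhood of the zero-section in $(J^1S^1,\xi_{\text{std}})$ obtained above.

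The main obstacle is step two: verifying that $S$ is convex with dividing set $B$ after an isotopy of $\xi$. This requires identifying a genuine contact vector field transverse to $S$ (rather than just a transverse vector field), and showing that the locus where it lies in $\xi$ coincides with the binding; once this is in place, the identification with $(J^1S^1,\xi_{\text{std}})$ is forced by the uniqueness theorem for tight contact structures on the solid torus with prescribed characteristic foliation (or equivalently dividing set), so the remaining bookkeeping is routine.
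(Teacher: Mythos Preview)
Your proposal is correct and follows essentially the same approach as the paper: the open book decomposition from the trivial Lefschetz fibration, the convex torus $S$ with dividing set $B$ via \cite[Lemma 4.1]{MR2249250}, and Honda's uniqueness \cite[Theorem 8.2]{MR1487723} to identify each $\overline{T}_i$ with a standard neighbourhood in $J^1S^1$. The paper presents the theorem as an immediate corollary of exactly these ingredients, so there is nothing to add.
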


\subsubsection{Reeb chords and holomorphic curves in the complement of the stop}

For the wrapped Fukaya category of the sector obtained by removing a neighbourhood $U^i$ of one of the Legendrians $\Lambda_i$, as above, to be well-defined, we will need to adjust the contact form in the neighbourhood to have a cutoff Reeb vector field. Moreover, to be able say that the contact invariants in the boundary at infinity is the same as those computed in $(J^1S^1,\xi_{\text{std}})$, we need to make sure that there are no periodic Reeb orbits and that Reeb chord which leaves a smaller tubular neighbourhood never returns to this neighbourhood. Finally, we have to make sure that all relevant pseudoholomorphic discs are contained in the samller neighbourhood.

Assume now that we have removed $U^0$ and that we are working in $U^1$. By possibly taking a smaller neighbourhood and rescaling  we can assume that 
\[
U^1=U^1_2=\{(\theta,y,z)\,|\,\rho=y^2+z^2<2\}.
\]
We define $f(\rho)$ to be a function $[0,2)$ such that
\begin{align*}
    &\bullet\; f(\rho)=1,\quad 0\leq \rho\leq 1,\\
    &\bullet\; f(\rho)=\frac{1}{H(\rho)},\text{ for a smooth function } H\text{ such that } H(2)=H'(2)=0,\\
    &\bullet\; f'(\rho)\geq 0.
\end{align*}
Then we let $\beta=f\alpha$, so that the Reeb dynamics on $(U^1,\beta)$ coincides with the one in $J^1S^1$ in the neighbourhood $U_1^1=\{\rho<1\}$, and the Reeb vector field $R_{\beta}$ is a cutoff contact vector field \cite[Section 2.9]{MR4106794}.
\begin{lemma}\label{chords}
For the contact form $\beta$ defined above, the Reeb vector field $R_{\beta}$ has no periodic orbits, and no Reeb chords which leaves the region $U_1^1=\{\rho <1\}$ re-enters later.
\end{lemma}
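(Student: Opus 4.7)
The plan is to derive an explicit formula for $R_\beta$ on $U^1$ and then read off the monotonicity properties of the $z$- and $\rho$-coordinates along Reeb trajectories. Writing $R_\beta = A \partial_\theta + B \partial_y + C \partial_z$ and imposing $\beta(R_\beta) = 1$ together with $\iota_{R_\beta} d\beta = 0$ for $\beta = f(\rho)\alpha$ with $\alpha = dz - y\, d\theta$, one finds
\[
d\beta = 2y f' \, dy \wedge dz - (f + 2y^2 f')\, dy \wedge d\theta - 2yz f'\, dz \wedge d\theta,
\]
and the resulting linear system solves to
\[
A = \frac{2y f'(\rho)}{f(\rho)^2}, \qquad B = -\frac{2yz f'(\rho)}{f(\rho)^2}, \qquad C = \frac{1}{f(\rho)} + \frac{2y^2 f'(\rho)}{f(\rho)^2}.
\]
Inside $U_1^1 = \{\rho < 1\}$ the defining properties of $f$ give $f \equiv 1$ and $f' \equiv 0$, hence $R_\beta = \partial_z$, consistent with the standard Reeb vector field on $(J^1 S^1, \xi_{\textup{std}})$ in a Darboux neighbourhood of the zero-section. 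Crucially, on all of $U^1$ one has $f > 0$ and $f' \geq 0$, so $C > 0$ everywhere.

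Once this formula is in hand, both assertions follow almost immediately. Since $\dot z = C > 0$ globally on $U^1$, the coordinate $z$ is strictly increasing along every Reeb trajectory, which at once excludes periodic orbits. For the Reeb chord claim, let $\gamma(t)$ be a Reeb trajectory with $\gamma(0) \in U_1^1$. Inside $U_1^1$ the flow is purely in the $+z$ direction, so $\gamma$ can exit $U_1^1$ only through the top half of the boundary, i.e.\ at a point of $\{\rho = 1\} \cap \{z > 0\}$. Once outside, a direct computation using the formulas for $B$ and $C$ gives
\[
\dot\rho = 2y\dot y + 2z\dot z = 2yB + 2zC = -\frac{4y^2 z f'}{f^2} + \frac{2z}{f} + \frac{4y^2 z f'}{f^2} = \frac{2z}{f(\rho)}.
\]
Because $\dot z > 0$, the $z$-coordinate remains strictly positive for all times after the exit, so $\dot\rho > 0$ throughout, and $\rho$ cannot decrease back below $1$. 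Hence no Reeb trajectory leaving $U_1^1$ can re-enter it.

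The only real obstacle is bookkeeping the signs in the computation of $R_\beta$; after that, the dynamical conclusions are essentially one-line consequences of $C > 0$ and $\dot\rho = 2z/f$.
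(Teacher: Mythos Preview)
Your proof is correct and, in fact, more complete than the paper's. Both arguments aim to show that the $\partial_z$-component of $R_\beta$ is strictly positive: the paper does this by contradiction (assuming $c=0$ forces $a=b=0$, impossible), whereas you solve for $R_\beta$ explicitly and read off $C = 1/f + 2y^2 f'/f^2 > 0$. Either route immediately rules out periodic orbits, since $z$ is strictly increasing along the flow.

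The substantive difference is your treatment of the no-re-entry claim. The paper asserts that ``it suffices to show that $c\neq 0$'', but $c>0$ alone does not obviously prevent a trajectory from returning to $\{\rho<1\}$: after exiting with $z>0$, the $y$-coordinate is no longer constant, and one must rule out $y^2+z^2$ dipping back below $1$. Your computation $\dot\rho = 2yB + 2zC = 2z/f(\rho)$, combined with the observation that exit occurs with $z>0$ and $z$ remains positive thereafter, closes this gap cleanly. So your explicit formula buys you both the positivity of $C$ and the monotonicity of $\rho$ in one stroke, while the paper's contradiction argument establishes only the former and leaves the second claim underexplained.
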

\begin{proof}
Write $R_{\beta}=a\partial_{\theta}+b\partial_y+c\partial_z$, for functions $a,b,c:J^1S^1\rightarrow\mathbb{R}$. Since $R_{\beta}=\partial_z$ in $U^1$ it suffices to show that $c\not=0$ (so that $c>0$ everywhere). Therefore, towards a contradiction, assume that $c=0$ at some point in $J^1S^1$. We compute $d\beta$:
\begin{align*}
    d\beta&=df\wedge\alpha+f\wedge d\alpha\\
    &=f'(\rho)d\rho\wedge\alpha+f(\rho)d\alpha \\
    &=f'(\rho)(2ydy+2zdz)\wedge (dz-yd\theta)+f(\rho)d\theta\wedge dy \\
    &=2f'(\rho)\left(ydy\wedge dz +y^2d\theta\wedge dy+yzd\theta\wedge dz\right)+f(\rho)d\theta\wedge dy.
\end{align*}
Thus, at a point where $c=0$,
\begin{align*}
0=&d\beta(R_{\beta},-) \\
=&a\left(2y^2f'(\rho)dy+f(\rho)dy+2yzf'(\rho)dz\right) \\
+&b\left(2yf'(\rho)dz-2y^2f'(\rho) d\theta-f(\rho)d\theta\right).
\end{align*}
But this implies $a=0$ and $b=0$, as $y^2f'(\rho)+f(\rho)>0$. This shows that $c$ cannot vanish, as sought.
\end{proof}

Assume that $\Lambda\subset U^1$ is a Legendrian with finitely many \emph{Reeb chords}, i.e. integral curves of the Reeb vector field which start and ends  at $\Lambda$. We want to consider rigid $J$-holomorphic curves $u:D\smallsetminus\{x,y_1,\dots, y_n\}\rightarrow (\mathbb{R}_t\times U^1_2,d(e^t\beta))$, where $u$ has positive puncture at $x$  and negative punctures punctures at $y_1,\dots,y_n$ in the boundary $\partial D$, asymptotic to the Reeb chord $\alpha_0$ respectively $\beta_1,\dots,\beta_n$ (in the meaning of \cite[Definition 3.6]{MR3548486}), and mapping $\partial D\smallsetminus\{x,y_1,\dots,y_n\}$ to $\Lambda\times\mathbb{R}_t$. Here $J$ is a compatible generic almost complex structure, which is assumed to be cylindrical.
\begin{lemma}\label{curves}
Assuming $\Lambda\subset U_1^1\subset U^1_2$ to be small enough (by possibly rescaling in the $z$-direction). If $u$ is a $J$-holomorphic disc as above (with punctures converging to Reeb chords on 
$\Lambda$), then $\textup{im}( u)\subset \mathbb{R}_t\times U_1^1$.
\end{lemma}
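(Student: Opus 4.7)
The plan is to combine a Stokes-type energy bound with the monotonicity lemma for $J$-holomorphic curves carrying a Lagrangian boundary condition. In the inner neighbourhood $U_1^1 = \{\rho < 1\}$, the cutoff contact form reduces to $\beta = \alpha = dz - y\,d\theta$, so that $R_\beta = \partial_z$ there by Lemma~\ref{chords}. Consequently every Reeb chord $\gamma$ of a Legendrian $\Lambda \subset U_1^1$ is a vertical segment in the $z$-direction, and its action is $\mathfrak{a}(\gamma) = z_+(\gamma) - z_-(\gamma) \leq \mathrm{diam}_z(\Lambda)$.

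Applying Stokes' theorem to $u$, using that $\beta$ vanishes identically on the exact Lagrangian $\mathbb{R}_t \times \Lambda$ and that each asymptote contributes (with sign) the action of its chord, one obtains the $d\beta$-energy estimate
$$E_{d\beta}(u) \;=\; \int_{\dot{D}} u^* d\beta \;=\; \mathfrak{a}(\alpha_0) - \sum_{j=1}^n \mathfrak{a}(\beta_j) \;\leq\; \mathrm{diam}_z(\Lambda).$$
The Legendrian condition is preserved under the joint scaling $(\theta,y,z) \mapsto (\theta,\varepsilon y,\varepsilon z)$, which multiplies $\mathrm{diam}_z(\Lambda)$ by $\varepsilon$. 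Hence, after replacing $\Lambda$ by a sufficiently small rescaling, the energy of every such disc can be made as small as desired.

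It remains to invoke a monotonicity argument to deduce that small-energy discs with boundary on $\mathbb{R}_t \times \Lambda$ must remain in a small neighbourhood of this Lagrangian. Fix a tame cylindrical almost complex structure $J$ on $\mathbb{R}_t \times U^1_2$ and an associated $J$-invariant Riemannian metric; since $J$, $\beta$ and $\mathbb{R}_t \times \Lambda$ are all $\mathbb{R}_t$-invariant, the distance from the closed set $\{\rho \geq 1\} \times \mathbb{R}_t$ to $\mathbb{R}_t \times \Lambda$ admits a positive lower bound $r_0$ that is uniform in $t$. Sikorav's monotonicity lemma for $J$-holomorphic curves with Lagrangian boundary then supplies a constant $C > 0$ such that any non-constant $J$-holomorphic curve $u$ as in the statement whose image meets $\{\rho \geq 1\}$ satisfies $E_{d\beta}(u) \geq C r_0^2$; here one compares, in the compact $t$-slab containing any putative crossing of $\{\rho = 1\}$, the $d\beta$-energy with the symplectic area $\int u^*\omega$. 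Choosing $\Lambda$ small enough in the $z$-direction that $\mathrm{diam}_z(\Lambda) < C r_0^2$ then forces $\mathrm{im}(u) \subset \mathbb{R}_t \times U_1^1$.

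The main obstacle is the monotonicity step: one must be careful passing between the $d\beta$-energy, which is controlled by chord actions via Stokes, and the symplectic area, which appears naturally in the monotonicity estimate, and one must ensure that the relevant constants are uniform in the non-compact $\mathbb{R}_t$-direction. Both points are handled by the $\mathbb{R}_t$-invariance of the setup together with a localisation to a compact slab around any hypothetical escape point.
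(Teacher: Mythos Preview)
Your overall architecture matches the paper's: an upper bound on energy via Stokes and chord actions, a lower bound via Sikorav monotonicity, and $\mathbb{R}_t$-translation to make the monotonicity constant uniform. However, there is a genuine gap at the step you yourself flag as the ``main obstacle''.

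Monotonicity bounds from below the \emph{symplectic area} $\int_{u\cap B} u^*\omega$ with $\omega=d(e^t\beta)=e^t\,d\beta+e^t\,dt\wedge\beta$, not the $d\beta$-energy. For a cylindrical $J$ both $u^*d\beta$ and $u^*(dt\wedge\beta)$ are pointwise non-negative, so the slab area $\int_{u\cap\{t\in[s-1,s+1]\}}u^*\omega$ can exceed the global $E_{d\beta}(u)$ by the $dt\wedge\beta$ contribution, which your Stokes computation does not control. The sentence ``one compares, in the compact $t$-slab \dots, the $d\beta$-energy with the symplectic area'' is therefore not justified as written: the inequality goes the wrong way.

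The paper's fix is to work instead with the total Hofer energy
\[
E(u)=\int_u d\beta+\sup_{\varphi}\int_u \varphi(t)\,dt\wedge\beta,
\]
which is also bounded above by (twice) the longest chord length, hence still shrinks with $\Lambda$, and which \emph{does} dominate the normalised slab area $A_s(u)=e^{-1}\int_{u\cap[s-1,s+1]}d(e^{t-s}\beta)$ (the $d\beta$ part because $e^{t-s}\leq e$ on the slab, the $dt\wedge\beta$ part because $\int_{s-1}^{s+1}e^{t-s-1}\,dt<1$). Monotonicity then gives $A_s(u)\geq C_0$ whenever the image meets $U^1_2\setminus\overline{U}^1_1$ at height $s$, with the constant made $s$-independent exactly by the translation trick you describe. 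Replacing $E_{d\beta}$ by the Hofer energy throughout closes the gap.
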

\begin{proof}
Following e.g. the convention of \cite[Section 3]{MR3548486}, the \emph{total Hofer energy} of $u$ is defined to be
\[
E(u)=\int_ud\beta+\sup_{\varphi\in \mathcal{C}}\int_u\varphi(t)dt\wedge \beta,
\]
where $\mathcal{C}$ is the set of smooth non-negative functions $\varphi$ which have compact support, for which $\int_{\mathbb{R}}\varphi(t)dt=1$. It satisfies
\[
E(u)=2\ell(\alpha_0)-\sum_i\ell(\beta_i),
\]
where $\ell(\gamma)=\int_{\gamma}\beta$ is the length of a Reeb chord. By shrinking $\Lambda$ we can assume that $E(u)<C$, for any choice of constant $C>0$, since by assumption $u$ has a unique positive puncture and since there are only finitely many Reeb chords on $\Lambda$, which holds generically in contactisations and thus also for $\Lambda$ in $U^1_2$ by Lemma~\ref{chords}.

We want to find a lower bound $C_0>0$, depending only on the almost complex structure $J$, such that if $(s,p)\in\text{im}(u)$ for some $p\in U^1_2\smallsetminus\overline{U}_1^1$, then $E(u)>C_0$. If we can provide such a bound, the statement follows.

For any real number $s$, let
\[
A_s(u)=e^{-1}\int_{u\cap\{t\in[s-1,s+1]\}}d(e^{t-s}\beta).
\]
By definition of the total Hofer energy we have the inequalities $0\leq A_s(u)\leq E(u)$, where the second one follows from the identity $d(e^{t-s-1}\beta)=e^{t-s-1}d\beta+e^{t-s-1}dt\wedge \beta$, together with bounds $e^{t-s-1}\leq 1$, for $t\in[s-1,s+1]$, and $\int_{s-1}^{s+1}e^{t-s-1}dt=1-e^{-2}<1$.

Suppose that $p\in U^1_2\smallsetminus \overline{U}_1^1$ and assume that $(s,p)\in\text{im}(u)$, for some real number $s$. Suppose first that $s=0$. By monotonicity \cite[Proposition 4.3.1. (ii)]{MR1274929} applied to the compact manifold with boundary $[-1,1]\times U_2^1$, since for $\Lambda\subset U^1_1$ there is a fixed distance $r_0>0$ from $\Lambda$ to $\partial (U^1_2\smallsetminus\overline{U}^1_1)$ (which can only get larger if we make $\Lambda$ smaller in the $z$-direction), this implies that
\[
A_0(u)>C_0'r_0^2=C_0,
\]
where $C_0'$ only depends on $J$.

Suppose next that $s\not=0$. Then, by translation in the $\mathbb{R}$-direction we define a new $J$-holomorphic curve $\tilde{u}$ by $\tilde{u}(q)=(u_t(q)-s,u_{U^1_2}(q))$, where $u(q)=(u_t(q),u_{U^1_2}(q))\in\mathbb{R}_t\times U_2^1$. We note that then
\[
A_s(u)=A_0(\tilde{u})>C_0,
\]
using that $J$ is cylindrical.
\end{proof}
Thus, by Lemma~\ref{chords} and Lemma~\ref{curves}, together with Theorem~\ref{stopsect}:
\begin{theorem}\label{cesector}
For a suitable contact form on $\partial_{\infty}(\mathbb{C}\times\mathbb{C}^*)\smallsetminus \sigma_{\textup{stop}}=(S^1\times S^2\smallsetminus U^0,\xi_{\textup{std}})$ there exists an embedding of a convex neighbourhood of the zero-section in $J^1S^1$ into $U^1$ which preserves the contact form and identifies the zero-section with $\Lambda_1$ (see Figure~\ref{neighbourhoods}). It satisfies the following: For a suitable compatible cylindrical almost complex structure, the pseudoholomorphic disc count of any link $\Lambda \subset U^1$ coincides with the count in  $J^1S^1$ (for the relevant discs in the definition of the Chekanov-Eliashberg algebra). In particular, the two Chekanov--Eliashberg algebras are equal.
\end{theorem}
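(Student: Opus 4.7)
The plan is to assemble Theorem~\ref{stopsect}, Lemma~\ref{chords} and Lemma~\ref{curves} into a single statement, with a careful choice of contact form and cylindrical almost complex structure so that both the Reeb chord generators and the rigid holomorphic discs of $CE(\Lambda)$ in the sector literally coincide with the corresponding data in $J^1S^1$. The statement naturally splits into two parts: the existence of the contact embedding, and the identification of the pseudoholomorphic curve counts.

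For the first part I would begin with the contactomorphism $U^1\cong \{\rho\leq 2\}\subset J^1S^1$ supplied by Theorem~\ref{stopsect} (where $\rho=y^2+z^2$). I would then transport the contact form $\beta=f(\rho)\alpha$ introduced just before Lemma~\ref{chords} to $U^1$ through this contactomorphism. Because $f\equiv 1$ on $\{\rho\leq 1\}$, the smaller neighbourhood $U_1^1=\{\rho\leq 1\}$ sits inside $(U^1,\beta)$ as an honest strict contact embedding of a convex closed neighbourhood of the zero-section in $(J^1S^1,\xi_{\textup{std}})$, sending the zero-section to $\Lambda_1$. This produces the embedding claimed in the theorem; that the ambient contact structure on $S^1\times S^2\smallsetminus U^0$ is the restriction of $\xi_{\textup{std}}$ follows from the Liouville-flow invariance of the contact structure at infinity.

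For the second part, I would choose a cylindrical compatible almost complex structure $J$ on $\mathbb{R}_t\times U^1$ such that its restriction to $\mathbb{R}_t\times U_1^1$ is a standard generic cylindrical almost complex structure $J_0$ pulled back from $\mathbb{R}_t\times J^1S^1$; the extension past $U_1^1$ can be made arbitrarily, subject to compatibility with $d(e^t\beta)$. Given a Legendrian link $\Lambda$, I would first shrink $\Lambda$ in the $z$-direction so that it is contained in $U_1^1$ and, via Lemma~\ref{curves}, so that every $J$-holomorphic disc asymptotic to Reeb chords of $\Lambda$ has image in $\mathbb{R}_t\times U_1^1$. Combined with Lemma~\ref{chords}, which rules out periodic orbits and forbids chords from leaving $U_1^1$ and re-entering, this yields a bijection of Reeb chords and a bijection of rigid $J$-holomorphic discs between the two pictures, with matching actions, moduli dimensions, and signs, so that the two differentials and products agree. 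Hence $CE(\Lambda)$ computed in the sector is literally the same dg-algebra over the same coefficient ring as $CE(\Lambda)$ computed in $J^1S^1$.

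The main obstacle is the last step: guaranteeing that no breaking, bubbling or escape into $U^1\smallsetminus U_1^1$ contaminates the curve count. My plan is to handle this by an SFT-compactness argument: for fixed $\Lambda$ with finitely many chords (guaranteed generically and preserved by Lemma~\ref{chords}), the total Hofer energy of any rigid disc in the differential is bounded by a universal constant depending only on the maximal chord action; by shrinking $\Lambda$ in the $z$-direction we can make this constant smaller than the lower bound $C_0$ produced by the monotonicity argument in the proof of Lemma~\ref{curves}, forcing all SFT limits of rigid discs to remain in $\mathbb{R}_t\times U_1^1$ where the data coincides with $J^1S^1$. This confines all relevant analysis to a region where the embedding is strict, and the theorem follows.
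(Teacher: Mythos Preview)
Your proposal is correct and follows essentially the same approach as the paper: the paper's proof of this theorem is a one-line assembly of Theorem~\ref{stopsect}, Lemma~\ref{chords}, and Lemma~\ref{curves}, and you do exactly this, just with the details of how the three pieces fit together spelled out more explicitly. Your additional SFT-compactness discussion is reasonable but largely redundant with what is already established in the proof of Lemma~\ref{curves}, whose monotonicity/energy bound is precisely the mechanism that confines the discs.
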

Thus, in practice, when computing the Chekanov--Eliashberg (see Section~\ref{invprel}) of the attaching spheres in $\partial_{\infty}(\mathbb{C}\times\mathbb{C}^*)\smallsetminus \sigma_{\text{stop}}=(S^1\times S^2\smallsetminus U^0,\xi_{\text{std}})$, we can do this in $(J^1S^1,\xi_{\text{std}})$.
\section{Symplectic realisation of type $\tilde{A}_n$ additive preprojective algebras}\label{wrappedsec}
We use Theorem~\ref{cesector} from the previous section to conclude that the Chekanov--Eliashberg sub-dg-algebra of a Legendrian $\Lambda \subset S^1\times S^2$ after putting a stop $\Lambda_{\text{stop}}$ (assuming $\Lambda$ is disjoint from a neighbourhood of this stop) can be computed in $J^1S^1$. We further show that this allows us to compute the partially wrapped Fukaya category by computing the Chekanov--Eliashberg algebra in $J^1S^1$. This also implies that we can compute the fully wrapped Fukaya category of the manifold obtained by handle attachment along $\Lambda\subset S^1\times S^2$ by dg-localisation of the module category of a finitely generated algebra computed in $J^1S^1$.

In particular the partially wrapped Fukaya category of cyclic plumbings of $T^*S^2$ with a stop can be computed by the Chekanov--Eliashberg algebra of an attaching link in $J^1S^1$. We show that the dg-algebra of this attaching link is the Ginzburg algebra of $\tilde{A}_n$.

\subsection{Preliminaries: symplectic invariants}\label{invprel}
We recall the definitions of the Chekanov--Eliashberg algebra of a Legendrian submanifold in a contact manifold and the wrapped Fukaya category of a Liouville sector. In particular we recall combinatorial models for computations in $(J^1\mathbb{R},\xi_{\text{std}})$, $(J^1S^1,\xi_{\text{std}})$ and $(\#^k(S^1\times S^2),\xi_{\text{std}})$. We also recall the relations between these two invariants via the surgery formula, which together with a generation result of the partially wrapped Fukaya category are the main tools that our results are based upon.

\subsubsection{The Chekanov--Eliashberg algebra}\label{cesection}

The Chekanov--Eliashberg algebra was proposed as a part of symplectic field theory in \cite{MR1826267}, and rigorously defined in $\mathbb{R}^3$ in \cite{MR1946550}. It was defined in the contactisation of an exact symplectic manifold in \cite{MR2299457}, and combinatorially in $J^1S^1$ in \cite{MR2131643} and in $\#^k(S^1\times S^2)$ in \cite{MR3356070} (see also \cite{MR4033516} for a finitely generated version). It is a semi-free dg-algebra, whose homology, also known as Legendrian contact homology, is a Legendrian isotopy invariant, more powerful than the previously known classical invariants, namely the smooth isotopy class, the Thurston-Bennequin number and the rotation number. 

Given a contact manifold $(M,\xi=\text{ker}\,\alpha)$, with chosen contact form $\alpha$. The Reeb vector field $R_{\alpha}$ is defined by
\begin{align*}
\iota_{R_{\alpha}}d\alpha=0,\\
\iota_{R_{\alpha}}\alpha=1.
\end{align*}
The non-trivial flow-lines of $R_{\alpha}$ from $\Lambda$ to itself are called \emph{Reeb chords}. Given a Legendrian submanifold $\Lambda\subset M$, decomposed into connected components $\Lambda=\Lambda_1\sqcup\cdots\sqcup\Lambda_n$. We say that $\Lambda$ is in \emph{chord generic} position by if the tangent space of the Legendrian at the starting point of the Reeb chord, is flown with the Reeb vector field to a subspace which is transverse inside $\xi$ to the tangent space of the Legendrian at the endpoint of the Reeb chord. Using a Legendrian isotopy we can always put $\Lambda$ in chord generic position, and we will therefore assume that this is the case, so that the Reeb chords form a discrete set. We define the \emph{Chekanov--Eliashberg algebra} $CE(\Lambda)$ of $\Lambda$ as follows.

We define a quiver $Q$ (with possibly countably infinitely many arrows) by picking one vertex $i$ for each connected component $\Lambda_i$, and one arrow $\alpha_c:i\rightarrow j$, for each Reeb chord $c$ which starts at $\Lambda_i$ and ends at $\Lambda_j$. As an algebra $CE(\Lambda)=\mathbb{C}Q$, that is, it is the path-algebra of the quiver (we write multiplication of arrows as composition). 

The grading is defined on generators using the Conley--Zehnder index and extended to words (multiplication/concatenations of arrows) as the sum of the gradings of the generators ($|\alpha\beta|=|\alpha|+|\beta|)$. In fact, for multiple component links there are many possible choices of gradings which depends on choices of certain paths. Furthermore, one must require that the Maslov classes of all the compenents of the Legendrian vanish in order to obtain a well-defined $\mathbb{Z}$-grading (otherwise one gets a grading in the integers modulo the minimal Maslov number. 

Assuming this now, we define the grading on generators as follows. Given a Reeb chord $c:\Lambda_i\rightarrow\Lambda_i$ starting and ending at the same component, we choose a capping path $\gamma_c$, i.e. $\gamma:I\rightarrow\Lambda$, such that $\gamma_c(0)=\text{starting point of }c$ and $\gamma_c(1)=\text{end point of }c$. This defines a path $\Gamma_c:I\rightarrow\xi_{|\gamma_c}$ of Lagrangian subspaces. We put
\[
|c|=\text{CZ}(\Gamma_c)-1,
\]
where $\text{CZ}$ denotes the Conley-Zehnder index as defined in \cite{MR2299457}.

To grade chords $c:\Lambda_i\rightarrow\Lambda_j$ between mixed components, we choose base-points $*_i\in\Lambda_i$ on all components, together with paths $\gamma_{ij}$ from $*_i$ to $*_j$ and paths $\Gamma_{ij}:I\rightarrow\xi_{|\gamma_{ij}}$. We assume that $\gamma_{ji}=\overline{\gamma}_{ij}$ and $\Gamma_{ji}=\overline{\Gamma}_{ij}$, where overline means the path traversed backwards, and $\gamma_{ij}*\gamma_{jk}\simeq\gamma_{ik}$ and $\Gamma_{ij}*\Gamma_{jk}\simeq\Gamma_{ik}$, so that in fact, we only need to pick paths $\gamma_i$ and $\Gamma_i$ from $*_i$ to $*_{n}$, $i=1,\dots,n-1$. Given $c:\Lambda_i\rightarrow\Lambda_j$ we also pick two paths, $\gamma_{c,j}$ from the end point of $c$ to $*_j$ and $\gamma_{c,i}$ from $*_i$ to the end point of $c$; as before they induce paths $\Gamma_{c,l}:I\rightarrow\xi_{\gamma_{c,l}}$, $l=i,j$. We put $\Gamma_c=\Gamma_{c,j}*\Gamma_{ji}*\Gamma_{c,i}$ (concatenation of pahts), and again the grading is defined by the formula $|c|=\text{CZ}(\Gamma_c)-1$. The choices of $\Gamma_{ij}$ above amounts to the choice of a Maslov potential, see \cite{MR1765826}.

The differential $\partial:CE(\Lambda)\rightarrow CE(\Lambda)$ (which by convention here is of degree $-1$) is defined on generators by counting holomorphic curves in the symplectisation, with one input (positive puncture) and many outputs (negative punctures). It is extended to the full algebra by requiring that it satisfies the graded Leibniz rule
\[
\partial(\alpha\beta)=\partial\alpha\cdot\beta+(-1)^{|\alpha|}\alpha\cdot\partial\alpha.
\]

Next we specialise to the cases of the 1-jet spaces $M=J^1N=T^*N\times\mathbb{R}_z$, for $N=\mathbb{R}$ or $N=S^1$, which are contact manifolds with contact structure $\xi=\text{ker}(\alpha)$, with $\alpha=dz-ydx$, where $x$ is a chart on $N$ and $(x,y)$ the induced chart on the cotangent bundle. In the front diagram $\pi_F(\Lambda)\subset N\times\mathbb{R}_z$, assuming that $\Lambda$ is in generic position, this is an immersed submanifold singularities are self-intersections or cusp edges. 

The Reeb vector field is $\partial_z$. The front projection $\pi_F(\Lambda)$ has no vertical ($z$-direction) tangencies and the link $\Lambda\subset J^1N$ can be recovered by the equations $y=dz/dx$. Thus the Reeb chords are found in the front projection going from points on lower strands to a points on upper strand such that their tangent spaces at the points are equal. They can be seen to be in one-to-one correspondence with right cusps and crossings (perhaps after perturbing) in the case of $N=\mathbb{R}$. In the case $N=S^1$ one must also add extra crossings coming from chords in the handle, seen in the resolution as in Figure~\ref{onejetresol}.

The choice of Maslov potential in the case of a $1$-jet space can be done in the following manner, which becomes combinatorial when the Legendrian is one-dimensional. We choose a locally constant function
$m:\Lambda\smallsetminus\pi_F^{-1}(\{\text{cusps}\})\rightarrow\mathbb{Z}$ such that $m$ decreases by $1$ when we pass downwards through a cusp, and increases by $1$ when we pass upwards through a cusp (when the Maslov class does not vanish one also needs to remove basepoints from the components). Then the gradings on the chords corresponding to right cusps are $1$, and for a chord $c$ identified with a crossing,
\[
|c|=m(\text{overstrand of } c)-m(\text{understrand of } c).
\]
Note that for Legendrian links with $n\geq 2$ connected component, the grading is not unique, but there are $\mathbb{Z}^{n-1}$ choices. In the case of $N=S^1$, there are two extra families of chords seen as crossings in the Lagrangian projection, as in Figure~\ref{onejetresol}. The left-hand side crossings are graded as usual, whereas for the \emph{special chords}, on the right-hand side, the grading is defined by $|c|=m(\text{overstrand of } c)-m(\text{understrand of } c)-1$.

For general $1$-jet spaces $M=J^1N$ of a manifold $N$, the discs for a particular choice of compatible almost complex structure can be counted in $N\times\mathbb{R}_z$ by counting rigid gradient flow trees, assuming that the Legendrian has no singularities except cusp edges or swallow tails in its front projection \cite{MR2326943}. For $N=\mathbb{R}$ or $N=S^1$, the differential can be computed combinatorially in the Lagrangian projection $\pi_L:J^1N\rightarrow T^*N$. Note that the Reeb chords are in one-to-one correspondence with double points in the Lagrangian projection. The differential $\partial \alpha$ at a double point $\alpha=\pi_L(c)$ counts discs with one positive puncture at $\alpha$ and several negative punctures at  $\beta_1=\pi_L(b_1),\dots,\beta_n=\pi_L(b_n)$, modulo holomorphic reparametrisation. More precisely,
\[
\partial\alpha=\sum_{\substack{n\geq 0,\,\beta_1,\dots,\beta_n\in R \\ u\in\mathcal{M}(\alpha;\beta_1,\dots,\beta_n)}}(-1)^{\sigma(u)}\beta_1\dots\beta_n,
\]
where $R$ is the set of Reeb chords or double points. The sets $\mathcal{M}(\alpha;\beta_1,\dots,\beta_n)$ are finite and non-empty for finitely many tuples, hence the sums above are finite (which is easily shown using an action argument). For $n=0$, if $\mathcal{M}(\alpha)\not=\emptyset$, then $\alpha$ must start and end at the same component and the empty word should be read as the corresponding idempotent.  The sets $\mathcal{M}(\alpha;\beta_1,\dots,\beta_n)$ are defined as the sets of discs $u:D\smallsetminus \{x,y_1,\dots,y_n\}\rightarrow T^*N$, where $D\subset\mathbb{C}$ is the unit disc and where $\{x,y_1,\dots,y_n\}\subset\partial D$ is cyclically ordered in the boundary, that are immersions such that $\lim_{z\rightarrow x}u(z)=\alpha$ and $\lim_{z\rightarrow y_i}=\beta_i$. Moreover, $u$ is required to map a neighbourhood of $x$ to one of the quadrants marked with a plus sign in Figure~\ref{chordsign}, and map a neighbourhood of $y_i$ to a quadrant marked with a minus sign.
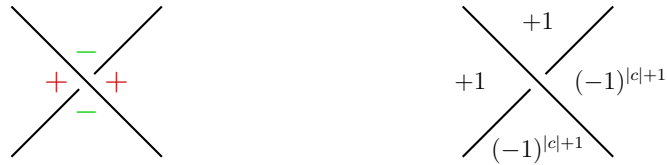
\begin{figure}[H]
\centering
\begin{tikzpicture}
\draw[thick] (0,2) to (2,0);
\draw[thick] (0,0) to (0.9,0.9);
\draw[thick] (1.1,1.1) to (2,2);
\node[red!80!black, very thick] at (1.4,1) {$+$};
\node[red!80!black] at (0.6,1) {$+$};
\node[green!80!black] at (1,0.6) {$-$};
\node[green!80!black] at (1,1.4) {$-$};

\draw[thick] (6,2) to (8,0);
\draw[thick] (6,0) to (6.9,0.9);
\draw[thick] (7.1,1.1) to (8,2);

\node[scale=0.8] at (7,0.1) {$(-1)^{|c|+1}$};
\node[scale=0.8] at (8.1,1) {$(-1)^{|c|+1}$};
\node[scale=0.8] at (7,1.8) {$+1$};
\node[scale=0.8] at (6.1,1) {$+1$};
\end{tikzpicture}
\caption{Reeb signs at the crossings on the left-hand side and orientation signs at crossings on the right-hand side, both shown in the Lagrangian projection. On the left-hand side: A positive puncture of a disc turns $90$ degrees on the left- or right-hand side quadrant, while the disc covers the same quadrant; the same for a disc with a negative puncture, but at the top or bottom quadrant. On the right-hand side: The orientation sign corresponding to chords $c$ are shown. For crossings with even degree this sign depends on a choice of an isotopy so that the strand with negative slope goes over the strand with positive slope. E.g. when the diagram comes from the resolution of a front (Figure~\ref{ngone}, and Figure~\ref{ngtwo}) this is case. For the special chords $s_{i,j}$ in Figure~\ref{onejetresol}, we choose the orientation signs obtained by rotating the picture $90$ degrees clockwise. Then they correspond to the short handle chords $c^0_{i,j}$ (without changing signs).}
\label{chordsign}
\end{figure}
Finally, a disc is either counted with a plus or a minus sign, which depends on the orientation signs at crossings, as shown in Figure~\ref{chordsign}, and choice of basepoint on each component. If the total number of crossings at which $u$ has a negative orientation sign together with the number of times a basepoint is traversed along the boundary is odd, then the disc is counted with a minus sign, i.e. $\sigma(u)=1$. Otherwise it is counted with plus sign, i.e. $\sigma(u)=0$.

A subcritical Weinstein domain of dimension $4$ has boundary $(\#^k(S^1\times S^2),\xi_{\text{std}})$, obtained by attaching $k$ $1$-handles to $B^4$. Up to quasi-isomorphism we can compute the Chekanov--Eliashberg algebra as follows, see \cite{MR3356070}.

Assume that the Weinstein handlebody diagram is in standard form, that is, the $1$-handles are represented by two balls each, which lie opposite to each other horizontally, and they are lined up vertically; the $2$-handles enter/exits the handles and go into a box between the $1$-handles where the linking happens. Legendrians can be isotoped so that their intersections with the $1$-handles consist of parallel strands. Therefore, for each $1$-handle with $k$ strands entering we get a countable family of handle chords (in \cite{MR4033516} this is simplified to finitely many by means of algebra, but it will not be used here). These are chords $c_{i,j}^p$, which go around in the handle from the $i$:th strand to $j$:th strand passing some basepoint $p$ times. We can think of them as in Figure~\ref{handlechords}, as chords going around a circle, with the strands on the right-hand side, and the chord $c_{i,j}^p$ going around the left-hand side of the circle $p$ times.

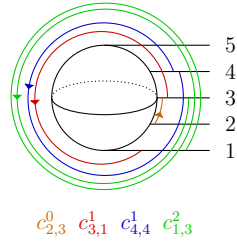
\begin{figure}[H]
\centering
\begin{tikzpicture}[scale=1.4]
\draw (0,0) circle (0.5cm);
\draw[densely dotted] (0.5,0) arc (0:180:0.5 and 0.16);
\draw (-0.5,0) arc (0:180:-0.5 and -0.16);

\draw (0,0.5) to (1,0.5);
\draw (0.433012,0.25) to (1,0.25);
\draw (0.5,0) to (1,0);
\draw (0.433012,-0.25) to (1,-0.25);
\draw (0,-0.5) to (1,-0.5);

\node[scale=0.7] at (1.2,0.5) {$5$};
\node[scale=0.7] at (1.2,0.25) {$4$};
\node[scale=0.7] at (1.2,0) {$3$};
\node[scale=0.7] at (1.2,-0.25) {$2$};
\node[scale=0.7] at (1.2,-0.5) {$1$};

\draw[red!80!black] (0.6,0) arc[start angle=0, end angle=307, radius=0.63];

\draw[blue!80!black,domain=21:379,samples=500] plot ({0.7*(\x*0.0002+1)*cos(\x)}, {0.7*(\x*0.0002+1)*sin(\x)});
\draw[green!80!black,domain=0:753,samples=500] plot ({0.8*(\x*0.0002+1)*cos(\x)}, {-0.8*(\x*0.0002+1)*sin(\x)});
\draw[orange!80!black,domain=-30:0,samples=500] plot ({0.55*cos(\x)}, {0.5*sin(\x)});

\node[scale=0.7,orange!80!black] at (-0.5,-1.2) {$c^0_{2,3}$};
\node[scale=0.7,red!80!black] at (-0.1,-1.2) {$c^1_{3,1}$};
\node[scale=0.7,blue!80!black] at (0.3,-1.2) {$c^1_{4,4}$};
\node[scale=0.7,green!80!black] at (0.7,-1.2) {$c^2_{1,3}$};

\node[green!80!black, scale=0.4, rotate=-90] at (-0.828,0) {\midarrow};
\node[blue!80!black, scale=0.4, rotate=-100] at (-0.718,0.1) {\midarrow};
\node[red!80!black, scale=0.4, rotate=-85] at (-0.657,-0.05) {\midarrow};
\node[orange!80!black, scale=0.4, rotate=71] at (0.526,-0.15) {\midarrow};
\end{tikzpicture}
\caption{Examples of handle chords in a handle where five strands enter (note that they may or may not lie on the same component of the link). The differential can be seen by observing how a chord can split into shorter chords in this picture (except for the idempotent terms appearing in $\partial c^1_{i,i}$).}
\label{handlechords}
\end{figure}

The description of the grading in front projection in terms of a choice of Maslov potential above extends Weinstein handlebody diagrams in standard position. One must now require that the value of $m$ remains constant when passing through a $1$-handle. Then the grading of the handle chords $c=c_{i,j}^p$ are
\[
|c_{i,j}^p|=2p-1+m(\text{endpoint of }c_{i,j}^p)-m(\text{starting point of } c_{i,j}^p)
\]
(where by endpoint/starting point mean any point at the corresponding strand near the $1$-handle). 

The differential at the handle chords is computed as follows:
\begin{align*}
\partial c^0_{i,j}&=\sum_{i<l<j}(-1)^{m(l)+m(j)}c^0_{l,j}c^0_{i,l}, \\
\partial c^1_{i,j}&=\delta_{i,j}e_{c(i)}+\sum_{i<l}(-1)^{m(l)+m(j)}c^1_{l,j}c^0_{i,l}+\sum_{l<j}(-1)^{m(l)+m(j)}c^0_{l,j}c^1_{i,l}, \\
\partial c^p_{i,j}&=\sum_{m=0}^p\sum_l(-1)^{m(l)+m(j)}c^{p-m}_{l,j}c^m_{i,l},\quad p\geq2.
\end{align*}
Here $\delta_{i,j}$ is the Kronecker delta, $e_{c(i)}$ is the idempotent corresponding to the $i$:th strand, and 
$m(i)$ is the value of the Maslov poitential at the $i$:th strand. In the last formula above, expressions of the form $c^0_{i,j}$ with $i\geq j$ should be interpreted as $0$. 

The differential at crossings is computed as usual by using the Lagrangian projection, but we now also allow for negative punctures converging at the handles, contributing $\beta=c_{i,j}^0$ to the words. See the left-hand side example in Figure~\ref{discpic} below. The orientation sign at negative punctures at handle chords is positive if the disc approaches the handle from the right-hand side (as in Figure~\ref{discpic}), and $(-1)^{m(j)-m(i)}$ if it is approaching $c^0_{i,j}$ from the left-hand side. 

\begin{figure}[H]
\centering
\begin{tikzpicture}[scale=1.4]
\draw (0,0) circle (0.5cm);
\draw[densely dotted] (0.5,0) arc (0:180:0.5 and 0.16);
\draw (-0.5,0) arc (0:180:-0.5 and -0.16);

\begin{knot}
\strand 
(0.25,0.433012) to[out=0,in=180] (1,0.433012) to[out=0,in=180] (2,-0.866024) to[out=0,in=180] (2.5,-0.866024);

\strand[name=A]
(0.5,0) to (2.5,0);
\strand[name=B]
(0.25,-0.433012) to (2.5,-0.433012);
\end{knot}

\draw[draw=none,fill, gray, opacity=0.5] (0.5,0) arc[start angle=0, end angle=-60, radius=0.5] (0.25,-0.433012) to (1.56,-0.433012) to[out=105,in=-70] (1.43,0) to (0.5,0);

\node[scale=0.5] at (1.5,0.13) {$\beta$};
\draw[red!80!black] (1.41,-0.35) to (1.47,-0.35);
\draw[red!80!black] (1.44,-0.32) to (1.44,-0.38);
\draw[green!80!black] (1.33,-0.07) to (1.39,-0.07);
\node[scale=0.5] at (1.7,-0.54) {$\alpha$};

\begin{knot}[flip crossing=4, flip crossing =3]
\strand (3,0) to[out=-45, in=135] (3.8,-0.8) to[out=-45,in=180] (4.5,-1) to[out=0,in=-90] (5,-0.5) to[out=90,in=0] (4.5,0.5) to[out=180,in=90] (4,0) to[out=-90,in=130] (4.2,-0.5);
\strand (3,-1) to[out=45,in=-135] (4,0) to [out=45,in=180] (4.5,0.2) to (5.5,0.2);
\strand (4.2,-0.5) to[out=-50,in=180] (5.5,-0.3);
\end{knot}

\draw[draw=none,fill, gray, opacity=0.5] (3.5,-0.5) to[out=-45,in=135] (3.8,-0.8) to[out=-45,in=180]  (4.5,-1) to[out=0,in=-90] (5,-0.5) to[out=90,in=0] (4.5,0.5) to[out=180,in=90] (4,0) to (3.5,-0.5);
\draw[draw=none,fill, gray, opacity=0.6] (4,0) to[out=-90,in=130] (4.2,-0.5) to[out=-50,in=-150] (5,-0.44) to[out=94,in=-72] (4.9,0.2) to (4.5,0.2) to[out=180,in=45] (4,0);

\draw[red!80!black] (3.64,-0.5) to (3.70,-0.5);
\draw[red!80!black] (3.67,-0.53) to (3.67,-0.47);
\draw[green!80!black] (4.89,-0.42) to (4.83,-0.42);
\draw[green!80!black] (4.85,0.14) to (4.79,0.14);

\node[scale=0.5] at (3.33,-0.47) {$\alpha$};
\node[scale=0.5] at (5.15,-0.53) {$\beta_1$};
\node[scale=0.5] at (5.03,0.32) {$\beta_2$};
\node[scale=0.5] at (3.85,0.04) {$\beta_3$};

\draw[fill] (4.5,-1) circle (0.02cm);
\node[scale=0.5] at (4.53,-1.13) {$*_1$};
\draw[fill] (5.3,0.2) circle (0.02cm);
\node[scale=0.5] at (5.33,0.35) {$*_2$};

\node[scale=0.4,rotate=-45] at (3.2,-0.2) {\midarrow};
\node[scale=0.4,rotate=45] at (3.2,-0.8) {\midarrow};

\draw[fill] (2.4,-0.866024) circle (0.02cm);
\draw[fill] (2.4,0) circle (0.02cm);
\draw[fill] (2.4,-0.433012) circle (0.02cm);

\node[scale=0.5] at (2.45,-0.986024) {$*_3$};
\node[scale=0.5] at (2.45,-0.12) {$*_2$};
\node[scale=0.5] at (2.45,-0.553012) {$*_1$};
\end{tikzpicture}
\caption{Examples of two discs in two different link diagrams in the Lagrangian projection. On the left-hand side showing that $\partial\alpha= (-1)^{|\beta|+1}\beta c^0_{1,2}+\cdots$. On the right hand side that $\partial\alpha = (-1)^{|\alpha|+|\beta_2|+1}t_1\beta_1\beta_2+\cdots$.}
\label{discpic}
\end{figure}
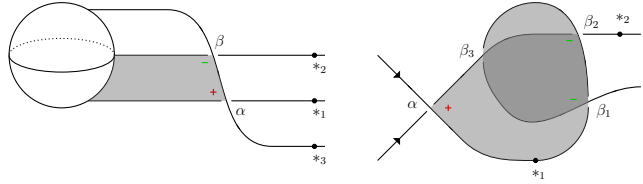

The following remark describes a combinatorial version of the Chekanov--Eliashberg algebra with loop space coefficients in the $3$-dimensional case. It based on the quasi-isomorphism between the dg-algebra $C_*(\Omega S^1;k)$, that is, chains on the based loop space of $S^1$ equipped with the Pontryagin product, and Laurent polynomials in one variable $k[t,t^{-1}]$. It is important because of its use in the surgery formula (Theorem~\ref{surgeryloop}, \cite{MR4634745}).
\begin{remark}[Loop space coefficients]\label{loopspace}
When putting stops rather than attaching handles at some components $\Lambda_{i_1},\dots\Lambda_{i_j}$ of a Legendrian link $\Lambda$, the corresponding Chekanov--Eliashberg algebra should be computed with loop space coefficients on these components. In the case of $3$-dimensional contact manifolds, i.e. $\Lambda_i\simeq S^1$, this can be modeled using the basepoints $*_{i_l}$ and choices of capping paths from the definition of the grading, together with a choice of orientation of each component $\Lambda_{i_l}$. In terms of generators of $CE(\Lambda)$, this adds two generators $t_{i_l},t_{i_l}^{-1}$ for each of the components $\Lambda_{i_1},\dots\Lambda_{i_j}$, such that $|t_{i_l}^{\pm 1}|=0$ and $\partial t_{i_l}^{\pm 1}=0$, and such that $e_jt_{i_l}^{\pm 1}=t_{i_l}^{\pm 1}e_j=0$, for $j\not=i_l$, $e_{i_l}t_{i_l}^{\pm 1}=t_{i_l}^{\pm 1}e_{i_l}=t_{i_l}^{\pm 1}$, and $t_{i_l}t_{i_l}^{-1}=t_{i_l}t_{i_l}^{-1}=e_{i_l}$. Moreover, discs $u$, say with positive puncture $\alpha$ and negative punctures $\beta_1,\dots,\beta_n$, in that order, whose boundary passes the basepoint $*_{j_i}$ $n_i$ times (counted with sign according to the orientation on $\Lambda_i$) between $\beta_{i}$ and $\beta_{i+1}$ (with $i=0,\dots,n$ and $\alpha=\beta_0=\beta_{n+1}$), now contributes the word 
\[
(-1)^{\sigma(u)}t_{j_0}^{n_0}\beta_1t_{j_1}^{n_1}\beta_2\cdots t_{j_{n-1}}^{n_{n-1}}\beta_nt_{j_n}^{n_n}
\]
to $\partial\alpha$ (which recovers $(-1)^{\sigma(u)}\beta_1\dots\beta_n$ by putting $t_i=e_i$). 
\end{remark}

Finally, some properties are more easily described in the front projection and some in the Lagrangian projection. One can always obtain the Lagrangian projection from the front projection using the \emph{Ng-resolution}, as in Figure~\ref{ngone}, Figure~\ref{ngtwo} and Figure~\ref{onejetresol}. This means that, after an isotopy of the front diagram (i.e. introducing no new crossings), we can assume that the Lagrangian projection is as shown up to isotopy. Note that this is sufficient for determining the rigid discs in the definition of the differential.

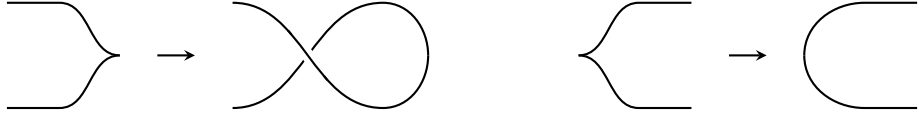
\begin{figure}[H]
\centering
\begin{tikzpicture}
\begin{knot}
\strand[thick]
(0,0.7) to[out=0,in=180] (0.7,0.7) to[out=0,in=180] (1.5,0);
\strand[thick]
(0,-0.7) to[out=0,in=180] (0.7,-0.7) to[out=0,in=180] (1.5,0);
\end{knot}

\draw[black,-stealth,thick] (2,0)  to (2.5,0) ;

\begin{knot}[clip width=6]
\strand[thick]
(3,0.7) to[out=0,in=180] (5,-0.7) to[out=0,in=-90] (5.6,0);
\strand[thick]
(5.6,0) to[out=90,in=0] (5,0.7) to[out=180,in=0] (3,-0.7);
\end{knot}

\begin{knot}
\strand[thick] 
(7.6,0) to[out=0,in=180] (8.4,0.7) to[out=0,in=180] (9.1,0.7);
\strand[thick]
(7.6,0) to[out=0,in=180] (8.4,-0.7) to[out=0,in=180] (9.1,-0.7);
\end{knot}

\draw[black,-stealth, thick] (9.6,0)  to (10.1,0) ;

\begin{knot}
\strand[thick]
(10.6,0) to[out=90,in=180] (11.4,0.7) to[out=0,in=180] (12.1,0.7);
\strand[thick]
(10.6,0) to[out=-90,in=180] (11.4,-0.7) to[out=0,in=180] (12.1,-0.7);
\end{knot}
\end{tikzpicture}
\caption{Showing how to resolve cusps when going from a front diagram to a Lagrangian diagram.}
\label{ngone}
\end{figure}

\begin{figure}[H]
\centering
\begin{tikzpicture}
\draw[thick] (-5,0) circle (0.5cm);
\draw[densely dotted] (-4.5,0) arc (0:180:0.5 and 0.16);
\draw[thick] (-5.5,0) arc (0:180:-0.5 and -0.16);
\draw[thick] (-8,0.433012) to (-5.25,0.433012);
\draw[thick] (-8,0.171) to (-5.47,0.171);
\draw[thick] (-8,-0.171) to (-5.47,-0.171);
\draw[thick] (-8,-0.433012) to (-5.25,-0.433012);

\draw[black,-stealth,thick] (-4,0)  to (-3.5,0) ;

\draw[thick] (0,0) circle (0.5cm);
\draw[densely dotted] (0.5,0) arc (0:180:0.5 and 0.16);
\draw[thick] (-0.5,0) arc (0:180:-0.5 and -0.16);
\begin{knot}[clip width =5.5]
\strand[thick] (-3,0.433012) to (-1.7,0.433012) to[out=0,in=180] (-0.5,-0.433012) to (-0.25,-0.433012);
\strand[thick] (-3,0.171) to (-1.9,0.171) to[out=0,in=180] (-1.1,-0.433012) to[out=0,in=180] (-0.7,-0.171) to (-0.47,-0.171);
\strand[thick] (-3,-0.171) to (-2.1,-0.171) to[out=0,in=180] (-1.7,-0.433012) to[out=0,in=180] (-0.9,0.171) to (-0.47,0.171);
\strand[thick] (-3,-0.433012) to (-2.2,-0.433012) to[out=0,in=180] (-1,0.433012) to (-0.25,0.433012);
\end{knot}
\end{tikzpicture}
\caption{Showing how the strands changes near a $1$-handle when passing to the Lagrangian projection, in the case of four strands going through the handle.}
\label{ngtwo}
\end{figure}
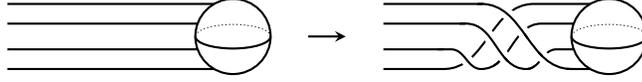
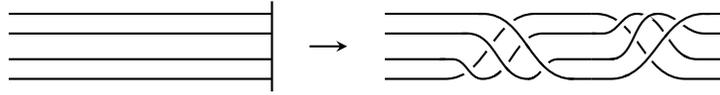
\begin{figure}[H]
\centering
\begin{tikzpicture}
\draw[thick] (-4.5,-0.6) -- (-4.5,0.6);
\draw[thick] (-8,0.433012) to (-4.5,0.433012);
\draw[thick] (-8,0.171) to (-4.5,0.171);
\draw[thick] (-8,-0.171) to (-4.5,-0.171);
\draw[thick] (-8,-0.433012) to (-4.5,-0.433012);

\draw[black,-stealth,thick] (-4,0)  to (-3.5,0) ;

\begin{knot}[clip width =5.5]
\strand[thick] (-3,0.433012) to (-1.7,0.433012) to[out=0,in=180] (-0.5,-0.433012) to (-0.25,-0.433012);
\strand[thick] (-3,0.171) to (-1.9,0.171) to[out=0,in=180] (-1.1,-0.433012) to[out=0,in=180] (-0.7,-0.171) to (-0.25,-0.171);
\strand[thick] (-3,-0.171) to (-2.1,-0.171) to[out=0,in=180] (-1.7,-0.433012) to[out=0,in=180] (-0.9,0.171) to (-0.25,0.171);
\strand[thick] (-3,-0.433012) to (-2.2,-0.433012) to[out=0,in=180] (-1,0.433012) to (-0.25,0.433012);
\end{knot}

\begin{knot}[clip width =4.5, flip crossing=5,flip crossing= 6]
\strand[thick] (-0.25,0.171) to (-0.1,0.171) to[out=0,in=180] (0.35,0.433012) to[out=0,in=180] (1.15,-0.171) to (1.5,-0.171);
\strand[thick] (-0.25,-0.171) to (0,-0.171) to[out=0,in=180] (0.63,0.433012) to[out=0,in=180] (1.25,0.171) to (1.5,0.171);
\strand[thick] (-0.25,0.433012) to(-0.2,0.433012) to[out=0,in=180]  (1,-0.433012) to (1.5,-0.433012);

\strand[thick] (-0.25,-0.433012) to (0,-0.433012) to[out=0,in=180]  (1.25,0.433012) to (1.5,0.433012);
\end{knot}
\draw[thick] (1.5,-0.6) -- (1.5,0.6);
\end{tikzpicture}
\caption{Showing the resolution when passing from the front projection to the Lagrangian projection in $J^1S^1$. The special chords, rightmost in the picture, where the strands with positive slopes cross over the strands with negative slopes corresponds to the short handle chords $c^0_{i,j}$ in the corresponding $1$-handle.}
\label{onejetresol}
\end{figure}

\subsubsection{The wrapped Fukaya category}

We sketch the definition and recall important results concerning the (partially) wrapped Fukaya category. The reader is referred to \cite{MR4106794}, \cite{MR4695507}, \cite{MR4634745}, \cite{MR4732675} and \cite{MR3427304} for in-depth treatments. Intuitively the Fukaya category is the $A_{\infty}$-category with objects being Lagrangian manifolds, where morphisms spaces are generated by intersection points and the $A_{\infty}$-maps are given by counting $J$-holomorphic polygons with punctures at the intersections, where the $A_{\infty}$-relations comes from the possible breakings of these curves. The wrapped Fukaya category is a version of this category adapted for a large class of open symplectic manifolds --- Liouville sectors --- where the Lagrangians are assumed to be cylinders over Legendrians and where the intersection points are counted after wrapping the Lagrangian submanifolds at infinity (in the cylindrical end) by using the Reeb flow. The partially wrapped Fukaya category allows for stops to stop the wrapping, e.g. in a neighbourhood of (a cylinder of) a Legendrian submanifold in the boundary at infinity. Equivalently, it can be seen as an enlargement of the class of objects for which the wrapped Fukaya category is defined, from Weinstein manifolds to Weinstein sectors.

Let $(W,\lambda)$ be a Liouville sector, we define \emph{wrapped Fukaya category} $\mathcal{W}(W,\lambda)$ as follows. The objects are exact Lagrangian submanifolds $L\subset W$ with cylindrical ends, i.e $\lambda$ pulls back to an exact form on $L$, and 
$L$ coincides with the cylinder $L=\mathbb{R}\times\Lambda$ under the identification $\tilde{\phi}:(\mathbb{R}_{t\geq 0}\times\partial_{\infty}W)\cong(W\smallsetminus \overline{W},\lambda)$, outside of a compact subset of $W$, for some Legendrian submanifold $\Lambda\subset\partial_{\infty}W$. We also include compact exact Lagrangian submanifolds, which can be thought of as having cylindrical ends with $\Lambda=\emptyset$. The Lagrangian submanifolds should also be equipped with extra data, which allows us to define $\mathbb{Z}$-gradings and $\mathbb{C}$-coefficients.

The homomorphism spaces $CW(L_0,L_1)$ are given by a limit of $\mathbb{C}$-vector spaces spanned by intersection points of $L_0$ and $\phi_H^1(L_1)$, for a family of Hamiltonians $H$.

As usual the definition depends on several choices, such as almost complex structures, but it yields an $A_{\infty}$-category $\mathcal{W}(W,\lambda)$ associated to $(W,\lambda)$, which is homologically unital, and independent of the choices up to quasi-equivalence. (Usually this is defined with cohomological grading, rather than the conventions in Section~\ref{algprelAinf}, but this does not cause any issues.)

Given a stop $\sigma\subset W$ (or union of stops) one can define $\mathcal{W}(W,\sigma)$, the \emph{partially wrapped Fukaya category} of $W$ \emph{stopped at} $\sigma$. This is another $A_{\infty}$-category in which the wrapping is not allowed to intersect the stop, and in which the holomorphic curves are also required to avoid the stop.  

The computations in this article do not use the definition of the (partially) wrapped Fukaya category explicitly, but rather they use the following two versions of the surgery formula. The first one concerns the attachment of standard Weinstein handles along Legendrian Links:
\begin{theorem}[\cite{MR2916289}]\label{surgerystd}
Suppose that $W$ is defined by attaching standard Weinstein handles along a Legendrian link $\Lambda\subset\partial_{\infty} W_0$, and let $\Lambda_1,\dots,\Lambda_m$ denote the connected components. Let $C_1,\dots, C_m$ denote the cocores of the handles. Then there is a surgery map defined by holomorphic curve counts, which induce and $A_{\infty}$-quasi-isomorphism
\[
\bigoplus_{i,j}CW(C_i,C_i)\simeq CE(\Lambda).
\]
\end{theorem}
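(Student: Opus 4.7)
The strategy is to realise the quasi-isomorphism through a neck-stretching argument along the co-spheres of the critical handles. First I would fix a standard local model for each handle $H_i$ so that the cocore $C_i$ has a cylindrical end which, inside the handle, is identified with a cotangent fibre $T^*_{p_i}\Lambda_i$ times a half-line. With a Hamiltonian of the form $H = h(\rho)$, where $\rho$ is a cylindrical coordinate at infinity chosen compatibly both in $\partial_\infty W$ and in the complement $\partial_\infty W_0\smallsetminus\Lambda$, the time-$T$ flow of $H$ drags the cylindrical end of $C_j$ along the Reeb flow of the induced contact form. For $T$ large, the transverse intersections of $\phi^T_H(C_j)$ with $C_i$ are in canonical bijection with Reeb chords of $\Lambda$ from $\Lambda_i$ to $\Lambda_j$, together with possibly short chords coming from the interior part of the handle model that can be arranged to cancel in pairs by a small perturbation. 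A direct index calculation matches the degrees of these generators with the Conley--Zehnder grading used in $CE(\Lambda)$.

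Next I would construct the surgery $A_\infty$-map $\Phi \colon CE(\Lambda) \to \bigoplus_{i,j} CW(C_i, C_j)$ on generators by sending a Reeb chord $c$ to its corresponding wrapped generator, and on higher components by counting moduli spaces of holomorphic discs in $W$ with one positive puncture at a Reeb chord (lying at infinity) and several boundary punctures at wrapped intersections, with boundary on the union of cocores and their Hamiltonian perturbations. To extract the $A_\infty$-operations on the right-hand side from discs with only wrapped intersection punctures, one uses the continuation/cascade model of wrapped Floer theory so that such counts translate directly into $\mu^k$ applied to chord generators.

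The heart of the proof is then an SFT compactness and gluing argument. I would stretch the neck along a unit co-sphere bundle around $\Lambda$ inside $\partial_\infty W$, so that $W$ degenerates into the subcritical completion $W_0$ glued along a symplectisation cylinder to the union of handles with their cocores. In the limit, a holomorphic strip contributing to the differential of $\bigoplus CW(C_i,C_j)$ breaks into (i) a rigid holomorphic disc in the symplectisation $\mathbb{R}\times\partial_\infty W_0$ with boundary on $\mathbb{R}\times\Lambda$ and punctures at Reeb chords, which is precisely a term in $\partial_{CE}$, glued through the neck to (ii) rigid ``trivial strip" pieces inside the handles running between cocores and their images under the wrapping, which contribute only the identity. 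The same picture for discs with several boundary inputs identifies higher $\mu^k$ on the wrapped side with the non-commutative polynomial structure on $CE(\Lambda)$. Combined with the fact that the subcritical completion $W_0$ contributes nothing (its wrapped Fukaya category vanishes), this realises $\Phi$ as a chain map inducing an isomorphism at the level of generators and differentials, hence an $A_\infty$-quasi-isomorphism.

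The main obstacle will be the analytic input: establishing transversality for the moduli spaces on both sides in a setup compatible with the neck-stretching, ensuring that no extra broken configurations contribute (in particular, that curves escaping into $W_0$ are ruled out by dimension or action), and verifying the gluing theorem that reconstructs $CW$-differentials from $CE$-curves plus trivial handle pieces. These are exactly the points developed in \cite{MR2916289} (and revisited in \cite{MR4634745}), so the plan is to invoke their framework while checking that the handle-local model and the identification of generators above feed into it correctly.
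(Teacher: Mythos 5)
The paper does not prove this statement: it is imported verbatim as a citation of \cite{MR2916289} (with the analytic completion deferred to \cite{MR4634745}), so there is no in-paper argument to compare yours against. What you have written is, in outline, a faithful summary of the strategy of those references: identify the wrapped generators of $\bigoplus_{i,j}CW(C_i,C_j)$ with Reeb chords of $\Lambda$ via a handle-local model in which the cocore ends on a cotangent fibre of $\Lambda_i$, define the surgery map by counting discs with one positive puncture at a chord and boundary on the cocores, and then use SFT neck-stretching along the attaching region to split such discs into a $CE$-disc in the symplectisation of $\partial_\infty W_0$ with boundary on $\mathbb{R}\times\Lambda$ glued to trivial pieces in the handles, with subcriticality of $W_0$ ruling out extra anchored configurations. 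That is the right skeleton, and your closing paragraph correctly locates the hard content (transversality, gluing, exclusion of escaping configurations) in the cited works rather than claiming to redo it.

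One imprecision worth fixing: the ``short chords coming from the interior part of the handle model'' do not all cancel in pairs. Each $CW(C_i,C_i)$ carries a generator corresponding to the minimum of the wrapping Hamiltonian near the cocore, and these survive to give the idempotents $e_i$, i.e.\ the copy of the semi-simple ring $R=kQ_0$ inside $CE(\Lambda)$. If you literally cancel everything not matched to a Reeb chord of $\Lambda$, the map cannot hit the units and cannot be a quasi-isomorphism onto all of $CE(\Lambda)$. Relatedly, be explicit about the direction of the $A_\infty$-morphism (in \cite{MR2916289} the surgery map goes from the Legendrian invariant to the wrapped complex) and about the fact that curves in the stretched limit may a priori be anchored in $W_0$; it is the subcriticality of $W_0$, not merely the vanishing of its wrapped Fukaya category, that kills these contributions.
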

The second formula (proved in \cite{MR4563001}) involves stopping along Legendrian submanifolds.
\begin{theorem}[\cite{MR4634745}]\label{surgeryloop}
Suppose that $W$ is obtained from $W_0$ by attaching handles along the link of Legendrian spheres $\Lambda^-=\Lambda^-_1\sqcup\dots\sqcup\Lambda_{m_1}^-$, and consider the partially wrapped Fukaya category of $W$ stopped at $\sigma$, where $\sigma$ is a stop put at the Legendrian link $\Lambda^{-}=\Lambda_1^+\sqcup\dots\sqcup\Lambda^+_{m_2}\subset \partial W_0 \smallsetminus \Lambda_-$. Then
\[
\bigoplus_{i,j}CW_{(W,\sigma)}(L_i^{\pm},L_j^{\pm})\simeq CE(\Lambda^-\cup\Lambda^+),
\]
where we on the right-hand side should put coeffients in chains on the based loop space on $\Lambda_1^+,\dots,\Lambda_{m_2}^+$. For $1$-dimensional Legendrians this is equivalent to adding generators $t_i^{\pm}$ on these components (see Remark~\ref{loopspace}). The sum on the left-hand side is over all possible combinations, where $L^-=C_i$ is the cocore of the handle, and where $L^+_i=D_i$ is the linking disc of the stop.
\end{theorem}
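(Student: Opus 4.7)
The plan is to follow the Ekholm--Lekili / Asplund--Ekholm framework, treating critical handles and Legendrian stops on equal footing by reducing both to a neck-stretching analysis of holomorphic curves in a symplectization. First I would set up the geometric model: pick a contact form $\alpha$ on $\partial_\infty W_0$ whose Reeb dynamics is generic with respect to $\Lambda^- \cup \Lambda^+$ and which has no short orbits, and describe $W$ as $W_0$ with standard critical handles glued along $\Lambda^-$; simultaneously realize the stop $\sigma$ along $\Lambda^+$ so that the linking disc $D_i$ of each component $\Lambda^+_i$ sits as a small Lagrangian disc transverse to $\Lambda^+_i$ with cylindrical end asymptotic to a short Reeb chord in a Darboux neighborhood, and the cocore $C_j$ of the $j$:th handle is the Lagrangian disc whose boundary at infinity is a meridional Legendrian sphere linking $\Lambda^-_j$. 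In both cases the end of the Lagrangian at infinity is a linking sphere of a component of $\Lambda^- \cup \Lambda^+$; the asymmetry between $\Lambda^-$ and $\Lambda^+$ is entirely geometric---the components of $\Lambda^-$ bound the cores of handles in $W$, while those of $\Lambda^+$ do not---and this is what gives rise to loop space coefficients on the $\Lambda^+$ side.

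Next I would introduce a wrapping Hamiltonian $H$ supported in the cylindrical end of the sector $W_\sigma$ such that its time-1 flow wraps positively around $\Lambda^-$ and is cut off to zero near $\Lambda^+$, with generators of $CW_{(W,\sigma)}(L^\pm_i, L^\pm_j)$ in bijection with Reeb chords of $\Lambda^- \cup \Lambda^+$ starting and ending on the relevant components, plus the initial intersection of each pair of Lagrangians (the continuous part of the morphism, giving the identity in the linking direction). The $A_\infty$-operations on the wrapped side count holomorphic polygons with boundary on the cocores and linking discs. I would then perform a simultaneous neck stretching along the unit conormal bundles of $\Lambda^-$ and the unit cotangent bundles of $\Lambda^+$ (equivalently, along copies of $J^1\Lambda^\pm$ sitting inside $\partial_\infty W_0$), obtaining an SFT-type compactification whose limit buildings consist of: (i) a top piece, which is a rigid holomorphic curve in the symplectization $\mathbb{R}\times \partial_\infty W_0$ with positive punctures asymptotic to Reeb chords of $\Lambda^-\cup\Lambda^+$ and boundary on cylinders over the linking Legendrians; and (ii) bottom pieces living in the (trivial) cotangent/normal neighborhoods of $\Lambda^-$ and $\Lambda^+$, together with the cores of handles in $W$. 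The top piece is precisely what is counted by the differential and the $A_\infty$-operations on $CE(\Lambda^-\cup \Lambda^+)$, so it remains to understand the bottom pieces.

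The decisive step, and the hard part, is the analysis of the bottom pieces over $\Lambda^+$. Over a component $\Lambda^-_j$, after gluing in the core disc of the handle, the bottom moduli space is a point: the capping disc is rigid, so each Reeb chord on $\Lambda^-_j$ produces exactly one contribution as an element of $\mathrm{Hom}(C_j, C_j)$, giving the straight identification of Theorem~\ref{surgerystd}. Over a component $\Lambda^+_i$, on the other hand, the stop is not capped off, so the projection of a bottom piece to $\Lambda^+_i$ is a based loop (or concatenation of loops between successive Reeb chord ends) on $\Lambda^+_i$. A standard parametrized transversality and gluing argument, modeled on Abouzaid's cotangent fiber generation and carried out explicitly in \cite{MR4634745} and \cite{MR4563001}, identifies the parameter space of these bottom pieces with the based loop space $\Omega \Lambda^+_i$ up to homotopy, so that each Reeb chord on $\Lambda^+_i$ contributes not a single word in $CE$ but a chain-level word with coefficients in $C_*(\Omega \Lambda^+_i;k)$. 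For $\dim\Lambda^+_i = 1$ we have $C_*(\Omega S^1;k) \simeq k[t_i^{\pm 1}]$, recovering Remark~\ref{loopspace}. I would then invoke a standard compactness/gluing package to show that the pairing between the Fukaya-side count and the SFT limit is an $A_\infty$-quasi-isomorphism, bijective on generators and intertwining $\mu_k^{\mathcal W}$ with the bar differential built from $\partial_{CE}$.

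The main obstacle is the transversality and gluing theory for the bottom pieces at the unfilled Legendrian $\Lambda^+$, including the need to orient the moduli of based loops coherently with the orientation signs coming from $CE$; this is where the loop space coefficients genuinely enter and where the argument of \cite{MR4634745} (using the partially wrapped definition of \cite{MR4563001}) is needed rather than a direct appeal to the original BEE surgery formula. Once this analytic input is in place, the algebraic assembly into a quasi-isomorphism of $A_\infty$-algebras is formal, and the theorem follows.
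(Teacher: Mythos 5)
This theorem is not proved in the paper at all: it is imported verbatim from \cite{MR4634745} (with the underlying partially wrapped framework from \cite{MR4563001}), so there is no internal proof to compare your argument against. What you have written is a faithful high-level summary of the strategy of those references --- neck-stretching along neighbourhoods of $\Lambda^-\cup\Lambda^+$, identification of the top-level SFT buildings with the curves defining $\partial_{CE}$, capping of the bottom pieces over $\Lambda^-$ by the handle cores, and the identification of the moduli of bottom pieces over the uncapped components $\Lambda^+_i$ with chains on $\Omega\Lambda^+_i$, specialising to $k[t_i^{\pm1}]$ in dimension one as in Remark~\ref{loopspace}. That is the correct architecture, and your explanation of \emph{why} loop space coefficients appear only on the stop components (no core disc to cap with) is the right conceptual point.

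However, as a proof the proposal is a roadmap whose substantive content is exactly what it defers: the transversality, gluing and orientation analysis for the bottom pieces at $\Lambda^+$, and the verification that the resulting chain map is an $A_\infty$-quasi-isomorphism. You should also be careful with the claim that the map is ``bijective on generators'': in the surgery formula the comparison map is defined by a holomorphic curve count and respects an action/word-length filtration, but it is a quasi-isomorphism rather than an isomorphism of semi-free models, and the identity/continuation element in $CW(L_i^\pm,L_i^\pm)$ does not correspond to a Reeb chord. Since the paper's role for Theorem~\ref{surgeryloop} is purely as a black box feeding into Theorem~\ref{surgerycon} and Theorem~\ref{cocoregen}, the appropriate conclusion is that your sketch correctly locates the result in the literature and its mechanism, but does not constitute an independent proof.
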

The following theorem relates the partially wrapped Fukaya category and the wrapped Fukaya category of the corresponding sector, obtained by removing a neighbourhood of the stop (see Section~\ref{geomprel}).
\begin{theorem}[\cite{MR4106794} ]\label{stopsecteq}
Let $W$ be a Liouville manifold with stop $\sigma$. Then the inclusion $W_{\sigma}\rightarrow W$ induces an equivalence
\[
D^{\pi}\mathcal{W}(W,\sigma)\simeq D^{\pi}\mathcal{W}(W_{\sigma}).
\]
\end{theorem}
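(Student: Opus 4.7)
The plan is to construct an $A_\infty$-functor $\iota_\ast : \mathcal{W}(W_\sigma) \to \mathcal{W}(W,\sigma)$ induced by the inclusion $\iota : W_\sigma \hookrightarrow W$ and then show it induces a quasi-equivalence after passing to perfect derived categories. On objects, a Lagrangian $L \subset W_\sigma$ cylindrical at infinity is identified with the same Lagrangian inside $W$; because the sector structure on $W_\sigma$ forces its cylindrical end to miss the neighbourhood $\sigma(\mathbb{C}_{\operatorname{Re}<0}\times F)$ that was excised, its image lies in the complement of $\sigma$ and thus defines a valid object of $\mathcal{W}(W,\sigma)$. Conversely, any admissible Lagrangian for $\mathcal{W}(W,\sigma)$ is by definition disjoint from $\sigma$ and can be Hamiltonian-isotoped into $W_\sigma$, giving immediate essential surjectivity on the level of objects.

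First I would identify the two types of morphism complexes. The wrapped Floer complex $CW_{W_\sigma}(L,L')$ is defined using a quadratic Hamiltonian whose cutoff Reeb vector field at infinity does not escape through the sector boundary (the contact Hamiltonian vanishes on $\partial \partial_\infty W_\sigma$), while $CW_{(W,\sigma)}(L,L')$ uses a partial wrapping in $\partial_\infty W$ whose trajectories avoid a neighbourhood of $\sigma$. Using the collar structure from Definition~\ref{stop}, the boundary $\partial \partial_\infty W_\sigma$ is precisely the link of $\sigma$, and one can construct an explicit interpolation of contact forms on the common region so that the generators (time-$1$ chords, respectively wrapping chords) and the boundary operators match. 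A maximum principle near the boundary of the sector, combined with a monotonicity argument of the type in Lemma~\ref{curves}, ensures that the relevant $J$-holomorphic polygons are confined away from $\sigma$ and hence are in bijection on the two sides; this step gives a quasi-isomorphism on every morphism complex.

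Next I would upgrade this to split-generation. By \cite{MR4732675} and \cite{MR4695507}, $D^\pi \mathcal{W}(W_\sigma)$ is split-generated by cocores of the critical handles in the Weinstein sector, while $D^\pi \mathcal{W}(W,\sigma)$ is split-generated by cocores of the critical handles of $W$ together with linking disks of the components of $\sigma$. Under $\iota$, the critical cocores of $W$ match those of $W_\sigma$ (one may perform the handle attachment away from $\sigma$), and each linking disk of $\sigma$ is identified with a Lagrangian co-thimble meeting the sector boundary of $W_\sigma$ transversely, which is itself a cocore in the sector decomposition of $W_\sigma$. Thus $\iota_\ast$ takes a split-generating set to a split-generating set, and combined with the previous step yields a quasi-equivalence of perfect derived categories.

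The hard part will be the global coherence of the construction at the $A_\infty$-level: the two categories are defined with genuinely different choices of Hamiltonians, almost complex structures, and perturbation data, so the pointwise quasi-isomorphisms of morphism complexes must be assembled into an honest $A_\infty$-functor compatible with all higher operations. The key technical ingredient is the existence of the defining function $I : \partial W_\sigma \to \mathbb{R}$ with $ZI = aI$ near infinity, which enforces a preserved foliation structure on $\partial W_\sigma$ and allows one to prove, via integrated maximum principles, that moduli spaces of pseudoholomorphic polygons with boundary on admissible Lagrangians never touch $\partial W_\sigma$; this is what makes the $A_\infty$-structures on the two sides deform through a family of well-defined wrapped Fukaya categories, following the sectorial formalism of \cite{MR4106794}.
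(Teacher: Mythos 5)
The paper does not prove this statement: Theorem~\ref{stopsecteq} is quoted as a black-box result from \cite{MR4106794}, and the author even flags in the introduction that the comparison between the ``stop'' and ``sector'' models of the partially wrapped Fukaya category is a delicate foundational point (the two definitions are only ``expected to be equivalent'' in general). So there is no in-paper argument to compare yours against, and your proposal has to be judged as a self-contained proof sketch of a known but substantial theorem.

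As such, it has a genuine gap at its central step. You treat the identification of $CW_{(W,\sigma)}(L,L')$ with $CW_{W_\sigma}(L,L')$ as a matter of ``interpolating contact forms'' and matching chords, but this is precisely where the entire content of the theorem lives, and it is not a chord-by-chord matching. The two models wrap in structurally different ways: in the stop model one wraps by (a cofinal family of) Hamiltonians on all of $W$ whose flow is constrained to avoid $\sigma(\mathbb{C}_{\mathrm{Re}<0}\times F)$, while in the sector model one wraps inside $\partial_\infty W_\sigma$, a contact manifold with convex boundary, using cutoff Reeb dynamics tangent to that boundary. A priori these produce different generator sets and different continuation maps, and the proofs in the literature do not proceed by a bijection of generators plus a confinement lemma; they use the colimit/localization description of wrapped Floer cohomology over positive (``forward-stopped'') isotopies, together with the trivial-functors and stop-removal machinery of \cite{MR4106794}. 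A maximum principle in the style of Lemma~\ref{curves} controls where polygons go once the Floer data are fixed, but it does not by itself show the two direct limits agree. Separately, your split-generation step risks circularity: the generation statements of \cite{MR4732675} and \cite{MR4695507} that you invoke for $D^\pi\mathcal{W}(W,\sigma)$ are proven in the sector model, and transporting them to the stop model is exactly an application of the theorem you are proving (this is why the paper needs Theorem~\ref{stopsecteq} before Theorem~\ref{generation} can be used in the form stated). Note also that if you had genuinely established cohomological full faithfulness on all morphism complexes together with essential surjectivity, the equivalence of $D^\pi$'s would follow at once and the generation step would be redundant; its presence signals that the object-level and morphism-level comparisons have not actually been nailed down.
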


Finally, to compute the (partially) wrapped Fukaya category we need not just the morphism spaces, but also a set of generators for the category, which for Weinstein manifolds are given by the cocores of the critical handles:
\begin{theorem}[\cite{MR4732675}, \cite{MR4106794}]\label{generation}
The wrapped Fukaya category obtained from attaching handles along $\Lambda_1,\dots,\Lambda_m$ to a subcritical Weinstein domain is generated by the cocores $C_1,\dots,C_m$ corresponding to the handles.

The partially wrapped Fukaya category obtained by attaching handles along the Legendrian link of spheres $\Lambda^-=\Lambda^-_1 \sqcup\cdots\sqcup\Lambda^-_{m_1}\subset \partial_\infty W_0$ to a subcritical Weinstein manifold $W_0$, and stopped along $\Lambda^+=\Lambda_1^+\sqcup\cdots\sqcup\Lambda_{m_2}^+\subset\partial_{\infty}W_0\smallsetminus\Lambda^-$, is generated by the corresponding cocores $C_1,\dots,C_{m_1}$ and the linking discs $D_1,\dots,D_{m_2}$.
\end{theorem}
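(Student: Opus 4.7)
The plan is to establish generation via the Abouzaid-style split-generation criterion applied to the open-closed map, following the general strategies of \cite{MR4732675} and \cite{MR4106794}. The central idea is that each critical Weinstein handle, respectively each stop, contributes a piece to the (partially wrapped) symplectic cohomology that is detected by its cocore, respectively a linking disc, so that the collection of these objects is sufficient to hit the unit in $SH^{\bullet}(W,\sigma)$.

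The first step would be to introduce the open-closed map
\[
OC\colon HH_{\bullet}(\mathcal{W}(W,\sigma))\longrightarrow SH^{\bullet}(W,\sigma),
\]
defined via moduli of discs with an interior puncture asymptotic to a Hamiltonian orbit and boundary marked points on Lagrangian objects. By the split-generation criterion (in the partially wrapped formulation given in \cite{MR4695507}), a full sub-$A_{\infty}$-category $\mathcal{A}\subseteq\mathcal{W}(W,\sigma)$ split-generates provided that the unit $1\in SH^0(W,\sigma)$ lies in the image of $OC$ restricted to $HH_{\bullet}(\mathcal{A})$. I would take $\mathcal{A}$ to be the full subcategory on the objects $\{C_1,\dots,C_{m_1}, D_1,\dots,D_{m_2}\}$, so the problem reduces to exhibiting the unit as the $OC$-image of a Hochschild cycle supported on these objects.

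The key geometric input is then a localisation/Mayer--Vietoris argument. Decompose $W$ into the subcritical part $W_0$, collar neighbourhoods of the critical handles, and neighbourhoods of the stop components. Since $SH^{\bullet}$ of a subcritical Liouville domain vanishes, the unit of $SH^{\bullet}(W,\sigma)$ is concentrated in the contributions from the critical handles and the stops. For each critical handle one shows that the length-zero Hochschild chain at the corresponding cocore $C_i$ maps under $OC$ to exactly that handle's contribution; this is a local calculation in a model handle where the relevant moduli space is a single rigid half-plane. For each stop one performs the analogous computation using a linking disc $D_j$ in the standard Legendrian neighbourhood model $J^1\Lambda\times\mathbb{C}_{\mathrm{Re}\leq\epsilon}$ from Definition~\ref{stop}. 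Summing these local contributions yields the unit in $SH^{\bullet}(W,\sigma)$, so $\mathcal{A}$ split-generates $\mathcal{W}(W,\sigma)$.

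The main obstacle is the analytic foundation rather than any new geometric idea: defining $OC$ coherently in the partially wrapped setting requires careful control of holomorphic curves (in particular, ensuring they avoid $\sigma$ via an appropriate cylindrical almost complex structure near the stop, along the lines of the cutoff Reeb dynamics of Lemma~\ref{chords} and Lemma~\ref{curves}), together with establishing transversality and the correct SFT-type compactifications of the relevant moduli spaces. Since the theorem as stated is precisely the combination of the main results of \cite{MR4732675} (unstopped case, by a direct geometric argument) and \cite{MR4106794} (sectorial Mayer--Vietoris framework accommodating stops), the actual ``proof'' here would amount to invoking those results and verifying that the sectorial and Weinstein-handle conventions in which they are proved match the setup of the statement above.
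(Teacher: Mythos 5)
The paper offers no proof of this statement at all: it is imported verbatim from the cited references as a black box, so the only content of a ``proof'' here is, as you say at the end, to invoke those results and check conventions. Your sketched route, however, is not the route either reference takes, and it has a concrete gap. The Abouzaid open--closed criterion, even in its partially wrapped form, only ever yields \emph{split}-generation, whereas the theorem asserts \emph{generation}; this distinction is load-bearing in the paper, since Proposition~\ref{quotgen} (used in Theorem~\ref{cocoregen} to discard the linking discs) takes genuine generation by the cocores and linking discs as its hypothesis and only then produces split-generation by the cocores alone. An argument that starts from the split-generation criterion cannot deliver the statement as used downstream.

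The second problem is the step where you decompose the unit of $SH^{\bullet}(W,\sigma)$ into ``local contributions'' of the handles and stops via Mayer--Vietoris. Symplectic cohomology does not satisfy a naive Mayer--Vietoris for such decompositions, and the vanishing of $SH^{\bullet}$ on the subcritical part does not by itself localise the unit onto the handles; making this precise is essentially equivalent to the descent and surgery theorems you would be trying to prove, so the sketch is circular at its key step. For the record, the references argue differently: \cite{MR4732675} proves generation (not just split-generation) by a direct geometric argument that writes an arbitrary cylindrical exact Lagrangian as a twisted complex of cocores and linking discs built from its intersection points with the Lagrangian (co)skeleton, deliberately avoiding the open--closed map; the sectorial approach of \cite{MR4695507} obtains it from a homotopy-colimit/descent decomposition of the sector. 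Either of these, correctly quoted, is the intended proof; your $OC$-based alternative would need both the generation upgrade and a rigorous localisation of the unit before it could substitute for them.
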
 

\subsection{Calculation of the Chekanov--Eliashberg algebra in $J^1S^1$ and $S^1\times S^2$}
We realise the Ginzburg algebra $\mathcal{G}(\tilde{A}_n)$ as the Chekanov--Eliashberg algebra of cyclically oriented $\tilde{A}_{n}$ of an attaching link in $J^1S^1$. We compare it to the algebra of the attaching link for cyclic plumbing of $T^*S^2$, computed in $S^1\times S^2$.

\subsubsection{The $\tilde{A}_{n}$-computation in $J^1S^1$}

We calculate $CE(\Lambda_{\text{cyc}})$, where $\Lambda_{\text{cyc}}\subset J^1\,S^1$ is the cyclic link of unknots $n+1$ as pictured in the front diagram shown in Figure~\ref{CEAn} below.
\begin{figure}[H]
\centering
\begin{tikzpicture}
\draw[very thick] (0,-1.1) to (0,1.1);
\begin{knot}[clip width=5, flip crossing=2,flip crossing =4]
\strand[thick] (0,0.7) to[out=0,in=180] (0.7,0.7) to[out=0,in=180]  (1.7,0);
\strand[thick] (0,-0.7) to[out=0,in=180] (0.7,-0.7) to[out=0,in=180]  (1.7,0);

\strand[thick] (0.8,0) to[out=0,in=180] (1.8,0.7) to[out=0,in=180] (2.0,0.7);
\strand[thick] (2.0,0.7) to[out=0,in=180] (2.7,0.7) to[out=0,in=180] (3.7,0);
\strand[thick] (2.0,-0.7) to[out=0,in=180] (2.7,-0.7) to[out=0,in=180] (3.7,0);
\strand[thick] (0.8,0) to[out=0,in=180] (1.8,-0.7) to[out=0,in=180] (2.0,-0.7);

\strand[thick] (2.8,0) to[out=0,in=180] (3.8,0.7) to[out=0,in=180] (4,0.7);
\strand[thick] (2.8,0) to[out=0,in=180] (3.8,-0.7) to[out=0,in=180] (4,-0.7);
\end{knot}
\draw[fill=black] (4.4,0) circle (0.01cm);
\draw[fill=black] (4.5,0) circle (0.01cm);
\draw[fill=black] (4.6,0) circle (0.01cm);

\begin{knot}[clip width=5, flip crossing =2]
\strand[thick] (5.0,0.7) to[out=0,in=180] (5.3,0.7) to[out=0,in=180] (6.2,0);
\strand[thick] (5.0,-0.7) to[out=0,in=180] (5.3,-0.7) to[out=0,in=180] (6.2,0);
\strand[thick] (5.3,0) to[out=0,in=180] (6.3,0.7) to[out=0,in=180] (7,0.7);
\strand[thick] (5.3,0) to[out=0,in=180] (6.3,-0.7) to[out=0,in=180] (7,-0.7);
\end{knot}

\draw[very thick] (7,-1.1) to (7,1.1);
\end{tikzpicture}
\caption{$\Lambda_{\text{cyc}}=\Lambda_1\sqcup\cdots\sqcup\Lambda_{n+1}$ in $J^1\,S^1$, shown in the front projection.}
\label{CEAn}
\end{figure}
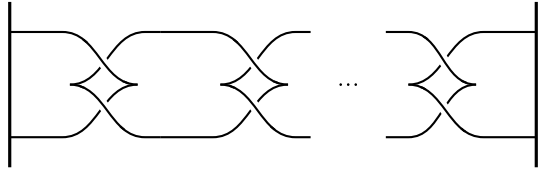
We have $n+1$ components $\Lambda_1,\dots,\Lambda_{n+1}$. We can apply the Ng-resolution (Figure~\ref{ngone} and Figure~\ref{onejetresol}) together with area-preserving Reidemeister moves of type III, to deduce that in a suitable representation of the Legendrian isotopy class the Reeb chords are in one-to-one correspondence with the right cusps and the intersection points (the Reeb chords corresponds to intersection points in Figure~\ref{lagAN}). Thus we have Reeb chords $\alpha_i:\Lambda_i\rightarrow\Lambda_{i+1}$, $\alpha_i^*:\Lambda_{i+1}\rightarrow\Lambda_i$ and $\zeta_i:\Lambda_i\rightarrow \Lambda_i$, for $i=1,\dots,n+1$. (This can also be seen easily by comparing the slopes in the front projection directly.) Thus the underlying path-algebra is the underlying algebra of the Ginzburg algebra with cyclically ordered $\tilde{A}_{n}$-quiver.

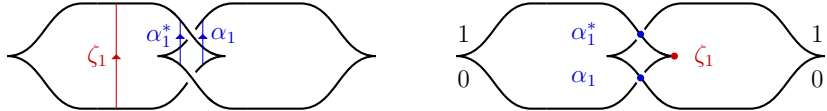
\begin{figure}[H]
\centering
\begin{tikzpicture}
\begin{knot}[clip width= 6,flip crossing=2]
\strand[thick] (0.8,0) to[out=0,in=180] (1.8,0.7) to[out=0,in=180] (2.0,0.7);
\strand[thick] (2.0,0.7) to[out=0,in=180] (2.7,0.7) to[out=0,in=180] (3.7,0);
\strand[thick] (2.0,-0.7) to[out=0,in=180] (2.7,-0.7) to[out=0,in=180] (3.7,0);
\strand[thick] (0.8,0) to[out=0,in=180] (1.8,-0.7) to[out=0,in=180] (2.0,-0.7);

\strand[thick] (2.8,0) to[out=0,in=180] (3.8,0.7) to[out=0,in=180] (4,0.7);
\strand[thick] (2.8,0) to[out=0,in=180] (3.8,-0.7) to[out=0,in=180] (4,-0.7);
\strand[thick] (4,0.7) to[out=0,in=180] (4.7,0.7) to[out=0,in=180] (5.7,0);
\strand[thick] (4,-0.7) to[out=0,in=180] (4.7,-0.7) to[out=0,in=180] (5.7,0);
\end{knot}

\draw[red!80!black] (2.25,-0.7) to (2.25,0.7);
\draw[blue!80!black] (3.4,-0.12) to (3.4,0.47);
\draw[blue!80!black] (3.1,-0.12) to (3.1,0.47);

\node[scale=0.4,rotate=90,red!80!black] at (2.25,0) {\midarrow};
\node[scale=0.4,rotate=90,blue!80!black] at (3.4,0.27) {\midarrow};
\node[scale=0.4,rotate=90,blue!80!black] at (3.1,0.27) {\midarrow};

\node[red!80!black, scale=0.8] at (2,0) {$\zeta_1$};
\node[blue!80!black, scale=0.8] at (2.82,0.275) {$\alpha_1^*$};
\node[blue!80!black, scale=0.8] at (3.68,0.27) {$\alpha_1$};

\end{tikzpicture}
\qquad
\begin{tikzpicture}
\begin{knot}[clip width= 6,flip crossing=2]
\strand[thick] (0.8,0) to[out=0,in=180] (1.8,0.7) to[out=0,in=180] (2.0,0.7);
\strand[thick] (2.0,0.7) to[out=0,in=180] (2.7,0.7) to[out=0,in=180] (3.7,0);
\strand[thick] (2.0,-0.7) to[out=0,in=180] (2.7,-0.7) to[out=0,in=180] (3.7,0);
\strand[thick] (0.8,0) to[out=0,in=180] (1.8,-0.7) to[out=0,in=180] (2.0,-0.7);

\strand[thick] (2.8,0) to[out=0,in=180] (3.8,0.7) to[out=0,in=180] (4,0.7);
\strand[thick] (2.8,0) to[out=0,in=180] (3.8,-0.7) to[out=0,in=180] (4,-0.7);
\strand[thick] (4,0.7) to[out=0,in=180] (4.7,0.7) to[out=0,in=180] (5.7,0);
\strand[thick] (4,-0.7) to[out=0,in=180] (4.7,-0.7) to[out=0,in=180] (5.7,0);
\end{knot}

\draw[red!80!black,fill=red!80!black] (3.7,0) circle (0.04cm);
\draw[blue!80!black,fill=blue!80!black] (3.25,-0.29) circle (0.04cm);
\draw[blue!80!black,fill=blue!80!black] (3.25,0.29) circle (0.04cm); 

\node[scale=0.8, blue!80!black] at (2.5,-0.3) {$\alpha_1$};
\node[scale=0.8, blue!80!black] at (2.5,0.3) {$\alpha_1^*$};
\node[scale=0.8, red!80!black] at (4.1,0) {$\zeta_1$};

\node[scale=0.8] at (5.6,0.3) {$1$};
\node[scale=0.8] at (5.6,-0.3) {$0$};

\node[scale=0.8] at (0.9,0.3) {$1$};
\node[scale=0.8] at (0.9,-0.3) {$0$};
\end{tikzpicture}
\caption{The Reeb chords $\zeta_1$, $\alpha_1$ and $\alpha_1^*$ shown in the front projection of $\Lambda_1\sqcup\Lambda_2$. The corresponding crossings and the Maslov potential is shown on the right-hand side.}
\label{grReeb}
\end{figure}
Next we calculate the grading. First note that $\text{rot}(\Lambda_i)=0$, and hence we will get a $\mathbb{Z}$-grading. We choose the Maslov potential as in Figure~\ref{grReeb}. With this choice we get:
\begin{align*}
|\zeta_i|&=1 \\
|\alpha_i|&=0-0=0 \\
|\alpha_i^*|&=1-1=0
\end{align*}

For the differential we note that there are no discs with positive puncture at either $\alpha_i$ or $\alpha_i^*$. There are four discs contributing to each differential at $\zeta_i$ (see Figure~\ref{lagAN} for one example), two of which contribute $e_i-e_i=0$, yielding
\[
\partial\zeta_i=\alpha_{i}^*\alpha_i-\alpha_{i-1}\alpha_{i-1}^*
\]
(counting modulo $n+1$).
\begin{figure}[H]
\centering
\begin{tikzpicture}
\draw[very thick] (0,-1.1) to (0,1.1);
\begin{knot}[clip width=6, flip crossing=3]
\strand[thick] (0,0.7) to[out=0,in=180] (0.5,0.7) to[out=0,in=180] (2.1,-0.7) to[out=0,in=-90] (2.8,0);
\strand[thick] (0,-0.7) to[out=0,in=180] (0.5,-0.7) to[out=0,in=180]  (2.1,0.7) to[out=0,in=90] (2.8,0);

\strand[thick] (2.4,0) to[out=90,in=180] (3.1,0.7) to[out=0,in=180] (4.5,-0.7);
\strand[thick] (2.4,0) to[out=-90,in=180] (3.1,-0.7) to[out=0,in=180] (4.5,0.7);
\end{knot}
\draw[fill=black] (4.7,0) circle (0.01cm);
\draw[fill=black] (4.8,0) circle (0.01cm);
\draw[fill=black] (4.9,0) circle (0.01cm);
\begin{knot}[clip width=6, flip crossing=1]
\strand[thick] (5.1,0.7) to[out=0,in=90] (5.8,0) to[out=-90,in=0] (5.1,-0.7);
\strand[thick] (6.3,0.7) to (6.1,0.7) to[out=180,in=90] (5.4,0) to[out=-90,in=180] (6.1,-0.7) to (6.3,-0.7);
\draw[very thick] (6.3,-1.1) to (6.3,1.1);
\end{knot}
\draw[draw=none,fill, gray, opacity=0.5] (3.8,0) to[out=110,in=0] (3.1,0.7) to[out=180,in=50] (2.63,0.5) to[out=-65,in=90] (2.8,0) to[out=-90,in=65] (2.63,-0.5) to[out=-50,in=180] (3.1,-0.7) to[out=0,in=-110] (3.8,0);
\node[scale=0.8] at (2.6,0.9) {$\alpha_1$};
\node[scale=0.8] at (2.6,-0.9) {$\alpha_1^*$};

\node[scale=0.8] at (5.6,0.9) {$\alpha_{n+1}$};
\node[scale=0.8] at (5.6,-0.9) {$\alpha_{n+1}^*$};

\node[scale=0.8] at (1.3,-0.5) {$\zeta_1$};
\node[scale=0.8] at (3.8,-0.5) {$\zeta_2$};

\draw[fill] (2.4,0) circle (0.04cm);
\draw[fill] (5.4,0) circle (0.04cm);

\node[scale=0.8] at (2.15,0) {$*_2$};
\node[scale=0.8] at (5.18,0.04) {$*_1$};

\draw[red!80!black] (3.45,0) to (3.65,0);
\draw[red!80!black] (3.55,0.1) to (3.55,-0.1);
\draw[green!80!black] (2.8,-0.5) to (3,-0.5);
\draw[green!80!black] (2.8,0.5) to (3,0.5);
\end{tikzpicture}
\caption{The Legendrian $\Lambda_{\text{cyc}}$ in the Lagrangian projection, obtained from the front projection by Ng-resolution and (area preserving) Reidemeister III moves. The shaded part shows the contribution of $-\alpha_1\alpha_1^*$ to $\partial \zeta_2$.}
\label{lagAN}
\end{figure}
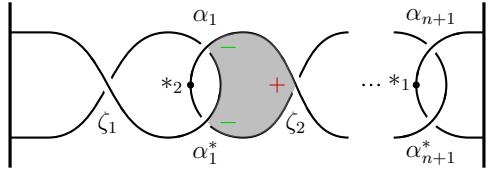
\begin{proposition}\label{ginzcalc}
The presentation of the Chekanov--Eliashberg dg-algebra of the cyclic link of $n$ standard unknots $CE(\Lambda_{\textup{cyc}})$ with choices made above, is equal as dg-algebras to the Ginzburg algebra $\mathcal{G}(\tilde{A}_n)$ of the cyclically oriented $\tilde{A}_{n}$-quiver.
\end{proposition}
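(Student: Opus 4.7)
The proof amounts to assembling the facts collected in the discussion preceding the proposition into a genuine isomorphism of dg-algebras, so my plan is to proceed in three steps: identify the underlying $R$-algebras, match the gradings, and match the differentials. First, I would note that both $CE(\Lambda_{\text{cyc}})$ and $\mathcal{G}(\tilde{A}_n)$ are free $R$-algebras over $R = \bigoplus_{i=1}^{n+1} ke_i$, where the idempotents $e_i$ on the Legendrian side correspond to the components $\Lambda_i$. The Reeb-chord inventory of $\Lambda_{\text{cyc}}$ established just before the proposition produces exactly the generator set $\{\alpha_i, \alpha_i^*, \zeta_i\}_{i=1}^{n+1}$ of $\tilde{Q}$ from Definition~\ref{defginzb}, sending the crossing chord $\Lambda_i\to\Lambda_{i+1}$ to $\alpha_i$, its partner to $\alpha_i^*$, and the right-cusp chord on $\Lambda_i$ to the loop $\zeta_i$. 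This gives the desired $R$-algebra map on underlying algebras; it is clearly bijective on generators.

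Second, I would verify that this map is degree-preserving. With the Maslov potential fixed in Figure~\ref{grReeb}, the combinatorial grading formula $|c| = m(\text{over}) - m(\text{under})$ at a crossing gives $|\alpha_i| = |\alpha_i^*| = 1-1 = 0$, while right-cusp chords satisfy $|\zeta_i| = 1$. These match the Ginzburg gradings $|\alpha_i|=|\alpha_i^*|=0$ and $|\zeta_i|=1$ from Definition~\ref{defginzb}.

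Third, and this is the real content, I would compare differentials on generators (this suffices because both sides satisfy the graded Leibniz rule). For $\alpha_i$ and $\alpha_i^*$, the differential must vanish by an action/degree argument: $\partial\alpha_i$ would have degree $-1$, but the algebra has no generators in negative degree, and short action bigons ending at Reeb chords of negative action do not exist here. For $\zeta_i$, I would work in the Lagrangian projection of Figure~\ref{lagAN} and enumerate rigid holomorphic discs with positive puncture at $\zeta_i$. As already observed in the text, four such discs appear locally: two bordering only $\Lambda_i$ and contributing $e_i - e_i$ after signs, plus two bigons whose negative punctures land on the pairs $(\alpha_i^*,\alpha_i)$ and $(\alpha_{i-1},\alpha_{i-1}^*)$ respectively.

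The main obstacle is the sign verification, which I would carry out using the orientation-sign rule from Figure~\ref{chordsign}: one has to check that with the fixed Maslov potential and chosen capping paths the two bigons pick up opposite orientation signs, producing $\partial\zeta_i = \alpha_i^*\alpha_i - \alpha_{i-1}\alpha_{i-1}^*$, precisely the Ginzburg relation. Once this sign check is carried out (and the two "idempotent" discs are shown to cancel as a pair), combining it with steps one and two delivers the required dg-algebra isomorphism $CE(\Lambda_{\text{cyc}}) \cong \mathcal{G}(\tilde{A}_n)$.
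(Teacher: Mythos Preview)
Your proposal is correct and follows the same approach as the paper: the proposition in the paper has no separate proof, being a summary of the computation carried out in the immediately preceding text, which does exactly what you outline (enumerate Reeb chords, compute gradings via the Maslov potential in Figure~\ref{grReeb}, and read off the differential from the rigid discs in Figure~\ref{lagAN}). One minor correction: with the Maslov potential shown, the crossing $\alpha_i$ has over/under values $0$ and $0$ (not $1$ and $1$), so $|\alpha_i|=0-0=0$ while $|\alpha_i^*|=1-1=0$; the conclusion is of course unaffected.
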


\subsubsection{The $\tilde{A}_n$-attaching link in $S^1\times S^2$}

We have seen the attaching link of the Weinstein handles in Figure~\ref{plumbingpic}. Its Chekanov--Eliashberg algebra is shown in \cite{MR4033516} to be equivalent with the derived multiplicative preprojective algebra (this is true for plumbing according to any graph), which in turn is shown to be formal and quasi-isomorphic to the (non-deformed) multiplicative preprojective algebra in \cite{MR4457411}. However this, does not include the stop, whose attaching sphere $\Lambda_{\text{stop}}$ is included in red in Figure~\ref{attachfull} below.
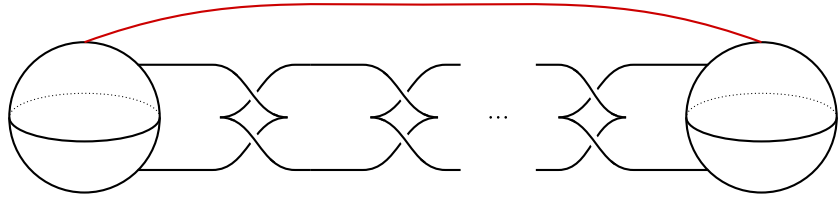
\begin{figure}[H]
\centering
\begin{tikzpicture}
\draw[thick] (-1,0) circle (1cm);
\draw[densely dotted] (0,0) arc (0:180:1 and 0.32);
\draw[thick] (-2,0) arc (0:180:-1 and -0.32);

\draw[thick] (8,0) circle (1cm);
\draw[densely dotted] (9,0) arc (0:180:1 and 0.32);
\draw[thick] (7,0) arc (0:180:-1 and -0.32);

\begin{knot}[clip width=5, flip crossing=2,flip crossing =4]
\strand[thick] (-0.293,0.7) to[out=0,in=180] (0.7,0.7) to[out=0,in=180]  (1.7,0);
\strand[thick] (-0.293,-0.7) to[out=0,in=180] (0.7,-0.7) to[out=0,in=180]  (1.7,0);

\strand[thick] (0.8,0) to[out=0,in=180] (1.8,0.7) to[out=0,in=180] (2.0,0.7);
\strand[thick] (2.0,0.7) to[out=0,in=180] (2.7,0.7) to[out=0,in=180] (3.7,0);
\strand[thick] (2.0,-0.7) to[out=0,in=180] (2.7,-0.7) to[out=0,in=180] (3.7,0);
\strand[thick] (0.8,0) to[out=0,in=180] (1.8,-0.7) to[out=0,in=180] (2.0,-0.7);

\strand[thick] (2.8,0) to[out=0,in=180] (3.8,0.7) to[out=0,in=180] (4,0.7);
\strand[thick] (2.8,0) to[out=0,in=180] (3.8,-0.7) to[out=0,in=180] (4,-0.7);
\end{knot}
\draw[fill=black] (4.4,0) circle (0.01cm);
\draw[fill=black] (4.5,0) circle (0.01cm);
\draw[fill=black] (4.6,0) circle (0.01cm);

\begin{knot}[clip width=5, flip crossing =2]
\strand[thick] (5.0,0.7) to[out=0,in=180] (5.3,0.7) to[out=0,in=180] (6.2,0);
\strand[thick] (5.0,-0.7) to[out=0,in=180] (5.3,-0.7) to[out=0,in=180] (6.2,0);
\strand[thick] (5.3,0) to[out=0,in=180] (6.3,0.7) to[out=0,in=180] (7.293,0.7);
\strand[thick] (5.3,0) to[out=0,in=180] (6.3,-0.7) to[out=0,in=180] (7.293,-0.7);
\end{knot}
\draw[red!80!black,thick] (-1,1) to[out=20,in=180] (3.5,1.5) to[out=0,in=160] (8,1);
\end{tikzpicture}
\caption{The attaching link from Figure~\ref{plumbingpic} with the stop (in red) added.}
\label{attachfull}
\end{figure}
Since we want to put a stop with attaching sphere the red component $\Lambda_{\text{stop}}$, and standard Weinstein handles at the other components, we will add generators $t_0^{\pm 1}$, modelling loop space coefficients, only on this component (and we use $e_0$ to denote the corresponding idempotent). 

From the Ng-resolution together with Reidemeister III moves to the link above (again area preserving), we obtain the following Lagrangian projection of a Legendrian link in $S^1\times S^2$, which we call $\Lambda_{\text{tot}}= \Lambda_1 \sqcup \ldots \sqcup \Lambda_{n+1}\sqcup\Lambda_{\text{stop}}$.
\begin{figure}[H]
\centering
\begin{tikzpicture}
\draw[thick] (0,0) circle (0.5cm);
\draw[densely dotted] (0.5,0) arc (0:180:0.5 and 0.16);
\draw[thick] (-0.5,0) arc (0:180:-0.5 and -0.16);

\draw[thick] (10,0) circle (0.5cm);
\draw[densely dotted] (10.5,0) arc (0:180:0.5 and 0.16);
\draw[thick] (9.5,0) arc (0:180:-0.5 and -0.16);

\begin{knot}[clip width=4.5, flip crossing=3, flip crossing=6, flip crossing=10,flip crossing=12,flip crossing=13]
\strand[thick]
(0.25,0.433012) to[out=0,in=180] (1,0.433012) to[out=0,in=180] (2,-0.433012) to[out=0,in=-90] (2.4,0);
\strand[thick]
(2.4,0) to[out=90,in=0] (2,0.433012) to[out=180,in=0] (1,-0.433012) to[out=180,in=0] (0.25,-0.433012);

\strand[thick]
(1.95,0) to[out=90,in=180] (2.35,0.433012) to[out=0,in=180] (3.35,-0.433012) to[out=0,in=-90] (3.75,0);
\strand[thick]
(3.75,0) to[out=90,in=0] (3.35,0.433012) to[out=180,in=0] (2.35,-0.433012) to[out=180,in=-90] (1.95,0);

\strand[thick]
(3.3,0) to[out=90,in=180] (3.7,0.433012) to[out=0,in=180] (4.7,-0.433012);
\strand[thick]
(4.7,0.433012) to[out=180,in=0] (3.7,-0.433012) to[out=180,in=-90] (3.3,0);

\strand[thick] 
(6,0.433012) to[out=0,in=180] (7,-0.433012) to[out=0,in=-90] (7.4,0);
\strand[thick]
(7.4,0) to[out=90,in=0] (7,0.433012) to[out=180,in=0] (6,-0.433012);

\strand[thick]
(6.95,0) to[out=90,in=180] (7.4,0.433012) to[out=0,in=180] (9.1,-0.433012) to (9.75,-0.433012);
\strand[thick] 
(9.75,0.433012) to[out=180,in=0] (9.1,0.433012) to[out=180,in=0] (7.4,-0.433012) to[out=180,in=-90] (6.95,0);

\strand[thick, red!80!black]
(0,0.5) to[out=20,in=180] (5,0.8) to[out=0,in=180] (7,0.8) to[out=0,in=120] (8.6,0.5) to[out=-60,in=180] (9.2,-0.7) to[out=0,in=-160] (10,-0.5);
\end{knot}

\draw[fill=black] (5.29,0) circle (0.01cm);
\draw[fill=black] (5.39,0) circle (0.01cm);
\draw[fill=black] (5.49,0) circle (0.01cm);

\draw[fill,red!80!black] (5,0.8) circle (0.025cm);
\node[scale=0.5, red!80!black] at (5,0.95) {$t_0$};
\node[scale=0.5] at (1.5,0.3) {$\zeta_1$};
\node[scale=0.5] at (2.85,0.3) {$\zeta_2$};
\node[scale=0.5] at (4.2,0.3) {$\zeta_3$};
\node[scale=0.5] at (6.5,0.42) {$\zeta_{n+1}$};
\node[scale=0.5] at (8.25,0.3) {$\zeta_1'$};

\node[scale=0.5] at (2.175,0.55) {$\alpha_1$};
\node[scale=0.5] at (2.175,-0.6) {$\alpha_1^*$};

\node[scale=0.5] at (3.525,0.55) {$\alpha_2$};
\node[scale=0.5] at (3.525,-0.6) {$\alpha_2^*$};

\node[scale=0.5] at (7.175,0.55) {$\alpha_{n+1}$};
\node[scale=0.5] at (7.175,-0.6) {$\alpha_{n+1}^*$};

\node[scale=0.5] at (8.7,0.55) {$\gamma_1$};
\node[scale=0.5] at (8.77,-0.57) {$\gamma_2$};
\end{tikzpicture}
\caption{The Lagrangian projection of the link $\Lambda_{\text{tot}}=\Lambda_1 \sqcup \ldots \sqcup \Lambda_{n+1}\sqcup\Lambda_{\text{stop}}$. The component drawn in red represents a stop rather then an attaching sphere.}
\end{figure}
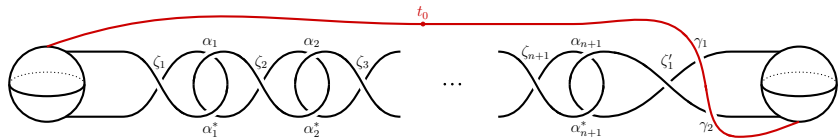

We thus get a DGA generated by the following Reeb chords: $\alpha_1,\alpha_1^*,\dots,\alpha_n,\alpha_n^*$, as before; now with one of the self-intersection generators split into two $\zeta_1,\zeta_1',\zeta_2,\dots,\zeta_n$; two new generators $\gamma_0$, $\gamma_1$, where the stop intersects $\Lambda_1$ in the projection; generators $t_0^{\pm 1}$, with relations $t_0t_0^{-1}=e_0=t_0^{-1}t_0$, corresponding to loop space coefficients on the stop; and finally the handle generators $c_{i,j}^p$, for $p\geq 0$ an integer, $1\leq i<j\leq 3$ if $p=0$, and when $p\geq 1$, $1\leq i,j\leq 3$. 

We have the following grading on the DGA (after choosing Maslov potential as in Theorem~\ref{grReeb}, together with $m(\text{stop/red component})=0$:
\begin{align*}
&|\alpha_i|=|\alpha_i^*|=0, &&|\zeta_1|=|\zeta_1'|=|\zeta_i|=1,\\
&|\gamma_1|=|\gamma_2|=0,&&|t_0^{\pm 1}|=0, \\
&|c^p_{1,2}|=2p,&&|c^p_{2,1}|=2p-2, \\
&|c^p_{1,3}|=2p,&&|c^p_{3,1}|=2p-2,\\
&|c^p_{2,3}|=2p-1,&&|c^p_{3,2}|=2p-1, \\
&|c^p_{1,1}|=|c^p_{2,2}|=|c^p_{3,3}|=2p-1.
\end{align*}
The differential can be computed as described above, but we won't need it here, except at the handle generators.

\begin{proposition}\label{quotone}
The Drinfeld quotient of $CE(\Lambda_{\textup{tot}})$ at the idempotents $e_1,\dots,e_{n+1}$, where $e_i$ is the idempotent corresponding to the component $\Lambda_i$, is trivial in homology.
\end{proposition}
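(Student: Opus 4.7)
The plan is to apply Proposition~\ref{dgquotalg}, which identifies the Drinfeld quotient at the idempotents $e_1,\dots,e_{n+1}$ with the semi-free dg-algebra $\tilde{A}$ obtained from $CE(\Lambda_{\textup{tot}})$ by deleting the vertices $1,\dots,n+1$ and every arrow with source or target among them. Inspecting the list of generators above the statement, every $\alpha_i$, $\alpha_i^*$, $\zeta_i$, $\zeta_1'$, $\gamma_1$ and $\gamma_2$ has an endpoint on a killed component and is therefore erased. As $\Lambda_{\textup{stop}}$ is visibly a single arc traversing the $1$-handle once, it contributes just one strand through the handle, call it $s$; so among the handle generators only the self-chords $c^p_{s,s}$ ($p\geq 1$) survive, together with the loop-space generators $t_0^{\pm 1}$ (and their relation $t_0t_0^{-1}=t_0^{-1}t_0=e_0$). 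Hence $\tilde{A}$ is generated over the single idempotent $e_0$ by $\{t_0^{\pm 1}\}$ and $\{c^p_{s,s}\}_{p\geq 1}$, with $\partial t_0^{\pm 1}=0$.

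Next I would apply the combinatorial formula of Section~\ref{cesection} for the differential of a handle chord to $c^1_{s,s}$, obtaining
\[
\partial c^1_{s,s}=e_{c(s)}+\sum_{s<l}(-1)^{m(l)+m(s)}c^1_{l,s}c^0_{s,l}+\sum_{l<s}(-1)^{m(l)+m(s)}c^0_{l,s}c^1_{s,l}.
\]
Every summand in the two sums contains a factor $c^0_{s,l}$ or $c^0_{l,s}$ with $l\neq s$, and such a factor is an arrow between $e_0$ and one of the killed vertices. Those terms therefore vanish in $\tilde{A}$, leaving the identity $\partial c^1_{s,s}=e_0$.

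Finally, $\tilde{A}$ is acyclic: for any cycle $x\in\tilde{A}$ of degree $d$ the graded Leibniz rule yields
\[
\partial(x\,c^1_{s,s})=(\partial x)\,c^1_{s,s}+(-1)^d x\,\partial c^1_{s,s}=(-1)^d x\,e_0=(-1)^d x,
\]
so $x$ is exact. Together with Proposition~\ref{dgquotalg} this proves the statement. The main (and mild) obstacle is the combinatorial bookkeeping in the first paragraph, in particular verifying from the Lagrangian projection that $\Lambda_{\textup{stop}}$ has no self-Reeb chords outside the handle and traverses the $1$-handle only once, so that $c^1_{s,s}$ really is the unique generator whose differential produces $e_0$ after quotienting.
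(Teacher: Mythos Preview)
Your argument is correct. It differs from the paper's proof of this particular proposition, which does not invoke Proposition~\ref{dgquotalg} but instead works directly in the Drinfeld quotient with the added generators $\kappa_i$: there the author explicitly constructs, using only $\kappa_1$ and the short handle chords, an element $c^1_{3,3}+a+b$ whose boundary is $e_0$. Your route---passing first to $\tilde{A}=CE(\Lambda_{\textup{tot}})/CE(\Lambda_{\textup{tot}})e_ICE(\Lambda_{\textup{tot}})$ via Proposition~\ref{dgquotalg} and then observing that all cross terms in $\partial c^1_{s,s}$ die---is cleaner and avoids that bookkeeping. In fact it is precisely the argument the paper gives for the general statement Theorem~\ref{alggen} (where the stop is the top strand and the quotient algebra is identified with $CE(\Lambda_{\textup{stop}})$, acyclic because $\partial c^1_{1,1}=1$). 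So you have independently rediscovered the paper's preferred general proof, applied to the specific link; the paper's proof of Proposition~\ref{quotone} itself is more hands-on and serves as a warm-up that does not rely on the quasi-isomorphism of Proposition~\ref{dgquotalg}. Your only loose end, the possible $t_0$ factor in $\partial c^1_{s,s}$, is harmless: the basepoint on $\Lambda_{\textup{stop}}$ lies outside the handle region, so the disc producing the $e_0$ term does not cross it.
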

\begin{proof}
The construction amounts to adding generators $\kappa_i$ of degree $1$ such that $\partial\kappa_i=e_i$, for $i=1,\dots,n+1$ (corresponding to loops in the underlying quiver of the dg-algebra, i.e. with $e_i\kappa_i=\kappa_i=\kappa_ie_i$ and $e_j\kappa_i=0=\kappa_ie_j$, if $j\not=i$). What remains is to show that $e_0$, the idempotent that corresponds to $\Lambda_{\text{stop}}$, is a boundary in this new dg-algebra (since then $1=e_0+e_1+\cdots+e_{n+1}$ is a boundary).

We have that
\[
\partial c^1_{33}=e_0-c^0_{13}c^1_{31}+c^0_{23}c^1_{32}.
\]
We let
\begin{align*}
a&=c^0_{1,3}\kappa_1c^1_{3,1}+c^0_{2,3}\kappa_1c^1_{3,2}, \\
b&=c^0_{2,3}\kappa_1c^0_{1,2}\kappa_1c^1_{3,1},
\end{align*}
and we note that
\begin{align*}
\partial a&=c^0_{2,3}c^0_{1,2}\kappa_1c^1_{3,1}+c^0_{1,3}c^1_{3,1}-c^0_{2,3}c^1_{3,2}-c^0_{2,3}\kappa_1 c_{1,2}^0c_{3,1}^1\\
\partial b&=-c^0_{2,3}c^0_{1,2}\kappa_1c^1_{3,1}+c^0_{2,3}\kappa_1c^0_{1,2}c^1_{3,1}.
\end{align*}
Hence
\[
\partial(c^1_{3,3}+a+b)=e_0.
\]
\end{proof}
\begin{corollary}\label{splitgen}
The perfect derived category of finitely generated (right) dg-modules over $CE(\Lambda_{\textup{tot}})$, is split-generated by the modules $e_iCE(\Lambda_{\textup{tot}})$, for $i=1,\dots,n+1$.
\end{corollary}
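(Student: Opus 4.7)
The plan is to combine Proposition~\ref{quotone} with Proposition~\ref{quotgen}, which together give exactly the statement. First I would observe that the idempotents $e_0, e_1, \ldots, e_{n+1}$ form a complete set of orthogonal idempotents for $CE(\Lambda_{\textup{tot}})$, so as a right dg-module we have the decomposition
\[
CE(\Lambda_{\textup{tot}}) \;=\; \bigoplus_{i=0}^{n+1} e_i\, CE(\Lambda_{\textup{tot}}).
\]
Under the Yoneda embedding of $CE(\Lambda_{\textup{tot}})$, viewed as a dg-category with one object per vertex, the modules $e_i\,CE(\Lambda_{\textup{tot}})$ correspond precisely to the representable modules. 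Since representables tautologically generate $D^{\pi}(CE(\Lambda_{\textup{tot}}))$ (every finitely generated perfect dg-module is built, up to twisted complexes and summands, from the images of the objects under Yoneda), the family $\{e_i\,CE(\Lambda_{\textup{tot}})\}_{i=0}^{n+1}$ generates $D^{\pi}(CE(\Lambda_{\textup{tot}}))$.

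Next I would apply Proposition~\ref{quotgen} to this generating set, partitioned as $X_i = e_i\,CE(\Lambda_{\textup{tot}})$ for $i = 1, \ldots, n+1$ and $Y_1 = e_0\,CE(\Lambda_{\textup{tot}})$ (the summand coming from the stop component $\Lambda_{\textup{stop}}$). Proposition~\ref{quotgen} requires the Drinfeld quotient of $CE(\Lambda_{\textup{tot}})$ at the $X_i$ to be quasi-equivalent to the trivial dg-category. By the dg-algebra description of the Drinfeld quotient from Section~\ref{derivedcat}, this quotient is obtained by adjoining the loops $\kappa_1, \ldots, \kappa_{n+1}$ with $|\kappa_i|=1$ and $\partial \kappa_i = e_i$; its triviality in homology amounts to the identity $1 = \sum_{i=0}^{n+1} e_i$ being a boundary, which by Proposition~\ref{quotone} reduces to showing $e_0$ is a boundary. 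That is exactly the content of Proposition~\ref{quotone}.

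Hence Proposition~\ref{quotgen} applies and yields that $e_1\,CE(\Lambda_{\textup{tot}}), \ldots, e_{n+1}\,CE(\Lambda_{\textup{tot}})$ split-generate $D^{\pi}(CE(\Lambda_{\textup{tot}}))$, which is the claim. There is no genuine obstacle: the whole content has been moved into Proposition~\ref{quotone}, where one needed the explicit element $c^1_{3,3} + a + b$ whose boundary is $e_0$. What remains here is purely formal bookkeeping, confirming that the hypotheses of Proposition~\ref{quotgen} are met.
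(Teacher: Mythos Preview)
Your proof is correct and follows exactly the approach of the paper, which simply says ``Apply Proposition~\ref{quotone} together with Proposition~\ref{quotgen}.'' You have merely spelled out why the hypotheses of Proposition~\ref{quotgen} are satisfied (the representables $e_iCE(\Lambda_{\textup{tot}})$ for $i=0,\dots,n+1$ generate, and the Drinfeld quotient at $e_1,\dots,e_{n+1}$ is trivial by Proposition~\ref{quotone}), which is just the formal bookkeeping the paper leaves implicit.
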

\begin{proof}
Apply Proposition~\ref{quotone} together with Proposition~\ref{quotgen}.
\end{proof}
In fact, the same strategy works more generally:
\begin{theorem}\label{alggen}
Let $\Lambda\subset S^1\times S^2$ be a link and let $\Lambda_{\textup{stop}}$ be a stop as in Figure~\ref{gengen}. Then the perfect derived category of finitely generated $CE(\Lambda\cup\Lambda_{\textup{stop}})$-modules is split-generated by the modules $e_iCE(\Lambda\cup\Lambda_{\textup{stop}})$, $i=1,\dots,m$, where $e_i$ are the idempotents corresponding to the connected components of $\Lambda$.
\end{theorem}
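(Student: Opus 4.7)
The strategy mirrors the proof of Corollary~\ref{splitgen}: I would apply Proposition~\ref{quotgen} to the set of idempotents $e_1, \ldots, e_m$ corresponding to the components of $\Lambda$. Thus it suffices to show that the Drinfeld quotient $Q$ of $CE(\Lambda \cup \Lambda_{\textup{stop}})$ at $e_1, \ldots, e_m$ is quasi-isomorphic to the trivial dg-category. By definition, $Q$ is obtained by adjoining degree-$1$ generators $\kappa_1, \ldots, \kappa_m$ with $\partial \kappa_i = e_i$. Since $1 = e_0 + e_1 + \cdots + e_m$ and each $e_i$ for $i \geq 1$ is already a boundary in $Q$, it is enough to exhibit $e_0$ as a boundary as well; then $1$ itself is a boundary and $Q$ is acyclic.

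The cleanest route is via Proposition~\ref{dgquotalg}: $Q$ is quasi-isomorphic to the dg-algebra $\widetilde{A}$ obtained from $CE(\Lambda \cup \Lambda_{\textup{stop}})$ by deleting the vertices $1, \ldots, m$ and all arrows incident to them. Assume the handlebody diagram is in Gompf standard form, and let $s$ denote a strand of $\Lambda_{\textup{stop}}$ passing through a $1$-handle. The handle chord $c^1_{s,s}$ survives in $\widetilde{A}$, and its differential, which in the full algebra has the form
\[
\partial c^1_{s,s} = e_0 + \sum_{l \neq s} (\pm)\, c^0_{l,s}\, c^1_{s,l} + (\text{possible higher handle terms}),
\]
simplifies in $\widetilde{A}$ to $\partial c^1_{s,s} = e_0$, because every chord $c^0_{l,s}$ or $c^1_{s,l}$ with $l \neq s$ has at least one endpoint on a $\Lambda$-strand and is therefore killed. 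This yields the desired primitive and shows $\widetilde{A}$, hence $Q$, is acyclic.

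Equivalently, one can construct a primitive for $e_0$ directly inside $Q$, mimicking the explicit calculation of Proposition~\ref{quotone}. Starting from $c^1_{s,s}$, the offending summands $c^0_{l,s}\, c^1_{s,l}$ are cancelled by iteratively inserting correction terms of the form $c^0_{l,s}\, \kappa_{c(l)}\, c^1_{s,l}$ (and, when needed, further nested corrections such as $c^0_{l,s}\, \kappa_{c(l)}\, c^0_{l,l'}\, \kappa_{c(l')}\, c^1_{s,l}$), repeatedly applying the Leibniz rule together with $\partial \kappa_i = e_i$ to peel off one unwanted term at a time. The process terminates because only finitely many strands pass through the handle, and each iteration strictly decreases the number of remaining factors of the form $c^0_{l,s}$ or $c^1_{s,l}$ in the uncancelled remainder.

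The main obstacle I anticipate is verifying that under the configuration depicted in Figure~\ref{gengen}, the standard handle-chord differential indeed produces $e_0$ as the constant term of $\partial c^1_{s,s}$, i.e.\ that $\Lambda_{\textup{stop}}$ threads the relevant $1$-handle in a way that makes $c^1_{s,s}$ a self-chord on the stop component. Once such a handle chord is identified, the Proposition~\ref{dgquotalg} argument reduces the theorem to a straightforward identification of the surviving generators in $\widetilde{A}$, and the explicit construction above supplies a concrete backup should any combinatorial hypothesis on the stop's strand arrangement need to be verified by hand.
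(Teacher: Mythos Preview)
Your proposal is correct and follows essentially the same approach as the paper: the paper's primary argument applies Proposition~\ref{dgquotalg} to identify the Drinfeld quotient with $B=CE(\Lambda_{\textup{stop}})$ and then observes $\partial_B c^1_{1,1}=1$ (your $\widetilde{A}$ is exactly this algebra, and your concern about whether the stop threads the handle is confirmed by Figure~\ref{gengen}, where $\Lambda_{\textup{stop}}$ is the top strand). The paper also gives an alternative spectral-sequence proof, filtering the sub-dg-algebra generated by handle chords and the $\kappa_i$ by chord length, which is a more systematic version of your hands-on iterative cancellation sketch.
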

\begin{proof}
By Proposition~\ref{dgquotalg} it follows that the Drinfeld quotient of $CE(\Lambda\cup\Lambda_{\text{stop}})$ at the idempotents $e_1,\dots,e_m$ is quasi-isomorphic to $B=CE(\Lambda_{\text{stop}})$. Also, $H_{\bullet}(B)=0$, using $\partial_Bc^1_{1,1}=1$. Hence the theorem follows from Proposition~\ref{quotgen}.

Alternatively can can prove it directly using a spectral sequence as follows:

Let $\mathcal{A}=CE(\Lambda\cup\Lambda_{\text{stop}})[\kappa_1,\dots,\kappa_m]$, where each $\kappa_i$ is a generator of degree $1$ such that $\partial\kappa_i=e_i$ (and $e_i\kappa_i=e_i=\kappa_ie_i$ and $e_j\kappa_i=0=\kappa_ie_j$, if $j\not=i$); we want to show that $H_{\bullet}(\mathcal{A})=0$. It suffices to show that the homology of the sub-dg-algebra $\mathcal{B}$ generated by the handle chords and the $\kappa_i$ vanishes, because if $1$ is a boundary in $\mathcal{B}$ it needs to be a boundary also in $\mathcal{A}$; it even suffices to show that $H_0(\mathcal{B})=0$.

Let $\ell(c_{i,j}^p)=p(n+1)+(j-i)$ and $\ell(\kappa_i)=0$, which we think of as the length of the chords. Then $F^p\mathcal{B}=\langle\alpha\,|\,\ell(\alpha)\leq p\rangle$, the subalgebra of $\mathcal{B}$ generated by chords of length $p$ or less, defines a filtration $F^0\mathcal{B}\subset F^1\mathcal{B}\subset\cdots\subset F^p\mathcal{B}\subset\cdots$ on $\mathcal{B}$, which satisfies $\partial(F^p\mathcal{B})\subset F^p\mathcal{B}$.

This filtration defines a spectral sequence with $E^0_{d,p}=F^p\mathcal{B}^d/\mathcal{F}^{p-1}\mathcal{B}^{d-1}$. Then $E^1_{d,p}=0$ for $p$ not a multiple of $n+1$, using $\partial\kappa_i=e_i$, for $i=1,\dots,m$, to kill cycles either from the left or from the right (or both). For $r\geq n+1$, we have $E^r_{d,p}=0$, using 
\[
\partial c^1_{n+1,n+1}=e_0+\sum_{l=1}^{n}(-1)^{m(l)+m(n+1)}c^0_{l,n+1}c^1_{n+1,l},
\]
and hence that the induced differential in the spectral sequence kills $e_0$. Thus the spectral sequence converges to $E^{\infty}_{d,p}=0$, which implies that $H_{\bullet}(\mathcal{B})=0$.
\end{proof}
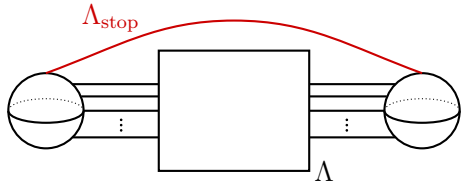
\begin{figure}[H]
\centering
\begin{tikzpicture}
\draw[thick] (0,0) circle (0.5cm);
\draw[densely dotted] (0.5,0) arc (0:180:0.5 and 0.16);
\draw[thick] (-0.5,0) arc (0:180:-0.5 and -0.16);

\draw[thick] (5,0) circle (0.5cm);
\draw[densely dotted] (5.5,0) arc (0:180:0.5 and 0.16);
\draw[thick] (4.5,0) arc (0:180:-0.5 and -0.16);

\draw[thick] (1.5,-0.8) rectangle (3.5,0.8);
\draw[red!80!black, thick] (0,0.5) to[out=20,in=180] (2.5,1.2) to[out=0,in=160] (5,0.5);
\draw[thick] (0.354,0.354) to (1.5,0.354);
\draw[thick] (0.469,0.191) to (1.5,0.191);
\draw[thick] (0.5,0) to (1.5,0);
\draw[fill=black] (1,-0.261) circle (0.01cm);
\draw[fill=black] (1,-0.191) circle (0.01cm);
\draw[fill=black] (1,-0.121) circle (0.01cm);
\draw[thick] (0.354,-0.354) to (1.5,-0.354);

\draw[thick] (4.646,0.354) to (3.5,0.354);
\draw[thick] (4.531,0.191) to (3.5,0.191);
\draw[thick] (4.5,0) to (3.5,0);
\draw[fill=black] (4,-0.261) circle (0.01cm);
\draw[fill=black] (4,-0.191) circle (0.01cm);
\draw[fill=black] (4,-0.121) circle (0.01cm);
\draw[thick] (4.646,-0.354) to (3.5,-0.354);
\node[scale=0.9] at (3.7,-0.8) {$\Lambda$};
\node[red!80!black,scale=0.9] at (0.86,1.23) {$\Lambda_{\text{stop}}$};
\end{tikzpicture}
\caption{The Legendrian link $\Lambda$ with $n$ strands going through the handle consisting of $m$ connected components, and Legendrian $\Lambda_{\text{stop}}$ at which we put a stop in red, shown in the front projection.}
\label{gengen}
\end{figure}

\subsection{Realisation of $\Pi(\tilde{A}_n)$ as a partially wrapped Fukaya category}\label{realisationsect}
We use the previous subsection, together with the surgery formula to argue that the cyclic plumbings of $T^*S^2$ with a stop attached, and hence the Milnor fibre with a fibre removed and a stop attached, has partially wrapped Fukaya category derived equivalent with finitely generated modules over the preprojective algebra over the type $\tilde{A}_n$ quiver. The argument is as follows.

\begin{theorem}\label{surgerycon}
Let $\Lambda\subset J^1S^1$ and let $\Lambda_{\textup{stop}}$ be as in Figure~\ref{gengen}, i.e. as in Theorem~\ref{stopsect}. Let $\iota:J^1S^1\rightarrow S^1\times S^2$ denote the inclusion. Then $\iota$ induces a quasi-isomorphism $CE(\Lambda)\xrightarrow{\textup{q.i.}}eCE(\iota(\Lambda) \cup\Lambda_{\textup{stop}})e$,  where $e$ is the sum of idempotents in $CE(\iota(\Lambda)\cup\Lambda_{\textup{stop}})$ corresponding to the connected components of $\iota(\Lambda)$. 
\end{theorem}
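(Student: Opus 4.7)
My plan is to construct the dga map $\iota_\ast \colon CE(\Lambda) \to eBe$ and show it is a quasi-isomorphism via a natural retraction combined with the acyclicity of $CE(\Lambda_{\textup{stop}})$. Throughout, write $B := CE(\iota(\Lambda) \cup \Lambda_{\textup{stop}})$, $e = e_1 + \cdots + e_m$ for the sum of idempotents of the components of $\iota(\Lambda)$, and $f = e_0$ for the idempotent of $\Lambda_{\textup{stop}}$. Call a Reeb chord of $\iota(\Lambda) \cup \Lambda_{\textup{stop}}$ of type $(e,e)$ if both its endpoints lie on components of $\iota(\Lambda)$. By Theorem~\ref{cesector}, the type-$(e,e)$ chords in $B$ are in grading-preserving bijection with the Reeb chords of $\Lambda \subset J^1 S^1$, and the holomorphic disc counts between them agree. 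So $\iota_\ast$ is defined by sending each Reeb chord of $\Lambda$ to the corresponding type-$(e,e)$ chord in $B$.

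To check $\iota_\ast$ intertwines the differentials I would rule out that $\partial_B c$, for a type-$(e,e)$ chord $c$, has terms with negative punctures on chords touching $\Lambda_{\textup{stop}}$, via an action argument: any chord touching $\Lambda_{\textup{stop}}$ has action bounded below by some constant $c_0 > 0$ depending only on the ambient geometry, while -- as in the proof of Lemma~\ref{curves} -- shrinking $\Lambda$ in the $z$-direction makes every type-$(e,e)$ chord have action below $c_0$. Positivity of Hofer energy, $\ell(\alpha_0) \geq \sum_i \ell(\beta_i)$, then excludes the problematic discs, and Lemma~\ref{curves} itself identifies $\partial_B c$ with the differential computed in $J^1 S^1$. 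In particular $\iota_\ast$ is injective and is split by the obvious dga retraction $\pi \colon eBe \to CE(\Lambda)$ sending a word to itself if every chord in it is of type $(e,e)$ and to zero otherwise; chain-compatibility of $\pi$ follows from the previous step together with the preservation of source and target idempotents by $\partial$.

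It then suffices to show the subcomplex $\ker \pi \subset eBe$, spanned by words touching $\Lambda_{\textup{stop}}$ at least once, is acyclic. The main input is the contractibility of the sub-dga $R := fBf = CE(\Lambda_{\textup{stop}})$: since $\Lambda_{\textup{stop}}$ passes once through the $1$-handle, the single long handle chord satisfies $\partial c^1_{0,0} = f$ within $R$ (a single-component handle-chord calculation, as in Example~\ref{surgeryex} and the second proof of Theorem~\ref{alggen}), making the unit of $R$ a boundary. Identifying $\ker \pi$ with the two-sided tensor product $eBf \otimes_R fBe$, the main obstacle will be acyclicity: the naive chain homotopy $H(p \otimes q) = (-1)^{|p|} p \otimes c^1_{0,0} q$ formally satisfies $\partial H + H \partial = \mathrm{id}$ by direct Leibniz computation but is not well-defined on the tensor over $R$ unless $c^1_{0,0}$ graded-commutes with $R$. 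I would overcome this either by verifying that $eBf$ (or $fBe$) is cofibrant as an $R$-dg-module from the semi-free structure of $B$, so that the underived tensor agrees with the trivially acyclic derived tensor $eBf \otimes^L_R fBe$, or by running a spectral sequence on the filtration of $\ker \pi$ by the number of excursions through $\Lambda_{\textup{stop}}$, killing each page via the acyclicity of $R$.
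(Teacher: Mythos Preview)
Your approach is genuinely different from the paper's, which never constructs a direct chain-level map. The paper instead chains together three quasi-isomorphisms through the wrapped Fukaya category: the surgery formula in the sector gives $CE(\Lambda)\simeq CW_{X_\sigma}(C,C)$, the stop-sector equivalence (Theorem~\ref{stopsecteq}) gives $CW_{X_\sigma}(C,C)\simeq CW_{(X,\sigma)}(C,C)$, and the surgery formula with stops (Theorem~\ref{surgeryloop}) gives $CW_{(X,\sigma)}(C,C)\simeq eBe$. No explicit retraction or filtration argument is needed.

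Your direct argument has a real gap, however. The claim that the type-$(e,e)$ chords of $B$ are in bijection with the Reeb chords of $\Lambda\subset J^1S^1$ is false. In $S^1\times S^2$ the $1$-handle produces, for every pair of strands $i,j$ (including those belonging to $\iota(\Lambda)$), an infinite family of long handle chords $c^p_{i,j}$ with $p\geq 1$. These are type-$(e,e)$ generators of $eBe$ with no counterpart in $CE(\Lambda)$, whose chords correspond only to crossings and the short chords $c^0_{i,j}$ (the special chords of Figure~\ref{onejetresol}). Theorem~\ref{cesector} compares $CE(\Lambda)$ with the algebra computed in the \emph{sector} $S^1\times S^2\smallsetminus U^0$, not with $eBe$ inside the full $CE$ on $S^1\times S^2$. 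Consequently your retraction $\pi$ is not defined on these long handle chords, and $\ker\pi$ is not spanned by words passing through $\Lambda_{\textup{stop}}$: for instance $c^1_{i,i}$ with $i$ a strand of $\iota(\Lambda)$ lies in $eBe$, never touches $\Lambda_{\textup{stop}}$, yet is not in the image of $\iota_\ast$, and its differential contains the idempotent $e_{c(i)}$. Your action argument also fails here: the action of $c^p_{i,j}$ is governed by $p$ and the handle geometry and does not go to zero when you shrink $\Lambda$ in the $z$-direction. Any direct argument would have to absorb this infinite tower of handle chords, which is essentially what the spectral sequence in the second proof of Theorem~\ref{alggen} does---but that is a different statement, and your sketch does not address it.
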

\begin{proof}
From Theorem~\ref{surgerystd} and Theorem~\ref{surgeryloop} together with Theorem~\ref{stopsecteq} we deduce: 
\[
CE(\Lambda)\simeq_{\text{q.i.}}CW_{X_{\sigma}}(C,C)\simeq_{\text{q.i.}}CW_{(X,\sigma_{\Lambda_{\text{stop}}})}(C,C)\simeq_{\text{q.i.}}eCE(\iota(\Lambda)\cup\Lambda_{\text{stop}})e,
\]
where the quasi-isomorphisms are as $A_{\infty}$-algebras. Here $X_{\sigma}$ denotes the Weinstein sector obtained by attaching Weinstein handles along $\Lambda\subset J^1S^1$ and $X$ the Weinstein manifold obtained by attaching Weinstein handles along $\iota(\Lambda)\subset S^1\times S^2$; $C$ denotes the direct sum of cocores of the Weinstein handles attached along $\Lambda$ and (by abuse of notation) along $\iota(\Lambda)$.
\end{proof}

From this we deduce: 
\begin{theorem}\label{cocoregen}
In the situation of Theorem~\ref{surgerycon}. Let $X$ be the Weinstein manifold obtained by attaching handles on $\iota(\Lambda)$. Then the partially wrapped Fukaya category stopped at $\sigma=\sigma_{{\Lambda}_{\textup{stop}}}$ is split-generated by the cocores. Thus,
\[
D^{\pi}\mathcal{W}(X,\sigma)\simeq D^{\pi}(CE(\Lambda)).
\]
\end{theorem}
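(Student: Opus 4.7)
The plan is to combine the three tools that have just been established: the generation result for partially wrapped Fukaya categories of Weinstein sectors (Theorem~\ref{generation}), the surgery formula with loop space coefficients (Theorem~\ref{surgeryloop}), and the algebraic generation statement of Theorem~\ref{alggen} (which together with Theorem~\ref{surgerycon} relates the Chekanov--Eliashberg algebras of $\Lambda \subset J^1 S^1$ and $\iota(\Lambda) \cup \Lambda_{\textup{stop}} \subset S^1 \times S^2$). The whole argument is essentially a dictionary between the geometric generators of $\mathcal{W}(X,\sigma)$ and the idempotents of $CE(\iota(\Lambda)\cup\Lambda_{\textup{stop}})$.

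First, by Theorem~\ref{generation} applied to the stopped Weinstein manifold $(X,\sigma)$, the perfect derived category $D^{\pi}\mathcal{W}(X,\sigma)$ is split-generated by the cocores $C_1,\ldots,C_m$ of the critical handles attached along $\iota(\Lambda)$ together with the linking disc $D_{\textup{stop}}$ of the stop $\sigma = \sigma_{\Lambda_{\textup{stop}}}$. Second, the surgery formula (Theorem~\ref{surgeryloop}) identifies the endomorphism $A_{\infty}$-algebra of these generators with $CE(\iota(\Lambda)\cup\Lambda_{\textup{stop}})$, computed with loop space coefficients on the stop component; under this identification the idempotents $e_1,\ldots,e_m$ corresponding to the connected components of $\iota(\Lambda)$ correspond to the cocores, and the remaining idempotent $e_0$ corresponds to the linking disc $D_{\textup{stop}}$. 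Consequently there is a quasi-equivalence
\[
D^{\pi}\mathcal{W}(X,\sigma) \simeq D^{\pi}\bigl(CE(\iota(\Lambda)\cup\Lambda_{\textup{stop}})\bigr).
\]

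Third, Theorem~\ref{alggen} states precisely that $D^{\pi}(CE(\iota(\Lambda)\cup\Lambda_{\textup{stop}}))$ is split-generated by the modules $e_i\, CE(\iota(\Lambda)\cup\Lambda_{\textup{stop}})$ for $i = 1,\ldots,m$. Transporting this back through the equivalence of the previous paragraph, the linking disc $D_{\textup{stop}}$ lies in the split-closure of $\langle C_1,\ldots,C_m\rangle$ inside $D^{\pi}\mathcal{W}(X,\sigma)$, so the cocores alone split-generate. Therefore $D^{\pi}\mathcal{W}(X,\sigma)$ is quasi-equivalent to the perfect derived category of the sub-$A_{\infty}$-algebra $\bigoplus_{i,j} CW_{(X,\sigma)}(C_i,C_j)$, which is the full subcategory on the idempotent $e = e_1 + \cdots + e_m$. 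By Theorem~\ref{surgerycon} this subalgebra $eCE(\iota(\Lambda)\cup\Lambda_{\textup{stop}})e$ is quasi-isomorphic as an $A_{\infty}$-algebra to $CE(\Lambda)$, giving the desired triangulated equivalence
\[
D^{\pi}\mathcal{W}(X,\sigma) \simeq D^{\pi}(CE(\Lambda)).
\]

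The main obstacle is foundational rather than computational: Theorem~\ref{generation} (from \cite{MR4732675}, \cite{MR4695507}) and the surgery formula with loop space coefficients (Theorem~\ref{surgeryloop}, from \cite{MR4634745}) are proved with respect to definitions of the partially wrapped Fukaya category that are expected, but not yet fully verified in the literature, to coincide. This caveat is exactly the one already flagged in the remarks following Theorem~\ref{final}, and it is the only non-trivial point in the argument; granting this compatibility, everything else is a formal chase of split-generators through the three equivalences above.
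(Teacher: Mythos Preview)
Your proof is correct and follows exactly the same approach as the paper: the paper's own proof is a one-line citation of Theorem~\ref{generation}, Theorem~\ref{surgeryloop}, Theorem~\ref{alggen} and Theorem~\ref{surgerycon}, and you have simply spelled out how these four ingredients combine. Your observation about the foundational caveat concerning the different definitions of the partially wrapped Fukaya category is also exactly the point the paper flags after Theorem~\ref{final}.
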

\begin{proof}
This follows from Theorem~\ref{generation}, Theorem~\ref{surgeryloop}, Theorem~\ref{alggen} and Theorem~\ref{surgerycon}.
\end{proof}
Finishing the argument, the partially wrapped Fukaya category of cyclic plumbing $\#_{\tilde{A}_{n-1}}T^*S^2$ of cotangent bundles of $T^*S^2$, or equivalently, $\mathcal{W}(X_{\text{Mil}}\smallsetminus D_m,\sigma)$, of the (completed) Milnor fibre with a fibre removed $X_{\text{Mil}}\smallsetminus D_m$, stopped at the Legendrian  described in Section~\ref{stopsection} in the boundary at infinity, is then, by Theorem~\ref{generation}, quasi-equivalent to the $A_{\infty}$-category generated by the cocores $C_1,\dots,C_n$, corresponding to the connected components of the attaching link, and the linking disc $D_0$, corresponding to the stop $\sigma=\sigma_{\Lambda_{\text{stop}}}$. Using Theorem~\ref{surgeryloop} this is quasi-equivalent to the category of finitely generated dg-modules over $CE(\Lambda_{\text{tot}})$. But we saw that this category is split-generated by the Chekanov--Eliashberg algebra computed in the $1$-jet space, thus:
\[
D^{\pi}\mathcal{W}(X_{\text{Mil}}\smallsetminus D_m,\sigma)\simeq D^{\pi}(CE(\Lambda_{\text{cyc}})).
\]

Therefore:
\begin{theorem}\label{mainresult}
There is an equivalence of triangulated categories
\[
D^{\pi}\mathcal{W}(X_{\textup{Mil}}\smallsetminus D_m,\sigma_{\Lambda_{\textup{stop}}})\simeq D^{b}(\Pi(\tilde{A}_n)).
\]
\end{theorem}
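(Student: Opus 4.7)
The plan is to assemble the pieces that have been laid out in the preceding sections: the geometric identification of the Milnor fibre with a divisor removed as a cyclic plumbing, the surgery-plus-stop-removal machinery that reduces the partially wrapped Fukaya category to a Chekanov--Eliashberg dg-algebra in $J^1S^1$, and finally the algebraic identification of that dg-algebra with the preprojective algebra.

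First I would invoke Proposition~\ref{geometriceq}, which produces a Weinstein equivalence $X_{\textup{Mil}}\smallsetminus D_m\simeq \#_{\tilde A_n}T^*S^2$. Under this equivalence, the stop $\sigma_{\Lambda_{\textup{stop}}}$ at the Legendrian described in Section~\ref{stopsection} corresponds precisely to the stop used in Theorem~\ref{surgerycon} and Theorem~\ref{cocoregen}: the Legendrian $\Lambda_{\textup{stop}}\subset S^1\times S^2$ of Theorem~\ref{stopsect}, sitting in the boundary at infinity of the subcritical part $\mathbb{C}\times\mathbb{C}^*$ to which the handles producing the cyclic plumbing are attached. Thus the partially wrapped Fukaya category we wish to compute is $\mathcal{W}(X,\sigma_{\Lambda_{\textup{stop}}})$, where $X$ is the Weinstein manifold obtained by attaching Weinstein $2$-handles along $\iota(\Lambda_{\textup{cyc}})\subset S^1\times S^2$ (cf.\ Figure~\ref{plumbingpic} and Figure~\ref{attachfull}).

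Next I would apply Theorem~\ref{cocoregen} directly to $\Lambda=\Lambda_{\textup{cyc}}\subset J^1S^1$: this yields a triangulated equivalence
\[
D^{\pi}\mathcal{W}(X,\sigma_{\Lambda_{\textup{stop}}})\simeq D^{\pi}(CE(\Lambda_{\textup{cyc}})).
\]
Recall that Theorem~\ref{cocoregen} combines three ingredients already established: the generation of the partially wrapped Fukaya category by the cocores of the critical handles together with the linking disc of the stop (Theorem~\ref{generation}), the surgery identification of the endomorphism algebra with $CE(\iota(\Lambda_{\textup{cyc}})\cup\Lambda_{\textup{stop}})$ (Theorem~\ref{surgeryloop}), and the split-generation result Theorem~\ref{alggen} allowing us to discard the linking-disc generator in favour of the sub-dg-algebra cut out by the idempotents $e_1,\dots,e_{n+1}$, which by Theorem~\ref{surgerycon} is quasi-isomorphic to $CE(\Lambda_{\textup{cyc}})$ computed entirely in $J^1S^1$ (this last step crucially relying on Theorem~\ref{cesector} to confine Reeb chords and holomorphic curves to the $J^1S^1$-neighbourhood).

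Finally, I would invoke Proposition~\ref{ginzcalc} to identify $CE(\Lambda_{\textup{cyc}})\cong \mathcal{G}(\tilde{A}_n)$ as dg-algebras, and then Theorem~\ref{formal} to obtain a quasi-isomorphism $\mathcal{G}(\tilde A_n)\xrightarrow{\sim}\Pi(\tilde A_n)$. Since quasi-isomorphic dg-algebras have equivalent perfect derived categories, and since $\Pi(\tilde A_n)$ is an ordinary associative algebra so that $D^{\pi}(\Pi(\tilde A_n))=D^b(\Pi(\tilde A_n))$ in the sense used throughout the article (Section~\ref{derivedcat}), the chain of equivalences
\[
D^{\pi}\mathcal{W}(X_{\textup{Mil}}\smallsetminus D_m,\sigma_{\Lambda_{\textup{stop}}})\simeq D^{\pi}(CE(\Lambda_{\textup{cyc}}))\simeq D^{\pi}(\mathcal{G}(\tilde A_n))\simeq D^{b}(\Pi(\tilde A_n))
\]
closes the argument. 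Since every step has already been done, the proof itself is essentially a citation chain; the only point that warrants care is the compatibility of the stop appearing in Proposition~\ref{geometriceq} (inherited from the Lefschetz-fibration description of $X_{\textup{Mil}}\smallsetminus D_m$) with the stop built in Theorem~\ref{stopsect}. The main conceptual obstacle therefore lies not in this theorem but in the infrastructure on which it relies, in particular in verifying that the two a priori distinct definitions of the partially wrapped Fukaya category implicit in Theorem~\ref{generation} and Theorem~\ref{surgeryloop} agree on sectors, which the author flags as a known gap in the literature.
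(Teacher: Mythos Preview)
Your proposal is correct and follows the same citation chain as the paper's proof: Theorem~\ref{cocoregen} (combined with the geometric equivalence of Proposition~\ref{geometriceq}), then Proposition~\ref{ginzcalc}, then Theorem~\ref{formal}. The only point where you are slightly imprecise is the final step: $D^{\pi}(\Pi(\tilde A_n))\simeq D^b(\Pi(\tilde A_n))$ is not automatic from $\Pi(\tilde A_n)$ being an ordinary algebra (for a general algebra $A$ one only has $D^{\pi}(A)=\text{Perf}(A)$, cf.\ Section~\ref{derivedcat}); the paper closes this by citing \cite[Theorem 3.1.3]{MR1996800}, which supplies the needed homological finiteness of $\Pi(\tilde A_n)$.
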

\begin{proof}
By Theorem~\ref{cocoregen}, together with Theorem~\ref{ginzcalc} and Theorem~\ref{formal}, we have
\[
D^{\pi}\mathcal{W}(X_{\text{Mil}}\smallsetminus D_m,\sigma_{\Lambda_{\text{stop}}})\simeq D^{\pi}(CE(\Lambda_{\text{cyc}}))\simeq  D^{\pi}(\mathcal{G}(\tilde{A}_n))\simeq D^{\pi}(\Pi(\tilde{A}_n)).
\]
Furthermore, 
\[
 D^{\pi}(\Pi(\tilde{A}_{n}))\simeq D^b(\Pi(\tilde{A}_{n}))
\]
is well known and follows from \cite[Theorem 3.1.3]{MR1996800}. We conclude the statement.
\end{proof}

It is also possible to show that the stop is an iterated cone of the short handle chords starting at the strands below the stop and ending on it. From this one can conclude generation rather then split-generation.

\subsection{The Chekanov--Eliashberg algebra for the $\tilde{A}_n$-link in higher dimensions}
The Ginzburg dga of a quiver that was considered in Definition~\ref{defginzb} has a grading that makes it $2$-Calabi--Yau. There are other natural grading conventions for the Ginzburg algebra which makes it $m$-Calabi Yau, for $m\geq 3$ \cite[Section 6]{MR2795754} (here we do not consider the case with potential). Here we show how to realise these dg-algebras as Chekanov-Eliashberg algebras of higher dimensional versions of the links that we constructed above, and, consequently, their module categories as wrapped Fukaya categories. However, there is no obvious corresponding homological mirror symmetry statement, and in particular the bounded derived category of coherent sheaves of a crepant resolution of the $A_n$ singularity in dimension $3$ is not the derived module category of the preprojective algebra of an $\tilde{A}_{n}$-quiver. Moreover, in higher dimensions it is easy to realise the preprojective algebras of the graphs $\tilde{D}_n$ or $\tilde{E}_6$, $\tilde{E}_7$ and $\tilde{E}_8$ as Legendrian links (recall that we do not know any such 1-dimensional Legendrian representiative) since the discs which contribute higher order terms to the differential for $1$-dimensional Legendrians (see \cite{MR3692968}) become non-rigid in bhigher dimensions.

We first note that the Milnor fibre with a regular conic removed
\[
\{x_1^2+\cdots+x_{m+1}^2+x_{m+2}^{n+1}=1\}\smallsetminus\{x_{m+2}=0\}\xrightarrow{x_{m+2}}\mathbb{C}^*
\]
is obtained by handle attachment as in the construction in Section~\ref{geomeq}. That is we first attach a Weinstein $1$-handle, and then critical handles along an $\tilde{A}_{n}$-link going through the $1$-handle. As in Section~\ref{stopsection} we start by attaching a stop, or rather, equivalently we consider the sector obtained by removing a neighbourhood of the isotropic submanifold along which we attach the stop; then we attach the handles along the link in the boundary at infinity of this sector. 

More precisely, the Weinstein structure of the Milnor fibre with a regular conic removed can be constructed by starting with the trivial Lefschetz fibration the projection to the last coordinate
\[
\mathbb{C}^*\times\mathbb{C}^m\rightarrow\mathbb{C}, 
\]
with regular fibre $\mathbb{C}^*\times\mathbb{C}^{m-1}\cong T^*(S^1\times\mathbb{R}^{m-1})$. From it \cite[Construction 2.9]{MR4687585}, we get an open book decomposition with trivial monodromy of
$M=\partial_{\infty}(\mathbb{C}^*
\times\mathbb{C}^m)=(S^1\times S^{2m},\xi_{\text{std}})$. Here the fibration is $\pi:M\smallsetminus B\rightarrow S^1$, where the binding is the contact submanifold $B \cong S^1 \times S^{2m-2}$ and the pages $\Sigma_{\theta}=\overline{\pi^{-1}(\theta)}$ are symplectomorphic to $T^*(S^1\times \mathbb{R}^{m-1})$ (after smoothing and taking completions). 

Analogously to the $4$-dimensional case, i.e. $m=1$, we consider convex hypersurfaces around the isotropic skeleton of the page $\pi^{-1}( -1)$. It should be possible to generalize the $m=1$ case and show that removing a neighbourhood the skeleton is equivalent to removing a page, and thus obtaining a description of $(S^1\times S^{2m},\xi_{\text{std}})$ as two copies of $J^1(S^1\times\mathbb{R}^{m-1})$ glued together. The cutoff Reeb dynamics and the argument for boundedness of holomorphic curves carries over.

We now draw the linking of the unknots a bit differently. We construct $\Lambda\subset J^1(S^1\times\mathbb{R}^{m-1})$ by lifting the front diagram in Figure~\ref{ndim}.
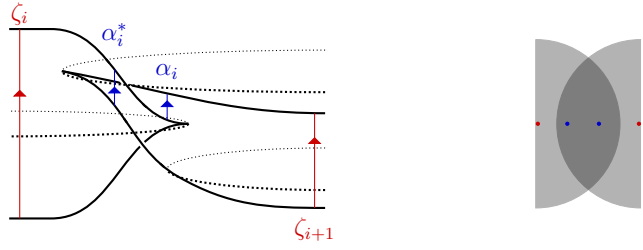
\begin{figure}[H]
\centering
\begin{tikzpicture}[scale=1.4]
\begin{knot}[flip crossing=2]
\strand[thick] (0,0.9) to[out=0,in=180] (0.4,0.9) to[out=0,in=180]  (1.7,0);
\strand[thick] (0,-0.9) to[out=0,in=180] (0.4,-0.9) to[out=0,in=180]  (1.7,0);

\strand[thick] (0.5,0.5) to[out=-10,in=180] (3,0.1);
\strand[thick] (0.5,0.5) to[out=-10,in=150] (1.6,-0.5) to[out=-30,in=180] (3.0,-0.8); 

\end{knot}
\draw[thick, densely dotted] (1.7,0) arc (0:90:1.7 and -0.12);
\draw[densely dotted] (1.7,0) arc (0:90:1.7 and 0.12);

\draw[thick, densely dotted] (3.0,0.3) arc (90:180:2.5 and -0.20);
\draw[densely dotted] (3.0,0.7) arc (90:180:2.5 and 0.20);

\draw[thick, densely dotted] (3.0,-0.63) arc (90:180:1.5 and -0.20);
\draw[densely dotted] (3.0,-0.23) arc (90:180:1.5 and 0.20);

\draw[red!80!black] (0.1,0.9) to (0.1,-0.9);
\draw[red!80!black] (2.9,-0.8) to (2.9,0.1);
\draw[blue!80!black] (1,0.17) to (1,0.515);
\draw[blue!80!black] (1.5,0.04) to (1.5,0.28);
\node[red!80!black,scale=0.8] at (0.1,1.05) {$\zeta_i$};
\node[blue!80!black,scale=0.8] at (1,0.85) {$\alpha_i^*$};
\node[blue!80!black,scale=0.8] at (1.5,0.5) {$\alpha_i$};
\node[red!80!black,scale=0.8] at (2.9,-1) {$\zeta_{i+1}$};

\draw[fill=black, opacity=0.3] (5,0.8) to[out=0,in=90] (5.8,0) to[out=-90,in=0] (5,-0.8) to (5,0.8);

\draw[fill=black,opacity=0.3] (6,0.8) to[out=180,in=90] (5.2,0) to[out=-90,in=180] (6,-0.8);
\draw[red!80!black,fill=red!80!black] (5.02,0) circle (0.015cm);
\draw[red!80!black,fill=red!80!black] (5.98,0) circle (0.015cm);
\draw[blue!80!black,fill=blue!80!black] (5.3,0) circle (0.015cm);
\draw[blue!80!black,fill=blue!80!black] (5.6,0) circle (0.015cm);

\node[red!80!black, scale=0.6,rotate=90] at (0.1,0.3) {\midarrow};
\node[red!80!black, scale=0.6,rotate=90] at (2.9,-0.15) {\midarrow};
\node[blue!80!black, scale=0.6,rotate=90] at (1,0.33) {\midarrow};
\node[blue!80!black, scale=0.6,rotate=90] at (1.5,0.2) {\midarrow};
\end{tikzpicture}
\caption{Shows in the front projection how to link Legendrian unknots in higher dimension. $S^1$-direction is horizontal in the pictures, and the right-hand side picture shows the linking from above.}
\label{ndim}
\end{figure}
The figure should be thought of as that the components are $S^{m-2}$-rotated symmetrically around the $\zeta_i$ Reeb chords.

To get the link one has to glue the left-hand side part of the knot to the right-hand side without creating new Reeb chords. Then one repeats the linking cyclically. 

The grading changes, and can be calculated by 
\[
|c|=m(\text{overstrand of }c)-m(\text{understrand of }c)+\text{index}_p(f_u-f_l)-1,
\]
where $f_u$ is a local height function of the upper strand and $f_l$ is a local height function of the lower strand. The height differences at $\zeta_i,\zeta_{i+1}$ are maxima, and thus the index is $m$. The height difference at $\alpha_i^*$ is a minima in the $S^1$-direction but a maxima in the other directions, and thus the index is $m-1$. The height difference at $\alpha_i$ is maximal in the $S^1$-direction but minimal in the other directions, and thus the index is $1$. The grading is thus:
\begin{align*}
|\zeta|&=1-0+m-1=m \\
|\alpha_i|&=1-1+1-1=0 \\
|\alpha_i^*|&=1-0+(m-1)-1=m-1.
\end{align*}
The differential in $J^1(S^1\times\mathbb{R}^{m-1})$ is obtained by counting rigid gradient flow trees (see \cite{MR2326943}), corresponding to the holomorphic curves. Combining grading (and action when m=2) we see that $\alpha_i$ and $alpha_i^*$ all are cycles. In the figure above one finds two rigid flows trees contributing $\partial\zeta_i=\alpha_i^*\alpha_i$ and $\partial\zeta_{i+1}=-\alpha\alpha_i^*$ to the differential, (the curve yielding cancelling idempotents not rigid in this dimension, i.e. they correspond to non-rigid trees, hence they are not counted by the differential).

Finally, since in higher dimensions (i.e. for $m\geq 2$) the stop is attached along an isotropic non-Legendrian submanifold, generation and surgery (Theorem~\ref{surgerystd} and Theorem~\ref{generation}) immediately implies that modules over the Chekanov--Eliashberg algebra computed in $J^1(S^1\times\mathbb{R}^{m-1})$ is equivalent to the partially wrapped Fukaya category of the Weinstein sector obtained by attaching critical handles along the Legendrian $\tilde{A}_n$-link (and no extra generation argument, see Section~\ref{realisationsect}, is needed).

\section{Homological mirror symmetry results}\label{mirrorsec}
Here we investigate homological mirror symmetry of the hyper-Kähler manifolds underlying the $A_n$-singularity, described in Example~\ref{HKquotient} and Example~\ref{loghkA}. Of primary interest is the homological mirror symmetry of the $S^1$-family of Kähler structures along the equators in Figure~\ref{hkfamilypic}, that is, when $u_I=0$, and the Kähler manifolds at the poles $u_I=1$, to which we now restrict our attention. The former structures correspond to the case when the symplectic manifolds are Weinstein, and the latter when the complex manifolds are given by the crepant resolution of the singularity. We will denote the structures at
the equators $u_I=0$ by $X_{\text{Mil}}$ and the structures at the poles $(1,0,0)$ by $X_{\text{res}}$; their corresponding
divisors will be denoted by $D_m$ and $D_r$, respectively.

We fix $\zeta$, and we begin by considering the log Calabi--Yau hyper-Kähler family, i.e. the family $X_{\zeta}\smallsetminus D_u$ from Example~\ref{loghkA}, obtained by removing anti-canonical normal crossing divisors $D_u'$ containing $D_u$ from a compactification of $X_{\zeta}$. It is anticipated that homological mirror symmetry for this family is obtained by hyper-Kähler rotation. We use a localisation argument on the resolution of the $A_n$-singularity to deduce one direction of homological mirror symmetry in this case, relating both sides to the non-deformed multiplicative preprojective algebra over the $\tilde{A}_n$-quiver. Moreover, we sketch the other direction, where $A_n$-Milnor fibres with a fibre removed are viewed as affine varieties, while the $A_n$-resolution of singularities is regarded as a non-exact symplectic manifold. We relate both these cases to deformed multiplicative preprojective algebras of the $\tilde{A}_n$-quiver. We find an equivalence of these algebras in an infinitesimally deformed sense. (See Figure~\ref{hmslogcyfigure}.)

Moreover, putting the divisor back in, either on the resolution or on the Milnor fibre, is expected to yield two new examples of homological mirror symmetry, where the mirror is the log Calabi--Yau mirror with a stop attached (viewed as an $A$-side) or equipped with a potential (viewed as a $B$-side). This is related to deformations of the additive preprojective algebras and the $A_n$-Ginzburg algebra. (See Figure~\ref{hmsresfigure} and Figure~\ref{hmsmilfigure}.) Again, the deformed preprojective algebras obtained on the $A$-side and the $B$-side, respectively, will be infinitesimally deformed equivalent.

This partially proves the homological mirror symmetry conjectured in Example~\ref{loghkA}, and the related statements without a divisor removed, analogous to the examples in Section~\ref{examplessec}. To be precise, in the non-exact setting our arguments assume certain foundational results on wrapped Fukaya categories which are not established in the literature. These assumptions are made in analogy with the exact setting, and they are used to relate the categories under consideration to (modules over) deformed preprojective algebras.

We have furthermore restricted our attention to the poles (the resolution) and the equator (an $S^1$-family of Milnor fibres) in the family of Kähler structures of the hyper-Kähler manifold (Figure~\ref{hkfamilypic}). However, this is not a severe restriction: the categories computed which involving deformations (see the north-west to south-east diagonal in Figure~\ref{hmslogcyfigure}, Figure~\ref{hmsresfigure} and Figure~\ref{hmsmilfigure}) could be interpreted to provide answers away from the pole/equator, for any choice of Kähler structure in the family, i.e. any choice of $u$ in the upper hemisphere of $S^2$ in Figure~\ref{hkfamilypic}. The important point is that the equivalence here is only shown in (and, indeed, only believed to be true in) a deformation sense, where we vary the areas on the exceptional spheres infinitesimally near $0$ and the complex structure is infinitesimally deformed near the structure at the pole. That is, we interpret the $B$-side along the equator as being obtained by an infinitesimal deformation of the complex structure at the pole, and vice versa for the $A$-side at the pole and the symplectic structure.

\subsection{Homological mirror symmetry in the log Calabi--Yau case}\label{otherdir}
The wrapped Fukaya category of plumbings of $T^*S^2$ has been computed in \cite{MR4033516}. In particular, for the cyclic plumbing, or equivalently for $X_{\text{Mil}}\smallsetminus D_m$,
the wrapped Fukaya category is derived equivalent to finitely generated modules of the non-deformed derived multiplicative preprojective algebra of $Q_{\tilde{A}_{n}}$ (the $\tilde{A}_n$-case can be easily computed via surgery and the Chekanov--Eliashberg algebra in $S^1\times S^2$ of the link in Figure~\ref{plumbingpic}). This dg-algebra is formal and quasi-isomorphic to its homology (concentrated in degree $0$), the non-deformed multiplicative preprojective algebra \cite{MR4582531}. Modules over this algebra is also a model for the derived category of coherent sheaves on the resolution with a divisor removed, which we show by localisation of the preprojective algebra of the $\tilde{A}_n$-quiver, in Proposition~\ref{bsidecycplumb}.

For the other direction we give a short sketch. However, the theory needed to make this a rigours computation of the wrapped Fukaya categories of the non-exact spaces appearing in Example~\ref{loghkA} is missing. We conjecture that these categories are derived equivalent with certain deformed multiplicative preprojective algebras.

We start by proving that for certain parameters the deformed multiplicative preprojective algebra of the $\tilde{A}_n$-quiver is derived Morita equivalent with derived category of coherent sheaves of some $A_n$-Milnor fibre with a fibre removed. 

\subsubsection{Type $A_n$-Milnor fibres and deformed multiplicative preprojective algebras}\label{multpreprojbside}

As a Lefschetz fibration, i.e. on the $A$-side, the Milnor fibre minus a fibre is unique and equivalent to the affine variety 
\[
\{Z^{n+1}-1=XY\}\cap\{Z\not=0\}\subset\mathbb{C}^3,
\] 
and the fibration is given by projecting to $z$-coordinate. However, when considered as algebraic varieties there are many Milnor fibres; they have the form $\{P(Z)=XY\}$, where $P(Z)$ is a degree $n+1$ polynomial without repeated roots. In \cite{MR520463}, the affine structures on the varieties arising in Example~\ref{HKquotient} away from the poles (Figure~\ref{hkfamilypic}) is described in terms of hyper-Kähler structure, where the roots are computed from areas of the exceptional spheres (see Remark~\ref{moreonfamily}). Note also that here $D_m=\{Z=0\}$, whereas in Example~\ref{loghkA} in $D_m=\{Z=-C\}$. The reason for this is a simple change of coordinates explained in Remark~\ref{substrmk}, where we also relate this to the parameters for the additive and multiplicative preprojective algebras (cf. Section~\ref{resstopsect}).

Since $X_{\text{Mil}}\smallsetminus D_m$ is affine, we have $D^{b}(\text{Coh}(X_{\text{Mil}}\smallsetminus D_m))=D^b(\mathbb{C}[X,Y,Z^{\pm1}]/(P(Z)-XY))$. We relate this to the deformed multiplicative preprojective algebra (see Example~\ref{multanex}) in the propositions below.

\begin{proposition}\label{parameters}
Let $B=\Lambda^q(\tilde{A}_n)$ be the deformed multiplicative preprojective algebra with parameters $q_1,\dots,q_{n+1}\in\mathbb{C}^{\times}$ such that $q_1\cdots q_{n+1}=1$. Then 
\[
A=e_{n+1}Be_{n+1}\cong\mathbb{C}[X,Y,Z^{\pm 1}]/(P(Z)-XY),
\]
where $P(Z)$ is of degree $n+1$. Furthermore, the roots of $P(Z)$ are $Q_i=\prod_{j=1}^iq_j$, for $i=1,\dots,n+1$.
\end{proposition}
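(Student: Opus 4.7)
The plan is to build an explicit algebra homomorphism $\phi : \mathbb{C}[X,Y,Z^{\pm 1}]/(P(Z) - XY) \to e_{n+1} B e_{n+1}$ by choosing distinguished elements of the corner algebra and then promoting it to an isomorphism. Set
\begin{align*}
X &= \alpha_n \alpha_{n-1} \cdots \alpha_1 \alpha_{n+1}, \\
Y &= \alpha_{n+1}^* \alpha_1^* \cdots \alpha_n^*, \\
Z &= z,
\end{align*}
all of which lie in $e_{n+1} B e_{n+1}$, with $Z$ invertible by construction. Writing $u_j = e_j + \alpha_{j-1}\alpha_{j-1}^*$ and $v_j = e_j + \alpha_j^* \alpha_j$, the multiplicative relations read $u_j = q_j v_j$; in particular $v_{n+1} = Z$ and $u_{n+1} = q_{n+1} Z$. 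The key computational input is the reduction identity
\[
\alpha_j\, p(v_j)\, \alpha_j^* \;=\; (u_{j+1} - e_{j+1})\, p(u_{j+1}),
\]
valid for any polynomial $p$, which follows by expanding $v_j^k = (e_j + \alpha_j^* \alpha_j)^k$ binomially, applying $\alpha_j (\alpha_j^* \alpha_j)^{\ell} \alpha_j^* = (\alpha_j \alpha_j^*)^{\ell+1}$, and using $e_{j+1} + \alpha_j \alpha_j^* = u_{j+1}$.

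Set $\pi_0 = \alpha_{n+1}\alpha_{n+1}^* = u_1 - e_1 \in e_1 B e_1$ and $\pi_i = \alpha_i \pi_{i-1} \alpha_i^* \in e_{i+1} B e_{i+1}$, so that $\pi_n = XY$. If $\pi_{i-1} = p_{i-1}(v_i)$, the reduction identity combined with $u_{i+1} = q_{i+1} v_{i+1}$ yields the recursion $p_i(v) = (q_{i+1} v - 1)\, p_{i-1}(q_{i+1} v)$ with seed $p_0(v) = q_1 v - 1$, which telescopes to
\[
p_n(v) \;=\; \prod_{k=1}^{n+1}\Bigl(\Bigl(\prod_{j=k}^{n+1} q_j\Bigr) v - 1\Bigr).
\]
The roots in $v$ are $\bigl(\prod_{j=k}^{n+1} q_j\bigr)^{-1}$ for $k = 1, \ldots, n+1$; using $q_1 \cdots q_{n+1} = 1$ each such root equals $\prod_{j=1}^{k-1} q_j = Q_{k-1}$, so the root set is $\{Q_0, Q_1, \ldots, Q_n\} = \{Q_1, \ldots, Q_{n+1}\}$ since $Q_0 = Q_{n+1} = 1$. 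Setting $P(Z) := p_n(Z)$ therefore realises $XY = P(Z)$ with precisely the claimed roots, making $\phi$ well defined.

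Surjectivity of $\phi$ is handled through the $\mathbb{Z}$-grading on $B$ with $\deg(\alpha_i) = +1$ and $\deg(\alpha_i^*) = -1$; all multiplicative relations are homogeneous of degree $0$, and any loop at vertex $n+1$ has degree in $(n+1)\mathbb{Z}$. For a degree-$0$ loop I would iterate the reduction identity at interior vertices to collapse the path to a polynomial in $v_{n+1} = Z$; for a loop of degree $k(n+1)$ with $k > 0$ I peel off $k$ copies of $X$ on the left (or $Y$ on the right when $k < 0$) and reduce the remainder to degree zero. Hence $X, Y, Z^{\pm 1}$ generate $e_{n+1} B e_{n+1}$.

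The hardest point is injectivity, for which I plan a graded-rank comparison over $\mathbb{C}[Z^{\pm 1}]$. On the source, the relation $XY = P(Z)$ forces every element into the normal form $f(Z) X^k$ ($k \geq 0$) or $g(Z) Y^k$ ($k > 0$), so each graded piece is free of rank one over $\mathbb{C}[Z^{\pm 1}]$. To match this on the target I would invoke a flat deformation in the parameter $q$: specialising to $q_i = 1$ recovers the additive preprojective algebra $\Pi(\tilde{A}_n)$, whose corner $e_{n+1} \Pi(\tilde{A}_n) e_{n+1}$ is the classical McKay invariant ring $\mathbb{C}[x,y]^{\mathbb{Z}/(n+1)} \cong \mathbb{C}[x,y,z]/(xy - z^{n+1})$, with graded ranks matching the source. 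Flatness of the family of multiplicative preprojective algebras over the locus $q_1 \cdots q_{n+1} = 1$ for type $\tilde{A}_n$ (closely related to Lemma~\ref{substlemma} together with the formality statements used elsewhere in the paper) then transfers these ranks to every admissible $q$, forcing $\phi$ to be injective and yielding the stated isomorphism.
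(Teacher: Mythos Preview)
Your computation of $XY = P(Z)$ via the reduction identity $\alpha_j\,p(v_j)\,\alpha_j^* = (u_{j+1}-e_{j+1})\,p(u_{j+1})$ and the resulting recursion $p_i(v) = (q_{i+1}v-1)\,p_{i-1}(q_{i+1}v)$ is correct and in fact cleaner than the paper's version, which introduces the intermediate elements $A_i = \alpha_n\cdots\alpha_i\alpha_i^*\cdots\alpha_n^*$ and unwinds them one step at a time. You arrive at the same polynomial with the same roots.

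There is, however, a genuine gap before you can even define $\phi$. The source $\mathbb{C}[X,Y,Z^{\pm1}]/(P(Z)-XY)$ is commutative, so for $\phi$ to be an algebra homomorphism you must first verify that the images of $X$, $Y$, $Z$ pairwise commute in $e_{n+1}Be_{n+1}$. You never check this. The paper does: it shows $ZX=XZ$ and $ZY=YZ$ by direct manipulation of the multiplicative relations, and then for $XY=YX$ it argues that $X(XY-YX)=0$ and $Y(XY-YX)=0$ and invokes the fact that $B$ is a prime ring (citing \cite[Proposition~6.11]{MR4582531}) to conclude $XY-YX=0$. Without some such argument, your $\phi$ is not yet well defined; your later rank comparison cannot retroactively supply this, since it presupposes the existence of the map whose bijectivity you are testing.

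On the remaining points the two approaches differ in emphasis. Your graded surjectivity argument is more explicit than the paper's one-line ``$X$, $Y$ and $Z$ clearly generate $A$,'' though your handling of a general degree-$0$ loop is still only sketched. Conversely, your injectivity via a flat deformation to $q_i=1$ is more structured than anything in the paper, which does not explicitly address injectivity at all; but your appeal to flatness of the family $\Lambda^q(\tilde{A}_n)$ over the locus $q_1\cdots q_{n+1}=1$ is itself a nontrivial input that you would need to justify (Lemma~\ref{substlemma} gives an isomorphism with a deformed preprojective algebra, but does not by itself give flatness in the sense you need).
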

\begin{proof}
We put $X=\alpha_n\cdots\alpha_1\alpha_{n+1}$, $Y=\alpha_{n+1}^*\alpha_1^*\cdots\alpha_n^*$ and $Z=z$. First of all $X$, $Y$ and $Z$ clearly generates $A$.

Next we show that $A$ is commutative. First of all
\[
ZX=z\alpha_n\cdots\alpha_1\alpha_{n+1}=(e_{n+1}+\alpha_{n+1}^*\alpha_{n+1})\alpha_n\cdots\alpha_1\alpha_{n+1}
\]
\[
=(e_{n+1}+(q_{n+1}^{-1}-e_{n+1}+q_{n+1}^{-1}\alpha_n\alpha_{n}^*)\alpha_n\dots\alpha_1\alpha_{n+1}
\]
\[
=q_{n+1}^{-1}\alpha_n\cdots\alpha_1\alpha_{n+1}+q_{n+1}^{-1}\alpha_n(q_n^{-1}-e_n+q_n^{-1})\alpha_{n-1}\cdots\alpha_1\alpha_{n+1}
\]
\[
=\cdots=q_{n+1}^{-1}\cdots q_2^{-1}\alpha_n\cdots\alpha_1\alpha_{n+1}+q_{n+1}^{-1}\cdots q_2^{-1}(q_1^{-1}-e_1+q_1^{-1}\alpha_{n+1}\alpha_{n+1}^*)\alpha_{n+1}
\]
\[
=X(1+\alpha_{n+1}^*\alpha_{n+1})=XZ.
\]
Simlarly $YZ=ZY$. By repeated application of the relations $e_i+\alpha_{i-1}\alpha_{i-1}^*=q_ie_i+q_i\alpha_i^*\alpha_i$ and $Z=z=\alpha_{n+1}^*\alpha_{n+1}+e_{n+1}$,
\begin{align*}
XY&=P(Z), \\
YX&=Q(Z)
\end{align*}
where $P$ and $Q$ are of degree $n+1$. Since $X$ (or $Y$) commute with $Z$ and hence with $XY$ (and with $YX$) we conclude that 
\[
X(XY-YX)=XYX-XYX=0.
\]
Similarly $Y(XY-YX)=0$. Then, in fact $XA(XY-YX)=0$, and hence $XB(XY-YX)=0$. Using that $B$ is prime, i.e. that $rBs=0$ implies either $r=0$ or $s=0$, \cite[Proposition 6.11]{MR4582531} we conclude that $XY-YX=0$.

Let $A_{i}=\alpha_n\cdots\alpha_i\alpha_i^*\dots\alpha_n^*$, for $i=1,\dots, n$. Then
\[
XY=\alpha_n\cdots\alpha_1\alpha_{n+1}\alpha_{n+1}^*\alpha_1^*\dots\alpha_n^*=\alpha_n\cdots\alpha_1(q_1-e_1+q_1\alpha_1^*\alpha_1)\alpha_1^*\cdots\alpha_n^*
\]
\[
=(q_1-1)A_1+q_1\alpha_n\cdots\alpha_1\alpha_1^*\alpha_1\alpha_1^*\cdots\alpha_n^*=(q_1-1)A_{1}+q_1\alpha_n\cdots\alpha_2(q_2-e_2+q_2\alpha_2^*\alpha_2)\alpha_1\alpha_1^*\cdots\alpha_n^*
\]
\[
=(q_1q_2-1)A_1+q_1q_2\alpha_n\cdots\alpha_3(q_3-e_3+q_3\alpha_3^*\alpha_3)\alpha_2\alpha_1\alpha_1^*\cdots\alpha_n^*
\]
\[
=\dots=(q_1\cdots q_{n+1}-1)A_{1}+q_1\cdots q_{n+1}a_{n+1}^*a_{n+1}A_1=(Z-1)A_1.
\]
In the same way,
\[
A_{i}=(q_i\cdots q_{n+1}-1)A_{i+1}+q_{i}\cdots q_{n+1}(Z-1)A_{i+1}=((q_i\cdots q_{n+1})Z-1)A_{i+1}.
\]
We conclude that
\[
P(Z)=XY=(Q_1^{-1}Z-1)\cdots(Q_{n+1}^{-1}Z-1).
\]
\end{proof}

\begin{proposition}\label{multloc}
In the setting in Theorem~\ref{parameters} above, if furthermore, for all $1\leq i\leq j\leq n$, we have $\prod_{i\leq l\leq j}q_l\not=1$, i.e. the roots of $P(Z)$ are distinct, then $D^b(B\textup{-Mod})\simeq D^b(A\textup{-Mod})$.
\end{proposition}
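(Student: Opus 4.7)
The plan is to establish an abelian Morita equivalence between $B$ and $A$, which will immediately descend to the desired derived equivalence. By Proposition~\ref{parameters}, $A = eBe$ where $e = e_{n+1}$, and by the standard Morita theory for idempotents, the functor
\[
M \mapsto Me : B\text{-Mod} \to eBe\text{-Mod} = A\text{-Mod}
\]
is an equivalence of abelian categories provided that the two-sided ideal $BeB$ equals $B$. The whole argument therefore reduces to showing that $e = e_{n+1}$ is a \emph{full} idempotent, i.e. $e_j \in BeB$ for every $j = 1, \dots, n+1$.

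First I would collect some elementary observations from the defining relations of $\Lambda^q(\tilde{A}_n)$. For each $i$ one has $\alpha_{i-1}\alpha_{i-1}^{*} \in Be_{i-1}B$ and $\alpha_i^{*}\alpha_i \in Be_{i+1}B$ (since these paths factor through $e_{i-1}$ and $e_{i+1}$ respectively). Combined with the relation
\[
(q_i - 1)e_i \;=\; \alpha_{i-1}\alpha_{i-1}^{*} - q_i\,\alpha_i^{*}\alpha_i
\]
and the fact that $q_i \neq 1$ (which follows from the genericity hypothesis specialised to $i=j$), this yields $e_i \in Be_{i-1}B + Be_{i+1}B$.

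To upgrade this to $e_j \in BeB$ for the single idempotent $e = e_{n+1}$, I would consider the loop $\gamma_j = \alpha_j^{*}\alpha_{j+1}^{*}\cdots\alpha_n^{*}\,\alpha_n\cdots\alpha_{j+1}\alpha_j \in e_jBe_j$, which visibly factors through $e_{n+1}$ and hence lies in $BeB$. Iterated substitution using $\alpha_i^{*}\alpha_i = q_i^{-1}(e_i + \alpha_{i-1}\alpha_{i-1}^{*}) - e_i$ should rewrite $\gamma_j$ as a polynomial in $\alpha_j^{*}\alpha_j$ with coefficients involving $\prod_{l=j+1}^{n+1} q_l$ and its partial subproducts; the genericity condition $\prod_{i \le l \le j} q_l \neq 1$ guarantees that the scalar factors appearing in this reduction are invertible, and combining it with the relations at vertices between $j$ and $n+1$ allows $e_j$ to be expressed as a $\mathbb{C}$-linear combination of elements already known to lie in $BeB$. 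An alternative and possibly cleaner formulation is to analyse the quotient $\tilde{B} = B/BeB$ directly: in $\tilde{B}$ one has $\alpha_{n} = \alpha_n^{*} = \alpha_{n+1} = \alpha_{n+1}^{*} = 0$, the relations collapse to a chain of identities relating $u_i = \alpha_i\alpha_i^{*}$ and $v_i = \alpha_i^{*}\alpha_i$, and one shows that the genericity hypothesis forces all $\tilde{e}_i$ to vanish.

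The hardest step will be executing the iterative reduction cleanly while keeping track of the scalar factors and showing they are nonzero exactly when the genericity condition holds (equivalently, exactly when the roots $Q_i$ of $P(Z)$ are distinct, so that $A$ is smooth). Once $BeB = B$ is established, the Morita equivalence theorem yields an equivalence of abelian categories $B\text{-Mod} \simeq A\text{-Mod}$, and this passes to an equivalence $D^b(B\text{-Mod}) \simeq D^b(A\text{-Mod})$, completing the proof.
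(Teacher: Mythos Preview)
Your approach is correct and takes a genuinely different, more elementary route than the paper. Both arguments reduce to the same key fact---that the deformed multiplicative preprojective algebra $\Lambda^{q'}(A_n)$ of the \emph{linear} $A_n$-quiver vanishes under the genericity hypothesis---but they package it differently. The paper works at the dg-level: it replaces $B$ by the semi-free model $\mathcal{G}_M^q(\tilde{A}_n)$, takes the Drinfeld quotient at $e_{n+1}$ (which by Proposition~\ref{dgquotalg} is $\mathcal{G}_M^{q'}(A_n)$), observes that $H_0$ of this quotient is $\Lambda^{q'}(A_n)=0$ so that $1=0$ in homology, and then invokes Proposition~\ref{quotgen} to conclude that $e_{n+1}B$ split-generates $D^\pi(B)$, whence $D^\pi(B)\simeq D^\pi(e_{n+1}Be_{n+1})=D^\pi(A)$. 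Your argument stays at the abelian level: $B/Be_{n+1}B\cong\Lambda^{q'}(A_n)=0$ means $Be_{n+1}B=B$, so $e_{n+1}$ is a full idempotent and classical Morita theory yields an equivalence of abelian categories $B\text{-Mod}\simeq A\text{-Mod}$, hence of their bounded derived categories.

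Your second formulation (analysing $\tilde{B}=B/BeB$ directly) is the cleaner of the two you outline, and is exactly the observation that the quotient is $\Lambda^{q'}(A_n)$; your first (tracking $\gamma_j$ and scalar factors by hand) is essentially a direct proof of that vanishing and is indeed more laborious. Either way, the external input you need---that $\Lambda^{q'}(A_n)=0$ precisely when no consecutive product $\prod_{i\le l\le j}q_l$ equals $1$---is the same one the paper cites from \cite[Example~2.3]{MR4582531}. What your approach buys is a stronger conclusion (an honest abelian Morita equivalence, not just a derived one) with less machinery; what the paper's approach buys is consistency with the dg framework used throughout, and it would adapt more readily to situations where $B$ is only given up to quasi-isomorphism.
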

\begin{proof}
We replace $B$ by the quasi-isomorphic semi-free dg-algebra $\mathcal{G}_M^q(\tilde{A}_{n+1})$ \cite[Theorem 3.7, Proposition 4.4]{MR4582531}. The Drinfeld quotient at $e_{n+1}$ is then quasi-isomorphic to $\mathcal{G}_M^{q'}(A_n)$, by Proposition~\ref{dgquotalg}, where $q'=(q_1,\dots,q_n)$, i.e. the first $n$ entries in $q$. 

Using that $H_0(\mathcal{G}_m^{q'}(A_n))=\Lambda^{q'}(A_n)\not=0$ if and only if there are $i<j$ such that $\prod_{i\leq l\leq j}q_l=1$ \cite[Example 2.3]{MR4582531}, we now apply Proposition~\ref{quotgen} to deduce that $D^{\pi}(A)\simeq D^{\pi}(B)$ (since $\Lambda^{q'}(A_n)=0$, then in particular $1=0$ in the algebra $H_{\bullet}(\mathcal{G}_M^{q'}(A_n))$). Finally $D^b(A)=D^{\pi}(A)$, and hence $D^b(A)\simeq D^b(B)$, because $A$ is a regular affine algebra \cite[Theorem 3.1.3]{MR1996800}.
\end{proof}

\subsubsection{Deformed multiplicative preprojective algebras of type $\tilde{A}_n$ and the non-exact case}\label{multpreprojnonexact}
We will now consider a slightly different setting, using a (log-)Novikov parameter $s$. That is, we consider the deformed multiplicative preprojective algebra with coefficients in the field of formal Laurent series $k=\mathbb{C}((s))$ (that is, power series in $s^{\pm 1}$ with only finitely many terms with negative exponent). Associated to $X_{\zeta}$ are the parameters $\lambda_1,\dots,\lambda_{n+1}$, that are related to the symplectic areas of the exceptional divisors of the resolution (see Example~\ref{loghkA} and Remark~\ref{moreonfamily}), satisfying $\lambda_1+\dots+\lambda_{n+1}=0$ and $\sum_{l=i}^j\lambda_l\not=0$, for all choices $1\leq i\leq j\leq n$. We put 
\[
q_i(s)=e^{s\lambda_i}=\sum_{m\geq 0}\frac{\lambda_i^m}{m!}s^m\in\mathbb{C}((s)),
\]
and we denote the deformed multiplicative preprojective algebra over $\mathbb{C}((s))$ with parameters $q_i(s)$, by $\Lambda^{q(s)}(\tilde{A}_n)$. Note that $\lambda_1+\cdots+\lambda_{n+1}=0$ implies that $q_1(s)\cdots q_{n+1}(s)=1$, and that $\sum_{l=i}^j\lambda_l\not=0$ implies that $\prod_{l=i}^jq_l(s)\not=1$.

Likewise, we denote the deformed derived  multiplicative preprojective algebra defined over $\mathbb{C}((s))$ with parameters $q_i(s)=e^{s\lambda_i}$ by $\mathcal{G}_M^{q(s)}(\tilde{A}_n)$.

\begin{conjecture}\label{cycconj}
The perfect derived non-exact wrapped Fukaya category $D^{\pi}\mathcal{W}(X_{\textup{res}}\smallsetminus D_r)$ is triangulated equivalent to $D^b(\Lambda^{q(s)}(\tilde{A}_n))$, with $q_i(s)=e^{s\lambda_i}$.
\end{conjecture}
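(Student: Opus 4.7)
The plan is to mirror the strategy used for the exact case (Theorem~\ref{mainresult} and Section~\ref{multpreprojbside}), adapting each step to the non-exact setting with coefficients in $\mathbb{C}((s))$. The key geometric input is that $X_{\textup{res}}\smallsetminus D_r$ is, as a smooth manifold, still diffeomorphic to the cyclic plumbing $\#_{\tilde{A}_n}T^*S^2\simeq X_{\textup{Mil}}\smallsetminus D_m$; the difference is that the exceptional $(-2)$-spheres now carry symplectic areas proportional to $\lambda_i$ (in the sense of Remark~\ref{moreonfamily}) instead of zero. Thus the underlying handle decomposition from Proposition~\ref{geometriceq} and the Legendrian link $\iota(\Lambda_{\textup{cyc}})\subset S^1\times S^2$ from Figure~\ref{plumbingpic} are the same. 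The difference is that in the non-exact setting, bounding holomorphic curves acquire symplectic areas that must be weighted by a Novikov parameter $s$, giving rise to loop space coefficients $t_i$ whose geometric values are governed by the areas.

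First, I would assume that a suitable definition of the non-exact wrapped Fukaya category $\mathcal{W}(X_{\textup{res}}\smallsetminus D_r)$ exists with coefficients in $\mathbb{C}((s))$, and that the analogues of the generation theorem \cite{MR4732675} and the surgery formula \cite{MR2916289}, \cite{MR4634745} hold in this setting. Granting this, $D^{\pi}\mathcal{W}(X_{\textup{res}}\smallsetminus D_r)$ is split-generated by the cocores of the critical handles attached along $\iota(\Lambda_{\textup{cyc}})$, and the endomorphism algebra of the sum of cocores is quasi-isomorphic to a deformed Chekanov--Eliashberg algebra $CE^{q(s)}(\iota(\Lambda_{\textup{cyc}}))$. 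The deformation arises from the surgery formula with loop space coefficients (cf.\ Remark~\ref{loopspace}): the non-exact discs in the plumbing crossing the exceptional spheres contribute factors $t_i$, which are then specialised to $q_i(s)=e^{s\lambda_i}\in\mathbb{C}((s))$ via the exponentiated symplectic area.

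Second, I would compute $CE^{q(s)}(\iota(\Lambda_{\textup{cyc}}))$ directly in $S^1\times S^2$, following the combinatorial model of \cite{MR3356070} as recalled in Section~\ref{invprel}. By the same Legendrian handlebody calculus used to prove Proposition~\ref{geometriceq}, the disc count reproduces that computed in \cite{MR4033516} for the plumbing, now with loop space coefficients $t_i$ retained. The expected outcome, following the pattern established in Proposition~\ref{multloc} and the promised Lemma~\ref{nonloccechoice}, Lemma~\ref{substlemma}, Proposition~\ref{deformedformal}, is that $CE^{q(s)}(\iota(\Lambda_{\textup{cyc}}))$ is quasi-isomorphic to the deformed derived multiplicative preprojective algebra $\mathcal{G}_M^{q(s)}(\tilde{A}_n)$, whose homology is concentrated in degree $0$ and equals $\Lambda^{q(s)}(\tilde{A}_n)$ (the formality argument carries over verbatim from the exact case since the parameters $q_i(s)$ are invertible in $\mathbb{C}((s))$). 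Combining these two steps yields
\[
D^{\pi}\mathcal{W}(X_{\textup{res}}\smallsetminus D_r)\simeq D^{\pi}(\mathcal{G}_M^{q(s)}(\tilde{A}_n))\simeq D^{\pi}(\Lambda^{q(s)}(\tilde{A}_n))\simeq D^b(\Lambda^{q(s)}(\tilde{A}_n)),
\]
where the last equivalence uses regularity of $\Lambda^{q(s)}(\tilde{A}_n)$ (the smoothness of the corresponding Milnor fibre away from the poles, cf.\ Proposition~\ref{multloc}).

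The main obstacle is foundational: the non-exact wrapped Fukaya category with Novikov coefficients for Liouville manifolds with non-exact symplectic cycles on compact Lagrangians has not been constructed in the literature in a form for which both the generation result and the surgery formula are available. Establishing these would require controlling bubbling of holomorphic discs against the exceptional spheres, which is precisely what the Novikov parameter $s$ is designed to absorb. A secondary, but technically nontrivial, point is verifying that the identification $t_i\mapsto q_i(s)=e^{s\lambda_i}$ is exactly the one induced by the geometric symplectic area (with the correct normalisation and compatibility with the relation $\lambda_1+\cdots+\lambda_{n+1}=0$, which guarantees $q_1(s)\cdots q_{n+1}(s)=1$ and thus the existence of a well-defined global parameter in the multiplicative preprojective algebra). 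Once these foundational pieces are in place, the remaining argument is essentially a verbatim transcription of Theorem~\ref{mainresult} combined with Proposition~\ref{multloc}.
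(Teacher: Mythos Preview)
Your proposal is correct and follows essentially the same route as the paper's own sketch: assume non-exact analogues of generation (Theorem~\ref{generation}) and surgery (Theorem~\ref{surgerystd}), compute the Chekanov--Eliashberg algebra of the link in Figure~\ref{plumbingpic} with loop space coefficients specialised to $t_i\mapsto q_i(s)=e^{s\lambda_i}$ to obtain $\mathcal{G}_M^{q(s)}(\tilde{A}_n)$ (as in \cite{MR4033516}), and then invoke formality (\cite{MR4582531} or Remark~\ref{formalityremark}) together with the argument of Proposition~\ref{multloc} to pass to $D^b(\Lambda^{q(s)}(\tilde{A}_n))$. The only minor imprecision is your appeal to Lemma~\ref{nonloccechoice}, Lemma~\ref{substlemma}, and Proposition~\ref{deformedformal} as a pattern: those lemmas concern the \emph{additive} (non-localised) deformation relevant to the stopped case in $J^1S^1$, whereas here the computation is genuinely in $S^1\times S^2$ without stop and lands in the \emph{multiplicative} deformation, with formality supplied by \cite{MR4582531} rather than by those lemmas.
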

\begin{proof}[Sketch of proof]
As a symplectic manifold $X_{\text{res}}$ is $(X_{\zeta},\omega_I)$ from Example~\ref{HKquotient}. After also removing $D_r$, this is topologically the cyclic plumbing of $T^*S^2$, but with respect to $\omega_I$ the $(-2)$-spheres are symplectic (see \cite[Lemma 40]{MR2720232}). Assuming results analogous to Theorem~\ref{generation} and Theorem~\ref{surgerystd} for the non-exact wrapped Fukaya category (i.e. the surgery formula and generation by cocores), then we can conclude that $\mathcal{W}(X_{\text{res}}\smallsetminus D_r)$ is perfect derived equivalent to modules over the endomorphism algebra of the Lagrangian cocores (which survive when adding areas to the spheres), which in turn can be computed by the surgery formula. This computation should then show that $D^{\pi}\mathcal{W}(X_{\text{res}}\smallsetminus D_r)$ is equivalent to the perfect derived category of the deformed derived multiplicative preprojective algebra $\mathcal{G}_M^{q(s)}(\tilde{A}_n)$ analogously to \cite{MR4033516}, or by direct computation via Figure~\ref{plumbingpic} and Remark~\ref{loopspace}. More precisely, this is achieved by computing the Chekanov--Eliashberg algebra with loop space coefficients $t_i^{\pm 1}$, and then specialising $t_i\mapsto q_i(s)=e^{s\lambda_i}\in\mathbb{C}((s))$. Note that the non-exactness forces the introduction of a (log-)Novikov parameter $s$. 

The dg-algebra $\mathcal{G}_M^{q_i(s)}(\tilde{A}_n)$ is formal and quasi-isomorphic to the deformed multiplicative preprojective algebra $\Lambda^{q_i(s)}(\tilde{A}_n)$ \cite{MR4582531} (or by Remark~\ref{formalityremark}), which implies that $D^{\pi}(\mathcal{G}_M^{q(s)}(\tilde{A}_n))$ is equivalent to $D^b(\Lambda^{q(s)}(\tilde{A}_n))$ (here we also use that $D^{\pi}(\Lambda^{q(s)}(\tilde{A}_n))=D^{b}(\Lambda^{q(s)}(\tilde{A}_n))$; see the proof of Proposition~\ref{multloc}).  
\end{proof}
Note that the cyclic plumbing contains an $A_n$-plumbing where the spheres are the exceptional divisors $E_1,\dots,E_n$. The extending vertex corresponds to a sphere that should be homologous to $-[E_1]-\cdots-[E_n]$, which explains the relation $\lambda_1+\dots+\lambda_{n+1}=0$.
\subsubsection{Equivalences as infinitesimal variations of $\Lambda^q(\tilde{A}_n)$ near $q=(1,\dots,1)$}\label{multinfvar}
It is described in Example~\ref{loghkA} how $(X_{\zeta}\smallsetminus D_u,I_u)$, for $u\in O\smallsetminus\{(1,0,0)\}$, i.e., away from the north pole in the upper hemisphere, can be represented as the affine variety
\[
\{XY=P(Z)\}\smallsetminus\{Z=-C\},
\]
for $P(Z)=(Z-\Lambda_1)\cdots(Z-\Lambda_{n+1})$. In particular this holds for $X_{\zeta}\smallsetminus D_u$, along the equator, here denoted by $X_{\text{Mil}}\smallsetminus D_m$. The roots are given by $\Lambda_i=\lambda_1+\cdots+\lambda_{i}$. By the coordinate change $Z\mapsto CZ-C$, $X\mapsto C^{n+1}X$ this can be rewritten as
\[
\{XY=(Z-(C^{-1}\Lambda_1+1))\cdots(Z-(C^{-1}\Lambda_{n+1}+1))\}\smallsetminus \{Z=0\}.
\]
Thus, by Section~\ref{multpreprojbside}, we conclude that
\[
D^b(\text{Coh}(X_{\text{Mil}}\smallsetminus D_m))=D^b(\Lambda^q(\tilde{A_n})),
\]
where the parameters are given by
\[
q_i=\frac{1+C^{-1}\Lambda_i}{1+C^{-1}\Lambda_{i-1}}=\frac{1+C^{-1}(\lambda_1+\dots+\lambda_i)}{1+C^{-1}(\lambda_1+\dots+\lambda_{i-1})},
\]
or equivalently, by the relation $C^{-1}\lambda_i=q_1\cdots q_{i-1}(q_i-1)$. (Cf. Remark~\ref{substrmk}.)

Note that this means that we can also view $C^{-1}$ as a parameter deforming the roots. By the description in Example~\ref{loghkA} and Remark~\ref{moreonfamily}, $C^{-1}$ is equivalently seen as a parameter describing a deformation of the complex structure near the pole in the underlying hyper-Kähler structure. 

If we regard $s=C^{-1}$ as an infinitesimal variation parameter, that is, we consider the deformed multiplicative preprojective algebra above over the ring $\mathbb{C}[[s]]/(s^2)$, then this algebra can be identified with $\Lambda^{q(s)}(\tilde{A}_n)$ obtained in Section~\ref{multpreprojnonexact}, provided that this algebra is also interpreted with coefficients in $k=\mathbb{C}[[s]]/(s^2)$. Indeed, the deformation parameters are then 
\[
q_i(s)=e^{s\lambda_i}=1+s\lambda_i,
\]
and hence 
\[
q_1(s)\cdots q_{i-1}(s)(q_i(s)-1)=(1+s\lambda_1)\cdots(1+s\lambda_{i-1})(s\lambda_i)=s\lambda_i.
\]
In other words, the partially wrapped Fukaya category of the resolution determines the family of affine varieties along the equator as a limit case, if we choose the parameter $C\gg0$ sufficiently large.
\subsubsection{Coherent sheaves on $X_{\text{res}}\smallsetminus D_r$}

Recall that $X_{\text{res}}$ is a smooth variety obtained by repeated blowing up singularities, starting from the singular variety $Z^{n+1}=XY$. Complexes of coherent sheaves on $X_{\text{res}}$ correspond to complexes of $\Pi(\tilde{A}_n)$-modules (after considering derived categories), and the global sections of the structure sheaf are in correspondence with the central elements of this algebra.

We compute $D^b(\text{Coh}(X_{\text{res}}\smallsetminus D_r))$. Since we remove a divisor, this means that the category should be a localisation of $D^b(\Pi(\tilde{A}_n))$. Indeed:
\begin{proposition}\label{bsidecycplumb}
Let $D_r=\{Z+1=0\}\subset X_{\textup{res}}$. There is an equivalence $D^b(\textup{Coh}(X_{\textup{res}}\smallsetminus D_r))=D^b(\textup{Coh}(X_{\textup{res}}))/\mathcal{M}\cong D^b(\Lambda^1(\tilde{A}_n))$, where $\mathcal{M}$ is the triangulated subcategory generated by coherent sheaves with support on $D_r$. The equivalence can be induced from the localisation $\Pi(\tilde{A}_n)\rightarrow \Lambda^1(\tilde{A}_n)$ which makes $1+\sum_i\alpha_i^*\alpha_i$ invertible.
\end{proposition}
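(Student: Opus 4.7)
The approach rests on three ingredients: the derived McKay correspondence, the standard description of coherent sheaves on an open complement as a Verdier quotient, and the identification of a central localisation of $\Pi(\tilde{A}_n)$ with the multiplicative preprojective algebra $\Lambda^1(\tilde{A}_n)$.

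The plan is first to invoke the derived McKay equivalence $D^b(\textup{Coh}(X_{\textup{res}})) \simeq D^b(\Pi(\tilde{A}_n))$ from Theorem~\ref{derivedmckay}. Under this equivalence (induced by a tilting bundle, cf.\ the reformulation in Section~\ref{secmckay}), multiplication by global functions on $X_{\textup{res}}$ corresponds to multiplication by central elements of $\Pi(\tilde{A}_n)$. The identification of $\mathcal{O}(X_{\textup{res}}) = \mathbb{C}[X,Y,Z]/(Z^{n+1}-XY)$ with $Z(\Pi(\tilde{A}_n))$ is well known (cf.\ \cite{MR1620538}); in particular, under a suitable normalisation matching the choice $D_r = \{Z+1 = 0\}$, the global function $Z$ maps to the central element $\zeta = \sum_i \alpha_i^*\alpha_i$, so that $Z+1$ corresponds to $1+\zeta$.

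Second, I would use the general Gabriel/Thomason--Trobaugh principle that, for a Noetherian scheme $X$ and closed subset $D \subset X$ with open complement $U$, restriction induces a triangulated equivalence
\begin{equation*}
D^b(\textup{Coh}(X))/\mathcal{M}_D \xrightarrow{\sim} D^b(\textup{Coh}(U)),
\end{equation*}
where $\mathcal{M}_D$ is the subcategory of complexes whose cohomology sheaves are supported on $D$. Transporting this through the McKay equivalence, $\mathcal{M}_{D_r}$ becomes the subcategory of $D^b(\Pi(\tilde{A}_n))$ consisting of modules on which some power of $1+\zeta$ acts as zero. Since $\Pi(\tilde{A}_n)$ is module-finite over its centre, the Verdier quotient by this subcategory agrees with $D^b(\Pi(\tilde{A}_n)[(1+\zeta)^{-1}])$, the derived category of modules over the central localisation.

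Third, to match this with $\Lambda^1(\tilde{A}_n)$, I would use Definition~\ref{defmultpreproj} with spanning tree $T = \tilde{A}_n \smallsetminus \{\alpha_{n+1}\}$: then $\Lambda^1(\tilde{A}_n)$ is obtained from $\Pi(\tilde{A}_n)$ by formally inverting $z = e_{n+1} + \alpha_{n+1}^*\alpha_{n+1}$. But $e_i(1+\zeta) = e_i + \alpha_i^*\alpha_i$ for every $i$, and using the preprojective relations $\alpha_i^*\alpha_i = \alpha_{i-1}\alpha_{i-1}^*$ one sees that all the elements $e_i + \alpha_i^*\alpha_i$ are cyclically related by the arrows, so that inverting the single central element $1+\zeta$ amounts to inverting $z$ together with its ``siblings'' at the other idempotents, yielding the full presentation of $\Lambda^1(\tilde{A}_n)$. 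The main obstacle I foresee is in the bookkeeping of this final identification: one must check that adjoining $z^{-1}$ at vertex $n+1$ really produces inverses for all $e_i + \alpha_i^*\alpha_i$ and vice versa, so that the two localisations coincide; once that is in place, the normalisation issue in the first step (identifying $Z$ with $\zeta$ up to an additive constant absorbing the value $-1$) is essentially a change-of-variables and should cause no difficulty.
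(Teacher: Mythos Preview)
Your proposal is correct and follows essentially the same route as the paper's proof: identify $D^b(\textup{Coh}(X_{\textup{res}}\smallsetminus D_r))$ with a Verdier quotient, transport through the derived McKay correspondence so that killing sheaves supported on $D_r$ becomes killing $\Pi(\tilde{A}_n)$-modules annihilated by the central element $p = 1+\sum_i\alpha_i^*\alpha_i$, and then recognise the resulting central localisation $\Pi(\tilde{A}_n)[p^{-1}]$ as $\Lambda^1(\tilde{A}_n)$. The paper handles your anticipated obstacle (that inverting the single central element $p$ coincides with the presentation in Definition~\ref{defmultpreproj} where only $z = e_{n+1}+\alpha_{n+1}^*\alpha_{n+1}$ is formally inverted) by citing \cite[Remark~4.7]{MR4582531}, which establishes precisely this equivalence of presentations; your sketch of why the $e_i+\alpha_i^*\alpha_i$ are cyclically related via the preprojective relations is the right intuition behind that result.
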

\begin{proof}
Removing $D_r$ corresponds to killing all complexes of coherent sheaves with support at $D_r$. That is, $D^b(\text{Coh}(X_{\text{res}}\smallsetminus D_r))=D^b(\text{Coh}(X_{\text{res}})/\mathcal{M}$, where, equivalently, $\mathcal{M}=\text{thick}\langle im(\iota_*)\rangle$, for $\iota:\{Z+1=0\}\hookrightarrow X_\text{res}$, i.e., $\mathcal{M}$ is the triangulated subcategory generated by complexes of coherent sheaves with homology annihilated by the global section of the structure sheaf of the subvariety defined by $Z+1$.

Let $S=\mathbb{C}[U,V]^{\mathbb{Z}_{n+1}}=\Gamma(\mathcal{O}(X_{\text{res}}))$, the ring of $\mathbb{Z}_{n+1}$-invariant polynomials (the action is by $\xi\cdot(z_1,z_2)=(\xi z_1,\xi^{-1} z_2)$, where we act by the multiplicative group generated by $\xi=e^{\frac{2\pi i}{n+1}}$). Let $M=\mathbb{C}[U,V]$ be the canonical $S$-module. Then $\Pi(\tilde{A}_n)=\mathbb{C}[U,V]*\mathbb{Z}_{n+1}=\text{End}_S(M)$ (where the middle term is the skew-symmetric algebra, see the end of Section~\ref{secmckay}) can be seen as the non-commutative crepant resolution (see e.g. \cite{MR3698338}), via the fully-faithful functor
\begin{align*}
D^b(S)&\rightarrow D^b(\Pi(\tilde{A}_n)), \\
N&\mapsto N\otimes_S^{\mathbb{L}}M.
\end{align*}
Here $M$ is an $\text{End}_S(M)$-module via evaluation, which is compatible with $S$-module structure, viewing $S\subset\text{End}_S(M)$ as canonically embedded as the centre.

Therefore, using that $S\subset \text{End}_S(M)$ acts as multiplication by global sections of the sheaf $\mathcal{O}(X_{\text{res}})$ of regular functions, $D^b(\text{Coh}(X_{\text{res}}))/\mathcal{M}$ can equivalently be computed as $D^b(\Pi(\tilde{A}_n))/\mathcal{M}'$, where $\mathcal{M}'$ is the triangulated subcategory generated by modules $N$ which are annihilated by $UV+1$, which is the element corresponding to $Z+1$ under the isomorphism $\mathbb{C}[U,V]^{\mathbb{Z}_{n+1}}\cong\mathbb{C}[X,Y,Z]/(XY-Z^{n+1})$. Equivalently again, $\mathbb{C}[X,Y,Z]/(XY-Z^{n+1})\cong S\hookrightarrow \Pi(\tilde{A}_n)$,
where $S=Z(\Pi(\tilde{A}_n))$, is identified with the centre via
\[
X=\sum_{i=1}^{n+1}\alpha_{i-1}\cdots\alpha_{i+1}\alpha_i,\quad Y=\sum_{i=1}^{n+1}\alpha_{i}^*\cdots\alpha_{i-2}^*\alpha_{i-1}^*,\quad Z=\sum_{i=1}^{n+1}\alpha_i^*\alpha_i.
\]
Thus, as an element in $\Pi(\tilde{A}_n)$,
\[
p=Z+1=\sum_{i=1}^{n+1}(\alpha_i^*\alpha_i+e_i).
\]

The algebra obtained by localisation at this central element, $\Pi(\tilde{A}_n)[p^{-1}]$, is by definition $\Lambda^1(\tilde{A}_n)$ according to the convention used in \cite{MR4582531}, which is  equivalent to Definition~\ref{defmultpreproj} by \cite[Remark 4.7]{MR4582531}. Let $i:\Pi(\tilde{A}_n)\rightarrow\Lambda^1(\tilde{A}_n)$ denote the localisation map. By definition of localisation, since $p$ is not a zero-divisor, for a $\Pi(\tilde{A}_n)$-module $M$, its pushforward satisfies $i_*M=0$ if and only if $M\cdot p=0$. Furthermore, this pushforwards is exact. Thus the induced quotient $D^b(\Pi(\tilde{A}_n))\rightarrow D^b(\Lambda^1(\tilde{A}_n))$ has the the same kernel as the quotient $D^b(\Pi(\tilde{A}_n))\rightarrow D^b(\Pi(\tilde{A}_n))/\mathcal{M}'$. Hence, by the universal property of Verdier localisation the triangulated categories $D^b(\Lambda^1(\tilde{A}_n))$ and $D^b(\Pi(\tilde{A}_n))/\mathcal{M}'$ are equivalent. 
\end{proof}
\begin{remark}\label{bsidecycplumbremark}
Note that the removal of different choices of divisor of the form $D_r=\{Z+C=0\}$, for $C\not=0$, defines isomorphic varieties. Isomorphisms of the different localisations of the preprojective algebra $\Pi(\tilde{A}_n)$ can be achieved by e.g. rescaling all arrows $\alpha_i$, $i=1,\dots,n+1$ by the factor $C$. Cf. Example~\ref{loghkA}.
\end{remark}
\begin{figure}[H]
\centering
\begin{tikzpicture}
    \draw[thick] (0,0) rectangle (8.9,4.1);
    \node at (1,2.4) {$A$-side};
    \node at (1,0.8) {$B$-side};
    \draw[very thick] (2.5,4.1) to (2.5,0);
    \draw[very thick] (0,3.2) to (8.9,3.2);

    \node at (4,3.6) {$X_{\text{res}}\smallsetminus D_r$};
    \node at (7.5,3.6) {$X_{\text{Mil}}\smallsetminus D_m$};

    \draw[thick] (0,1.6) -- (8.9,1.6);
    \draw[thick] (5.7,0) -- (5.7,4.1);

    \node at (4,1.1) {$D^b(\Lambda^1(\tilde{A}_n))$};
    \node at (4.1,0.5) {Prop. \ref{bsidecycplumb}};'

    \node at (7.2,2.7) {$D^b(\Lambda^1(\tilde{A}_n))$};
    \node at (7.2,2.1) {\cite{MR4033516}, \cite{MR4582531}};

    \node at (4,2.7) {$D^b(\Lambda^{q(s)}(\tilde{A}_n))$?};
    \node at (4,2.1) {Conj. \ref{cycconj}};

    \node at (7.2,1.1) {$D^b(\Lambda^q(\tilde{A}_n))$};
    \node at (7.2,0.5) {Prop. \ref{multloc}};
\end{tikzpicture}
\caption{Summary of the results in Section~\ref{otherdir}. When considered in the same underlying hyper-Kähler manifold $X_{\zeta}$, the parameters in the main diagonal define the same algebras when considered as infinitesimal variations of $\Lambda^1(\tilde{A}_n)$ (see Section~\ref{multinfvar}).}
\label{hmslogcyfigure}
\end{figure}
\subsection{Homological mirror symmetry for resolutions of singularities}\label{hmsres}
Motivated by the computation of the derived category of coherent sheaves on the resolution of singularities without a divisor removed (Theorem~\ref{derivedmckay}), we realised the derived category of finitely generated $\Pi(\tilde{A}_n)$-modules as a wrapped Fukaya category of the cyclic plumbing of $T^*S^2$, or equivalently the $A_n$-Milnor fibre with a fibre removed, with a stop attached (Theorem~\ref{mainresult}). In this way we extended one direction of the derived equivalence in Section~\ref{otherdir} to a derived equivalence between complexes of coherent sheaves on the resolution of $A_n$-singularities with the wrapped Fukaya category of the $A_n$-Milnor fibre with a divisor removed and a stop attached. To get the full statement of homological mirror symmetry on the resolution of $A_n$-singularities we need to consider it as an $A$-side as well. Then, instead of attaching a stop to it, we should equip the cyclic plumbing (i.e. the Milnor fibre with a fibre removed) with a potential.

\subsubsection{The Milnor fibre $X_{\text{Mil}}\smallsetminus D_m$ as a B-side via a Landau--Ginzburg model}\label{sectrivialpot}

The potential is a holomorphic function $w_m:X_{\text{Mil}}\smallsetminus D_m\rightarrow\mathbb{C}$. The mirror category, called a \emph{Landau--Ginzburg model} should be given by a product of singularity categories
\[
DB(w_m)=\prod_{\lambda\in\text{Sg}(w_m)}D_{\text{sg}}(w_m^{-1}(\lambda))
\]
(see e.g. \cite{MR2101296}). The potential is computed on the $A$-side, i.e. in $X_{\text{res}}\smallsetminus D_r$. In the compact Fukaya category it is defined by counting holomorphic discs with Maslov index $2$, that has boundary on the Lagrangian skeleton of $X_{\text{res}}\smallsetminus D_r$. In the wrapped Fukaya category, the same disc counts give rise to Borman--Sheridan classes (see \cite[Section 1.6.4]{MR4696565}), that give rise to a cyclic word $w_m\in CE(\Lambda)$ (more precisely an element in $HH_0(CE(\Lambda))$, the zeroth degree Hochschild homology), which can be obtained by counting Maslov index $2$ discs with boundary on the Legendrian attaching link $\Lambda$ and punctures converging to Reeb chords (see \cite{MR4489822} for a precise definition).

We start by considering the $\tilde{A}_0$-case. Here $X_{\text{res}}\smallsetminus D_r=X_{\text{Mil}}\smallsetminus D_m=\{XY=Z-1\}\subset\mathbb{C}^3\smallsetminus\{Z=0\}$ and the potential is computed in \cite[Section 5.3]{MR4489822} as $\tilde{w}_0=X+Y^2Z^{-1}$.
However, this computation is done in the compactification (without $\mathbb{C}P_{\infty}^1$ removed), and in the open case considered here the disc contributing $Y^2Z^{-1}$ does no longer exists. Thus
\[
w_0=X,
\]
where $X$ is identified with the Reeb chord $\alpha$ (see the proof of Proposition~\ref{parameters}).

In general, the $A_n$-Milnor fibre with a fibre removed is given by the $(n+1)$-fold covering space of the $\tilde{A}_0$-case, and the potential is therefore given by $\alpha_n\dots\alpha_1\alpha_{n+1}\in CE(\Lambda)$ (or more preciely the corresponding cyclic word in $HH_0(CE(\Lambda))$) which we identify with the regular function $w_m:X_{\text{Mil}}\smallsetminus D_m\rightarrow\mathbb{C}$ given by $w(X,Y,Z)=X$.

This regular function is smooth, which implies that $DB(w_m)=0$.

\subsubsection{Resolution of $A_n$-singularities $X_{\text{res}}$ as an $A$-side via non-exact wrapped Fukaya categories}
\begin{remark}\label{nonexactlin}
Under similar assumptions to those of Conjecture~\ref{cycconj}, on generators and morphisms on the non-exact Fukaya category of $X_{\text{res}}$, i.e. that the non-exact Fukaya category of $X_{\text{res}}$ is $\Lambda^{q(s)}(A_n)$, with $q_1(s),\dots,q_n(s)$, satisfying $\prod_{l=i}^jq_l(s)\not=1$, for all choices $1\leq i\leq j\leq n$. We have, as in the proof of Proposition~\ref{multloc}, that this category vanishes.

This ought also be a corollary of the main result in \cite{MR2720232}, that $SH^{\bullet}(X_{\text{res}})=0$. One of the caveats with this result (and with Conjecture~\ref{cycconj}) is that the theoretical foundations (e.g. generation and surgery formula) for the non-exact category is not, to the knowledge of the author, worked out.
\end{remark}

\begin{figure}[H]
\centering
\begin{tikzpicture}
    \draw[thick] (0,0) rectangle (12,4.1);
    \node at (1,2.4) {$A$-side};
    \node at (1,0.8) {$B$-side};
    \draw[very thick] (2.5,4.1) to (2.5,0);
    \draw[very thick] (0,3.2) to (12,3.2);

    \node at (4.5,3.6) {$X_{\text{res}}$};
    \node at (9.3,3.6) {$X_{\text{Mil}}\smallsetminus D_m+\text{stop/potential}$};

    \draw[thick] (0,1.6) -- (12,1.6);
    \draw[thick] (6.6,0) -- (6.6,4.1);

    \node at (4.5,1.1) {$D^b(\Pi(\tilde{A}_n))$};
    \node at (4.5,0.5) {\cite{MR1752785} (Thm. \ref{derivedmckay})};'

    \node at (9.3,2.7) {$D^b(\Pi(\tilde{A}_n))$};
    \node at (9.3,2.1) {Thm. \ref{mainresult}};

    \node at (4.5,2.7) {$0$};
    \node at (4.5,2.1) {\cite{MR2720232} (Rmk. \ref{nonexactlin})};

    \node at (9.3,1.1) {$0$};
    \node at (9.3,0.5) {Section \ref{sectrivialpot}};
\end{tikzpicture}
\caption{Summary of the results in Section~\ref{hmsres}.}
\label{hmsresfigure}
\end{figure}
\subsection{Homological mirror symmetry for Milnor fibres: deformations}\label{hmsmil}
Finally, we consider the case where we put back the divisor on the Milnor fibre, and where we put a stop or a potential instead on the resolution of singularities; we divide this case into two sections. The Milnor fibre is a linear plumbing, i.e. plumbing according to the $A_n$-diagram, of $T^*S^2$ (see Figure~\ref{linear}). Its wrapped Fukaya category has been computed in \cite{MR3692968}, and is equivalent to $D^{\pi}(\mathcal{G}(A_n))$ (this is immediately seen from Figure~\ref{linear}, using the generation result together with the surgery formula). We relate this to a category computed by a potential on the resolution of singularities with a divisor removed.

Before that, analogously to the exact case we conjecture that the wrapped Fukaya category of the resolution with a divisor removed and a stop attached can be computed by a Chekanov--Eliashberg algebra in $J^1S^1$, and is given by modules over a deformation of the preprojective algebra of type $\tilde{A}_n$ (introduced in \cite{MR1620538}). Such deformations of the preprojective algebras also model coherent sheaves on the $A_n$-Milnor fibres.

\subsubsection{Sketch of attaching a stop on $X_{\text{res}}\smallsetminus D_r$}\label{resstopsect}

Now we want to compute $CE(\Lambda)$ in $J^1S^1$, but with certain area parameters coming from the non-exactness of the symplectic manifold. This yields conjecturally:
\begin{conjecture}\label{resconj}
The non-exact partially wrapped Fukaya category of $X_{\textup{res}}\smallsetminus D_r$ with a stop $\sigma$ satisfies
\[
D^{\pi}\mathcal{W}(X_{\textup{res}}\smallsetminus D_r,\sigma)\cong D^b(\Pi^{\lambda(s)}(\tilde{A}_n)),
\]
that is, it is equivalent to the bounded derived category of finitely generated modules over the deformed preprojective algebra, where the parameters are in the field of Laurent series $\mathbb{C}((s))$ and given by 
\[
\lambda_i(s)=q_1(s)\cdots q_{i-1}(s)(q_i(s)-1),
\]
where $q_i(s)=e^{s\lambda_i}\in\mathbb{C}((s))$, and where the $\lambda_i\not=0$ are determined by the symplectic area of the non-exact Lagrangian spheres. 
\end{conjecture}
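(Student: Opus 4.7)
The plan is to mirror the proof strategy of Theorem~\ref{mainresult}, working now in the non-exact setting with loop-space coefficients specialized to parameters determined by the symplectic areas on the exceptional $(-2)$-spheres. As a smooth manifold, $X_{\text{res}}\smallsetminus D_r$ is diffeomorphic to $X_{\text{Mil}}\smallsetminus D_m$ (Proposition~\ref{geometriceq}), so the same Weinstein handle diagram applies; the only change is that with respect to $\omega_I$ the $(-2)$-spheres acquire non-zero symplectic areas proportional to $\lambda_i$, forcing the introduction of a (log-)Novikov parameter $s$ and coefficients in $\mathbb{C}((s))$. Granting the non-exact analogues of Theorem~\ref{generation} (split-generation of the partially wrapped Fukaya category by the cocores together with the linking disc of the stop) and of Theorem~\ref{surgeryloop} (the surgery formula with loop-space coefficients on the stopped component, specialised to the areas on the exceptional spheres), the first step is to reduce the computation of $\mathcal{W}(X_{\text{res}}\smallsetminus D_r,\sigma)$ to the computation of a deformed Chekanov--Eliashberg dg-algebra $CE^{q(s)}(\Lambda_{\text{tot}})$ of the link in Figure~\ref{attachfull}.

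Next I would import the sector/covering argument of Theorem~\ref{stopsect}, Theorem~\ref{cesector}, and Theorem~\ref{surgerycon} to move the computation from $S^1\times S^2$ into $J^1S^1$. Since these arguments only concern Reeb chord counts and pseudoholomorphic disc geometry inside a fixed Darboux chart of the contact boundary, they are unaffected by the non-exactness of the filling and apply verbatim at the level of the Chekanov--Eliashberg algebra enlarged with the coefficients $t_i^{\pm 1}$ of Remark~\ref{loopspace}. Combined with the non-exact analogue of Theorem~\ref{alggen}, this reduces the problem to computing $CE^{q(s)}(\Lambda_{\text{cyc}})$ in $J^1S^1$ with loop-space coefficients, under the specialisation $t_i\mapsto q_i(s)=e^{s\lambda_i}$. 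The substitution lemma (Lemma~\ref{substlemma}) then identifies this with the deformed Ginzburg algebra $\mathcal{G}^{\lambda(s)}(\tilde{A}_n)$, where $\lambda_i(s)=q_1(s)\cdots q_{i-1}(s)(q_i(s)-1)$; the constraint $q_1(s)\cdots q_{n+1}(s)=1$ follows from $\lambda_1+\cdots+\lambda_{n+1}=0$ (which in turn is forced by the $(-2)$-spheres in the cyclic plumbing being homologous to minus the sum of the exceptional spheres of $X_{\text{res}}$, cf.\ Remark~\ref{moreonfamily}).

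Finally, I would invoke the deformed formality statement (Proposition~\ref{deformedformal}, the deformed analogue of Theorem~\ref{formal}), which gives a quasi-isomorphism $\mathcal{G}^{\lambda(s)}(\tilde{A}_n)\simeq \Pi^{\lambda(s)}(\tilde{A}_n)$ concentrated in degree $0$; this is proved by the same Koszul-dual resolution argument as in Theorem~\ref{formal}, with the extra idempotent terms $\lambda_i e_i$ in the differential of $\zeta_i$ not affecting the dimension count. For generic $\lambda$, the algebra $\Pi^{\lambda(s)}(\tilde{A}_n)$ is regular of finite global dimension over $\mathbb{C}((s))$, so by \cite[Theorem 3.1.3]{MR1996800} we have $D^\pi(\Pi^{\lambda(s)}(\tilde{A}_n))\simeq D^b(\Pi^{\lambda(s)}(\tilde{A}_n))$, and chaining the equivalences gives the conjectured identification.

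The main obstacle is unambiguous and lies entirely on the symplectic side: the analytical foundations for non-exact wrapped Fukaya categories --- a rigorous split-generation result by cocores and linking discs, together with a surgery formula in which the areas of the exceptional Lagrangian spheres are recorded via Novikov/loop-space coefficients --- are not currently available in the literature at the level of generality required here. The remaining algebraic ingredients (the deformed CE computation, the substitution identifying it with the deformed Ginzburg algebra, and formality of the latter) are straightforward extensions of Section~\ref{formalityginzburgsec} and Section~\ref{wrappedsec} once the coefficient ring is taken to be $\mathbb{C}((s))$ and signs and idempotent decompositions are tracked as in Remark~\ref{loopspace}.
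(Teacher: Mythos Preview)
Your proposal is correct and follows essentially the same approach as the paper's sketch: assume non-exact analogues of generation and the surgery formula, compute $CE^{q(s)}(\Lambda_{\text{cyc}})$ in $J^1S^1$, identify it with $\mathcal{G}^{\lambda(s)}(\tilde{A}_n)$ via Lemma~\ref{nonloccechoice} and Lemma~\ref{substlemma}, and then apply the deformed formality statement (Proposition~\ref{deformedformal}). The only minor difference is the final step replacing $D^{\pi}$ by $D^b$: the paper invokes \cite[Theorem 1.5]{MR1620538} (finite global dimension of the deformed preprojective algebra), whereas you cite \cite[Theorem 3.1.3]{MR1996800}; both yield the same conclusion.
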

We sketch a proof below. See also Conjecture~\ref{parameters}.

Let $CE^{q(s)}(\Lambda)$ denote the dg-algebra obtained by computing the Chekanov--Eliashberg algebra of the $\tilde{A}_n$-link (Figure~\ref{lagAN}) with loop space coefficients $t_i^{\pm 1}$ (see Remark~\ref{loopspace}), and then specialising $t_i\mapsto q_i(s)\in\mathbb{C}((s))$ (and consider the algebra as defined over this field).

We also make the following auxiliary definition: For $q=(q_1,\dots,q_{n+1})$, $q_i\in k^{\times}$, let $\mathcal{G}^{q}_{M,\,\text{non-loc}}(\tilde{A}_n)$ denote the dg-algebra, defined in almost the same way as the deformed derived multiplicative preprojective algebra, the only difference being here we don't have the generators $z^{\pm 1}$, $\tau$ (see Definition~\ref{defdermultpreproj} and Example~\ref{multanex}). This is just, up to quasi-isomorphism, the dg-algebra obtained from $\mathcal{G}_M^q(\tilde{A}_n)$ with a non-localised variable $z$, as follows by cancellation of generators. (In the following we will mainly be interested in the cases $k=\mathbb{C}$ and $k=\mathbb{C}((s))$.)

\begin{lemma}\label{nonloccechoice}
By choosing orientations and placement of basepoint in the right way, we have $CE^{q(s)}(\Lambda)=\mathcal{G}^{q(s)}_{M,\,\textup{non-loc}}(\tilde{A}_n)$ on the nose.
\end{lemma}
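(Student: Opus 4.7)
The plan is to start from Proposition~\ref{ginzcalc}, which identifies $CE(\Lambda_{\text{cyc}})$ with $\mathcal{G}(\tilde{A}_n)$ on the nose when the loop-space coefficients are trivialised to $t_i=1$. Since $\mathcal{G}^{q(s)}_{M,\,\text{non-loc}}(\tilde{A}_n)$ has the same generators and gradings as $\mathcal{G}(\tilde{A}_n)$, and since the chords $\alpha_i,\alpha_i^*$ admit no holomorphic disc with a positive puncture at them (as already used in the proof of Proposition~\ref{ginzcalc}), it only remains to compute $\partial\zeta_i$ with loop-space coefficients tracked as in Remark~\ref{loopspace} and to match it termwise with
\[
\partial\zeta_i \;=\; q_i e_i + q_i\alpha_i^*\alpha_i - e_i - \alpha_{i-1}\alpha_{i-1}^*,
\]
which is the differential in $\mathcal{G}^{q(s)}_{M,\,\text{non-loc}}(\tilde{A}_n)$ coming from Definition~\ref{defdermultpreproj} applied to cyclic $\tilde{A}_n$.

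At each self-crossing $\zeta_i$ of the component $\Lambda_i$ in the Lagrangian projection of Figure~\ref{lagAN}, there are precisely four rigid holomorphic discs with positive puncture at $\zeta_i$: two ``big'' discs (one with negative punctures at $\alpha_i^*,\alpha_i$ sweeping into the region between $\Lambda_i$ and $\Lambda_{i+1}$, and one with negative punctures at $\alpha_{i-1},\alpha_{i-1}^*$ between $\Lambda_{i-1}$ and $\Lambda_i$), together with two ``small'' lobe discs whose boundaries lie entirely on $\Lambda_i$. The two small lobes at $\zeta_i$ give boundary loops $\gamma_i^{\pm}$ in $\Lambda_i\cong S^1$ with $[\gamma_i^{+}]+[\gamma_i^{-}]=[\Lambda_i]$ in $\pi_1(\Lambda_i)=\mathbb{Z}$; that is, the two lobes are topologically distinguished, one being null-homotopic and the other a generator of $\pi_1(\Lambda_i)$. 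This is the key observation that makes the lemma possible: without loop-space coefficients the two small lobes contribute $\pm e_i$ and cancel, but after introducing basepoints they are detected separately.

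I then place the basepoint $*_i\in\Lambda_i$ and orient $\Lambda_i$ so that (i) the non-contractible lobe at $\zeta_i$ contributes $+t_ie_i$ while the contractible lobe contributes $-e_i$, and (ii) the big disc producing the word $\alpha_i^*\alpha_i$ traverses $*_i$ once while the big disc producing $\alpha_{i-1}\alpha_{i-1}^*$ does not traverse any basepoint. Additionally I arrange that no big disc at $\zeta_i$ crosses any $*_j$ with $j\neq i$; the cyclic symmetry of $\Lambda_{\text{cyc}}$ lets these choices be made consistently for all $i=1,\dots,n+1$. By Remark~\ref{loopspace} this yields
\[
\partial\zeta_i \;=\; (t_i-1)e_i + t_i\alpha_i^*\alpha_i - \alpha_{i-1}\alpha_{i-1}^*,
\]
which under the specialisation $t_i\mapsto q_i(s)$ reproduces exactly the formula displayed above. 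Combined with the automatic equalities $\partial\alpha_i=\partial\alpha_i^*=0$ and the matching of generators and gradings from Proposition~\ref{ginzcalc}, this gives the claimed equality of dg-algebras on the nose.

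The main obstacle will be the simultaneous control of orientation signs (Figure~\ref{chordsign}) and basepoint placements across all $n+1$ self-crossings: a single global sign error would produce $t_i^{-1}$ in place of $t_i$, or interchange the roles of the two big discs, yielding a dg-algebra that differs from $\mathcal{G}^{q(s)}_{M,\,\text{non-loc}}(\tilde{A}_n)$ by a reparameterisation rather than an equality on the nose. The cyclic symmetry of $\Lambda_{\text{cyc}}$ reduces the verification to a single index $i$, but compatibility with the specific Maslov potential and orientation conventions chosen in Proposition~\ref{ginzcalc} still has to be checked explicitly at one crossing.
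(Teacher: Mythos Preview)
Your approach is essentially the same as the paper's, and your analysis of the four discs at $\zeta_i$ and of how the basepoint distinguishes the two lobe discs is exactly the mechanism that makes the lemma work; the paper leaves this implicit and simply records the concrete choices (clockwise orientation of the unknots in the front projection, and $*_i$ placed on $\Lambda_i$ between $\zeta_i$ and $\alpha_i^*$ along the orientation), whereas you explain \emph{why} such choices should exist but stop short of naming them. One small imprecision: the boundaries of the two lobe discs are arcs on $\Lambda_i$ between the two endpoints of the self-chord, not closed loops, so speaking of one being ``null-homotopic'' and the other ``a generator of $\pi_1(\Lambda_i)$'' is not quite the right framing; the operative fact is just that the two arcs partition $\Lambda_i\cong S^1$, so for any basepoint exactly one of them crosses it. With the paper's explicit placement of $*_i$ one then reads off directly that the $\alpha_i^*\alpha_i$-disc and one lobe pick up a factor $t_i$ while the $\alpha_{i-1}\alpha_{i-1}^*$-disc and the other lobe (and all arcs on neighbouring components) avoid every basepoint, which is precisely the simultaneous verification you flag as the remaining obstacle.
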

\begin{proof}
The orientations are chosen so that the unknots in the front projection (see e.g. Figure~\ref{grReeb}) are oriented clockwise (i.e. the strand with greater $z$-coordinate is oriented from left to right). We change the place of the basepoints in the Lagrangian projection in Figure~\ref{lagAN} so that, on each component $\Lambda_i$, the basepoint $*_i$ lies after the crossing corresponding to $\zeta_i$ and before the crossing corresponding to $\alpha_i^*$ following the orientation of the component.
\end{proof}
Let $\Lambda^q_{\text{non-loc}}(\tilde{A}_n)=H_0(\mathcal{G}^{q}_{M,\text{ non-loc}}(\tilde{A}_n))$, i.e. the only difference to $\Lambda^q(\tilde{A}_n)$ is that we don't have the inverse $z^{-1}$.
\begin{lemma}\label{substlemma}
The dg-algebra $\mathcal{G}^{q}_{M,\,\textup{non-loc}}(\tilde{A}_n)$, with parameters $q_i\in k^{\times}$ such that $q_1\cdots q_{n+1}=1$, is isomorphic to the deformed Ginzburg algebra $\mathcal{G}^{\lambda}(\tilde{A}_n)$, with parameters $\lambda_i=q_1\cdots q_{i-1}(q_i-1)$, satisfying $\lambda_1+\cdots+\lambda_{n+1}=0$. Furthermore, with the same parameters, the algebra $\Lambda^q_{\textup{non-loc}}(\tilde{A}_n))$ is isomorphic to the deformed multiplicative preprojective algebra $\Lambda^q(\tilde{A}_n)$.
\end{lemma}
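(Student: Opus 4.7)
The plan is to construct an explicit dg-algebra isomorphism via a rescaling of generators. Since both $\mathcal{G}^q_{M,\,\textup{non-loc}}(\tilde A_n)$ and $\mathcal{G}^\lambda(\tilde A_n)$ are semi-free over $R=kQ_0$ with generating sets $\{\alpha_i,\alpha_i^*,\zeta_i\}_{i=1}^{n+1}$ and $\{\beta_i,\beta_i^*,\eta_i\}_{i=1}^{n+1}$ in matching bidegrees, it suffices to specify an $R$-linear multiplicative map on generators and verify compatibility with the differentials.

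First I will write the two differentials out explicitly. In the cyclic $\tilde A_n$ orientation every vertex has exactly one outgoing and one incoming arrow in $Q_1$, so the formula of Definition~\ref{defdermultpreproj} collapses to
\[
\partial\zeta_i\;=\;q_i(e_i+\alpha_i^*\alpha_i)-(e_i+\alpha_{i-1}\alpha_{i-1}^*)\;=\;q_i\alpha_i^*\alpha_i-\alpha_{i-1}\alpha_{i-1}^*+(q_i-1)e_i,
\]
whereas on the deformed Ginzburg side $\partial\eta_i=\beta_i^*\beta_i-\beta_{i-1}\beta_{i-1}^*+\lambda_ie_i$. Under the ansatz $\beta_i\mapsto a_i\alpha_i$, $\beta_i^*\mapsto b_i\alpha_i^*$, $\eta_i\mapsto r_i\zeta_i$ for scalars $a_i,b_i,r_i\in k^{\times}$, matching coefficients in $\phi(\partial\eta_i)=\partial\phi(\eta_i)$ yields the three-equation system $a_ib_i=r_iq_i$, $a_{i-1}b_{i-1}=r_i$, and $\lambda_i=r_i(q_i-1)$, with all indices taken cyclically modulo $n+1$.

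I then exhibit the solution $r_i=q_1\cdots q_{i-1}$ (with $r_1=1$), $a_i=1$, $b_i=q_1\cdots q_i$. The third equation reproduces $\lambda_i=q_1\cdots q_{i-1}(q_i-1)$ on the nose; the first holds because $a_ib_i=q_1\cdots q_i=r_iq_i$; and the cyclic consistency at $i=1$ requires $a_{n+1}b_{n+1}=r_1=1$, which is equivalent to the hypothesis $q_1\cdots q_{n+1}=1$. The parameter relation $\lambda_1+\cdots+\lambda_{n+1}=0$ then falls out of a trivial telescoping $\sum_i(q_1\cdots q_i-q_1\cdots q_{i-1})=q_1\cdots q_{n+1}-1=0$. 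Extending the rescaling as an $R$-linear multiplicative map gives a homomorphism of semi-free dg-algebras, and its inverse is obtained by reciprocating the scalars, so it is an isomorphism.

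The furthermore assertion follows by passing to $H_0$ on both sides of this dg-isomorphism, identifying the zeroth homologies of the two sides. The whole argument is essentially a formal computation; the only real bookkeeping step is checking that the cyclic product of rescaling factors closes up, which is exactly the content of the hypothesis $\prod_iq_i=1$, so I do not anticipate any serious technical obstacle.
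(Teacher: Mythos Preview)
Your proof is correct and follows the same approach as the paper: an explicit rescaling of generators. The paper's one-line proof records only the substitution $\alpha_i\leftrightarrow q_1\cdots q_i\,\alpha_i$ (your choice to place the scalar on $\alpha_i^*$ and $\zeta_i$ instead is an equivalent variant), and, like you, obtains the second assertion from the first via the same substitution at the level of $H_0$.
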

\begin{proof}
The substitutions $\alpha_i\leftrightarrow q_1\cdots q_i\alpha_i$ while setting $\lambda_i=q_1\cdots q_{i-1}(q_i-1)$ gives rise to both isomorphisms.
\end{proof}

\begin{proposition}\label{deformedformal}
The deformed Ginzburg algebra of the (cyclically oriented) $\tilde{A}_n$-quiver $\mathcal{G}^{\lambda}(\tilde{A}_n)$ is formal and quasi-isomorphic to the deformed preprojective algebra $\Pi^{\lambda}(\tilde{A}_n)$.
\end{proposition}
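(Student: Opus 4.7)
The plan is to deduce formality of $\mathcal{G}^{\lambda}(\tilde{A}_n)$ from the undeformed formality statement (Theorem~\ref{formal}) via a deformation spectral sequence. First I would equip $\mathcal{G}^{\lambda}(\tilde{A}_n)$ with an increasing weight filtration $F_{\bullet}$, assigning each $\alpha_i$ and $\alpha_i^*$ weight $1$ and each $\zeta_i$ weight $2$, and defining $F_p$ to be the span of all words of total weight $\leq p$. The differential respects this filtration: $\partial\zeta_i=(\alpha_i^*\alpha_i-\alpha_{i-1}\alpha_{i-1}^*)+\lambda_ie_i$ consists of a weight-$2$ summand and a weight-$0$ summand, both of which lie in $F_2$. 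The associated graded dg-algebra $\mathrm{gr}_{\bullet}\mathcal{G}^{\lambda}(\tilde{A}_n)$ therefore discards the weight-$0$ term $\lambda_ie_i$ and is naturally identified with the undeformed Ginzburg algebra $\mathcal{G}(\tilde{A}_n)$ (equipped with its length grading). The filtration spectral sequence $E_r^{p,q}\Rightarrow H_{p+q}(\mathcal{G}^{\lambda}(\tilde{A}_n))$ then has $E_1^{p,q}=H_{p+q}(\mathcal{G}(\tilde{A}_n))_p$, which by Theorem~\ref{formal} equals $\Pi(\tilde{A}_n)_p$ when $p+q=0$ and vanishes otherwise. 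Since each $d_r$ decreases total degree by one, every higher differential must vanish on the line $p+q=0$, so the spectral sequence degenerates at $E_1$; this forces $H_{\bullet}(\mathcal{G}^{\lambda}(\tilde{A}_n))$ to be concentrated in homological degree $0$ and equips $H_0(\mathcal{G}^{\lambda}(\tilde{A}_n))$ with a filtration whose associated graded is $\Pi(\tilde{A}_n)$.

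Next I would identify $H_0(\mathcal{G}^{\lambda}(\tilde{A}_n))$ with $\Pi^{\lambda}(\tilde{A}_n)$. The map $\psi:\mathcal{G}^{\lambda}(\tilde{A}_n)\to\Pi^{\lambda}(\tilde{A}_n)$ defined by $\alpha_i\mapsto\alpha_i$, $\alpha_i^*\mapsto\alpha_i^*$ and $\zeta_i\mapsto 0$ is a well-defined $R$-algebra homomorphism, since the equations $\partial\zeta_i=0$ in $\mathcal{G}^{\lambda}$ are precisely the defining relations of $\Pi^{\lambda}$, and it descends to a surjection $\overline{\psi}:H_0(\mathcal{G}^{\lambda}(\tilde{A}_n))\twoheadrightarrow\Pi^{\lambda}(\tilde{A}_n)$ of filtered algebras. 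To upgrade this to an isomorphism I would compare Hilbert series with respect to the length filtration: the source has associated graded $\Pi(\tilde{A}_n)$ by the spectral sequence argument above, while under the hypothesis $\lambda_1+\cdots+\lambda_{n+1}=0$ the target is a flat deformation of $\Pi(\tilde{A}_n)$ in the sense of Crawley--Boevey--Holland, so its length filtration also has associated graded $\Pi(\tilde{A}_n)$. A filtered surjection between filtered algebras with isomorphic associated gradeds of finite dimension in every weight is an isomorphism, and therefore $\overline{\psi}$ is an isomorphism and $\psi$ itself is a quasi-isomorphism, establishing both formality and the identification of the homology.

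The main technical obstacle is the flatness input used in the last step: one needs the natural length filtration on $\Pi^{\lambda}(\tilde{A}_n)$ to have associated graded $\Pi(\tilde{A}_n)$, which amounts to a PBW-type statement for deformed preprojective algebras in the tame case, available from the Crawley--Boevey--Holland theory precisely because we assume $\lambda_1+\cdots+\lambda_{n+1}=0$. If one prefers a self-contained argument one can instead perturb the explicit resolution of $R$ over the Koszul dual $A$ used in the proof of Theorem~\ref{formal}: the square-zero matrices $\Phi_m$ built from the multiplications by $\sum_i\alpha_i$ and $\sum_i\alpha_i^*$ can be twisted by terms involving the $\lambda_i$ to yield a resolution of $R$ over the non-homogeneous Koszul dual of $\Pi^{\lambda}(\tilde{A}_n)$, and computing $\mathrm{Ext}$ as in the proof of Theorem~\ref{formal} would simultaneously recover the homology and the quasi-isomorphism with $\Pi^{\lambda}(\tilde{A}_n)$ without invoking external flatness results.
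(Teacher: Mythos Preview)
Your spectral-sequence approach is essentially the paper's: it too deduces the result from Theorem~\ref{formal} by comparing the filtration spectral sequences of $\mathcal{G}(\tilde{A}_n)$ and $\mathcal{G}^{\lambda}(\tilde{A}_n)$ and observing that their first pages coincide (the paper simply cites \cite{kalck2018relativesingularitycategoriesii} for the details you spell out).

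Your second paragraph, however, works much harder than necessary. The identification $H_0(\mathcal{G}^{\lambda}(\tilde{A}_n))=\Pi^{\lambda}(\tilde{A}_n)$ is immediate from the definitions: the degree-$0$ part of $\mathcal{G}^{\lambda}(\tilde{A}_n)$ is exactly $k\overline{Q}$, and the image of $\partial$ from degree $1$ is the two-sided ideal generated by the elements $\partial\zeta_i$, which are precisely the defining relations of $\Pi^{\lambda}(\tilde{A}_n)$. So $\overline{\psi}$ is an isomorphism on the nose, and no PBW/flatness input from Crawley-Boevey--Holland is required, nor is the hypothesis $\lambda_1+\cdots+\lambda_{n+1}=0$; the proposition holds for arbitrary $\lambda$. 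Once you know $H_\bullet$ is concentrated in degree $0$, the map $\psi$ you wrote down is already the desired quasi-isomorphism.
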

\begin{proof}
This is contained in \cite[Lemma 9.1]{kalck2018relativesingularitycategoriesii}. It follows from Theorem~\ref{formal}, together with a comparison of spectral sequences associated to $G(\tilde{A}_n)$ and $G^{\lambda}(\tilde{A}_n)$ respectively, showing that their first two pages coincide.
\end{proof}
\begin{remark}\label{formalityremark}
By Lemma~\ref{deformedformal} and Lemma~\ref{substlemma}, $\mathcal{G}^{q}_{M,\,\text{non-loc}}(\tilde{A}_n)$ is formal and quasi-isomorphic to $\Lambda^q_{\text{non-loc}}(\tilde{A}_n)$, when $q_1\cdots q_{n+1}=1$. One can show that the multiplicative set $S\subset\Lambda^q(\tilde{A}_n)$ generated by $e_i+\alpha_i^*\alpha_i$, $i=1\dots,n+1$, is a right Ore set (this follows from the defining relations $e_i+\alpha_{i-1}\alpha_{i-1}^*=q_ie_i+q_i\alpha_i^*\alpha_i$ in Definition~\ref{defmultpreproj}, Example~\ref{multanex}). By \cite[Remark 4.7]{MR4582531}, localising at $S$ provides an alternative model for the (deformed) derived multiplicative preprojective algebra. Therefore, by \cite[Proposition 5.14]{MR3771137}, $G_M^q(\tilde{A}_n)$ is formal and quasi-isomorphic to its degree $0$ homology $\Lambda^q(\tilde{A}_n)$. This provides an alternative proof of this formality, in the case $q_1\cdots q_{n+1}=1$, which is the case used in Section~\ref{otherdir}.
\end{remark}
\begin{proof}[Sketch of proof of Conjecture~\ref{resconj}]
This is analogous to Conjecture~\ref{cycconj}. Computing via the surgery formula, the expected category is $D^{\pi}(CE^{q(s)}(\Lambda))$. The parameters $q_i=e^{s\lambda_i}\in\mathbb{C}((s))\smallsetminus\{0\}$ are defined using a (log-)Novikov parameter $s$. The area parameters $\lambda_i$ satisfy $\sum_{l=i}^j\lambda_l\not=0$, for all choices $1\leq i\leq j\leq n$ and $\lambda_1+\cdots+\lambda_{n+1}=0$, which implies that $\prod _{i\leq l\leq j}q_l(s)\not=1$ and $q_1(s)\cdots q_{n+1}(s)=1$.

By Lemma~\ref{nonloccechoice}, Lemma~\ref{substlemma}, Proposition~\ref{deformedformal}, we have $D^{\pi}(CE^{q(s)}(\Lambda))=D^{\pi}(\Pi^{\lambda(s)}(\tilde{A}_n))$. One concludes the statement by applying \cite[Theorem 1.5]{MR1620538}, to replace $D^{\pi}$ with $D^b$. 
\end{proof}

\subsubsection{Deformed preprojective algebras and Milnor fibres}\label{addpreprojbside}

For the other side, we consider the $A_n$-Milnor fibre $X_{\text{Mil}}=\{XY=P(Z)\}$, where $P(Z)$ is a polynomial of degree $n+1$ without repeated roots. Then $D^b(\text{Coh}(X_{\text{Mil}}))=D^b(\mathbb{C}[X,Y,Z]/(XY-P(Z)))$, and we relate this situation to the deformed preprojective algebras as follows.
\begin{proposition}\label{nonlocmil}
For all choices of $\lambda_i\in\mathbb{C}$ such that $\sum_{i=1}^{n+1}\lambda_i=0$, 
we have
\[
e_{n+1}\Pi^{\lambda}(\tilde{A}_{n})e_{n+1}\cong\mathbb{C}[X,Y,Z]/(XY-P(X)),
\]
where $P(X)$ is a polynomial of degree $n+1$, whose roots are $\Lambda_i=\sum_{j=1}^{i}\lambda_j$,  for $i=1,\dots,n+1$.

If $\lambda_i$ are such that for all $l< k$ we have $\sum_{l\leq i\leq k}\lambda_i\not=0$, i.e. the roots are distinct, then also
\[
D^b(\Pi^{\lambda}(\tilde{A}_{n}))\cong D^b(\mathbb{C}[X,Y,Z]/(XY-P(Z))).
\]
\end{proposition}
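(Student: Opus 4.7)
The plan is to closely parallel the proofs of Proposition~\ref{parameters} and Proposition~\ref{multloc}, replacing the multiplicative vertex relations by their additive analogues. For the algebraic statement, set $X=\alpha_n\cdots\alpha_1\alpha_{n+1}$, $Y=\alpha_{n+1}^{*}\alpha_1^{*}\cdots\alpha_n^{*}$ and $Z=\alpha_{n+1}^{*}\alpha_{n+1}$, all loops at vertex $n+1$ in $\Pi^{\lambda}(\tilde{A}_n)$. The deformed relation at vertex $i$ reads $\alpha_i^{*}\alpha_i-\alpha_{i-1}\alpha_{i-1}^{*}+\lambda_i e_i=0$, equivalently $\alpha_{i-1}\alpha_{i-1}^{*}=\alpha_i^{*}\alpha_i+\lambda_i e_i$. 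Defining $A_i=\alpha_n\cdots\alpha_i\alpha_i^{*}\cdots\alpha_n^{*}$ in analogy with the proof of Proposition~\ref{parameters}, an induction using this substitution gives $A_{i-1}=A_i\,(Z-\Lambda_{i-1})$, starting from $A_n=Z-\Lambda_n$ (using $\Lambda_n=-\lambda_{n+1}$, from $\sum_i\lambda_i=0$). Telescoping produces
\[
XY=A_0=Z\cdot A_1=\prod_{k=1}^{n+1}(Z-\Lambda_k)=P(Z),
\]
where one uses $\Lambda_0=\Lambda_{n+1}=0$ in the final step. Commutativity of $X$, $Y$ and $Z$ inside the corner $e_{n+1}\Pi^{\lambda}(\tilde{A}_n)e_{n+1}$ follows by the same kind of telescoping argument as in the proof of Proposition~\ref{parameters}, together with primeness of $\Pi^{\lambda}(\tilde{A}_n)$ (from \cite{MR1620538}). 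The resulting surjection $\mathbb{C}[X,Y,Z]/(XY-P(Z))\twoheadrightarrow e_{n+1}\Pi^{\lambda}(\tilde{A}_n)e_{n+1}$ is then seen to be an isomorphism by a graded dimension count, using that $\Pi^{\lambda}(\tilde{A}_n)$ is a flat deformation of $\Pi^{0}(\tilde{A}_n)=\mathbb{C}[u,v]*\mathbb{Z}_{n+1}$, whose trivial-isotype corner is already known to equal $\mathbb{C}[X,Y,Z]/(XY-Z^{n+1})$.

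For the derived equivalence, I follow exactly the localisation strategy of Proposition~\ref{multloc}. First replace $\Pi^{\lambda}(\tilde{A}_n)$ by the quasi-isomorphic semi-free dg-algebra $\mathcal{G}^{\lambda}(\tilde{A}_n)$ (Proposition~\ref{deformedformal}). By Proposition~\ref{dgquotalg}, the Drinfeld quotient at the idempotent $e_{n+1}$ is quasi-isomorphic to the deformed Ginzburg algebra $\mathcal{G}^{\lambda'}(A_n)$ of the linear $A_n$-quiver on vertices $\{1,\dots,n\}$ with parameters $\lambda'=(\lambda_1,\dots,\lambda_n)$; its zeroth homology is $\Pi^{\lambda'}(A_n)$. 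By the standard non-vanishing criterion of Crawley-Boevey and Holland \cite{MR1620538} for the deformed preprojective algebra of a Dynkin quiver, $\Pi^{\lambda'}(A_n)=0$ if and only if $\sum_{k=i}^{j}\lambda_k\neq 0$ for all $1\le i\le j\le n$, which is exactly the hypothesis that the roots $\Lambda_1,\dots,\Lambda_{n+1}$ of $P(Z)$ are distinct. In that case $1\in\mathcal{G}^{\lambda'}(A_n)$ is a boundary, so the unital dg-algebra $\mathcal{G}^{\lambda'}(A_n)$ is acyclic, and Proposition~\ref{quotgen} yields that $e_{n+1}$ split-generates $D^{\pi}(\Pi^{\lambda}(\tilde{A}_n))$. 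Combined with the first part, this produces a triangulated equivalence $D^{\pi}(\Pi^{\lambda}(\tilde{A}_n))\simeq D^{\pi}(A)$ with $A=\mathbb{C}[X,Y,Z]/(XY-P(Z))$. Since $P$ has distinct roots, $A$ is a smooth affine algebra, so $D^{\pi}(A)=D^{b}(A)$ by \cite[Theorem 3.1.3]{MR1996800}; transporting finite global dimension back via the Morita-type equivalence yields $D^{\pi}(\Pi^{\lambda}(\tilde{A}_n))=D^{b}(\Pi^{\lambda}(\tilde{A}_n))$, and hence the desired equivalence
\[
D^{b}(\Pi^{\lambda}(\tilde{A}_n))\simeq D^{b}(A)=D^{b}(\textup{Coh}(X_{\textup{Mil}})).
\]

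The main obstacle is the explicit verification of the telescoping identity $A_{i-1}=A_i(Z-\Lambda_{i-1})$ and the commutativity of $X$, $Y$, $Z$ inside the corner: each inductive step requires a careful iterated use of the vertex relations while tracking the accumulation of the $\lambda_i$-terms (in particular the cancellation $\Lambda_0=\Lambda_{n+1}=0$ must be invoked at the right moment for $P(Z)$ to have degree exactly $n+1$). Once these calculations are in place, the remainder of the proof is essentially formal, being a direct additive analogue of the multiplicative argument already carried out in this section.
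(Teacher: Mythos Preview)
Your proposal is correct and follows essentially the same approach as the paper: the paper's proof simply says ``This is proven in the same way as Proposition~\ref{parameters} and Proposition~\ref{multloc}'', specifies the same generators $X=\alpha_n\cdots\alpha_1\alpha_{n+1}$, $Y=\alpha_{n+1}^*\alpha_1^*\cdots\alpha_n^*$, $Z=\alpha_{n+1}^*\alpha_{n+1}$, and for the second part invokes the vanishing criterion for $\Pi^{\lambda'}(A_n)$. You have supplied more detail than the paper (the explicit telescoping identity, the dimension count for injectivity, and the passage through $\mathcal{G}^{\lambda}(\tilde{A}_n)$ via Proposition~\ref{deformedformal}), but the strategy is identical.
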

\begin{proof}
This is proven in the same way as Proposition~\ref{parameters} and Proposition~\ref{multloc} (see also Remark~\ref{substrmk}), where we put $X=\alpha_{n}\cdots\alpha_1\alpha_{n+1}$, $Y=\alpha_{n+1}^*\alpha_1^*\cdots\alpha_n^*$ and $Z=\alpha_{n+1}^*\alpha_{n+1}$. The condition $\sum_{i=1}^{n+1}\lambda_i=0$ ensures commutativity of the variables $X$, $Y$ and $Z$. The polynomial obtained is
\[
P(Z)=(Z-\Lambda_1)\cdots(Z-\Lambda_{n+1}).
\]

For the second part, one notes that $\Pi^{\lambda}(A_n)\not=0$ if and only if there exists $l<k$ such that $\sum_{l\leq i\leq k}\lambda_l=0$ (see also \cite[Example 2.3]{MR4582531}).
\end{proof}
\begin{remark}\label{substrmk}
In terms of the relations between the parameters $\lambda_i$ and $q_i$ in the proof of Lemma~\ref{substlemma}, note that the condition $\prod_{l\leq i\leq k}q_i\not=1$ for all $l< k$ is equivalent to $\sum_{l\leq i\leq k}\lambda_i\not=0$ for all $l<k$, and the condition $\sum_{i=1}^{n+1}\lambda_i=0$ is equivalent to $\prod_{i=1}^{n+1}q_i=1$.

Moreover, comparing Proposition~\ref{nonlocmil} with Proposition~\ref{parameters}, the roots are related by $\Lambda_i=\sum_{j=1}^{i}\lambda_{j}=\prod_{j=1}^iq_j-1=Q_i-1$. The equation $XY-P(Z)=0$ in Proposition~\ref{parameters} is obtained from the same equation in Proposition~\ref{nonlocmil} by the substitutions $X\leftrightarrow Q_1\cdots Q_{n+1}X$ and $Z\leftrightarrow Z+1$, which is in line with the substitutions in the proof of Lemma~\ref{nonlocmil}. These changes of coordinates corresponds to a change of coordinates on the Milnor fibre which puts the divisor $D_m$ at preimage of $0$ instead of $-1$, as in Example~\ref{loghkA}. In Proposition~\ref{parameters}, we thus localise the variable $Z$ rather than $Z+1$.

\end{remark}
\subsubsection{Equivalences as infinitesimal variations of $\Pi^{\lambda}(\tilde{A}_n)$ near $\lambda=(0,\dots,0)$}\label{addinfvar}
This section follows Section~\ref{multinfvar} closely. Along the equator $u_I=0$, it is described in Example~\ref{loghkA} how $(X_{\zeta},I_u)$, here denoted by $X_{\text{Mil}}$, can be represented as the affine variety
\[
\{XY=(Z-\Lambda_1)\cdots(Z-\Lambda_{n+1})\},
\]
where the roots of the polynomial on the right-hand side, denoted $P(Z)$, are $\Lambda_i=\lambda_1+\cdots+\lambda_{i}$.

Deforming the complex structure (see Remark~\ref{moreonfamily}) corresponds to adding a deformation parameter $s$ deforming the roots, namely
\[
\{XY=(Z-s\Lambda_1)\cdots(Z-s\Lambda_{n+1})\}.
\]
Note that (in contrast to the situation in Section~\ref{multinfvar}) these affine varieties are all isomorphic. Using $\lambda_i=\Lambda_{i}-\Lambda_{i-1}$, for $i=1,\dots,n+1$ (with $\Lambda_0=\Lambda_{n+1}=0$), this is equivalent to deforming the parameters $\lambda_i$. Thus, by Section~\ref{addpreprojbside}, we conclude that
\[
D^b(\text{Coh}(X_{\text{Mil}}))=D^b(\Pi^{s\lambda}(\tilde{A_n})).
\]

If we regard $s$ as an infinitesimal variation parameter, that is, we consider the deformed preprojective algebra above over the ring $k=\mathbb{C}[[s]]/(s^2)$, then this algebra can be identified with $\Pi^{\lambda(s)}(\tilde{A}_n)$ obtained in Section~\ref{resstopsect}, provided that this algebra is also interpreted with coefficients in $k=\mathbb{C}[[s]]/(s^2)$. Indeed, the deformation parameters are then
\[
\lambda_i(s)=q_1(s)\cdots q_{i-1}(s)(q_i(s)-1)=(1+s\lambda_1)\cdots(1+s\lambda_{i-1})(s\lambda_i)=s\lambda_i.
\]
\subsection{Homological mirror symmetry for Milnor fibres: LG-potential}\label{hmsmiltwo}
We find a potential $w_r$ and compute the triangulated category of singularities at the singular points (see Section~\ref{prelsingcat}). Similarly to above (Section~\ref{sectrivialpot}) the potential counts Maslov $2$ index discs with boundary on the divisor and can be found by considering the resolution of singularities as a covering space of the self-plumbing (the $\tilde{A}_0$-case), using \cite[Section 5.3]{MR4489822}. The potential is given by the composition $w_r:X_{\text{res}}\smallsetminus D_r\rightarrow \{XY=Z^{n+1}\}\smallsetminus\{Z=-1\}\xrightarrow{\pi_X}\mathbb{C}$, where $\pi_X$ means projection onto the $X$ coordinate. For this potential we prove the following result. 

The regular function $w_r$ has an isolated singular value equal to $0$. Thus, $DB(w_r)=D_{\text{sg}}(w_r^{-1}(0))$. Geometrically, $w_r^{-1}(0)$ is the non-affine variety given by the exceptional divisor (which is an $A_n$ configuration of copies of $\mathbb{C}P^1$ with self-intersection $-2$) together with the plane $\{X=0\}\subset \{XY=Z^{n+1}\}$. However, the structure as a variety is also determined by the order of vanishing of $w$, which is $n+1,n,\dots,2$, respectively, on the complex projective lines, and $1$ on $\{X=0\}$.
\begin{theorem}\label{respot}
For the potential $w_r:X_{\textup{res}}\smallsetminus D_r\rightarrow\mathbb{C}$ given by the composition with projection onto the $X$ coordinate we have an equivalence $D_{\textup{sg}}(w_r^{-1}(0))\simeq D^{\pi}(\mathcal{G}(A_{n})\otimes_{\mathbb{C}}\mathbb{C}[T^{\pm1}])$, where $|T|=-2$.
\end{theorem}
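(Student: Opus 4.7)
The plan is to reduce this to an algebraic computation in two stages: first transfer the problem via the derived McKay correspondence to the singularity category of a non-commutative algebra, and then identify that category with the perfect derived category of the advertised dg-algebra. Concretely, the first step gives the intermediate equivalence $D_{\textup{sg}}(w_r^{-1}(0)) \simeq D_{\textup{sg}}(\Pi(\tilde{A}_n)/(\alpha^{n+1}))$ asserted in the introduction, and the second step identifies this with $D^{\pi}(\mathcal{G}(A_n) \otimes_{\mathbb{C}} \mathbb{C}[T^{\pm 1}])$.

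For the first stage, I would use Proposition~\ref{bsidecycplumb} to identify $X \in \Gamma(\mathcal{O}_{X_{\textup{res}}})$ with the central element $\alpha = \sum_{i} \alpha_{i-1}\cdots \alpha_{i+1}\alpha_i \in \Pi(\tilde{A}_n)$ (noting that on the variety side $X = \alpha^{n+1}/(\textrm{unit})$ after the substitution in Remark~\ref{substrmk}, so the ideal $(w_r) \subset \mathcal{O}_{X_{\textup{res}}}$ pulls back under McKay to the two-sided ideal generated by $\alpha^{n+1}$). Since the derived McKay correspondence (Theorem~\ref{derivedmckay}) is an equivalence of $D^b(\textup{Coh}(X_{\textup{res}}))$ with $D^b(\Pi(\tilde{A}_n))$ linear over the centre, it descends to an equivalence $D^b(\textup{Coh}(w_r^{-1}(0))) \simeq D^b(\Pi(\tilde{A}_n)/(\alpha^{n+1}))$; the general transfer principle for singularity categories (recalled in Section~\ref{prelsingcat}, following \cite{MR2437083}) then yields the desired equivalence of singularity categories.

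For the second stage, the key observation is that $\alpha^{n+1}$ is a central non-zero-divisor in $\Pi(\tilde{A}_n)$, making the quotient a ``noncommutative hypersurface''. Buchweitz's theorem (in its non-commutative form) gives a $2$-periodicity on $D_{\textup{sg}}(\Pi(\tilde{A}_n)/(\alpha^{n+1}))$, realised by a degree $-2$ central endomorphism $T$ of the identity functor, accounting for the $\mathbb{C}[T^{\pm 1}]$-factor. To identify the remaining $\mathcal{G}(A_n)$-part, I would pick the simple modules $S_1,\ldots, S_n$ at the non-extending vertices as a candidate set of split-generators of $D_{\textup{sg}}(\Pi(\tilde{A}_n)/(\alpha^{n+1}))$: the simple $S_{n+1}$ at the extending vertex is (after killing $\alpha^{n+1}$) still representable by a finite projective resolution, while the other $S_i$ become genuinely non-perfect, and the $2$-Calabi--Yau property of $\Pi(\tilde{A}_n)$ ensures their $\mathrm{RHom}$-algebra assembles into the expected shape. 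Computing the minimal $A_\infty$-model of the endomorphism dg-algebra $\mathrm{REnd}(\bigoplus_{i=1}^n S_i)$ should then produce $\mathcal{G}(A_n) \otimes_{\mathbb{C}} \mathbb{C}[T^{\pm 1}]$, with the Ginzburg generators $\alpha_i,\alpha_i^*,\zeta_i$ of the type $A_n$-quiver coming from the subquiver of $\tilde{A}_n$ obtained by deleting the extending vertex, and $T$ coming from the hypersurface periodicity.

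The main obstacle will be this last identification as dg-algebras (and not merely graded algebras), because $\mathcal{G}(A_n)$ is \emph{non-formal} by Theorem~\ref{Hermes}, so there are genuine higher $A_\infty$-operations to match. I plan to handle this by building an explicit eventually-periodic projective resolution of $\bigoplus_i S_i$ over $\Pi(\tilde{A}_n)/(\alpha^{n+1})$ --- starting from the known minimal resolution over $\Pi(\tilde{A}_n)$ used in the proof of Theorem~\ref{formal} and truncating/twisting by the relation $\alpha^{n+1}=0$ --- and reading off the $A_\infty$-operations from the composition structure, using the $T$-equivariance to reduce to a finite-dimensional computation. Split-generation of $D_{\textup{sg}}$ by $S_1, \dots, S_n$ (which should follow from a Drinfeld quotient argument analogous to Proposition~\ref{quotgen}, killing $S_{n+1}$ which becomes perfect after the quotient) then completes the proof.
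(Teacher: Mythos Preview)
Your first stage agrees with the paper's argument, which reduces to $D_{\text{sg}}(A)$ for $A = \Pi(\tilde{A}_n)/(\alpha^{n+1}) \cong (\mathbb{C}[X,Y]/(X^{n+1}))*\mathbb{Z}_{n+1}$ via a result on equivariant singularity categories from \cite{MR2504756}.

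Your second stage contains a genuine error. The claim that the simple $S_{n+1}$ at the ``extending vertex'' is perfect over $A$ is false: the cyclic quiver $\tilde{A}_n$ and the ideal $(\alpha^{n+1})$ are both invariant under the full $\mathbb{Z}_{n+1}$-rotation, so all $n+1$ simples are on exactly the same footing. If one were perfect they all would be, forcing $D_{\text{sg}}(A)=0$. Unlike for $\tilde{D}_n$ or $\tilde{E}_k$, there is no distinguished extending vertex in $\tilde{A}_n$. Your proposed generating set $S_1,\dots,S_n$ therefore cannot be justified this way, and the plan to read off $\mathcal{G}(A_n)$ directly as their $A_\infty$-endomorphism algebra fails at the outset.

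The paper proceeds differently. It uses $n+1$ generators $C_i = e_i(A/(\alpha))$ (not the simples), whose $A_\infty$-endomorphism algebra in $D^b(A)$ is an auxiliary $(n+1)$-object algebra $B_T$ (Lemmas~\ref{rhomhomology}--\ref{sgend}); quotienting by the perfects then inverts $T$, yielding $D_{\text{sg}}(A)\simeq D^\pi(B\otimes\mathbb{C}[T^{\pm1}])$ (Theorem~\ref{dbsgcatalg}). Your Buchweitz-periodicity intuition for the $T^{\pm1}$ is morally right---it is encoded in the $2$-periodic resolutions $P_i\xleftarrow{\alpha}P_{i-1}\xleftarrow{\alpha^n}P_i\xleftarrow{\alpha}\cdots$---but the paper does not invoke it as a black box. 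The crucial final step, identifying $D^\pi(B)$ with $D^\pi(\mathcal{G}(A_n))$, is done \emph{symplectically} rather than algebraically: Lemma~\ref{dbsggeom} shows via Legendrian Kirby moves that the Weinstein manifold built from $n+1$ parallel strands through a $1$-handle equals the linear $A_n$-plumbing, and a separate computation in \cite{ryd2025} identifies $B$ as the minimal model of the Chekanov--Eliashberg algebra of those parallel strands. This detour through the $A$-side sidesteps precisely the direct matching of non-formal $A_\infty$-structures on $\mathcal{G}(A_n)$ that you anticipate as the main obstacle.
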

\begin{remark}\label{parameterremark}
There is an asymmetry here in that this $B$-side model, that is, the singularity category, has $\mathbb{C}[T^{\pm}]$-coefficients, with $|T|=2$. The author believes that this is due to the fact that we are not considering the linear plumbing and the cyclic plumbing by themselves, but we are considering a non-exact embedding of the cyclic plumbing of $n+1$ copies of $T^*S^2$ into the linear plumbing of $n$ copies of $T^*S^2$, that moreover is not compatible with the trivialisations of the complex determinant bundles (which means that the $\mathbb{Z}$-gradings are not compatible). In general, such a non-exact embedding requires the use of some kind of Novikov coefficients. Using this, we can obtain the same category on the $A$-side. Here, on the linear plumbing this parameter is specialised by putting $T=1$, which need not yield a convergent theory on the $A$-side in general, and which also destroys the grading.
\end{remark}
\begin{proof}[Proof of Theorem~\ref{respot}]
First of all, we can consider $w$ as a function from all of $X_{\text{res}}$, and ignore the removal of $D_r$, since it is a closet set disjoint from the singular locus. Note that then $\pi_X$ is identified with the function from the invariant ring $\mathbb{C}[X,Y]^{\mathbb{Z}_{n+1}}\rightarrow\mathbb{C}$ given by $X^{n+1}$. By \cite[Theorem 8.6]{MR2504756} we can consider the non-commutative model given by $G$-equivariant modules over $\mathbb{C}[X,Y]/(X^{n+1})$, that is,
\[
D_{\text{sg}}(w_r^{-1}(0))\cong D_{\text{sg}}^{G}(\pi_X^{-1}(0))=D_{\text{sg}}(A),
\]
where $A=(\mathbb{C}[X,Y]/(X^{n+1}))*\mathbb{Z}_{n+1}$ (the skew-symmetric algebra, see Section~\ref{secmckay} for the definition of the skew-symmetric product). Moreover, this algebra is isomorphic to the preprojective algebra modulo all full turn paths in the original $\tilde{A}_{n}$ arrows: 
\[
A= \Pi(\tilde{A}_{n})/(\alpha_i\alpha_{i-1}\cdots\alpha_{n+1}\alpha_1\cdots\alpha_{i-1}\;|\;i=1,\dots,n+1).
\]
Thus we want to show that $D_{\text{sg}}(A)\simeq D^{\pi}(\mathcal{G}(A_{n})\otimes_{\mathbb{C}}\mathbb{C}[T^{\pm 1}])$.

This follows from Lemma~\ref{dbsggeom} and Theorem~\ref{dbsgcatalg} below, together with the computation of the minimal model of parallel strands through a $1$-handle in the author's work \cite{ryd2025}.
\end{proof}

\begin{lemma}\label{dbsggeom}
The Weinstein manifold obtained by attaching $n+1$ Weinstein $2$-handles along $n+1$ parallel Legendrian strands through a Weinstein $1$-handle is equivalent to the linear plumbing of $n$ copies of $T^*S^2$. In other words, there is an equivalence of perfect derived categories of the Chekanov--Eliashberg algebras of the different Legendrian links depicted in Figure~\ref{sgeq}.
\end{lemma}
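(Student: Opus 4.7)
The plan is to reduce the first diagram in Figure~\ref{sgeq} to the second by applying Weinstein Kirby calculus, as set up in Section~\ref{geomprel}, and then invoke the surgery formula to transport the equivalence to the level of Chekanov--Eliashberg algebras.

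Concretely, I would label the $n+1$ parallel Legendrian strands through the $1$-handle as $\Lambda_1,\dots,\Lambda_{n+1}$ (numbered, say, from top to bottom in the front projection), each being a component of a $2$-handle attachment of a Legendrian unknot that runs through the $1$-handle exactly once. The strategy is to perform a sequence of Legendrian handle slides: for $i=1,\dots,n$ slide $\Lambda_i$ over $\Lambda_{i+1}$. Each such slide replaces $\Lambda_i$ by its Legendrian connect sum with a parallel pushoff of $\Lambda_{i+1}$; the two strand passages through the $1$-handle (one from $\Lambda_i$, the other from the pushoff of $\Lambda_{i+1}$) then cancel algebraically, and after an isotopy $\Lambda_i$ can be assumed to no longer traverse the $1$-handle. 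What remains of the linking between $\Lambda_i$ and $\Lambda_{i+1}$ is precisely a Hopf link in the Legendrian sense, as in the computation displayed in Figure~\ref{eqpic} for the single--strand case.

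After carrying out these $n$ slides, only $\Lambda_{n+1}$ still passes through the $1$-handle, and it does so geometrically once. This pair is a canceling Weinstein $1$-handle/$2$-handle pair and can be removed. The resulting Weinstein handlebody diagram is then a chain of $n$ Hopf-linked Legendrian unknots $\Lambda_1,\Lambda_2,\dots,\Lambda_n$ in standard contact $\mathbb{R}^3$, with consecutive pairs linked and non-adjacent pairs disjoint: this is precisely the standard front diagram (Figure~\ref{linear}) of the linear plumbing $\#_{A_n}T^*S^2$ of $n$ copies of $T^*S^2$. Since Weinstein handle slides and Weinstein handle cancellations induce equivalences of Weinstein manifolds, and hence quasi-isomorphisms of Chekanov--Eliashberg algebras (see Section~\ref{geomprel}) and equivalences of perfect derived categories of modules, the claim follows.

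The main obstacle is checking that each slide step is a bona fide \emph{Weinstein} (rather than merely smooth) handle slide, so that the Liouville structure is preserved. This reduces to verifying that the Legendrian pushoff used in the slide has the correct Thurston--Bennequin framing, which can be read off from the front projection after putting the diagram into Gompf standard form. The $n=1$ instance of this is precisely the calculation reproduced in Figure~\ref{eqpic}, and the general case is obtained by iterating that model calculation between consecutive strands $\Lambda_i$ and $\Lambda_{i+1}$; no new geometric input beyond the moves listed in \cite{MR1668563} is required.
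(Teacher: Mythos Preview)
Your approach is correct and uses the same machinery as the paper---Legendrian Kirby calculus, i.e.\ handle slides followed by a $1$-handle/$2$-handle cancellation---but the specific sequence of moves differs. The paper slides \emph{all} of $\Lambda_2,\dots,\Lambda_{n+1}$ over the top strand $\Lambda_1$, pulls each through the $1$-handle, and then cancels the $1$-handle against $\Lambda_1$; this lands on the intermediate ``nested unknots'' configuration (the middle diagram of Figure~\ref{sgeq}), which is then unraveled by a \emph{second} round of handle slides and Reidemeister moves to reach the linear chain. Your sequence of adjacent slides $\Lambda_i$ over $\Lambda_{i+1}$ aims to produce the linear chain in a single pass; this is conceptually cleaner but requires a little more care at the inductive step---when you slide $\Lambda_2$ over $\Lambda_3$ you must place the band so that the Hopf-link already created between $\Lambda_1$ and $\Lambda_2$ is undisturbed and no spurious linking of $\Lambda_1$ with $\Lambda_3$ is introduced. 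Both routes are valid sketches at the same level of rigour.

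One small correction: Figure~\ref{eqpic} is not the $n=1$ instance of this lemma---it establishes a different equivalence (self-plumbing of $T^*S^2$ $\simeq$ $T^*\mathbb{T}^2$ with one critical handle). The paper instead points to \cite[Figure~9]{MR4417717} for the model handle-slide move used in its first step.
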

\begin{proof}
The first step in Figure~\ref{sgeq} is obtained by consecutive handle sliding all other $2$-handles over the top $2$-handle and pulling them through the $1$-handle, followed by cancelling the $1$-handle with the top $2$-handle (see e.g. \cite[Figure 9]{MR4417717}). The second step is obtained by a repeated series of $2$-handle slides and Reidemeister moves, as follows: First we slide the top component over the next one. Then we can pull this one out on the right-hand side. Then we repeat, sliding the second unknot from the top (the one which is now on top) over the next one. Then we can pull this one out using Reidemeister moves. Continuing this eventually yields the Legendrian link on the right-hand side (after some extra modifications using Reidemeister moves).
\end{proof}
\begin{figure}[H]
\centering
\begin{tikzpicture}
\draw[thick] (-5,0) circle (0.5cm);
\draw[densely dotted] (-4.5,0) arc (0:180:0.5 and 0.16);
\draw[thick] (-5.5,0) arc (0:180:-0.5 and -0.16);

\draw[thick] (-4.75,0.43) to (-2.75,0.43);
\draw[thick] (-4.53,0.17) to (-2.97,0.17);
\draw[thick] (-4.53,-0.17) to (-2.97,-0.17);
\draw[thick] (-4.75,-0.43) to (-2.75,-0.43);

\draw[thick] (-2.5,0) circle (0.5cm);
\draw[densely dotted] (-2,0) arc (0:180:0.5 and 0.16);
\draw[thick] (-3,0) arc (0:180:-0.5 and -0.16);
\draw[-stealth] (-1.7,0) to (-1.3,0);

\begin{knot}[flip crossing=3,flip crossing=5]
\strand[thick] (-1,0) to[out=0,in=180] (-0,0.6) to[out=0,in=180] (1,0)
to[out=180,in=0] (0,-0.6) to[out=180,in=0] (-1,0);

\strand[thick] (-1,0.3) to[out=0,in=180] (0,0.9) to[out=0,in=180] (1,0.3) to[out=180,in=0] (0,-0.3) to[out=180,in=0] (-1,0.3);

\strand[thick] (-1,-0.3) to[out=0,in=180] (0,0.3) to[out=0,in=180] (1,-0.3)
to[out=180,in=0] (0,-0.9) to[out=180,in=0] (-1,-0.3);
\end{knot}

\draw[-stealth] (1.3,0) to (1.7,0);

\begin{knot}[flip crossing=2,flip crossing=4]
\strand[thick] (2,0) to[out=0,in=180] (3,0.6) to[out=0,in=180] (4,0)
to[out=180,in=0] (3,-0.6) to[out=180,in=0] (2,0);

\strand[thick] (3.15,0) to[out=0,in=180] (4.15,0.6) to[out=0,in=180] (5.15,0)
to[out=180,in=0] (4.15,-0.6) to[out=180,in=0] (3.15,0);

\strand[thick] (4.3,0) to[out=0,in=180] (5.3,0.6) to[out=0,in=180] (6.3,0)
to[out=180,in=0] (5.3,-0.6) to[out=180,in=0] (4.3,0);
\end{knot}
\end{tikzpicture}
\caption{On the left-hand side is the Legendrian link in $S^1\times S^2$ consisting of $n+1$ parallel strands. It is equivalent to the Legendrian link in $\mathbb{R}^3$ that defines the linear plumbing $n$ copies of $T^*S^2$, shown on the right-hand side. Here, shown in the front projection, for $n=3$.}
\label{sgeq}
\end{figure}
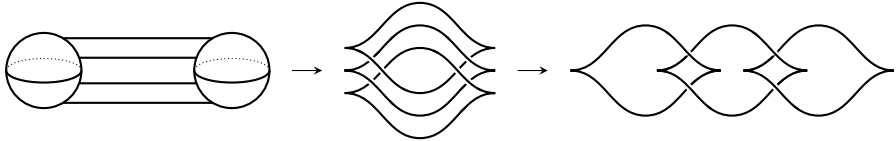
Therefore, as noted in the proof of Theorem~\ref{respot}, by \cite{ryd2025}, $D^{\pi}(\mathcal{G}(A_n))$ is quasi-equivalent to $D^{\pi}(B)$, where $B$ is the following $A_{\infty}$-algebra. (Note however, that the grading differs from \cite{ryd2025}, which is due to the choice of Maslov class. The gradings can be made to coincide, either by changing that choice, or by shifting the degree of the generators in Lemma~\ref{sggeneration}.)
\begin{definition}\label{sgquiveralgdef1}
Let $Q_B=(Q_0,Q_1)$ be the cyclic quiver with $n+1$ vertices and two arrows $a_i,b_i:i\rightarrow i-1$ (indices are considered modulo $n+1$). We grade this quiver by putting $|a_i|=0$ and $|b_i|=-1$, for $i=1,\dots,n$, and $|a_{n+1}|=2$ and $|b_{n+1}|=1$. Let $B=\mathbb{C}Q/I$, where $I$ is the ideal generated by the relations $b_{i-1}b_i=0$ (except for $n=1$, where $b_{i-1}b_i=\mu_2(b_{i-1},b_i)=e_i$) and $a_{i-1}b_i=b_{i-1}a_i$ (modulo $n+1$). We turn $B$ into an $A_{\infty}$-algebra by equipping it with the higher product $\mu_{n+1}(a^{m_{n+1}}b_{i_{m+1}},\dots,a^{m_1}b_{i_1})=a^{m_1+\dots+m_{n+1}}$. The notation should be read as follows: on the left-hand side, $a^{m_i}b_j=a_{i+m_i}\cdots a_{i+1}b_j$, and $\mu_{n+1}$ is non-zero if and only if the entries are composable in $\mathbb{C}Q$, in which case the right-hand side is equal to $a^{m_1+\cdots+m_{n+1}}=a_{i_1+m_1+\cdots+m_{n+1}-1}\cdots a_{i_1}$.
\end{definition}
\begin{figure}[H]
\centering
\begin{tikzpicture}
\draw[fill=black] (-0.1,0) circle (0.01cm);
\draw[fill=black] (0,0) circle (0.01cm);
\draw[fill=black] (0.1,0) circle (0.01cm);

\draw[fill=black] (2,0) circle (0.06cm);
\draw[fill=black] (3,1.73) circle (0.06cm);
\draw[fill=black] (2,3.46) circle (0.06cm);
\draw[fill=black] (0,3.46) circle (0.06cm);
\draw[fill=black] (-1,1.73) circle (0.06cm);

\draw[black,-stealth] (-0.93,2) [out=80, in=220] to (-0.27,3.39);
\draw[black,-stealth] (-0.72,1.8) [out=40, in=-100] to (-0.07,3.19) ;

\draw[black,-stealth] (0.2,3.66) [out=20, in=160] to (1.8,3.66) ;
\draw[black,-stealth] (0.2,3.26) [out=-20, in=-160] to (1.8,3.26) ;

\draw[black,stealth-] (2.93,2) [out=100, in=-40] to (2.27,3.39);
\draw[black,stealth-] (2.72,1.8) [out=140, in=-80] to (2.07,3.19) ;

\draw[black,stealth-] (-0.93,1.46) [out=-80, in=140] to (-0.27,0.07);
\draw[black,stealth-] (-0.72,1.66) [out=-40, in=100] to (-0.07,0.27) ;

\draw[black,-stealth] (2.97,1.46) [out=-100, in=40] to (2.27,0.07);
\draw[black,-stealth] (2.72,1.66) [out=-140, in=80] to (2.07,0.27) ;

\draw[black,-stealth] (1.8,0.2) [out=160, in=20] to (0.2,0.2);
\draw[black,-stealth] (1.8,-0.2) [out=200, in=-20] to (0.2,-0.2) ;

\node[scale=0.6] at (0,3.71) {$2$};
\node[scale=0.6] at (2,3.71) {$1$};
\node[scale=0.6] at (-1.25,1.73) {$3$};
\node[scale=0.6] at (3.43,1.73) {$n+1$};
\node[scale=0.6] at (2.03,-0.25) {$n$};

\node[scale=0.6] at (0,2.4) {$a_3$};
\node[scale=0.6] at (1,2.9) {$a_2$};
\node[scale=0.6] at (2,2.4) {$a_1$};
\node[scale=0.6] at (1.94,1.06) {$a_{n+1}$};
\node[scale=0.6] at (1,0.6) {$a_n$};
\node[scale=0.6] at (0.05,1.06) {$a_{4}$};

\node[scale=0.6] at (-1,2.8) {$b_3$};
\node[scale=0.6] at (1,4) {$b_2$};
\node[scale=0.6] at (3,2.8) {$b_1$};
\node[scale=0.6] at (3.08,0.6) {$b_{n+1}$};
\node[scale=0.6] at (1,-0.6) {$b_n$};
\node[scale=0.6] at (-0.9,0.6) {$b_{4}$};

\end{tikzpicture}
\caption{The quiver $Q_{B}$. The relations can be summarised by $b^2=0$ and $ba=ab$; the higher operations can be summarised by $\mu_{n+1}(a^{n_1}b,\dots,a^{n_{n+1}}b)=\pm a^{n_1+\cdots+n_{n+1}}$ and otherwise $0$.}
\label{sgquiver}
\end{figure}
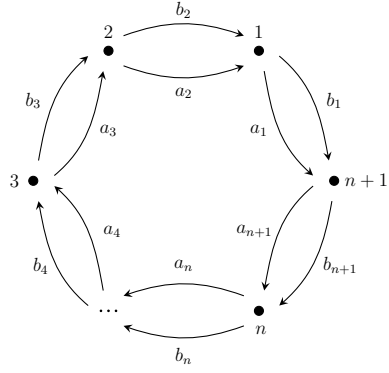
We will from now on use the notation $\alpha=\alpha_1+\cdots+\alpha_ {n+1}$ (hence $\alpha^{m}=\sum_i\alpha_{i-1}\alpha_{i-1}\cdots\alpha_{i-m}$, where indices are read modulo $n+1$). In particular $(e_i\Pi(\tilde{A}_n))/(\alpha^m)=(e_i\Pi(\tilde{A}_n))/(e_i\alpha^m)$, and we have morphisms given by left multiplication $\alpha^m:e_i\Pi(\tilde{A}_n)\rightarrow e_{i+m}\Pi(\tilde{A}_n)$ (and similarly replacing $\Pi(\tilde{A}_n)$ by A).
\begin{theorem}\label{dbsgcatalg}
The singularity category $D_{\textup{sg}}(A)=D_{\textup{sg}}(\Pi(\tilde{A}_n)/(\alpha^{n+1}))$ is equivalent to the triangulated category $D^{\pi}(B\otimes_{\mathbb{C}}\mathbb{C}[T^{\pm1}])$, where $|T|=-2$ and the $A_{\infty}$-structure is extended to be linear in $T^{\pm 1}$.
\end{theorem}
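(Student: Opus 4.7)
The plan is to identify a compact generating set of $D_{\textup{sg}}(A)$ and compute its $A_{\infty}$-endomorphism algebra explicitly. Using the isomorphism $A \cong (\mathbb{C}[X,Y]/(X^{n+1})) \ast \mathbb{Z}_{n+1}$ with action $\xi \cdot (X,Y) = (\xi X, \xi^{-1} Y)$ (already used in the proof of Theorem~\ref{respot}), equivariant Morita theory gives $D_{\textup{sg}}(A) \simeq D_{\textup{sg}}^{\mathbb{Z}_{n+1}}(\mathbb{C}[X,Y]/(X^{n+1}))$, which by Buchweitz's theorem is equivalent to the category $\mathrm{MF}^{\mathbb{Z}_{n+1}}(\mathbb{C}[X,Y], X^{n+1})$ of equivariant matrix factorizations of the potential $X^{n+1}$. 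Under this equivalence, the $n+1$ simples $S_1, \ldots, S_{n+1}$ of $A$ correspond to the rank-one matrix factorization $(\mathbb{C}[X,Y] \xrightarrow{X} \mathbb{C}[X,Y] \xrightarrow{X^n} \mathbb{C}[X,Y])$ equipped with its $n+1$ distinct irreducible $\mathbb{Z}_{n+1}$-equivariant structures, and a standard generation argument (e.g.\ Orlov-type) shows that these split-generate $D_{\textup{sg}}(A)$.

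Next, I would construct minimal $A$-free resolutions of each $S_i$. Because $A$ is of hypersurface type, these resolutions are eventually $2$-periodic, and the periodicity morphism is precisely the invertible class $T$ of degree $-2$. The degree-$0$ morphisms $a_1, \ldots, a_n$ arise as chain-level lifts of right multiplication by the $\alpha_i^{\ast}$-arrows, the degree-$(-1)$ morphisms $b_1, \ldots, b_n$ arise as the connecting maps of the short exact sequences $0 \to S_{i-1} \to e_i A/\mathrm{rad}^2 \to S_i \to 0$, and the wrap-around generators $a_{n+1}$ of degree $2$ and $b_{n+1}$ of degree $1$ pick up a single $T$-shift when the cycle closes, reflecting the non-trivial equivariant twist. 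The quadratic relations $b_{i-1} b_i = 0$ and $a_{i-1} b_i = b_{i-1} a_i$ then read off from the defining relations of the preprojective algebra, and degree considerations force $\mu_k = 0$ for $k \neq 2, n+1$.

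The only remaining $A_{\infty}$-datum is the $(n+1)$-fold product $\mu_{n+1}$. The formula $\mu_{n+1}(a^{m_{n+1}} b_{i_{n+1}}, \ldots, a^{m_1} b_{i_1}) = a^{m_1 + \cdots + m_{n+1}}$ should be extracted as a Massey product: it encodes the vanishing of the cyclic relation $\alpha^{n+1} = 0$ in $A$, equivalently the relation $X \cdot X^n = X^{n+1}$ in the matrix factorization model. The full computation can be carried out either by an explicit bar-complex calculation on the minimal resolution or by appealing to a known $A_{\infty}$-model for $\mathrm{MF}^{\mathbb{Z}_{n+1}}(\mathbb{C}[X,Y], X^{n+1})$ via an Orlov-type description. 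Matching the resulting minimal model with $B \otimes_{\mathbb{C}} \mathbb{C}[T^{\pm 1}]$ from Definition~\ref{sgquiveralgdef1} then concludes the proof.

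The main obstacle is this last identification of $\mu_{n+1}$: tracking Koszul signs and verifying that no further higher products appear is subtle, and the cleanest route is likely to avoid explicit Massey-product bookkeeping by recognising $\mathrm{MF}^{\mathbb{Z}_{n+1}}(\mathbb{C}[X,Y], X^{n+1})$ as the derived category of graded modules over a well-understood graded hypersurface algebra, and then transporting the isomorphism with $B \otimes \mathbb{C}[T^{\pm 1}]$ directly from that model.
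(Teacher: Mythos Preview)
Your overall route via equivariant matrix factorizations is legitimate and in some ways cleaner than the paper's: by Buchweitz--Orlov you land directly in a $2$-periodic setting where the shift-by-$2$ automorphism is the invertible class $T$, so you never need the separate localisation step. The paper instead works in $D^b(A)$ first, where $T$ is only a polynomial variable, computes $\text{Ext}^{\bullet}_A(C,C)\cong B_T$ as a genuine $A_\infty$-algebra, and then proves as a separate lemma that passing to the singularity category (quotienting by the perfect objects $P_i\simeq S_i$) is exactly the derived localisation inverting $T$, via a universal-property argument comparing Drinfeld quotients and derived localisation. Your approach trades that lemma for Buchweitz, which is a reasonable trade.

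However, the core of the theorem is the determination of the $A_\infty$-structure, and here your proposal has a genuine gap. The claim that ``degree considerations force $\mu_k=0$ for $k\neq 2,n+1$'' is not justified and is in fact not obviously true once $T^{\pm 1}$ is available: one can manufacture degree-compatible candidates such as $\mu_{2(n+1)}(\dots)=T^2(\cdots)$ or $\mu_k$ with mixed $a,b,T$ inputs, and nothing short of an actual computation rules them out. The paper does this by an explicit Merkulov--Kadeishvili transfer on the minimal projective resolutions $D_i$ of the simples: it writes down the homotopy $h$ on the chain level, computes $\lambda_j(b,\dots,b)$ inductively as ``multiplication by $\alpha^{n+1-j}$ composed with $T$'', observes that this vanishes for $j>n+1$ because $h\lambda_{n+1}=0$, and checks separately that any input containing a pure $a$- or $T$-word kills $\lambda_j$. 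None of this is a degree argument. Your fallback --- appealing to a ``known $A_\infty$-model'' for $\mathrm{MF}^{\mathbb{Z}_{n+1}}(\mathbb{C}[X,Y],X^{n+1})$ --- would need a precise reference; the presence of the free variable $Y$ (equivalently, the $\alpha_i^*$-arrows) means this is not the standard $A_n$-category of matrix factorizations of $X^{n+1}$ over $\mathbb{C}[X]$, and you would still have to match the result with $B\otimes\mathbb{C}[T^{\pm 1}]$ by hand.

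In short: your strategy is sound and the detour through matrix factorizations is a genuine simplification of the localisation step, but you have deferred rather than solved the hard part. The paper's contribution here is precisely the explicit chain-level verification of the $A_\infty$-structure, and there is no shortcut of the kind you suggest.
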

We divide the theorem into four lemmas.
\begin{lemma}\label{sggeneration}
The derived category of $A$-modules $D^b(A)$ is  split-generated by the modules $C_i=e_i(A/(\alpha))$, for $i=1,\dots,n+1$. Hence, the singularity category $D_{\textup{sg}}(A)$ is also split-generated by modules $C_i$, $i=1,\dots,n+1$.
\end{lemma}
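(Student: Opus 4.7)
The plan is to exploit the standard Morita equivalence for skew group rings to translate the statement into one about $\mathbb{Z}_{n+1}$-graded modules over the hypersurface $R = \mathbb{C}[X,Y]/(X^{n+1})$, on which $\mathbb{Z}_{n+1}$ acts by $X \mapsto \xi X$ and $Y \mapsto \xi^{-1}Y$ with $\xi = e^{2\pi i/(n+1)}$. Under this equivalence, the projective indecomposable $P_i = e_iA$ corresponds to $R$ with its unit placed in degree $i$, while $C_i$ corresponds to $(R/X)(i)$. Consequently the lemma reduces to showing that the bounded derived category of finitely generated $\mathbb{Z}_{n+1}$-graded $R$-modules is generated by the shifts $(R/X)(k)$, $k \in \mathbb{Z}_{n+1}$ (and split-generation then follows automatically).

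To handle the projective module $R$ itself, I would use the short exact sequences of graded $R$-modules
\[
0 \to (R/X^j)(-1) \xrightarrow{\,\cdot X\,} R/X^{j+1} \to R/X \to 0, \qquad j = 1, \ldots, n,
\]
whose injectivity relies on the fact that the annihilator of $X$ in $R$ equals the ideal $(X^n)$. A short induction on $j$ then shows that each $R/X^{j+1}$ lies in the thick subcategory generated by the shifts of $R/X$; specialising to $j = n$ and noting that $R = R/X^{n+1}$ places every projective $P_i$ inside $\text{thick}\langle C_1, \ldots, C_{n+1}\rangle$.

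For an arbitrary finitely generated $A$-module $M$, I would use that $R$ is one-dimensional Gorenstein, so the first syzygy $\Omega M$ is maximal Cohen-Macaulay. Because $R$ is free as a $\mathbb{C}[Y]$-module with $X$ acting as a nilpotent endomorphism of order $n+1$, the finitely generated MCM $R$-modules are precisely the $\mathbb{C}[Y]$-torsion-free ones, and Jordan decomposition over the PID $\mathbb{C}[Y]$ exhibits every such module as a direct sum of copies of $R/X^j$ for $1 \le j \le n+1$ (with grading shifts in the graded setting). By the filtration from the previous paragraph each $R/X^j$ already lies in $\text{thick}\langle C_1, \ldots, C_{n+1}\rangle$. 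Combining this with the short exact sequence $0 \to \Omega M \to P \to M \to 0$, where $P$ is projective and thus also in the thick subcategory, places $M$ there as well. The singularity category statement then follows immediately, since generators of $D^b(A)$ descend to generators of the Verdier quotient $D_{\text{sg}}(A) = D^b(A)/\text{Perf}(A)$.

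The main potential obstacle is the MCM classification: the singular locus of $R$ is the entire $Y$-axis rather than being isolated, so the general Cohen-Macaulay theory is a priori delicate. However, the very explicit description of $R$ as a nilpotent thickening of the polynomial ring $\mathbb{C}[Y]$ reduces the classification to elementary linear algebra over a PID, bypassing these subtleties entirely. A secondary point requiring care is tracking the grading conventions through the Morita equivalence, but this is a bookkeeping matter only.
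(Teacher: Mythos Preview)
Your Morita translation and the filtration of $R/X^{j+1}$ by copies of $R/X$ are correct, but the crucial third step fails: there is no Jordan normal form for nilpotent endomorphisms of free modules over a PID that is not a field. For $n=1$, take $M=\mathbb{C}[Y]^{\oplus2}$ with $X$ acting as $\bigl(\begin{smallmatrix}0&Y\\0&0\end{smallmatrix}\bigr)$; this is $\mathbb{C}[Y]$-free with $X^{2}=0$, yet $\operatorname{im}(X\mid M)=Y\,\mathbb{C}[Y]\subsetneq\mathbb{C}[Y]=\ker(X\mid M)$ rules out any decomposition into copies of $R$ and $R/X$. Moreover $\Omega_{R}M\cong M$, so this module arises as a syzygy and iterating does not help. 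Your flagged concern about the non-isolated singular locus is exactly the point: $\mathbb{C}[Y][x]$ is not a PID, Smith normal form does not control similarity, and the ``linear algebra over a PID'' escape does not exist.

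The repair is to drop the syzygy step and filter any finitely generated $A$-module directly by powers of $\alpha$. Each subquotient $\alpha^{j}M/\alpha^{j+1}M$ is a module over $A/(\alpha)=\Pi(\tilde A_{n})/(\alpha)$, which is the path algebra of the cyclic quiver on the arrows $\alpha_{i}^{*}$ alone (the preprojective relations become vacuous once the $\alpha_i$ are killed); this algebra is hereditary with indecomposable projectives exactly the $C_{i}$, so each subquotient lies in $\operatorname{thick}\langle C_{1},\dots,C_{n+1}\rangle$ via a two-term resolution, and hence so does $M$. The paper takes a rather different route: it uses global dimension~$2$ of $\Pi(\tilde A_{n})$ to produce, for any bounded complex, a bounded projective piece followed by an eventually $2$-periodic tail, and then explicitly realises each $(e_{j}A)/(\alpha^{m})$ as a summand of an iterated cone of the $C_{i}$ --- longer, but yielding the concrete resolutions needed for the subsequent $\operatorname{Ext}^{\bullet}_{A}(C,C)$ computation.
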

\begin{proof}
Let $M_{\bullet}\in\text{Ch}(A)$ be a chain complex, bounded below and with bounded homology, defining an arbitrary object in $D^b(A)$. Without loss of generality, by shifting $M_{\bullet}$, we can assume that $M_i=0$, for $i<0$. This object has a representative given by a complex of projective modules, i.e., a complex $Q_{\bullet}'$, such that each $Q_i'$ is projective, together with a quasi-isomorphism $Q_{\bullet}'\rightarrow M_{\bullet}$ (where we note that $Q'_{\bullet}$ may be unbounded but $Q_i'=0$, for $i<0$).

Let $N$ be the smallest integer such that $H_i(M_{\bullet})=0$, for all $i\geq N$. Consider $K=\text{ker}(M_N\rightarrow M_{N-1})$ as a $\Pi(\tilde{A}_n)$-module. As such, it has a projective resolution 
\[
\tilde{Q}_{2}\xrightarrow{\tilde{f}_2} \tilde{Q}_{1}\xrightarrow{\tilde{f}_1} \tilde{Q}_0\simeq_{\text{q.i.}}K,
\]
using that $\Pi(\tilde{A}_n)$ has global dimension $2$ \cite[Example 3.4]{MR1752785}. We define a new complex $Q_{\bullet}$ of projective $A$-modules, which is built from $C_i$ in finitely many steps taking cones, sums and summands, and which is quasi-isomorphic to $M_{\bullet}$. It will consist of a finite projective part, followed by an infinite alternating part, both of which can be built from the $C_i$ in finitely many steps. We do this in two steps, as follows: First, let $Q_{\bullet}''$ be defined by $Q_i''=Q_i'$, for $i\leq N-1$, by $Q_{N+i}''=\tilde{Q}_i\otimes_{\Pi(\tilde{A}_n)}A=\tilde{Q}_i/(\alpha^{n+1})$, for $i=0,1,2$, and $Q_i''=0$, otherwise. Here $\tilde{Q}_i$ is a direct sum of indecomposable projective $\Pi(\tilde{A}_n)$-modules, i.e., modules of the form $e_j\Pi(\tilde{A}_n)$, and $Q_i''=\tilde{Q}_i/(\alpha^
{n+1})$ denotes the corresponding direct sum of indecomposable projective $A$-modules $e_jA=(e_j\Pi(\tilde{A}_n))/(\alpha^{n+1})$. Note that $Q''$ has a chain map to $M_{\bullet}$ that induces isomorphism in homology in all degrees except possibly $N+1$ and $N+2$. In these degrees the map in homology is simply vanishing. This constitute the first finite projective part.

If the complex $Q''$ is exact in degrees $N+1$ and $N+2$ we are done, otherwise we next show how to amend this non-exactness in $Q''$ to produce the sought complex $Q$ that is quasi-isomorphic to $M_{\bullet}$. This will be done by adding the alternating part. Given any indecomposable direct summand $e_jA$ in $Q_{N+i}''$, for $i=1$ or $i=2$. The map $e_jA\hookrightarrow Q''_{N+i}\xrightarrow{f_1}Q_{N+i-1}''$ is given by left multiplication with $\sum c_{r,s}\alpha^r(\alpha^*)^s$, for some finite sum, where $\alpha^r$ denotes the product of $r$ consecutive $\alpha_i$ that ends at the vertex $j$, and, similarly, $(\alpha^*)$ denotes the product of consecutive $\alpha^*_i$ that ends at the vertex where $\alpha^r$ starts, and $c_{r,s}\in\mathbb{C}$ (this follows from the commutator relations defining the preprojective algebra); here we use $f_i=\tilde{f}_i\otimes_{\Pi(\tilde{A}_n)}A$ to denote the induced maps. Let $J^i$ be a set indexing the indecomposable summands $Q^i_j$ of $Q_{N+i}''$, for $i=1,2$ (here we use that finitely generated projective modules have unique decomposition, up to re-ordering the summands, into indecomposable projective modules, using that $R$ is semi-simple). For each $j\in J^i$, let $f_{i,j}$ denote the map given by the composition $Q^i_j\hookrightarrow Q_{N+i}''\rightarrow Q''_{N+i-1}$. In the sum corresponding to $f_{i,j}$, let $r_{i,j}$ denote the minimal $r$-exponent. 

We obtain an exact complex by adjoining $(e_{(j-(n+1-r_{i,j}))}A)/(\alpha^{r_{i,j}})$, (in degree $N+i+1$) together with the map into $Q^i_j$ given by multiplication by $\alpha^{n+1-r_{i,j}}$, for each indecomposable summand in $Q_{N+1}''$ and $Q_{N+2}''$, if $r_{i,j}\leq n$. In this case, we make this into a complex of projective modules $Q_{\bullet}$ by replacing each $(e_{(j-(n+1-r_{i,j}))}A)/(\alpha^{r_{i,j}})$ by its resolution below given by an iterated cone of the $C_i$. And, when $r_{i,j}\geq n+1$, then the map above is equal to multiplication by $0$, and we remove this summand from the complex $Q''_{N+i}$.

Since the resulting complex $Q_{\bullet}$ is given by a part which is obtained from the projective modules in finitely many steps (taking direct sums, summands and cones and applying shifts), followed by the resolutions of finitely many quotients $(e_jA)/(\alpha^m)$, to conclude that $M_{\bullet}$ lies in $\text{thick}\langle C_1,\dots,C_{n+1}\rangle$, it only remains to show that $(e_jA)/(\alpha^{m})\in \text{thick}\langle C_1,\dots,C_{n+1},\rangle$, for $m=1,\dots,n+1$. (Note that this statement includes the projective modules $P_i=e_iA=(e_i\Pi(\tilde{A}_n))/(\alpha^{n+1})$.)

We start by considering the following projective resolutions:
\[
\begin{tikzcd}
(e_iA)/(\alpha^m)\quad\simeq\quad P_i&\arrow{l}[swap]{\alpha^m}P_{i-m}& \arrow{l}[swap]{\alpha^{n+1-m}}P_i&\arrow{l}[swap]{\alpha^m}P_{i-m}&\arrow{l}[swap]{\alpha^{n+1-m}}\cdots
\end{tikzcd}
\]
In particular, when $m=1$ this gives a projective resolution of $C_i$, and we denote the corresponding complex by $D^1_i$. We consider the iterated cones
\[
D_i^{m+1}=\text{cone}(b_{m,i}:\text{cone}(b_{m-1,i}(\cdots \text{cone}(b_{1,i}:D^1_{i}\rightarrow D^1_{i-1})\cdots)\rightarrow D_{i-m}^1)
\]
obtained using the morphisms $b_{m,i}$, given by
\[
\begin{tikzcd}
P_{i-m+1}\oplus\cdots \oplus P_{i-1}\oplus P_{i}\arrow{d} &\arrow{l}[swap]{d_1} P_{i-m}\oplus\cdots \oplus P_{i-2}\oplus P_{i-1}\arrow{d}{f_1}&\arrow{l}[swap]{d_2} P_{i-m+1}\oplus\cdots \oplus P_{i-1}\oplus P_{i}\arrow{d}{f_2}&\arrow{l}[swap]{d_1}\cdots \\
    0&\arrow{l}P_{i-m}&\arrow{l}[swap]{\alpha}P_{i-m-1}&\arrow{l}[swap]{\alpha^n}\cdots
\end{tikzcd}
\]
alternating between the two maps $f_{1;m,i}=f_1$ and $f_{2;m,i}=f_2$ defined by
\begin{align*}
f_1(x_{i-m},\dots,x_{i-1})&=x_{i-m}, \\
f_2(x_{i-m+1},\dots,x_{i})&=-\alpha^{n-1}x_{i-m+1}-\cdots-\alpha^{n-m+1}x_{i-1}+\alpha^{n-m}x_i.
\end{align*}
Here the differentials $d_1=d_{1;m,i}$ and $d_{2;m,i}=d_2$ are defined recursively from the maps $f_1,f_2$, in the definition of the iterated cones $D_i^{m+1}$ above. On $D_i^{m}$, they are given by
\[
d_1=\begin{bmatrix}
    -\alpha&1&0&\cdots& 0&0 \\
    0&-\alpha&1&\dots&0&0\\
    0&0&-\alpha&\cdots&0&0 \\
    &\ddots&&\ddots&&\vdots\\
    0&0&0&\cdots&-\alpha&1 \\
    0&0&0&\cdots&0&\alpha
\end{bmatrix},
\qquad
d_2=\begin{bmatrix}
    -\alpha^{n}&-\alpha^{n-1}&\cdots&-\alpha^{n-m+2}&\alpha^{n-m+1}\\
    0&-\alpha^{n}&\cdots&-\alpha^{n-m+3}&\alpha^{n-m+2} \\
    0&0&\cdots&-\alpha^{n-m+4}&\alpha^{n-m+3} \\
    &\ddots&&\ddots&\vdots\\
    0&0&\cdots&-\alpha^{n}&\alpha^{n-1} \\
    0&0&\cdots&0&\alpha^{n}
\end{bmatrix}.
\]

To see that $e_iA/(\alpha^m)$ is a summand of $D^m_i$ (which is all we need for split-generation) we consider the composition of $g_{m,i}=g$ and $h_{m,i}=h$ below:
\[
\begin{tikzcd} 
\arrow{d}{g_1}P_i&\arrow{d}{g_2}\arrow{l}[swap]{\alpha^m} P_{i-m}&\arrow{l}[swap]{\alpha^{n+1-m}} P_i\arrow{d}{g_1}&\arrow{l}[swap]{\alpha^{m}}\cdots \\
\arrow{d}{h_1}P_{i-m+1}\oplus\cdots\oplus P_i&\arrow{d}{h_2}\arrow{l}[swap]{d_1}P_{i-m}\oplus\cdots\oplus P_{i-1}&\arrow{d}{h_1}\arrow{l}[swap]{d_2}P_{i-m+1}\oplus\dots\oplus P_{i}&\arrow{l}[swap]{d_1}\cdots \\
P_i&\arrow{l}[swap]{\alpha^m} P_{i-m}&\arrow{l}[swap]{\alpha^{n+1-m}} P_i&\arrow{l}[swap]{\alpha^{m}}\cdots
\end{tikzcd}
\]
A simple verification shows that $g$ and $h$ define chains maps and that $h\circ g=\text{Id}$, where $h$ and $g$, respectively, are given by
\begin{align*}
g_1(x_{i})&=(0,\dots,0,x_i) \\
g_2(x_{i-m})&=(x_{i-m},\alpha x_{i-m},\dots,\alpha^{m-1}x_{i-m}) \\
h_1(x_{i-m+1},\dots,x_i)&=-\alpha^{m-1}x_{i-m+1}-\cdots-\alpha x_{i-1}+x_i \\
h_2(x_{i-m},\dots,x_{i-1})&=x_{i-m}.
\end{align*}

\end{proof}
Next, let $B_T$ be the $A_{\infty}$-algebra defined as follows.
\begin{definition}
We define a graded quiver $Q_B^T$, by adding loops $T_i$ of degree $|T_i|=-2$ to each vertex $i\in Q_0$ of the quiver $Q_B$ in Figure~\ref{sgquiver}, and changing the grading so that $|a_{n+1}|=0$ and $|b_{b+1}|=-1$. As an algebra, we let $B_T$ be given as the quotient of the path algebra, where we impose the same relations as in $B$ (see Definition~\ref{sgquiveralgdef1}), and furthermore the commutator relations for $T_i$: $T_{i-1}b_{i}=b_{i-1}T_i$ and $T_{i-1}a_{i}=a_{i-1}T_i$. The $A_{\infty}$-structure is defined by $\mu_2$ being the product (concatenation), $\mu_{n+1}(T^{r_{n+1}}a^{s_{n+1}}b_{j_{n+1}},\dots,T^{r_{1}}a^{s_{1}}b_{j_{1}})=a^{\sum_is_i}T^{1+\sum_ir_i}$ and $\mu_i=0$, if $i\not=2,n+1$. The $\mu_{n+1}$ should be understood as only being non-zero when the input is concatenable, i.e. the $T^{r_{i}}a^{s_{i}}b_{j_{i}}$ ends in the same vertex as $T^{r_{i+1}}a^{s_{i+1}}b_{j_{i+1}}$ starts, and when each input contains a $b_i$; then the output starts in the same vertex as $T^{r_{1}}a^{s_{1}}b_{j_{1}}$ and ends in the same vertex as $T^{r_{n+1}}a^{s_{n+1}}b_{j_{n+1}}$. Another way to describe this is by putting $\mu_{n+1}(b_{i-1},\dots,b_{i+1},b_{i})=T_{i}$, and extending linearly over words in $a_i$ and $T_i$.
\end{definition}
Let $C=\bigoplus_{i=1}^{n+1}C_i$, with $C_i$ as in Lemma~\ref{sggeneration}. For each $C_i$, recall the minimal projective resolution above:
\[
\begin{tikzcd}
D_i=D^1_i\quad=\quad P_i&\arrow{l}[swap]{\alpha} P_{i-1}&\arrow{l}[swap]{\alpha^n} P_i&\arrow{l}[swap]{\alpha}\cdots
\end{tikzcd}
\] 
Let $D=\bigoplus_{i=1}^{n+1}D_i$. Then $\text{End}_A(D)$, the algebra of all $A$-morphisms $f:D\rightarrow D$ of chain complexes of modules (not just chain maps), is a dg-algebra with differential given on homogeneous maps $f$, by $\partial=\partial_{\mathcal{B}}(f)=\partial_{D}\circ f+(-1)^{|f|}f\circ \partial_{D}$, where $\partial_{D}$ is the alternating differential on $D$, as shown on $D_i$ above. By definition, $\text{Ext}^n_A(C,C)=H_n(\text{End}_A(D))$.
\begin{lemma}\label{rhomhomology}
We have
\[
H_{\bullet}(\textup{End}_A(D))\cong B_T,
\]
as algebras.
\end{lemma}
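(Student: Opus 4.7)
The plan is to explicitly produce cycles in the dg-algebra $\textup{End}_A(D)$ representing the generators $a_i$, $b_i$, $T_i$ of $B_T$, verify by direct computation that they satisfy the defining relations of $B_T$, and then show that these classes exhaust the cohomology by a direct dimension count in each bidegree.

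First I would construct the generators. Writing $D_i$ in its periodic form $P_i \xleftarrow{\alpha} P_{i-1} \xleftarrow{\alpha^n} P_i \xleftarrow{\alpha} \cdots$, I would define $a_i\colon D_i \to D_{i-1}$ of degree $0$ level-wise by left multiplication with the dual arrow $\alpha^*$ ending at the appropriate vertex; the chain-map property reduces to the preprojective relation $\alpha^*_j\alpha_j = \alpha_{j-1}\alpha^*_{j-1}$. I would define $T_i\colon D_i \to D_i$ of degree $-2$ by left multiplication with the loop $Z_j = \alpha^*_j\alpha_j$ at each level; this is a cycle because $Z$ commutes with $\alpha$ and with $\alpha^n$ up to the vertex shift intrinsic to the cyclic structure of $D_i$. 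Finally I would define $b_i\colon D_i \to D_{i-1}$ of degree $-1$ by $(b_i)_{2k}=0$, $(b_i)_{2k+1}=\textup{id}_{P_{i-1}}$, and $(b_i)_{2k+2}=-\alpha^{n-1}\colon P_i \to P_{i-2}$; closedness reduces to the identity $\alpha\cdot\alpha^{n-1}=\alpha^n$ in the path algebra.

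Next I would check the relations defining $B_T$. Commutativity of $T$ with $a$ and $b$ is immediate from the fact that $Z$ is central in $\Pi(\tilde{A}_n)$ (and hence in $A$), modulo the vertex-index shift intrinsic to the construction. The relation $a_{i-1}b_i = b_{i-1}a_i$ follows by comparing the two compositions level-wise and again invoking the preprojective relations. The vanishing $b_{i-1}b_i = 0$ for $n\geq 2$ I would verify by producing an explicit null-homotopy: at level $2$ the composition equals $-\alpha^{n-1}\colon P_i \to P_{i-2}$, which is $\partial h$ for the degree-$(-1)$ morphism $h$ with $h_2 = -\alpha^{n-2}\colon P_i \to P_{i-3}$; an analogous formula handles higher levels by periodicity. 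For $n=1$ one directly reads off that $b_{i-1}b_i = T_i$ via the identification $B_T \cong B \otimes \mathbb{C}[T^{\pm 1}]$.

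The main obstacle will be showing that the constructed classes $a_i$, $b_i$, $T_i$ exhaust $H_\bullet(\textup{End}_A(D))$. The approach I would take is to compute $\dim \textup{Ext}^d_A(C_i, C_j)$ directly in each bidegree $(i,j,d)$ by exploiting the explicit basis of $e_i A e_j$ in terms of paths $\alpha^k$, $(\alpha^*)^l$, and loops $Z_i^m$ (where the relation $Z_i^{n+1}=0$ in $A$ comes from $A = \Pi(\tilde{A}_n)/(X)$ with $X$ central and equal to $Z^{n+1}$), and then computing the cohomology of the explicit bicomplex $\textup{Hom}_A(D_i, D_j)$. As an alternative I might invoke the short exact sequence of $\Pi(\tilde{A}_n)$-bimodules
\begin{equation*}
0 \longrightarrow \Pi(\tilde{A}_n) \xrightarrow{\;\cdot X\;} \Pi(\tilde{A}_n) \longrightarrow A \longrightarrow 0,
\end{equation*}
valid since $X$ is a central non-zero-divisor, which produces a long exact sequence relating $\textup{Ext}_A^*(C_i, C_j)$ to $\textup{Ext}_{\Pi(\tilde{A}_n)}^*(C_i, C_j)$; the latter is controlled by the $2$-Calabi--Yau structure on $\Pi(\tilde{A}_n)$. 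Either way, the resulting graded dimensions match those of $B_T$ bidegree by bidegree, and the explicit classes from the first two steps then provide the algebra isomorphism $H_\bullet(\textup{End}_A(D))\cong B_T$.
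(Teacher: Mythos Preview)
Your construction of $T_i$ contains a genuine error. You define $T_i$ as the degree $-2$ map given (after shifting levels by two) by left multiplication with $Z=\alpha^*\alpha$. But using the preprojective relation one rewrites $Z=\alpha_{i-1}\alpha_{i-1}^*=\alpha^1(\alpha^*)^1$, so in the paper's decomposition of an even-degree chain map as $f=f^0+f'$ (where $f^0$ collects the terms with $\alpha$-exponent zero and $f'$ the rest), your $T_i$ lies entirely in $f'$; the paper then writes down an explicit null-homotopy killing $f'$. Hence your $T_i$ represents the zero class. The correct $T_i$ is the pure shift-by-two with \emph{identity} components, witnessing the 2-periodicity $\Omega_A^2 C_i\cong C_i$ of the minimal resolution; it has nothing to do with the action of a central element.

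On the completeness step: your proposed dimension count via an explicit basis of $e_jAe_i$ is essentially the same computation the paper performs, but the paper organises it more cleanly by decomposing every chain map (in four parity/sign cases) as $f^0+f'$, writing explicit alternating homotopies for $f'$, and reading off that the surviving classes are exactly the monomials $T^ra^s$ and $T^ra^sb$. Your alternative via the short exact sequence $0\to\Pi\xrightarrow{X}\Pi\to A\to 0$ is less direct than you suggest: the induced change-of-rings comparison is a spectral sequence rather than a single long exact sequence, its $d_2$ differential is precisely the periodicity operator $T$, and extracting $\mathrm{Ext}_A^*$ from $\mathrm{Ext}_\Pi^*$ requires already knowing this operator. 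The cleaner route, which you do not invoke, is to use the visible 2-periodicity of $D_i$ directly to reduce to degrees $0$, $1$, $2$.
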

\begin{proof} 
By definition, the $n$:th homology of $\text{End}_A(D)$ can be computed as the degree $n$ chain maps modulo the null-homotopic ones. We look at the homogeneous parts separately, where we can describe the chain maps as follows, dividing into the four cases non-negative even degree, positive odd degree, negative even degree and negative odd degree.

Suppose first that $f:D_i\rightarrow D_j$ is of non-positive even degree. Then, using only that it is a morphism of projective $A$-modules in each degree, it is of the form:
\[
\begin{tikzcd}
P_i\arrow{d}&\arrow{l}[swap]{\alpha}P_{i-1}\arrow{d}&\arrow{l}[swap]{\alpha^n}\cdots&\arrow{l}[swap]{\alpha}P_{i-1}\arrow{d}&\arrow{l}[swap]{\alpha^n}P_i\arrow{d}{f_1}&\arrow{l}[swap]{\alpha}P_{i-1}\arrow{d}{f_2}&\arrow{l}[swap]{\alpha^n}P_{i}\arrow{d}{f_3}&\arrow{l}[swap]{\alpha}\cdots \\
0&\arrow{l}0&\arrow{l}\cdots&\arrow{l}0&\arrow{l}P_{j}&\arrow{l}[swap]{\alpha}P_{j-1}&\arrow{l}[swap]{\alpha^n}P_{j}&\arrow{l}[swap]{\alpha}\cdots
\end{tikzcd}
\]
We can write $f_k=\sum_{a_k,b_k}\lambda_{a_k,b_k}\alpha^{a_k}(\alpha^*)^{b_k}$ (where the morphism is given by left multiplication by this element), where each sum is over all pairs $a_k,b_k$ such that $a_k-b_k$ is equivalent to $j-i$ modulo $n+1$, and $\lambda_{a_k,b_k}\not=0$ for finitely many pairs $a_k,b_k$. The diagram commutes if and only if the following conditions hold: For $k$ odd, we have $\lambda_{a_k,b_k}=\lambda_{a_{k+1},b_{k+1}}$ for each pair $a_k=a_{k+1}$, $b_k=b_{k+1}$ and $a_k\not=n$; for $k$ even, for all pairs $a_k,b_k$ with $a_k=0$, we have $\lambda_{a_k,b_k}=\lambda_{a_{k+1},b_{k+1}}$, for each pair $a_k=a_{k+1}$ and $b_k=b_{k+1}$. From this it follows that we can write $f=f^0+f'$, where $f^0_k=\sum_{b_k}\lambda_{0,b_k}(\alpha^*)^{b_k}$, with $\lambda_{0,b_k}=\lambda_{0,b_{l}}$ when $b_k=b_l$, and where $f'=\sum_{a_k,b_k}\lambda_{a_k,b_k}\alpha^{a_k}(\alpha^*)^{b_k}$, with $a_k\geq 1$.

We next define a homotopy $H$ showing that $f'\simeq 0$, i.e. is null-homotopic (see diagram below). We write $f'_k=(f'_k)^n+(f'_k)^{<n}$, where $(f'_k)$ is the part with $a_k=n$. Recall that, if $k$ is odd, by the previous paragraph $(f'_k)^{<n}=(f'_{k+1})^{<n}$. We define $H$ inductively as follows: We put $H_1=f_1'/\alpha$, $H_2=-(f_1')^n/\alpha^n$, for $k\geq 3$ odd $H_k=f_k'/\alpha-\alpha^{n-1}H_{k-1}$ and, for $k\geq 4$ even, $H_k=(f'_k)^n-(f'_{k-1})^n+H_{k-2}$. (This is well-defined because the sum is finite in each degree.)
\[
\begin{tikzcd}
P_i\arrow{d}&\arrow{l}[swap]{\alpha}P_{i-1}\arrow{d}&\arrow{l}[swap]{\alpha^n}\cdots&\arrow{l}[swap]{\alpha}P_{i-1}\arrow{d}\arrow{dr}{0}&\arrow{l}[swap]{\alpha^n}P_i\arrow{d}{f_1'}\arrow{dr}{H_1}&\arrow{l}[swap]{\alpha}P_{i-1}\arrow{d}{f_2'}\arrow{dr}{H_2}&\arrow{l}[swap]{\alpha^n}P_{i}\arrow{d}{f_3'}\arrow{dr}{H_3}&\arrow{l}[swap]{\alpha}\cdots \\
0&\arrow{l}0&\arrow{l}\cdots&\arrow{l}0&\arrow{l}P_{j}&\arrow{l}[swap]{\alpha}P_{j-1}&\arrow{l}[swap]{\alpha^n}P_{j}&\arrow{l}[swap]{\alpha}\cdots
\end{tikzcd}
\]

Secondly, we consider a map of negative odd degree:
\[
\begin{tikzcd}
P_i\arrow{d}&\arrow{l}[swap]{\alpha}P_{i-1}\arrow{d}&\arrow{l}[swap]{\alpha^n}\cdots&\arrow{l}[swap]{\alpha^n}P_{i}\arrow{d}&\arrow{l}[swap]{\alpha}P_{i-1}\arrow{d}{f_1}&\arrow{l}[swap]{\alpha^n}P_{i}\arrow{d}{f_2}&\arrow{l}[swap]{\alpha}P_{i-1}\arrow{d}{f_3}&\arrow{l}[swap]{\alpha^n}\cdots \\
0&\arrow{l}0&\arrow{l}\cdots&\arrow{l}0&\arrow{l}P_{j}&\arrow{l}[swap]{\alpha}P_{j-1}&\arrow{l}[swap]{\alpha^n}P_{j}&\arrow{l}[swap]{\alpha}\cdots
\end{tikzcd}
\]
Similarly to above, we write $f_k=\sum_{a_k,b_k}\lambda_{a_k,b_k}\alpha^{a_k}(\alpha^*)^{b_k}$. For this to be chain map we note that the following must hold. For odd $k$, and a pair $a_k,b_k$, if $a_k=0$, then $\lambda_{a_k,b_k}=\lambda_{a_{k+1},b_{k+1}}$, when $a_{k+1}=n-1$ and $b_k=b_{k+1}$. For even $k$, we must have $\lambda_{a_k,b_k}=0$, when $a_k<n-1$. 

Therefore, we can write $f=f^0+f'$ where $f_k^0=\sum_{b_k}\lambda_{0,b_k}(\alpha^*)^{b_k}$, when $k$ is odd, and \\$f^0_k=\sum_{b_k}\lambda_{b_k}\alpha^{n-1}(\alpha^*)^{b_k}$, when $k$ is even; $f'_k=\sum_{b_k}\lambda_{n,b_k}\alpha^n(\alpha^*)^{b_k}$, when $k$ is even, and $f'_k=\sum_{a_k,b_k}\lambda_{a_k,b_k}\alpha^{a_k}(\alpha^*)^{b_k}$, such that $a_k\geq 1$, when $k$ is odd. We can now construct a homotopy $H$ showing that $f'\simeq 0$ (see the diagram below): Let $H_1=f_1'/\alpha$ and $H_2=f_2'/\alpha^n-f'_1/\alpha$. For $k\geq 3$ odd, we put $H_k=f_3/\alpha-H_{k-1}$, and for $k\geq 4$ even, we put $H_k=f_k/\alpha^n-H_{k-1}$
\[
\begin{tikzcd}
P_i\arrow{d}&\arrow{l}[swap]{\alpha}P_{i-1}\arrow{d}&\arrow{l}[swap]{\alpha^n}\cdots&\arrow{l}[swap]{\alpha^n}P_{i}\arrow{d}\arrow{dr}{0}&\arrow{l}[swap]{\alpha}P_{i-1}\arrow{d}{f_1'}\arrow{dr}{H_1}&\arrow{l}[swap]{\alpha^n}P_{i}\arrow{d}{f_2'}\arrow{dr}{H_2}&\arrow{l}[swap]{\alpha}P_{i-1}\arrow{d}{f_3'}\arrow{dr}{H_3}&\arrow{l}[swap]{\alpha^n}\cdots \\
0&\arrow{l}0&\arrow{l}\cdots&\arrow{l}0&\arrow{l}P_{j}&\arrow{l}[swap]{\alpha}P_{j-1}&\arrow{l}[swap]{\alpha^n}P_{j}&\arrow{l}[swap]{\alpha}\cdots
\end{tikzcd}
\]

The third and fourth cases, positive degree maps of even respectively odd degrees are likewise shown to be null-homotopic. This is because in the decomposition $f=f^0+f'$, the map $f^0$ must be equal to $0$, otherwise $f$ will not define a chain map (the first square with $0\leftarrow P_i$ in the top row will not commute); that $f'$ is null-homotopic is shown in the exact same way as above. 

We can conclude from this that the chain maps that are not null-homotopic can all be described as linear combinations of compositions of the following: the degree $0$ map $D_i\rightarrow D_{i-1}$ given by multiplication by $\alpha_{i-1}^*$, which we identify with $a_i$; the degree $-1$ map $b_i=b_{1,i}$, as described in the previous proof; and the degree $-2$ map $T_i:D_i\rightarrow D_i$, given by 
\[
\begin{tikzcd}
P_i\arrow{d}&\arrow{l}[swap]{\alpha}P_{i-1}\arrow{d}& \arrow{l}[swap]{\alpha^{n}}P_i\arrow{d}{\text{id}}&\arrow{l}[swap]{\alpha}P_{i-1}\arrow{d}{\text{id}}&\arrow{l}[swap]{\alpha^n}\cdots \\
0&\arrow{l}0&\arrow{l}P_{i}&\arrow{l}[swap]{\alpha}P_{i-1}&\arrow{l}[swap]{\alpha^n}\cdots
\end{tikzcd}
\]
All these maps commute. 

Indeed, suppose first that $f^0$, as above is of even degree, say $2l$. Then we can assume that $f^0_k$ is of the form $x\mapsto \lambda (\alpha^*)^bx$, for some number $b\geq 0$ (by looking at the possible summands separately). Then $f^0=T^k_{i-b}a_{i-b+1}\cdots a_{i-1}a_i$. Suppose next that $f^0$ is of odd degree, say $2l+1$. Then we can assume that $f^0_k$ is of the form 
\[
x\mapsto
\begin{cases}
\lambda(\alpha^*)^b &\mbox{ if }k\text{ is odd}, \\
\lambda\alpha^n(\alpha^*)^b &\mbox{ if }k\text{ is even}.
\end{cases}
\]
In this case $f^0=T^l_{i-b-1}b_{i-b}a_{i-b+1}\cdots a_i$.

There can furthermore be no more relations among them, except in the case $n=1$, where $b_{i+1}b_i=T_i$ (multiplication is induced by composition). Therefore there is an algebra-isomorphism $H_{\bullet}(\text{End}_A(D))\cong B_T$. 
\end{proof}
\begin{lemma}\label{sgend}
We can upgrade the isomorphism in Lemma~\ref{rhomhomology} to an isomorphism
\[
\textup{Ext}^{\bullet}_A(C,C)\cong B_T,
\]
as $A_{\infty}$-algebras.
\end{lemma}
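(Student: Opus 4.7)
The plan is to apply Kadeishvili's minimal model theorem to transfer the dg-algebra structure on $\text{End}_A(D)$, viewed as an $A_\infty$-algebra with $\mu_1 = \partial$, $\mu_2$ equal to composition, and $\mu_k = 0$ for $k \geq 3$, to a minimal $A_\infty$-structure on its total homology. By Lemma~\ref{rhomhomology} this homology agrees with $B_T$ as a graded algebra, and the transferred $\mu_2$ is the product on $B_T$; since minimal $A_\infty$-models are unique up to $A_\infty$-isomorphism, it suffices to check that the $A_\infty$-structure stipulated on $B_T$ is realised as some such transfer, i.e.\ that the transferred higher products consist only of the specified $\mu_{n+1}$, with all other $\mu_k$ ($k \geq 3$, $k \neq n+1$) vanishing.

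Concretely, I would fix a homotopy retract $(i,p,h)$ where $i: B_T \hookrightarrow \text{End}_A(D)$ embeds the generators $a_j, b_j, T_j$ as the explicit cocycles constructed in the proof of Lemma~\ref{rhomhomology}, $p$ is a projection onto these representatives, and $h$ is a chain homotopy of degree $+1$ satisfying $\text{id} - i \circ p = \partial h + h \partial$. The existence of $(i,p,h)$ follows from the periodic structure of the resolution $D_i$, which makes each graded piece of $\text{End}_A(D)$ a semi-simple $R$-bimodule. By the standard tree formula for Kadeishvili transfer, the transferred $\mu_k$ is then a signed sum over planar rooted binary trees with $k$ leaves, with leaves decorated by $i$, internal edges by $h$, the root by $p$, and trivalent vertices by composition in $\text{End}_A(D)$.

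The central calculation is $\mu_{n+1}(b_{i-1}, b_{i-2}, \ldots, b_i) = \pm T_i$. At the chain level $i(b_{j-1}) \circ i(b_j)$ represents the vanishing class $b_{j-1} b_j \in B_T$ (for $n \geq 2$) and is therefore exact, so $h$ provides a canonical primitive. Iterating through the right-comb tree with $n+1$ $b$-inputs, each step contributes one application of $h$ to such an exact cocycle; after $n$ iterations the resulting chain endomorphism coincides with the degree $-2$ morphism $T_i$ identified in the proof of Lemma~\ref{rhomhomology}, and projection by $p$ returns the class $T_i \in B_T$. The extension to mixed inputs involving $a_j$ and $T_j$ follows from choosing $h$ compatibly with the chain-level commutation of $i(a_j)$ and $i(T_j)$ with $h$, reproducing the stipulated formula $\mu_{n+1}(T^{r_{n+1}} a^{s_{n+1}} b_{j_{n+1}}, \ldots, T^{r_1} a^{s_1} b_{j_1}) = a^{\sum s_i} T^{1+\sum r_i}$. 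The vanishing of $\mu_k$ for $3 \leq k \neq n+1$ can be read off from the tree formula: for $k < n+1$, the iterated primitives sit in a strict sub-complex killed by $p$; the case $k > n+1$ is either forced by the $A_\infty$-relations once $\mu_2$ and $\mu_{n+1}$ are known, or checked directly by the same tree analysis.

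The main technical obstacle is choosing $h$ so that the signed sum of all tree shapes in the Kadeishvili formula collapses to the single comb contribution at order $n+1$, and tracking signs through the cyclic composition. A cleaner alternative is to describe $\mu_{n+1}$ as the $(n+1)$-fold Massey product $\langle b_{i-1}, b_{i-2}, \ldots, b_i \rangle$: this is unambiguously defined because every proper sub-word $b_{j-l+1} \cdots b_j$ with $l < n+1$ vanishes in $B_T$ and admits a canonical primitive in $\text{End}_A(D)$, and a direct evaluation identifies the resulting class with $T_i$. Uniqueness of minimal $A_\infty$-models then promotes the graded algebra isomorphism of Lemma~\ref{rhomhomology} to the desired $A_\infty$-isomorphism $\text{Ext}^\bullet_A(C,C) \cong B_T$.
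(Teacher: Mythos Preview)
Your approach is essentially the paper's: both invoke Kadeishvili's transfer, the paper via the Merkulov recursion $\lambda_m = \sum_{i+j=m}(-1)^{j+1}\lambda_2(h\lambda_i\otimes h\lambda_j)$ rather than the tree language, and both reduce the computation to pure $b$-inputs with $a$ and $T$ factoring out linearly.

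The difference is that the paper actually executes the step you flag as the main technical obstacle. It does not argue abstractly that only comb trees survive; instead it makes an explicit inductive choice of $h$ on the elements $\lambda_j(b,\dots,b)$, namely $h(Tb^j)$ is the chain map alternating between $0$ and multiplication by $\alpha^{n-j}$, and then checks directly that $\lambda_2(h\lambda_i(b,\dots,b)\otimes h\lambda_j(b,\dots,b))=0$ whenever $i,j\geq 2$. This is what collapses the recursion to the two comb terms and gives $\lambda_j(b,\dots,b)=T\cdot\alpha^{n+1-j}$, hence $\mu_j=0$ for $2<j<n+1$ and $\mu_{n+1}(b,\dots,b)=T$. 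Your proposal asserts the outcome of this computation without performing it. Also, your claim that the vanishing of $\mu_k$ for $k>n+1$ is ``forced by the $A_\infty$-relations once $\mu_2$ and $\mu_{n+1}$ are known'' is not correct in general; the paper verifies it directly from $h\lambda_{n+1}(b,\dots,b)=0$, which terminates the recursion. The Massey-product alternative you mention would give $\mu_{n+1}$ on pure $b$-inputs but does not by itself establish the full $A_\infty$-structure, in particular the linearity in $a$ and $T$ and the vanishing of all other $\mu_k$.
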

\begin{proof}
The $A_{\infty}$-algebra $\text{Ext}_A^{\bullet}(C,C)=H_{\bullet}(\text{RHom}_A(C,C))$ is by definition the minimal model of the dg-algebra $\mathcal{B}=\text{End}_{A}(D)$. By \cite{MR2533303} and \cite{MR1672242} (here following the exposition of these results in \cite[Section 6, Theorem 17]{MR4432239}), we can find the $A_{\infty}$-structure in the following way. We choose a graded decomposition $\mathcal{B}=H\oplus\text{im}(\partial)\oplus L$ as graded vector spaces, where $H_k$ is a complement of $\text{im}(\partial_{k+1})$ in the cycles $\text{ker}(\partial_k)\subset\mathcal{B}$ and $L_k$ is a complement of the cycles in $\mathcal{B}_k$. We also pick an isomorphism $\iota:\text{Ext}_A^{\bullet}(C,C)\rightarrow H$, and we let $\pi:\mathcal{B}\rightarrow \text{Ext}_A^{\bullet}(C,C)$ denote the composition $\iota^{-1}\circ\pi'$, where $\pi':\mathcal{B}\rightarrow H$ is the projection. Given the decomposition, note that $\partial_k$ maps $H_k\oplus\text{im}(\partial_{k+1})\oplus L_k$ onto $\text{im}(\partial_k)$ with kernel $H_k\oplus\text{im}(\partial_{k+1})$. Thus, we can define an "inverse" $h$ to $\partial$ by $h_{|H_k\oplus L_k}=0$ and $h_{|\text{im}(\partial_{k})}=(\partial_k)^{-1}$. Using this, inductively, for $m\geq 1$, we define $\lambda_n:\mathcal{B}^{\otimes m}\rightarrow\mathcal{B}$ by
\[
h\lambda_1=-\text{id}_{\mathcal{B}},\qquad \lambda_2=\text{composition},\qquad\lambda_n=\sum_{\substack{i+j=n  \\ i,j\geq 1}}(-1)^{j+1}\lambda_2(h\lambda_i\otimes h\lambda_j).
\]
Then, for $m\geq 2$, $\mu_m=\pi\circ\lambda_n\circ \iota$ defines the $A_{\infty}$-structure.

To make it more readable we skip the indices, and note that they are implicitly determined. In the above situation, we choose a decomposition with $H_k=\{T^ra^sb\}$, for $k=2r+1$ odd, and $H_k=\{T^ra^s\}$, for $k=2r$ even. Note that $b^2$ is given by multiplication with $\alpha^{n-1}$ composed with $T$. We define $h(Tb^2)$ to be the morphism below:
\[
\begin{tikzcd}
P_i\arrow{d}&\arrow{l}[swap]{\alpha^n}P_{i-1}\arrow{d}{0}&\arrow{l}[swap]{\alpha^{n}}P_i\arrow{d}{\alpha^{n-2}}&\arrow{l}[swap]{\alpha}P_{i-1}\arrow{d}{0}&\arrow{l}[swap]{\alpha^n}P_i\arrow{d}{\alpha^{n-2}}&\arrow{l}[swap]{\alpha}\cdots \\
0&\arrow{l}P_{i-2}&\arrow{l}[swap]{\alpha}P_{i-3}&\arrow{l}[swap]{\alpha^n}P_{i-2}&\arrow{l}[swap]{\alpha}P_{i-3}&\arrow{l}[swap]{\alpha^n}\cdots
\end{tikzcd}
\]
Then $\lambda_2(h\lambda_1(b),h\lambda_2(b,b))$ is given by
\[
\begin{tikzcd}
P_i\arrow{d}&\arrow{l}[swap]{\alpha}P_{i-1}\arrow{d}& \arrow{l}[swap]{\alpha^{n}}P_i\arrow{d}{\alpha^{n-2}}&\arrow{l}[swap]{\alpha}P_{i-1}\arrow{d}{0}&\arrow{l}[swap]{\alpha^n}P_i\arrow{d}{\alpha^{n-2}}&\arrow{l}[swap]{\alpha}\cdots \\
0&\arrow{l}0&\arrow{l}P_{i-3}&\arrow{l}[swap]{\alpha}P_{i-4}&\arrow{l}[swap]{\alpha^n}P_{i-3}&\arrow{l}[swap]{\alpha}\cdots
\end{tikzcd}
\]
and $\lambda_2(h\lambda_2(b,b),h\lambda_1(b))$ by
\[
\begin{tikzcd}
P_i\arrow{d}&\arrow{l}[swap]{\alpha}P_{i-1}\arrow{d}& \arrow{l}[swap]{\alpha^{n}}P_i\arrow{d}{0}&\arrow{l}[swap]{\alpha}P_{i-1}\arrow{d}{\alpha^{n-2}}&\arrow{l}[swap]{\alpha^n}P_i\arrow{d}{0}&\arrow{l}[swap]{\alpha}\cdots \\
0&\arrow{l}0&\arrow{l}P_{i-3}&\arrow{l}[swap]{\alpha}P_{i-4}&\arrow{l}[swap]{\alpha^n}P_{i-3}&\arrow{l}[swap]{\alpha}\cdots
\end{tikzcd}
\]

Thus, $\lambda_3(b,b,b)$ is the sum of two alternating maps, and is given by multiplication by $\alpha^{n-2}$ composed with $T$. 

Next, we let $h(\lambda_3(b,b,b))=h(Tb^3)$ be given by 
\[
\begin{tikzcd}
P_i\arrow{d}&\arrow{l}[swap]{\alpha}P_{i-1}\arrow{d}{0}&\arrow{l}[swap]{\alpha^{n}}P_i\arrow{d}{\alpha^{n-3}}&\arrow{l}[swap]{\alpha}P_{i-1}\arrow{d}{0}&\arrow{l}[swap]{\alpha^n}P_i\arrow{d}{\alpha^{n-3}}&\arrow{l}[swap]{\alpha}\cdots \\
0&\arrow{l}P_{i-3}&\arrow{l}[swap]{\alpha}P_{i-4}&\arrow{l}[swap]{\alpha^n}P_{i-3}&\arrow{l}[swap]{\alpha}P_{i-4}&\arrow{l}[swap]{\alpha^n}\cdots
\end{tikzcd}
\]
If we compute $\lambda_4$, it will be the sum of two alternating maps $\lambda_2(h\lambda_1(b)\otimes h\lambda_3(b,b,b))$ and \\$\lambda_2(h\lambda_3(b,b,b)\otimes h\lambda_1(b))$ (and one map $\lambda_2(h\lambda_2(b,b)\otimes h\lambda_2(b,b))=0$). It will be equal to multiplication by $\alpha^{n-3}$ composed with $T$.

We can continue and define $h$ in the analogous way (starting with $0$ and alternating with the appropriate power of $\alpha$). Then $h\lambda_i(b,\dots,b)\otimes h\lambda_j(b\dots,b)=0$, for $i,j\geq 2$, and, inductively, 
\[
\lambda_j(b,\dots,b)=h\lambda_1(b)\otimes h\lambda_{j-1}(b,\dots,b)+h\lambda_{j-1}(b,\dots,b)\otimes h\lambda_1(b),
\]
which is then equal to multiplication by $\alpha^{n+1-j}$ composed with $T$.

From this we deduce that $\mu_j(b,\dots,b)=0$, for $2<j<n+1$. Moreover, we deduce that $\mu_{n+1}(b,\dots,b)=T$.

As $h\lambda_{n+1}(b,\dots,b)=0$, this process will stop, and we will have $\lambda_j(b,\dots,b)=0$, if $j>n+1$.

Likewise, if we replace $b$ with any word $w$ in $a$ and $T$, then $h(w)=0$. Hence, $\lambda_j(\dots,w,\dots)=0$.

Finally, if we replace $b$ by any word of the form $T^ra^sb$, then $T^ra^s$ can always be factored out of the equations defining $\lambda_j$. We conclude the formula described right above the lemma.
\end{proof}
In particular, $D^b(A)\cong D^{\pi}(B_T)$, but we are interested in the singularity category. Note especially, in the proof of Lemma~\ref{sggeneration}, that the projective $A$-modules are quasi-isomorphic to the twisted complexes $D_i^{n+1}$. Under this isomorphism these twisted complexes corresponds to objects in $D^{\pi}(B_T)$ are quasi-isomorphic to the twisted complexes over $B_T$ given by
\[
P_i\simeq S_i=(e_{i+1}B_T\oplus\cdots\oplus e_{i-1}B_T\oplus e_{i}B_T,f_i),\qquad f_i=
\begin{bmatrix}
0&b_{i+2}&0&\cdots&0\\
0&0&b_{i+3}&\cdots&0 \\
&\ddots&&\ddots \\
0&0&0&\cdots &b_{i}\\
0&0&0&\cdots&0
\end{bmatrix}.
\] 
Since $R$ is semi-simple, the indecomposable projective modules $P_1,\dots,P_{n+1}$ corresponding to the nodes generate $\text{Perf}(A)$. We deduce that \[
D_{\text{sg}}(A)\simeq D^{\pi}(B_T)/D^{\pi}(S_1,\dots,S_{n+1})\simeq D^{\pi}(B_T/\langle S_1,\dots,S_{n+1}\rangle),
\]
where the middle quotient is the Verdier quotient, and the last quotient is the $A_{\infty}$-quotient (see Section~\ref{algprelAinf}) due to \cite{MR2259271}, where the second quasi-isomorphism is also established.

Let $B_{T^{\pm}}$ denote the $A_{\infty}$-algebra obtained by adding $T_{i}^{-1}$ at all nodes, i.e., adding loops to the underlying quiver and imposing the extra relations $T_i^{-1}T_i=e_i=T_iT_i^{-1}$, extending the commutating relations with $b_i$ and $a_i$, as well as the linearity under $\mu_{n+1}$. 
\begin{lemma}
The $A_{\infty}$-quotient of $B_T$ at the twisted complexes $S_1,\dots,S_{n+1}$ is quasi-isomorphic to the $A_{\infty}$-algebra $B_{T^{\pm}}$.
\end{lemma}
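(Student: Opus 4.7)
The plan is to identify $B_T/\langle S_1,\dots,S_{n+1}\rangle$ with $B_{T^\pm}$ by exhibiting a natural comparison $A_\infty$-functor, produced by the universal property of the quotient, and then verifying that it is a quasi-equivalence. The first step is to show that each twisted complex $S_i$ becomes contractible after extending scalars from $B_T$ to $B_{T^\pm}$. Concretely, I would construct a degree-$1$ endomorphism $h_i:S_i\to S_i$ whose non-zero matrix entries are words in the $a_j$'s times a single $T_j^{-1}$, placed so that the twisted differential
\[
\mu_1^{\mathrm{Tw}}(h_i)=\sum_{p,q\geq 0}\mu_{p+q+1}^{B_{T^\pm}}(\underbrace{f_i,\dots,f_i}_{p},h_i,\underbrace{f_i,\dots,f_i}_{q})
\]
collapses to $e_{S_i}$. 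The crucial identity $\mu_{n+1}(b_{j-1},\dots,b_{j+1},b_j)=T_j$, together with invertibility of the $T_j$ in $B_{T^\pm}$, is precisely what makes this computation work: the full $b$-cycle around the cyclic quiver sitting inside a matrix product $\mu_{n+1}(f_i,\dots,f_i,h_i)$ produces a factor $T_j$ which annihilates the $T_j^{-1}$ built into $h_i$, leaving an identity. With contractibility in place, the inclusion $B_T\hookrightarrow B_{T^\pm}$ sends each $S_i$ to a zero object in $D^\pi(B_{T^\pm})$, and by the universal property of the $A_\infty$-quotient recalled in Section~\ref{algprelAinf} (and established in~\cite{MR2259271}) one obtains a factorisation
\[
\tilde\Phi:B_T/\langle S_1,\dots,S_{n+1}\rangle\longrightarrow B_{T^\pm}.
\]

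Since $\tilde\Phi$ is the identity on objects and every object of $B_{T^\pm}$ already lies in its image, quasi-essential surjectivity is automatic. What remains is to verify that $\tilde\Phi$ is quasi-fully-faithful, i.e.\ that the induced map on morphism complexes is a quasi-isomorphism. Using the explicit bar-type formula
\[
(B_T/\langle S_\bullet\rangle)(e_iB_T,e_jB_T)=\bigoplus_{k\geq 0}\bigoplus_{i_1,\dots,i_k}\mathrm{Tw}\,B_T(S_{i_k},e_jB_T)\otimes\cdots\otimes \mathrm{Tw}\,B_T(e_iB_T,S_{i_1}),
\]
I would filter by the number $k$ of $S$-insertions. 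On the associated graded each $k$-fold zigzag contributes a word in the $a$'s and $b$'s of $B_T$ together with one Drinfeld generator $\kappa_{S_{i_\ell}}$ per insertion; the only operation that mixes filtration degrees is the higher operation $\mu_{n+1}$, which converts a complete $b$-cycle trapped between two successive $S$-insertions into a $T_j$ factor. After taking $H_\bullet$ each $\kappa_{S_{i_\ell}}$ should therefore survive precisely as a formal $T_{i_\ell}^{-1}$, producing exactly the presentation of $B_{T^\pm}$.

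The main obstacle is verifying degeneration of this spectral sequence: the bar complex is infinite-dimensional in every degree, and the cancellations between zigzag configurations have to be organised carefully so that no spurious classes survive. A safer route, which I expect to make rigorous, is to replace $B_T$ by a semi-free dg-model $\widetilde{B}_T$ as in the formality argument of Proposition~\ref{deformedformal} and Remark~\ref{formalityremark}, realise the $A_\infty$-quotient as the honest Drinfeld quotient of $\widetilde{B}_T$ obtained by adjoining generators $\kappa_{S_i}$ of degree $1$ with $\partial\kappa_{S_i}=e_{S_i}$, and then apply the cancellation argument of Proposition~\ref{dgquotgen} and Proposition~\ref{dgquotalg} to eliminate the contractible pairs $(\kappa_{S_i},e_{S_i})$ together with the auxiliary generators of $\widetilde{B}_T$. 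This reduces the verification to a semi-free identification of the resulting quiver with differential against a semi-free model of $B_{T^\pm}$, paralleling the substitutions used in Lemma~\ref{substlemma} and the handle-chord cancellations in the proof of Theorem~\ref{alggen}.
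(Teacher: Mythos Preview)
Your first two steps match the paper exactly: you build a contracting homotopy for each $\iota(S_i)$ in $\mathrm{Tw}\,B_{T^\pm}$ using the identity $\mu_{n+1}(b_{j-1},\dots,b_j T_j^{-1},\dots)=e_j$, and then invoke the universal property of the $A_\infty$-quotient to produce the comparison functor. The paper does precisely this (with the explicit matrix $g_i$ having a single nonzero entry $b_{i+1}T_{i+1}^{-1}$).

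Where you diverge is in proving the comparison is a quasi-isomorphism. You propose either a filtration/spectral-sequence argument on the bar model of the quotient, or a semi-free replacement followed by generator cancellation. The paper instead avoids any direct computation of the quotient's hom-complexes by exploiting \emph{derived localisation} in the sense of \cite{MR3771137}. The key observation you are missing is that $\mathrm{cone}(T_i)$ is quasi-isomorphic to $S_i$ (this is visible from the description of $T_i$ in Lemma~\ref{rhomhomology}: it is the degree-$(-2)$ shift map on the periodic resolution, so its cone is the two-term complex $P_i\xleftarrow{\alpha}P_{i-1}$). Hence killing the $S_i$ forces each $T_i$ to become invertible in homology, and the universal property of $L_T(\Omega B(B_T))$ produces a map \emph{back} from $B_{T^\pm}$ to $B_T/\langle S_\bullet\rangle$. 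The two universal properties (of the quotient and of derived localisation) then force the two maps to be mutual homotopy inverses. This sidesteps entirely the convergence issue you flag for the bar spectral sequence.

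Your alternative route has a concrete gap: Proposition~\ref{dgquotalg} applies to quotients at \emph{vertices} of a semi-free quiver dg-algebra, not at twisted complexes like the $S_i$, so the cancellation mechanism you cite does not directly apply. One could in principle enlarge the category to include the $S_i$ as new objects and then try to cancel, but making this rigorous is essentially equivalent to the spectral-sequence computation you already identified as the obstacle. The paper's localisation argument is both shorter and structurally cleaner, since it replaces an infinite-dimensional bar computation with two invocations of universal properties plus the single geometric input $\mathrm{cone}(T_i)\simeq S_i$.
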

\begin{proof}
We have an inclusion $\iota:B_T\rightarrow B_{T^{\pm}}$ (which is an $A_{\infty}$-morphism with $f_1=\iota$, $f_k=0$, $k\geq 2$). This defines a functor on the $A_{\infty}$-categories of twisted complexes, $\text{Tw}\,B_T\rightarrow\text{Tw}\,B_{T^{\pm}}$, which we here also denote by $\iota$ (see Section~\ref{algprelAinf}). Thus, there is a twisted complex $\iota(S_i)$ over $B_{T^{\pm}}$ for each $S_i$. The following calculation shows that
\[
\mu_1^{\iota(S_i)}(g_i)=\mu_1^{\text{Tw}\,B_{T^{\pm}}}(g_i)=\text{Id}_{\iota(S_i)},\quad\text{where}\quad g_i=\begin{bmatrix}
0&0&\cdots&0
\\
\vdots&\vdots&\ddots&\vdots \\
0&0&\cdots& 0 \\
b_{i+1}T_{i+1}^{-1}&0&\cdots&0
\end{bmatrix}.
\]
Indeed,
\[
\mu_1^{\text{Tw}\,B_{T^{\pm}}}(g_i)=\sum\mu_{n+1}^{\text{Add}_{B_{T^{\pm}}}}(\iota(f_i),\dots,\iota(f_i),g_i,\iota(f_i),\dots,\iota(f_i)),
\]
where the sum is over all different positions for $g_i$. On components, evaluating where $g_i$ is in the $l$:th position, for $l=1,\dots,n+1$,
\[
\left(\mu_{n+1}^{\text{Add}_{B_{T^{\pm}}}}(\iota(f_i),\dots,\iota(f_i),g_i,\iota(f_i),\dots,\iota(f_i))\right)_{s,t}
\]
\[
=\begin{cases}
\mu_{n+1}^{B_{T^{\pm}}}(b_{i+l-1},\dots,b_iT_i^{-1},\dots,b_{i+l})=e_{i+l}, &\mbox{if }s=t=n+2-l, \\
0,&\mbox{otherwise}
\end{cases}
\]
(which is most easily seen from multiplying matrices).

Therefore, by \cite[Section 3]{MR2259271} or, equivalently, by the universal property in \cite[Theorem 4.13]{oh2024infinitycategoricaluniversalpropertiesquotients}, there is a morphism $h=(h_n)_{n\geq 1}$ of $A_{\infty}$-algebras making the diagram commute:
\[
\begin{tikzcd}
B_T\arrow{r}{\iota}\arrow{d}&B_{T^{\pm}}\\
B_T/\langle S_1,\dots,S_{n+1}\rangle\arrow[dashed]{ur}[swap]{\exists h}
\end{tikzcd}
\]
We need to show that $h$ defines a quasi-isomorphism. 

We can consider the derived localisation $L_T(B_T)$ of $B_T$ at $T=T_1+\cdots+T_{n+1}$, in the sense of \cite{MR3771137}, replacing it with a quasi-isomorphic dg-algebra. We choose to consider the semi-free dg-algebra replacement given by the cobar-bar construction $\Omega B(B_T)$ of $B_T$; it comes with a canonical quasi-isomorphism $\Omega B(B_T)\rightarrow B_T$. We have an inclusion of dg-algebras $\Omega B(B_T)\hookrightarrow L_T(\Omega B(B_T))$ and $H_{\bullet}(L_T(B_T))=H_{\bullet}(L_T(\Omega B(B_T)))=\tilde{B}_{T^{\pm}}$, where $\tilde{B}_{T^{\pm}}$ by definition is the minimal model of $L_T(B_T)=L_T(\Omega B(B_T))$. Note that $\tilde{B}_{T^{\pm}}$ is equal to $B_{T^\pm}$ as algebras by \cite[Theorem 5.3]{MR3771137}, since $T$ is a central element in homology, but not necessarily as  $A_{\infty}$-algebras. However, there is a map of $A_{\infty}$-algebras $i=(i_k)_{k\geq 1}$ induced by localisation $B_T\rightarrow\tilde{B}_{T^{\pm}}$ such that $i_1$ is the inclusion: this is obtained by taking the composition $B_T\rightarrow \Omega B(B_T)\rightarrow L_T(\Omega B(B_T))\rightarrow\tilde{B}_{T^{\pm}}$, where the first and the last maps are $A_{\infty}$-quasi-isomorphisms whose arity-$1$ parts are cycle choosing respectively projection to homology class. 

Consider next the cobar-bar construction also of the $A_{\infty}$-algebra $B_{T^{\pm}}$, $\Omega B(B_{T^{\pm}})$, we then have the commutative diagram, where the vertical arrows are quasi-isomorphisms:
\[
\begin{tikzcd}
\Omega B(B_T) \arrow[hook]{r}{\Omega B(\iota)} \arrow{d}  &\Omega B(B_{T^{\pm}}) \arrow{d}\\
B_{T}\arrow[hook]{r}[swap]{\iota}&  B_{T^{\pm}}
\end{tikzcd}
\] 
By the universal property \cite[Definition 3.3]{MR3771137} there is a morphism of dg-algebras, making the diagram below commute, because $\Omega B(\iota)(T')$ is invertible in homology, for any representative $T'$ of the homology class $T$:
\[
\begin{tikzcd}
\Omega B(B_T)\arrow{r}{\Omega B(\iota)}\arrow{d} & \Omega B(B_{T^{\pm}}) \\
L_T(\Omega B(B_T))\arrow[dashed]{ur}
\end{tikzcd}
\]

Going back to the level of minimal modeles, this induces a morphism of $A_{\infty}$-algebras $j=(j_k)_{k\geq 1}$ such that the diagram below commutes:
\[
\begin{tikzcd}
B_T\arrow{r}{\iota}\arrow{d}{i}&B_{T^{\pm}} \\
\tilde{B}_{T^{\pm}}\arrow{ur}[swap]{j}
\end{tikzcd}
\]
By commutativity, $j_1$ is the identity restricting to $B_T\subset\tilde{B}_{T^{\pm}}$. Using that $\mu_2^{B_{T^{\pm}}}(i_1,i_1)=i_1\circ\mu_2^{\tilde{B}_{T^{\pm}}}$, which holds as $\mu_1^{B_{T^{\pm}}}=0$ and $\mu_1^{\tilde{B}_{T^{\pm}}}=0$, we deduce that $j_1(T_i^{-1})=T_i^{-1}$. Thus, $j_1$ must be the identity, and in particular, $j$ is an isomorphism. Hence, we will identify $\tilde{B}_{{T^{\pm}}}$ (which is defined up to $A_{\infty}$-quasi-isomorphism) with $B_{T^{\pm}}$, and $L_T(\Omega B(B_T))$ wtih $\Omega B(B_{T^{\pm}})$.

By above, derived localisation of $T$ implies killing $S_i$, $i=1,\dots,n+1$, yielding the map $h$. On the other hand, killing all $S_i$ implies localisation of all $T_i$, which follows from looking at the Verdier quotient, using that $\text{cone}(T_i)$ is isomorphic to the perfect complex 
\[
S_i'\simeq P_i\xleftarrow{\alpha}P_{i-1}
\]
(see the description of T in the the proof of Lemma~\ref{rhomhomology}). To more precise, there is a quasi-isomorphism between the dg-algebra $\text{End}_A(D)$ and the $A_{\infty}$-algebra $ B_T$. T twisted complexes $S_i$ corresponds to twisted complexes $S_i'$ under this quasi-isomorphism, and, thus, the dg-quotient \\$\text{End}_A(D)/\langle S_1',\dots,S_{n+1}'\rangle$ (which is the endomorphism dg-algebra, obtained by taking the quotient in the dg-category of twisted complexes over $\text{End}_A(D)$) is quasi-isomorphic to $A_{\infty}$-quotient $B_T/\langle S_1,\dots,S_{n+1}\rangle$. The element $T\in\text{End}_A(D)/\langle S_1',\dots,S_{n+1}'\rangle$ is identified with the corresponding endomorphism in $\text{Tw}\,\text{End}_A(D)/\langle S_1',\dots,S_{n+1}'\rangle$, and its homology class is identified with its class in $H_0\text{Tw}\,\text{End}_A(D)/\langle S_1',\dots,S_{n+1}'\rangle$, which is identified with the morphism also denoted by $T$ in $D^{\pi}(\text{End}_A(D)/\langle S_1',\dots,S_{n+1}'\rangle)\simeq D_{\text{sg}}(A)$.

Therefore, by \cite[Definition 3.3]{MR3771137}, there is dg-morphism making the diagram commute:
\[
\begin{tikzcd}
\Omega B(B_T) \arrow{r}\arrow{d}&L_T(\Omega B(B_T))\arrow[dashed]{dl} \\
\Omega B(\text{End}_A(D)/\langle S_1',\dots,S'_{n+1}\rangle)
\end{tikzcd}
\]
Here, the vertical map is defined as the composition 
\[
\Omega B(B_T)\rightarrow \Omega B(B_T/\langle S_1,\dots,S_{n+1}\rangle)\xrightarrow{\sim_{\text{q.i.}}}\Omega B( \text{End}_A(D)/\langle S_1',\dots,S_{n+1}'\rangle).
\]

Thus, there is an $A_{\infty}$-morphism $\tilde{h}=(\tilde{h_n})_{n\geq 1}:B_{T^{\pm}}\rightarrow B_T/\langle S_1,\dots,S_{n+1}\rangle$, and a commutative diagram:
\[
\begin{tikzcd}
B_T\arrow{r}\arrow{d}&B_{T^{\pm}}\arrow[bend left=20]{dl}{\tilde{h}} \\
B_T/\langle S_1,\dots,S_{n+1}\rangle\arrow{ur}{h}
\end{tikzcd}
\]
The universal property in \cite{MR3771137}, together with the universal property in \cite[Theorem 4.13]{oh2024infinitycategoricaluniversalpropertiesquotients}, furthermore implies that the dg-morphism must be a quasi-inverse. Indeed, 
$\Omega B(h \circ\tilde{h})$ must be equal to $\text{Id}_{L_T(\Omega B(B_T))}$ in the homotopy category of the under category $\Omega B(B_T)$ (in dg-algebras). This implies that $h\circ\tilde{h}\simeq\text{Id}_{B_{T^{\pm}}}$. Moreover, similarly, $h\circ\tilde{h}$ is equal to the identity $\text{Id}_{B_T/\langle S_1,\dots, S_{n+1\rangle}}$ in the $\infty$-category of $A_{\infty}$-categories, which implies that they are homotopic $A_{\infty}$-morphisms. Hence the map induced by $h$ on the minimal model of $B_T/\langle S_1,\dots,S_{n+1}\rangle$ is an $A_{\infty}$-isomorphism.

\end{proof}
\begin{proof}[Proof of Theorem~\ref{dbsgcatalg}]
We can make the substitutions $b_{n+1}\leftrightarrow b_{n+1}T_{n+1}^{-1}$ and $a_{n+1}\leftrightarrow a_{n+1}T_{n+1}^{-1}$. Then $\mu_{n+1}(a_i,a_{i-1},\dots,a_{i+1})=e_{i+1}$, and $\mu_{n+1}$ is still linear in the $b_i$ and $T_i^{\pm 1}$. This shows that $B_{T^{\pm}}\cong B\otimes_{\mathbb{C}}\mathbb{C}[T^{\pm 1}]$, with $|T|=-2$, isomorphic as $A_{\infty}$-algebras, where we tensor with a graded algebra on the right-hand side such that the $A_{\infty}$-relations are extended to be linear in $T^{\pm 1}$.
\end{proof}

\begin{figure}[H]
\centering
\begin{tikzpicture}
    \draw[thick] (0,0) rectangle (12,4.1);
    \node at (1,2.4) {$A$-side};
    \node at (1,0.8) {$B$-side};
    \draw[very thick] (2.5,4.1) to (2.5,0);
    \draw[very thick] (0,3.2) to (12,3.2);

    \node at (5.2,3.6) {$X_{\text{res}}\smallsetminus D_r+\text{stop/potential}$};
    \node at (10,3.6) {$X_{\text{Mil}}$};

    \draw[thick] (0,1.6) -- (12,1.6);
    \draw[thick] (7.9,0) -- (7.9,4.1);

    \node at (5.2,1.1) {$D^{\pi}(\mathcal{G}(A_n)\otimes_{\mathbb{C}}\mathbb{C}[T^{\pm1}])$};
    \node at (5.2,0.5) {Thm. \ref{respot} };'

    \node at (10,2.7) {$D^{\pi}(\mathcal{G}(A_n))$};
    \node at (10,2.1) {\cite{MR3692968}};

    \node at (5.2,2.7) {$D^b(\Pi^{\lambda(s)}(\tilde{A}_n))$?};
    \node at (5.2,2.1) {Conj. \ref{resconj}};

    \node at (10,1.1) {$D^b(\Pi^{\lambda}(\tilde{A}_n))$};
    \node at (10,0.5) {Prop. \ref{nonlocmil}};
\end{tikzpicture}
\caption{Summary of the results in Section~\ref{hmsmil} and Section~\ref{hmsmiltwo}. See Remark~\ref{parameterremark} for the asymmetry in the off-diagonal. When considered in the same underlying hyper-Kähler manifold $X_{\zeta}$, the parameters in the main diagonal define the same algebras when considered as infinitesimal variations of $\Pi(\tilde{A}_n)$ (see Section~\ref{addinfvar}).}
\label{hmsmilfigure}
\end{figure}

\addcontentsline{toc}{section}{References}
\printbibliography

\end{document}